\renewcommand{\email}[2][]{%
  \ifx\emails\@empty\relax\else{\g@addto@macro\emails{,\space}}\fi%
  \@ifnotempty{#1}{\g@addto@macro\emails{\textrm{(#1)}\space}}%
  \g@addto@macro\emails{#2}%
}
\author{Julien Korinman}
\address{Universidade de S\~ao Paulo - Instituto de Ci\^encias Matem\'aticas e de Computa\c{c}\~ao.  
					Avenida Trabalhador S\~ao-Carlense, 400 - Centro CEP: 13566-590 - S\~ao Carlos - SP Brazil}
\email{julien.korinman@gmail.com}
\subjclass{$57$R$56$, $57$N$10$, $57$M$25$.}
\keywords{Skein algebras, Quantum Teichm\"uller spaces.}
\dedicatory{This paper is dedicated to Daciberg Lima Gon\c{c}alves for his $70$-th birthday}
\newcommand{\quotient}[2]{{\raisebox{.2em}{$#1$}\left/\raisebox{-.2em}{$#2$}\right.}}
\newcommand{\Tr}{\operatorname{Tr}}
\def\restriction#1#2{\mathchoice
              {\setbox1\hbox{${\displaystyle #1}_{\scriptstyle #2}$}
              \restrictionaux{#1}{#2}}
              {\setbox1\hbox{${\textstyle #1}_{\scriptstyle #2}$}
              \restrictionaux{#1}{#2}}
              {\setbox1\hbox{${\scriptstyle #1}_{\scriptscriptstyle #2}$}
              \restrictionaux{#1}{#2}}
              {\setbox1\hbox{${\scriptscriptstyle #1}_{\scriptscriptstyle #2}$}
              \restrictionaux{#1}{#2}}}
\def\restrictionaux#1#2{{#1\,\smash{\vrule height .8\ht1 depth .85\dp1}}_{\,#2}} 
\newcommand{\heightexch}[3]{
	\begin{tikzpicture}[baseline=-0.4ex,scale=0.5, >=stealth]
	\draw [fill=gray!60,gray!45] (-.7,-.75)  rectangle (.4,.75)   ;
	\draw[#1] (0.4,-0.75) to (.4,.75);
	\draw[line width=1.2] (0.4,-0.3) to (-.7,-.3);
	\draw[line width=1.2] (0.4,0.3) to (-.7,.3);
	\draw (0.65,0.3) node {\scriptsize{$#2$}}; 
	\draw (0.65,-0.3) node {\scriptsize{$#3$}}; 
	\end{tikzpicture}
}
\newcommand{\heightcurve}{
\begin{tikzpicture}[baseline=-0.4ex,scale=0.5]
\draw [fill=gray!20,gray!45] (-.7,-.75)  rectangle (.4,.75)   ;
\draw[-] (0.4,-0.75) to (.4,.75);
\draw[line width=1.2] (-.7,-0.3) to (-.4,-.3);
\draw[line width=1.2] (-.7,0.3) to (-.4,.3);
\draw[line width=1.15] (-.4,0) ++(-90:.3) arc (-90:90:.3);
\end{tikzpicture}
}
\newcommand{\heightcurveright}{
\begin{tikzpicture}[baseline=-0.4ex,scale=0.5]
\draw [fill=gray!20,gray!45] (-.7,-.75)  rectangle (.4,.75)   ;
\draw[-] (-0.7,-0.75) to (-.7,.75);
\draw[line width=1.2] (0.1,-0.3) to (.4,-.3);
\draw[line width=1.2] (0.1,0.3) to (.4,.3);
\draw[line width=1.15] (.1,0) ++(90:.3) arc (90:270:.3);
\end{tikzpicture}
}
\newcommand{\heightexchright}[3]{
	\begin{tikzpicture}[baseline=-0.4ex,scale=0.5, >=stealth]
	\draw [fill=gray!60,gray!45] (-.7,-.75)  rectangle (.4,.75)   ;
	\draw[#1] (-0.7,-0.75) to (-0.7,.75);
	\draw[line width=1.2] (0.4,-0.3) to (-.7,-.3);
	\draw[line width=1.2] (0.4,0.3) to (-.7,.3);
	\draw (-1,0.3) node {\scriptsize{$#2$}}; 
	\draw (-1,-0.3) node {\scriptsize{$#3$}}; 
	\end{tikzpicture}
}
\begin{document}

\theoremstyle{plain}
\newtheorem{theorem}{Theorem}[section]
\newtheorem{main_theorem}[theorem]{Main Theorem}
\newtheorem{proposition}[theorem]{Proposition}
\newtheorem{corollary}[theorem]{Corollary}
\newtheorem{corollaire}[theorem]{Corollaire}
\newtheorem{lemma}[theorem]{Lemma}
\theoremstyle{definition}
\newtheorem{notations}[theorem]{Notations}
\newtheorem*{notations*}{Notations}
\newtheorem{definition}[theorem]{Definition}
\newtheorem{Theorem-Definition}[theorem]{Theorem-Definition}
\theoremstyle{remark}
\newtheorem{remark}[theorem]{Remark}
\newtheorem{conjecture}[theorem]{Conjecture}
\newtheorem{example}[theorem]{Example}

\title[Unicity for representations of reduced stated skein algebras]{Unicity for representations of reduced stated skein algebras}
%
%
%

\date{}
\maketitle


\begin{abstract} 
We prove that both stated skein algebras and their reduced versions at odd roots of unity are almost-Azumaya and compute the rank of a reduced stated skein algebra over its center, extending a theorem of Frohman, Kania-Bartoszynska and L\^e to the case of open punctured surfaces. We deduce that generic irreducible representations of the reduced stated skein algebras are of quantum Teichm\"uller type and, conversely, that generic quantum Teichm\"uller type representations are irreducible.
\end{abstract}


\section{Introduction}

\paragraph{\textbf{Skein algebras and their representations}}

\par  A \textit{punctured surface} is a pair $\mathbf{\Sigma}=(\Sigma,\mathcal{P})$, where $\Sigma$ is a compact oriented surface and $\mathcal{P}$ is a (possibly empty) finite subset of $\Sigma$ which intersects non-trivially each boundary component.  We write $\Sigma_{\mathcal{P}}:= \Sigma \setminus \mathcal{P}$.
  The set $\partial \Sigma\setminus \mathcal{P}$ consists of a disjoint union of open arcs which we call \textit{boundary arcs}.
   \\ \textbf{Warning:} In this paper, the punctured surface $\mathbf{\Sigma}$ will be called open if the surface $\Sigma$ has non empty boundary and closed otherwise. This convention differs from the traditional one, where some authors refer to open surface a punctured surface $\mathbf{\Sigma}=(\Sigma, \mathcal{P})$ with $\Sigma$ closed and $\mathcal{P}\neq \emptyset$ (in which case $\Sigma_{\mathcal{P}}$ is not closed).

\vspace{2mm}
\par The \textit{Kauffman-bracket skein algebras} were introduced by Bullock and Turaev as a tool to study the $\mathrm{SU}(2)$ Witten-Reshetikin-Turaev topological quantum field theories (\cite{Wi2, RT}). They are associative unitary algebras $\mathcal{S}_{\omega}(\mathbf{\Sigma})$ indexed by a closed punctured surface $\mathbf{\Sigma}$ and a non-zero complex number $\omega$. Despite of the apparent simplicity of their definition, these algebras possesses deep connections with character varieties, knot theory and TQFTs (see e.g. \cite{MarcheCharVarSkein} for a survey). They appear in TQFTs through their finite dimensional irreducible representations as in \cite{BHMV2, BCGPTQFT}. Such a representation exists if and only if  $\omega$ is a root of unity  and the problem of classifying those representations is quite difficult. Significative progresses towards such a classification were made recently by Bonahon and Wong \cite{BonahonWong1, BonahonWong2} using quantum Teichm\"uller theory. Given a topological triangulation $\Delta$ of a punctured surface $\mathbf{\Sigma}=(\Sigma, \mathcal{P})$, whose set of vertices is $\mathcal{P}$, the \textit{balanced Chekhov-Fock algebra} $\mathcal{Z}_{\omega}(\mathbf{\Sigma}, \Delta)$ is a refinement of Chekhov and Fock's quantum Teichm\"uller space in \cite{ChekhovFock}. The representation theory of $\mathcal{Z}_{\omega}(\mathbf{\Sigma}, \Delta)$ is quite easy to study since this algebra is Azumaya of constant rank, hence it is semi-simple and its simple modules are in $1$-to-$1$ correspondence with the characters over its center and all have the same dimension, which is the square root of the rank of $\mathcal{Z}_{\omega}(\mathbf{\Sigma}, \Delta)$ over its center. In \cite{BonahonWongqTrace}, the authors defined an algebra embedding  $\Tr_{\omega} : \mathcal{S}_{\omega}(\mathbf{\Sigma}) \hookrightarrow \mathcal{Z}_{\omega}(\mathbf{\Sigma}, \Delta)$ named the \textit{quantum trace}. In particular, for any irreducible representation $r_0 : \mathcal{Z}_{\omega}(\mathbf{\Sigma}, \Delta) \rightarrow \mathrm{End}(V)$, one can define a representation 
$$ r : \mathcal{S}_{\omega}(\mathbf{\Sigma}) \xrightarrow{\Tr_{\omega}}  \mathcal{Z}_{\omega}(\mathbf{\Sigma}, \Delta) \xrightarrow{r_0} \mathrm{End}(V).$$
Such a representation will be referred to as a \textit{quantum Teichm\"uller representation} of the skein algebra. In the case where $\mathcal{P}=\emptyset$, the definition of quantum Teichm\"uller representations is more delicate (see \cite{BonahonWong3}). The family of quantum Teichm\"uller representations is quite large and Bonahon and Wong conjectured that a generic irreducible representation of $\mathcal{S}_{\omega}(\mathbf{\Sigma})$ is a quantum Teichm\"uller representation. More precisely, let $\mathcal{Z}^0$ denote the center of $\mathcal{S}_{\omega}(\mathbf{\Sigma})$. A representation $r : \mathcal{S}_{\omega}(\mathbf{\Sigma})\rightarrow \mathrm{End}(V)$ which is either irreducible or of quantum Teichm\"uller type sends the elements of $\mathcal{Z}^0$ to scalar operators, hence induces a character $\chi_r \in \mathrm{Specm}(\mathcal{Z}^0)$. Bonahon and Wong conjectured the existence of an open dense Zariski subset $\mathcal{U} \subset \mathrm{Specm}(\mathcal{Z}^0)$ such that any irreducible representation with induced character in $\mathcal{U}$ is of quantum Teichm\"uller type.
\vspace{2mm}
\par The unicity representations conjecture was solved recently by Frohman, Kania-Bartoszynska and L\^e in \cite{FrohmanKaniaLe_UnicityRep}, who proved a stronger result by introducing the concept of \textit{almost-Azumaya algebra}. A unital associative algebra $\mathcal{A}$ over an algebraic closed field is said almost-Azumaya if it becomes Azumaya after localization by a non zero central element. By \cite[Theorem $3.6$]{FrohmanKaniaLe_UnicityRep}, an algebra is almost-Azumaya if:
\begin{enumerate}
\item $\mathcal{A}$ is finitely generated as an algebra, 
\item $\mathcal{A}$ is prime, 
\item $\mathcal{A}$ is finitely generated as module over its center $\mathcal{Z}$.
\end{enumerate}
An algebra satisfying these conditions will be called \textit{affine almost-Azumaya}. Its \textit{PI-degree} $D$ is defined by $D^2=\dim_{Q(\mathcal{Z})} (\mathcal{A}\otimes_{\mathcal{Z}}Q(\mathcal{Z}))$ where $Q(\mathcal{Z})$ is the field of fractions of $\mathcal{Z}$.

\begin{theorem}[Frohman, Kania-Bartoszynska, L\^e\cite{FrohmanKaniaLe_UnicityRep, FrohmanKaniaLe_DimSkein}]\label{theorem_FKL} 
\begin{enumerate}
\item If $\mathcal{A}$ is an affine almost-Azumaya algebra of PI-degree $D$, then:
\begin{enumerate}
\item The map $\chi : \mathrm{Irrep} \to \mathrm{Specm}(\mathcal{Z})$, sending an irreducible representation to the associated character over its center, is surjective; 
\item any irreducible representation of $\mathcal{A}$ has dimension at most $D$; 
\item there exists a Zariski  open dense subset $\mathcal{U} \subset \mathrm{Specm}(\mathcal{Z})$ such that for any two irreducible representations $V_1, V_2$ of $\mathcal{A}$ such that $\chi(V_1)=\chi(V_2)\in \mathcal{U}$, then $V_1$ and $V_2$ are isomorphic and have dimension $D$. Moreover any representation sending $\mathcal{Z}$ to scalar operators and whose induced character lies in $\mathcal{U}$ is semi-simple.
\end{enumerate}
\item For a closed punctured surface $\mathbf{\Sigma}=(\Sigma, \mathcal{P})$ with $\omega\in \mathbb{C}^*$ a root of unity of order $N>1$, then $\mathcal{S}_{\omega}(\mathbf{\Sigma})$ is affine almost-Azumaya of PI-degree $D$, where $D$ is the dimension of the simple modules of $\mathcal{Z}_{\omega}(\mathbf{\Sigma},\Delta)$.
\end{enumerate}
\end{theorem}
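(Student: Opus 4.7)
Part (1) is pure noncommutative algebra about prime affine PI-rings. By hypothesis, there is a nonzero central element $c\in\mathcal{Z}$ such that $\mathcal{A}[c^{-1}]$ is Azumaya of rank $R$ over $\mathcal{Z}[c^{-1}]$. Since $\mathcal{A}$ is prime, $\mathcal{Z}$ is a domain and the locus
\[
\mathcal{U}:=\{\chi\in\mathrm{Specm}(\mathcal{Z})\mid \chi(c)\neq 0\}
\]
is Zariski open and dense. Standard Azumaya theory then gives a bijection between $\mathcal{U}=\mathrm{Specm}(\mathcal{Z}[c^{-1}])$ and the isomorphism classes of simple $\mathcal{A}[c^{-1}]$-modules, all of dimension $\sqrt{R}$, and every $\mathcal{A}[c^{-1}]$-module is semi-simple. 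Since any representation of $\mathcal{A}$ that sends $\mathcal{Z}$ to scalars with induced character in $\mathcal{U}$ factors through $\mathcal{A}[c^{-1}]$, this yields statement (c). For (a), hypothesis (iii) makes $\mathcal{A}$ integral over $\mathcal{Z}$, so $\mathcal{A}/\mathfrak{m}\mathcal{A}$ is a nonzero finite-dimensional algebra for every $\mathfrak{m}\in\mathrm{Specm}(\mathcal{Z})$ by Nakayama; any simple quotient then provides an irreducible representation with central character $\mathfrak{m}$. For (b), Posner's theorem identifies the PI-degree of the prime PI-ring $\mathcal{A}$ with $\sqrt{R}$ (read off from the Azumaya $\mathcal{A}[c^{-1}]$), and Kaplansky's theorem bounds the dimension of every irreducible $\mathcal{A}$-module by the PI-degree.

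For part (2), I would verify the three properties defining ``affine almost-Azumaya'' and then pin down the rank. Finite generation (i) follows from the existence of a finite generating family of simple closed curves on the compact surface $\Sigma$. Primeness (ii) is the content of earlier work (e.g.\ Przytycki-Sikora in the connected case) and can be checked using the canonical multicurve basis of $\mathcal{S}_\omega(\mathbf{\Sigma})$. The core step is (iii): the Bonahon-Wong Chebyshev-Frobenius homomorphism produces an embedding $\mathcal{F}:\mathcal{S}_{-1}(\mathbf{\Sigma})\hookrightarrow\mathcal{S}_\omega(\mathbf{\Sigma})$ whose image lies in the center of $\mathcal{S}_\omega(\mathbf{\Sigma})$. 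Writing any multicurve as $\prod_i \gamma_i^{m_i}$ with $m_i=q_iN+r_i$ and using the Chebyshev recursion to extract the $q_i$-powers, one expresses it as an $\mathcal{F}(\mathcal{S}_{-1})$-linear combination of multicurves with all $m_i<N$; the finitely many such ``reduced'' multicurves therefore span $\mathcal{S}_\omega(\mathbf{\Sigma})$ over the finitely generated central subalgebra $\mathcal{F}(\mathcal{S}_{-1})$, a fortiori over the full center.

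The main obstacle is the rank computation, which I would handle via the quantum trace $\Tr_\omega:\mathcal{S}_\omega(\mathbf{\Sigma})\hookrightarrow\mathcal{Z}_\omega(\mathbf{\Sigma},\Delta)$. Since $\mathcal{Z}_\omega(\mathbf{\Sigma},\Delta)$ is Azumaya with simple modules of dimension $D$, its PI-degree equals $D$. The task is to transfer this to $\mathcal{S}_\omega$, which I would achieve by showing that after inverting enough central elements the inclusion $\Tr_\omega$ induces an equality of classical rings of fractions; equivalently, one must exhibit sufficiently many balanced monomials in the Chekhov-Fock shear coordinates as quantum traces of skein elements, so that the localized image spans the localized target. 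The PI-degrees of $\mathcal{S}_\omega(\mathbf{\Sigma})$ and $\mathcal{Z}_\omega(\mathbf{\Sigma},\Delta)$ then coincide and both equal $D$, whence the rank of $\mathcal{S}_\omega(\mathbf{\Sigma})$ over its center is $D^2$.
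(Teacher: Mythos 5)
This statement is quoted by the paper from \cite{FrohmanKaniaLe_UnicityRep, FrohmanKaniaLe_DimSkein} and is not proved there, so there is no in-paper argument to compare against; I can only assess your sketch on its own terms. Part (1) is essentially the correct standard argument: localizing at the central element $c$ provided by the almost-Azumaya property, identifying $\mathcal{U}=\{\chi(c)\neq 0\}$ (open and dense since $\mathcal{Z}$ is a domain and, by Artin--Tate, affine), using the fibers $\mathcal{A}[c^{-1}]/\mathfrak{m}\mathcal{A}[c^{-1}]\cong M_{\sqrt{R}}(\mathbb{C})$ for (c), Nakayama plus torsion-freeness of the prime ring $\mathcal{A}$ over $\mathcal{Z}$ for (a), and Posner--Kaplansky for (b). One small imprecision: ``every $\mathcal{A}[c^{-1}]$-module is semi-simple'' is false as stated (the regular module is not); what you need, and what your next sentence actually uses, is that every module on which $\mathcal{Z}$ acts by a character in $\mathcal{U}$ factors through a matrix-algebra fiber and is therefore semi-simple.

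Part (2) correctly names the three ingredients (Bullock's finite generation, primeness via the multicurve basis, and the Chebyshev--Frobenius argument of Abdiel--Frohman for finiteness over the center), but the rank computation --- the actual content of \cite{FrohmanKaniaLe_DimSkein} --- is reduced to the sentence ``exhibit sufficiently many balanced monomials \ldots so that the localized image spans the localized target,'' which is precisely the hard step and is not carried out. The known proof does not proceed by directly showing the localized quantum trace is surjective; it compares leading terms of skein elements and of balanced monomials with respect to a filtration by edge coordinates, i.e.\ exactly the kind of valuation argument that the present paper reconstructs for open surfaces in Sections 5 and 6 (Lemmas \ref{lemma_quasi_surjective}, \ref{lemma_rank1} and \ref{lemma_rank2}, where the two inequalities $R^0\geq R$ and $R\geq R^0$ are obtained by quite different mechanisms: linear independence detected by the strict valuation on one side, and decomposition of a quantum Teichm\"uller representation on the other). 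As it stands your sketch identifies the right strategy but leaves this step as a genuine gap. A cosmetic point: for odd $N$ the Chebyshev morphism in this paper lands in $\mathcal{S}_{+1}(\mathbf{\Sigma})$, not $\mathcal{S}_{-1}(\mathbf{\Sigma})$.
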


Theorem \ref{theorem_FKL} implies Bonahon and Wong's unicity representations conjecture.
\vspace{2mm}
\par Bonahon-Wong \cite{BonahonWongqTrace} and L\^e \cite{LeStatedSkein} generalized  the notion of Kauffman-bracket skein algebras to open punctured surfaces, where in addition to closed curves the algebras are generated by arcs whose endpoints are endowed with a sign $\pm$ (a state). The motivation for the introduction of these so-called \textit{stated skein algebras} is their good behavior for the operation of gluing two boundary arcs together. This property provides a deeper understanding of the quantum trace. The quantum trace $\Tr_{\omega} : \mathcal{S}_{\omega}(\mathbf{\Sigma}) \rightarrow \mathcal{Z}_{\omega}(\mathbf{\Sigma}, \Delta)$ is still defined for open punctured surfaces, though it is no longer injective. The kernel of the quantum trace was described by Costantino and L\^e in \cite{CostantinoLe19} and the quotient of the stated skein algebra by this kernel is called the \textit{reduced stated skein algebra} and denoted $\overline{\mathcal{S}}_{\omega}(\mathbf{\Sigma})$. The definition of quantum Teichm\"uller representations of the reduced stated skein algebras extends straightforwardly. 

\vspace{4mm}
\paragraph{\textbf{Main results}}
\vspace{2mm}
\par The purpose of this paper is to extend Theorem \ref{theorem_FKL} to open punctured surfaces, that is to prove the

\begin{theorem}\label{theorem1}
If $\omega$ is a root of unity of odd order $N>1$,  both the stated skein algebra $\mathcal{S}_{\omega}(\mathbf{\Sigma})$ and the reduced stated skein algebra $\overline{\mathcal{S}}_{\omega}(\mathbf{\Sigma})$ are affine almost-Azumaya. Moreover the PI-degree of $\overline{\mathcal{S}}_{\omega}(\mathbf{\Sigma})$  is equal to the PI-degree of  $\mathcal{Z}_{\omega}(\mathbf{\Sigma}, \Delta)$.
\end{theorem}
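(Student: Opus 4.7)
The plan is to verify the three conditions of affine almost-Azumaya (finite generation as an algebra, primeness, finite generation as a module over the center) for both $\mathcal{S}_\omega(\mathbf{\Sigma})$ and $\overline{\mathcal{S}}_\omega(\mathbf{\Sigma})$, and then to identify the rank of $\overline{\mathcal{S}}_\omega(\mathbf{\Sigma})$ over its center with $D^2$ by comparison with the balanced Chekhov--Fock algebra $\mathcal{Z}_\omega(\mathbf{\Sigma},\Delta)$. Finite generation as an algebra is essentially available from L\^e's presentation, which exhibits $\mathcal{S}_\omega(\mathbf{\Sigma})$ as generated by finitely many stated arcs and closed curves; the reduced version then inherits finite generation as a quotient.

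For primeness of the reduced algebra, I would use the injection $\overline{\Tr}_\omega : \overline{\mathcal{S}}_\omega(\mathbf{\Sigma}) \hookrightarrow \mathcal{Z}_\omega(\mathbf{\Sigma},\Delta)$ provided by Costantino--L\^e; since $\mathcal{Z}_\omega(\mathbf{\Sigma},\Delta)$ sits inside a quantum torus, $\overline{\mathcal{S}}_\omega(\mathbf{\Sigma})$ is a domain, hence prime. Primeness of the unreduced algebra is subtler because $\Tr_\omega$ has a non-trivial kernel for open $\mathbf{\Sigma}$. I would address this by a filtration/associated-graded argument: filter $\mathcal{S}_\omega(\mathbf{\Sigma})$ by a combinatorial weight on stated tangles chosen so that the generators of $\ker(\Tr_\omega)$ described by Costantino--L\^e are homogeneous and the associated graded algebra splits as a tensor product of $\overline{\mathcal{S}}_\omega(\mathbf{\Sigma})$ with a (Laurent) polynomial factor coming from the ``bad arcs''. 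Once the graded algebra is shown to be a domain, a standard filtered-to-graded argument gives primeness of $\mathcal{S}_\omega(\mathbf{\Sigma})$.

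For finite generation over the center I would invoke the Chebyshev--Frobenius morphism available at odd order $N$: an injective central morphism $\Phi : \mathcal{S}_{+1}(\mathbf{\Sigma}) \to \mathcal{S}_\omega(\mathbf{\Sigma})$ together with its reduced analogue. Since $\mathcal{S}_{+1}(\mathbf{\Sigma})$ is a finitely generated commutative algebra and each of the finitely many chosen generators $x$ of $\mathcal{S}_\omega(\mathbf{\Sigma})$ satisfies a monic polynomial equation of degree at most $N$ over $\Phi(\mathcal{S}_{+1})$, both algebras become module-finite over the central subalgebra $\Phi(\mathcal{S}_{+1})$, and \emph{a fortiori} over their full centers. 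To compute the rank of $\overline{\mathcal{S}}_\omega(\mathbf{\Sigma})$ I would observe that both $\overline{\mathcal{S}}_\omega(\mathbf{\Sigma})$ and $\mathcal{Z}_\omega(\mathbf{\Sigma},\Delta)$ are prime affine PI-algebras, and that after inverting suitable central elements $\overline{\Tr}_\omega$ becomes an isomorphism of skew fields of fractions; the two algebras therefore share the same PI-degree $D$, and since for an affine almost-Azumaya algebra the rank over the center equals the square of the PI-degree, this yields the rank $D^2$.

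The main obstacles I anticipate are (i) establishing primeness of $\mathcal{S}_\omega(\mathbf{\Sigma})$ despite the non-trivial $\ker(\Tr_\omega)$, which requires a careful use of the Costantino--L\^e description of this kernel in order to control the associated graded; and (ii) proving that $\overline{\Tr}_\omega$ becomes surjective up to denominators, that is, that every element of $\mathcal{Z}_\omega(\mathbf{\Sigma},\Delta)$ lies in the image of $\overline{\Tr}_\omega$ after a central localization. This surjectivity-up-to-denominators statement is the crucial ingredient identifying the PI-degree of $\overline{\mathcal{S}}_\omega(\mathbf{\Sigma})$ with that of $\mathcal{Z}_\omega(\mathbf{\Sigma},\Delta)$ and thus giving the claimed rank.
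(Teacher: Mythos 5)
Your overall architecture matches the paper's (verify the three affine almost-Azumaya conditions, then compare ranks with $\mathcal{Z}_{\omega}(\mathbf{\Sigma},\Delta)$), and your treatment of primeness of the reduced algebra is the paper's; for the unreduced algebra no filtration argument is needed, since the splitting morphism of Remark \ref{remark_prime} already embeds $\mathcal{S}_{\omega}(\mathbf{\Sigma})$ into a tensor product of triangle algebras, which are domains. Two of your steps, however, have genuine gaps.

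First, module-finiteness over the center. You assert that each generator satisfies a monic polynomial of degree at most $N$ over the image of the Chebyshev morphism. This is clear for a closed curve and for a stated arc joining two distinct boundary arcs, where $\alpha_{\varepsilon\varepsilon'}^{(N)}=(\alpha_{\varepsilon\varepsilon'})^N$; but for an arc with both endpoints on the same boundary arc the central element $\alpha_{\varepsilon\varepsilon'}^{(N)}$ is the class of $N$ parallel copies with a \emph{reordered} height sequence, which is not the $N$-th power of $\alpha_{\varepsilon\varepsilon'}$, and for mixed states the height-exchange relations relating the two produce extra terms, so no monic integral equation is apparent. This is exactly where the paper works hardest (Lemmas \ref{l3}--\ref{l5}, in particular the non-local factorization $[\mathcal{T}]=[\mathcal{T}']\alpha_{\varepsilon\varepsilon'}^{(N)}$ of Lemma \ref{l4}). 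Moreover, even granting that every generator is integral over a central subalgebra, a noncommutative algebra with finitely many integral generators need not be a finite module over that subalgebra; one also needs an ordered-monomial spanning statement such as Lemma \ref{l1}, which uses that the chosen geometric representatives are pairwise disjoint so that the generators reorder against each other via the height-exchange relations.

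Second, the rank. Your computation rests on the claim that $\overline{\Tr}_{\omega}$ becomes an isomorphism after central localization, which you correctly identify as the crucial ingredient but for which you offer no argument. This statement is only known a posteriori, as a consequence of $R=R^0$; any direct proof would seem to require at least the explicit description of the center of $\overline{\mathcal{S}}_{\omega}(\mathbf{\Sigma})$ (Theorem \ref{theorem_center}, the most technical part of the paper, which relies on a strict valuation extracted from the leading terms of the quantum trace). The paper avoids localized surjectivity altogether and instead proves two inequalities: $R^0\geq R$ by exhibiting a family of $R$ basis elements of $\overline{\mathcal{S}}_{\omega}(\mathbf{\Sigma})$ whose valuations represent the cosets of $K^0_{\Delta}$ in $K_{\Delta}$ and which is therefore free over $\mathcal{Z}^0$ (Lemma \ref{lemma_rank1}); and $R\geq R^0$ by restricting a generic irreducible $\mathcal{Z}_{\omega}(\mathbf{\Sigma},\Delta)$-module of dimension $\sqrt{R}$ along the quantum trace and using Theorem \ref{theorem_FKL} to see that it splits into irreducibles of dimension $\sqrt{R^0}$, so that $\sqrt{R^0}$ divides $\sqrt{R}$ (Lemma \ref{lemma_rank2}). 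You would need either to supply a proof of the localization statement or to replace it by an argument of this kind.
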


By combining Theorem \ref{theorem_FKL} with Theorem \ref{theorem1}, we obtain an extension of the unicity representations conjecture for open punctured surfaces. More precisely, denoting by $\mathcal{Z}$ the center of $\mathcal{Z}_{\omega}(\mathbf{\Sigma}, \Delta)$ and by $\mathcal{Z}^0$ the center of $\overline{\mathcal{S}}_{\omega}(\mathbf{\Sigma})$, one obtains the:

\begin{corollary}\label{corollary1}
\begin{enumerate}
\item There exists a Zariski open dense subset $\mathcal{V}\subset \mathrm{Specm}(\mathcal{Z})$ such that any simple $\mathcal{Z}_{\omega}(\mathbf{\Sigma}, \Delta)$-module, whose induced character lies in $\mathcal{V}$, induces a simple $\overline{\mathcal{S}}_{\omega}(\mathbf{\Sigma})$ module.
\item There exists a Zariski open dense subset $\mathcal{U}\subset \mathrm{Specm}(\mathcal{Z}^0)$ such that any simple $\overline{\mathcal{S}}_{\omega}(\mathbf{\Sigma})$-module, whose induced character lies in $\mathcal{U}$, is isomorphic to a quantum Teichm\"uller representation.
\end{enumerate}
\end{corollary}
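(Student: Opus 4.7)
The plan is to reduce Corollary \ref{corollary1} to Theorem \ref{theorem_FKL}(1) applied to the reduced stated skein algebra -- which, by Theorem \ref{theorem1}, is affine almost-Azumaya of rank $D^2$ -- and then transfer characters via the quantum trace. Applying Theorem \ref{theorem_FKL} produces an open dense $\mathcal{U}\subset\mathrm{Specm}(\mathcal{Z}^0)$ such that every simple $\overline{\mathcal{S}}_\omega(\mathbf{\Sigma})$-module with central character in $\mathcal{U}$ has dimension $D$ and is unique up to isomorphism, and such that any representation on which $\mathcal{Z}^0$ acts through scalars with character in $\mathcal{U}$ is semi-simple.

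The bridge between the two corollary statements is a birational correspondence between $\mathrm{Specm}(\mathcal{Z})$ and $\mathrm{Specm}(\mathcal{Z}^0)$. Since both $\overline{\mathcal{S}}_\omega(\mathbf{\Sigma})$ and $\mathcal{Z}_\omega(\mathbf{\Sigma},\Delta)$ are prime PI-algebras of PI-degree $D$ (Theorem \ref{theorem1} together with the Azumaya property of $\mathcal{Z}_\omega(\mathbf{\Sigma},\Delta)$), and since $\Tr_\omega$ becomes an isomorphism after inverting a suitable central element $c\in\mathcal{Z}^0$ -- a fact tied to the kernel description of \cite{CostantinoLe19} and likely surfacing inside the proof of Theorem \ref{theorem1} -- one obtains a ring isomorphism $\mathcal{Z}^0[c^{-1}]\cong\mathcal{Z}[\Tr_\omega(c)^{-1}]$ and, on maximal spectra, an isomorphism $\pi$ between matching open dense subsets of $\mathrm{Specm}(\mathcal{Z}^0)$ and $\mathrm{Specm}(\mathcal{Z})$.

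With $\pi$ in hand, part (1) follows by taking $\mathcal{V}:=\pi(\mathcal{U}\cap\mathrm{dom}(\pi))$, open dense in $\mathrm{Specm}(\mathcal{Z})$: a simple $\mathcal{Z}_\omega(\mathbf{\Sigma},\Delta)$-module $V_0$ with character in $\mathcal{V}$ has dimension $D$ by the Azumaya property, and its pullback along $\Tr_\omega$ is a $\overline{\mathcal{S}}_\omega(\mathbf{\Sigma})$-module of dimension $D$ whose central character lies in $\mathcal{U}$, hence semi-simple with all simple summands of dimension $D$, hence itself simple. For part (2) I would shrink $\mathcal{U}$ so that it sits in the domain of $\pi$; given a simple $\overline{\mathcal{S}}_\omega(\mathbf{\Sigma})$-module $V$ with central character $\chi\in\mathcal{U}$, the pullback via $\Tr_\omega$ of the simple $\mathcal{Z}_\omega(\mathbf{\Sigma},\Delta)$-module associated to $\pi(\chi)$ is a simple quantum Teichm\"uller representation sharing the central character $\chi$, so the uniqueness clause in Theorem \ref{theorem_FKL}(1)(c) identifies it with $V$.

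The main obstacle is the identification of the birational correspondence $\pi$, that is, an explicit central element $c\in\mathcal{Z}^0$ whose inversion turns $\Tr_\omega$ into an isomorphism. The rank assertion in Theorem \ref{theorem1} -- matching the PI-degrees of the two algebras -- is what makes such a $c$ exist; once it is exhibited, the remaining steps are formal manipulations of spectra and a direct appeal to Theorem \ref{theorem_FKL}.
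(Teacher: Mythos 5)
Your overall strategy is the paper's: reduce to Theorem \ref{theorem_FKL}(1) for $\overline{\mathcal{S}}_{\omega}(\mathbf{\Sigma})$, use the rank equality $R=R^0$ from Theorem \ref{theorem1} to match dimensions, and transfer central characters through the quantum trace. But the bridge you build between $\mathrm{Specm}(\mathcal{Z})$ and $\mathrm{Specm}(\mathcal{Z}^0)$ is wrong. You claim that $\Tr_{\omega}$ becomes an isomorphism after inverting a central element $c\in\mathcal{Z}^0$, hence that $\mathcal{Z}^0$ and $\mathcal{Z}$ become isomorphic after localization and the spectra are birationally identified by some $\pi$. This fails whenever $\mathbf{\Sigma}$ has an inner puncture $p$: the element $H_p$ is central in $\mathcal{Z}_{\omega}(\mathbf{\Sigma},\Delta)$ (Proposition \ref{prop_center_CF}), but the quantum trace only produces $\Tr_{\omega}^{\Delta}(\gamma_p)=H_p+H_p^{-1}$ and $H_p^{\pm N}$ from $\mathcal{Z}^0$ (Lemma \ref{lemma_central} and Theorem \ref{theorem_center}). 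Since $x\mapsto x+x^{-1}$ does not separate $x$ from $x^{-1}$ and $H_p^N+H_p^{-N}=T_N(H_p+H_p^{-1})$ adds nothing, the field extension $\mathrm{Frac}(\Tr_{\omega}^{\Delta}(\mathcal{Z}^0))\subset\mathrm{Frac}(\mathcal{Z})$ has degree $2^{|\mathcal{P}\cap\mathring{\Sigma}|}$, so no central localization makes $\Tr_{\omega}$ surjective and no birational $\pi$ exists. The rank equality $R=R^0$ concerns the ranks of the algebras over their respective centers, not the degree of $\mathcal{Z}$ over $\mathcal{Z}^0$, so it does not produce the element $c$ you are hoping for.

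The fix, which is what the paper does, is to observe that birationality is never needed: only the weaker statement that the embedding $\mathcal{Z}^0\hookrightarrow\mathcal{Z}$ induces a \emph{dominant} morphism $\psi:\mathrm{Specm}(\mathcal{Z})\to\mathrm{Specm}(\mathcal{Z}^0)$, whose image therefore contains a dense open subset $\mathcal{O}$. Taking $\mathcal{U}:=\mathcal{O}\cap\mathcal{O}'$ (where $\mathcal{O}'$ is the open set supplied by Theorem \ref{theorem_FKL}) and $\mathcal{V}:=\psi^{-1}(\mathcal{U})$, part (2) only requires that every $\chi\in\mathcal{U}$ admit \emph{some} lift $\rho$ with $\psi(\rho)=\chi$ (uniqueness of the lift is irrelevant, since the unicity clause of Theorem \ref{theorem_FKL}(1)(c) is applied downstairs in $\mathrm{Specm}(\mathcal{Z}^0)$), and part (1) only requires that $\psi^{-1}(\mathcal{U})$ be open and dense, which holds because it is a nonempty open subset of the irreducible variety $\mathrm{Specm}(\mathcal{Z})$. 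With that substitution, the rest of your argument (semi-simplicity plus equality of the dimensions $\sqrt{R}=\sqrt{R^0}$ forcing simplicity of the pullback, and the unicity clause identifying the given simple module with the quantum Teichm\"uller one) is correct and coincides with the paper's proof.
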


The dimension of quantum Teichm\"uller representations, hence the dimension of generic irreducible $\overline{\mathcal{S}}_{\omega}(\mathbf{\Sigma})$ representation, were computed in \cite{BonahonWong2} when $\mathbf{\Sigma}$ is closed and \cite{KojuQuesneyQNonAb} when $\mathbf{\Sigma}$ is open and is equal to $N^{3g-3+s+n_{\partial}}$, where $N$ is the order of the root of unity $\omega$, $g$ is the genus of $\Sigma$ (assumed to be connected here), $s=|\mathcal{P}|$ is the number of punctures and $n_{\partial}$ is the number of boundary components of $\Sigma$.

\vspace{2mm}
\paragraph{\textbf{Plan of the paper}}
In the second section we briefly recall the definitions and basic properties of the (reduced) stated skein algebras, balanced Chekhov-Fock algebras and the quantum trace. In the third section we prove that both the stated skein algebras and their reduced versions are finitely generated as algebras whereas in the fourth section we prove that they are finitely generated as modules over their centers. The fifth section is devoted to the construction of  suitable valuations on the reduced stated skein algebras derived from the quantum trace. This is the most technical and original part of the paper and can probably have applications beyond the scope of this paper.  In the last section we eventually use these valuations to characterise the center of a reduced stated algebra and compute its rank over its center.

\vspace{2mm}
\paragraph{\textbf{Acknowledgments.}} The author thanks  F.Costantino, L.Funar , A.Quesney for useful discussions and T.L\^e for clarifying an important issue concerning the Chebyshev morphism. He also thanks the anonymous referee for valuable suggestions that improved the clarity of the paper. He acknowledges  support from the JSPS, the grant ANR  ModGroup, the GDR Tresses, the GDR Platon and the GEAR Network.

\section{Skein algebras and the quantum trace}
\subsection{Stated skein algebras and their reduced version}

\begin{definition}
A \textit{punctured surface} is a pair $\mathbf{\Sigma}=(\Sigma,\mathcal{P})$ where $\Sigma$ is a compact oriented surface and $\mathcal{P}$ is a finite subset of $\Sigma$ which intersects non-trivially each boundary component. A \textit{boundary arc} is a connected component of $\partial \Sigma \setminus \mathcal{P}$. We write $\Sigma_{\mathcal{P}}:= \Sigma \setminus \mathcal{P}$.
\end{definition}

\textbf{Definition of stated skein algebras}
\\ Before stating precisely the definition of stated skein algebras, let us sketch it informally. Given a punctured surface $\mathbf{\Sigma}$, the stated skein algebra $\mathcal{S}_{\omega}(\mathbf{\Sigma})$ is the quotient of the $\mathbb{C}$-vector space spanned by isotopy classes of stated tangles in $\Sigma_{\mathcal{P}}\times (0,1)$  by some local skein relations. The left part of Figure \ref{fig_statedtangle} illustrates such a stated tangle: each point of $\partial T \subset \partial \Sigma_{\mathcal{P}}$ is equipped with a sign $+$ or $-$ (the state). Here the stated tangle is the union of three stated arcs and one closed curve. In order to work with two-dimensional pictures, we will consider the projection of tangles in $\Sigma_{\mathcal{P}}$ as in the right part of Figure \ref{fig_statedtangle}; such a projection will be referred to as a diagram.

\begin{figure}[!h] 
\centerline{\includegraphics[width=6cm]{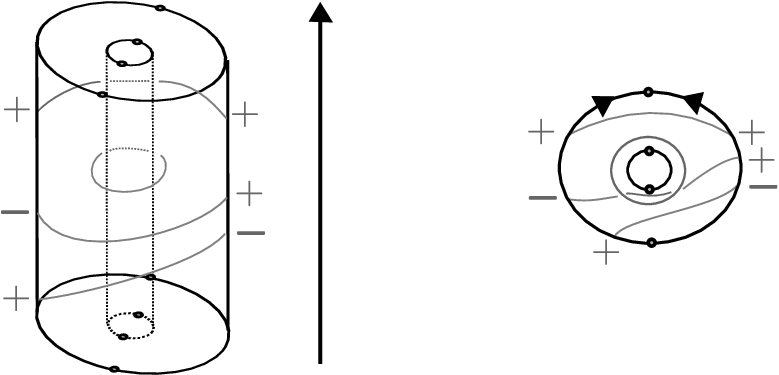} }
\caption{On the left: a stated tangle. On the right: its associated diagram. The arrows represent the height orders. } 
\label{fig_statedtangle} 
\end{figure} 

\par   A \textit{tangle} in $ \Sigma_{\mathcal{P}}\times (0,1)$   is a  compact framed, properly embedded $1$-dimensional manifold $T\subset \Sigma_{\mathcal{P}}\times (0,1)$ such that for every point of $\partial T \subset \partial \Sigma_{\mathcal{P}}\times (0,1)$ the framing is parallel to the $(0,1)$ factor and points to the direction of $1$.  Here, by framing, we refer to a section of the unitary normal bundle of $T$. The \textit{height} of $(v,h)\in \Sigma_{\mathcal{P}}\times (0,1)$ is $h$.  If $b$ is a boundary arc and $T$ a tangle, we impose that no two points in $\partial_bT:= \partial T \cap b\times(0,1)$  have the same heights, hence the set $\partial_bT$ is totally ordered by the heights. Two tangles are isotopic if they are isotopic through the class of tangles that preserve the boundary height orders. By convention, the empty set is a tangle only isotopic to itself.
 
\vspace{2mm}
\par Let $\pi : \Sigma_{\mathcal{P}}\times (0,1)\rightarrow \Sigma_{\mathcal{P}}$ be the projection with $\pi(v,h)=v$. A tangle $T$ is in \textit{generic position} if for each of its points, the framing is parallel to the $(0,1)$ factor and points in the direction of $1$ and is such that $\pi_{\big| T} : T\rightarrow \Sigma_{\mathcal{P}}$ is an immersion with at most transversal double points in the interior of $\Sigma_{\mathcal{P}}$. Every tangle is isotopic to a tangle in generic position. We call \textit{diagram}  the image $D=\pi(T)$ of a tangle in generic position, together with the over/undercrossing information at each double point. An isotopy class of diagram $D$ together with a total order of $\partial_b D:=\partial D\cap b$ for each boundary arc $b$, define uniquely an isotopy class of tangle. When choosing an orientation $\mathfrak{o}(b)$ of a boundary arc $b$ and a diagram $D$, the set $\partial_bD$ receives a natural order by setting that the points are increasing when going in the direction of $\mathfrak{o}(b)$. We will represent tangles by drawing a diagram and an orientation (an arrow) for each boundary arc, as in Figure \ref{fig_statedtangle}. When a boundary arc $b$ is oriented we assume that $\partial_b D$ is ordered according to the orientation. A \textit{state} of a tangle is a map $s:\partial T \rightarrow \{-, +\}$. A pair $(T,s)$ is called a \textit{stated tangle}. We define a \textit{stated diagram} $(D,s)$ in a similar manner.

 \vspace{2mm}
\par  Let  $\omega\in \mathbb{C}^*$ a non-zero complex number and write $A:=\omega^{-2}$.

\begin{definition}\label{def_stated_skein}\cite{LeStatedSkein} 
  The \textit{stated skein algebra}  $\mathcal{S}_{\omega}(\mathbf{\Sigma})$ is the  free $\mathbb{C}$-module generated by isotopy classes of stated tangles in $\Sigma_{\mathcal{P}}\times (0, 1)$ modulo the following relations \eqref{eq: skein 1} and \eqref{eq: skein 2}, 
  	\begin{equation}\label{eq: skein 1} 
\begin{tikzpicture}[baseline=-0.4ex,scale=0.5,>=stealth]	
\draw [fill=gray!45,gray!45] (-.6,-.6)  rectangle (.6,.6)   ;
\draw[line width=1.2,-] (-0.4,-0.52) -- (.4,.53);
\draw[line width=1.2,-] (0.4,-0.52) -- (0.1,-0.12);
\draw[line width=1.2,-] (-0.1,0.12) -- (-.4,.53);
\end{tikzpicture}
=A
\begin{tikzpicture}[baseline=-0.4ex,scale=0.5,>=stealth] 
\draw [fill=gray!45,gray!45] (-.6,-.6)  rectangle (.6,.6)   ;
\draw[line width=1.2] (-0.4,-0.52) ..controls +(.3,.5).. (-.4,.53);
\draw[line width=1.2] (0.4,-0.52) ..controls +(-.3,.5).. (.4,.53);
\end{tikzpicture}
+A^{-1}
\begin{tikzpicture}[baseline=-0.4ex,scale=0.5,rotate=90]	
\draw [fill=gray!45,gray!45] (-.6,-.6)  rectangle (.6,.6)   ;
\draw[line width=1.2] (-0.4,-0.52) ..controls +(.3,.5).. (-.4,.53);
\draw[line width=1.2] (0.4,-0.52) ..controls +(-.3,.5).. (.4,.53);
\end{tikzpicture}
\hspace{.5cm}
\text{ and }\hspace{.5cm}
\begin{tikzpicture}[baseline=-0.4ex,scale=0.5,rotate=90] 
\draw [fill=gray!45,gray!45] (-.6,-.6)  rectangle (.6,.6)   ;
\draw[line width=1.2,black] (0,0)  circle (.4)   ;
\end{tikzpicture}
= -(A^2+A^{-2}) 
\begin{tikzpicture}[baseline=-0.4ex,scale=0.5,rotate=90] 
\draw [fill=gray!45,gray!45] (-.6,-.6)  rectangle (.6,.6)   ;
\end{tikzpicture}
;
\end{equation}

\begin{equation}\label{eq: skein 2} 
\begin{tikzpicture}[baseline=-0.4ex,scale=0.5,>=stealth]
\draw [fill=gray!45,gray!45] (-.7,-.75)  rectangle (.4,.75)   ;
\draw[->] (0.4,-0.75) to (.4,.75);
\draw[line width=1.2] (0.4,-0.3) to (0,-.3);
\draw[line width=1.2] (0.4,0.3) to (0,.3);
\draw[line width=1.1] (0,0) ++(90:.3) arc (90:270:.3);
\draw (0.65,0.3) node {\scriptsize{$+$}}; 
\draw (0.65,-0.3) node {\scriptsize{$+$}}; 
\end{tikzpicture}
=
\begin{tikzpicture}[baseline=-0.4ex,scale=0.5,>=stealth]
\draw [fill=gray!45,gray!45] (-.7,-.75)  rectangle (.4,.75)   ;
\draw[->] (0.4,-0.75) to (.4,.75);
\draw[line width=1.2] (0.4,-0.3) to (0,-.3);
\draw[line width=1.2] (0.4,0.3) to (0,.3);
\draw[line width=1.1] (0,0) ++(90:.3) arc (90:270:.3);
\draw (0.65,0.3) node {\scriptsize{$-$}}; 
\draw (0.65,-0.3) node {\scriptsize{$-$}}; 
\end{tikzpicture}
=0,
\hspace{.2cm}
\begin{tikzpicture}[baseline=-0.4ex,scale=0.5,>=stealth]
\draw [fill=gray!45,gray!45] (-.7,-.75)  rectangle (.4,.75)   ;
\draw[->] (0.4,-0.75) to (.4,.75);
\draw[line width=1.2] (0.4,-0.3) to (0,-.3);
\draw[line width=1.2] (0.4,0.3) to (0,.3);
\draw[line width=1.1] (0,0) ++(90:.3) arc (90:270:.3);
\draw (0.65,0.3) node {\scriptsize{$+$}}; 
\draw (0.65,-0.3) node {\scriptsize{$-$}}; 
\end{tikzpicture}
=\omega
\begin{tikzpicture}[baseline=-0.4ex,scale=0.5,>=stealth]
\draw [fill=gray!45,gray!45] (-.7,-.75)  rectangle (.4,.75)   ;
\draw[-] (0.4,-0.75) to (.4,.75);
\end{tikzpicture}
\hspace{.1cm} \text{ and }
\hspace{.1cm}
\omega^{-1}
\heightexch{->}{-}{+}
- \omega^{-5}
\heightexch{->}{+}{-}
=
\heightcurve.
\end{equation}
The product of two classes of stated tangles $[T_1,s_1]$ and $[T_2,s_2]$ is defined by  isotoping $T_1$ and $T_2$  in $\Sigma_{\mathcal{P}}\times (1/2, 1) $ and $\Sigma_{\mathcal{P}}\times (0, 1/2)$ respectively and then setting $[T_1,s_1]\cdot [T_2,s_2]=[T_1\cup T_2, s_1\cup s_2]$. Figure \ref{fig_product} illustrates this product.
\end{definition}
\par For a closed punctured surface, $\mathcal{S}_{\omega}(\mathbf{\Sigma})$ coincides with the classical (Turaev's) Kauffman-bracket skein algebra.

\begin{figure}[!h] 
\centerline{\includegraphics[width=8cm]{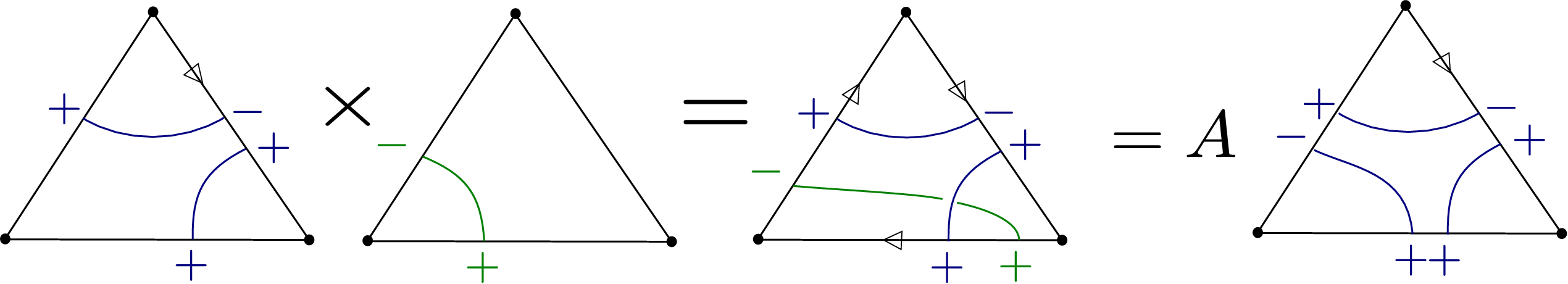} }
\caption{An illustration of the product in stated skein algebras.} 
\label{fig_product} 
\end{figure} 

\textbf{The reduced stated skein algebras}
\\ 
\par A connected diagram is called a \textit{closed curve} if it is closed and an \textit{arc} if it is open. For $p \in \mathcal{P} \cap \partial \Sigma$, a \textit{corner arc} at $p$ is an arc 
$\alpha_p$ such that there exists an arc $\alpha' \subset \partial \Sigma$ with $\partial \alpha_p=\partial \alpha'$, and such that $\alpha_p\cup \alpha'$ bounds a disc in $\Sigma$ whose only intersection with $\mathcal{P}$ is $\{p\}$ (see Figure \ref{fig_bad_arc}). A stated corner arc is called a \textit{bad arc} if the state are $-$ followed by $+$ while we go along the arc counterclockwise around the puncture $p$ as in Figure \ref{fig_bad_arc}.

\begin{figure}[!h] 
\centerline{\includegraphics[width=4cm]{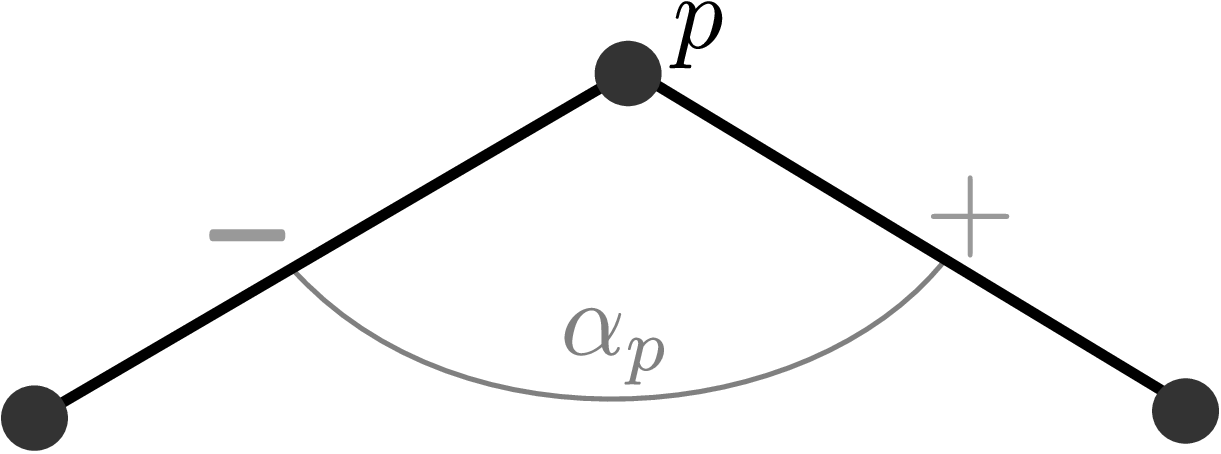} }
\caption{A bad arc.} 
\label{fig_bad_arc} 
\end{figure} 

\begin{definition}\label{def_reduced_skein}\cite{CostantinoLe19} The \textit{reduced stated skein algebra} $\overline{\mathcal{S}}_{\omega}(\mathbf{\Sigma})$ is the quotient of the stated skein algebra $\mathcal{S}_{\omega}(\mathbf{\Sigma})$ by the ideal generated by bad arcs.
\end{definition}

\textbf{Bases for stated skein algebras}
\\ A closed component of a diagram $D$ is trivial if it bounds an embedded disc in $\Sigma_{\mathcal{P}}$. An open component of $D$ is trivial if it can be isotoped, relatively to its boundary, inside some boundary arc. A diagram is \textit{simple} if it has neither double point nor trivial component. By convention, the empty set is a simple diagram. Let $\mathfrak{o}^+$ denote the orientation of the boundary arcs induced by the orientation of $\Sigma$ and for each boundary arc $b$ we write $<_{\mathfrak{o}^+}$ the induced total order on $\partial_b D$. A state $s: \partial D \rightarrow \{ - , + \}$ is $\mathfrak{o}^+-$\textit{increasing} if for any boundary arc $b$ and any two points $x,y \in \partial_b D$, then $x<_{\mathfrak{o}^+} y$ implies $s(x)< s(y)$, with the convention $- < +$. 

\begin{definition}\label{def_basis}
\begin{enumerate}
\item We denote by $\mathcal{B}\subset \mathcal{S}_{\omega}(\mathbf{\Sigma})$ the set of classes of stated diagrams $(D,s)$ such that $D$ is simple and $s$ is $\mathfrak{o}^+$-increasing. 
\item We denote by $\overline{\mathcal{B}} \subset \overline{\mathcal{S}}_{\omega}(\mathbf{\Sigma})$ the set of classes of stated diagrams $(D,s)\in \mathcal{B}$ which do not contain any bad arc.
\end{enumerate}
\end{definition}
By \cite[Theorem $2.11$]{LeStatedSkein}  the set $\mathcal{B}$ is a  basis of $\mathcal{S}_{\omega}(\mathbf{\Sigma})$ and by \cite[Theorem $7.1$]{CostantinoLe19} the set $\overline{\mathcal{B}}$ is a basis of $\overline{\mathcal{S}}_{\omega}(\mathbf{\Sigma})$.
\vspace{2mm}
\par 
\textbf{Gluing maps}
\\ Let $a$, $b$ be two distinct boundary arcs of $\mathbf{\Sigma}$ and let $\mathbf{\Sigma}_{|a\#b}$ be the punctured surface obtained from $\mathbf{\Sigma}$ by gluing $a$ and $b$. Denote by $\pi : \Sigma_{\mathcal{P}} \rightarrow (\Sigma_{|a\#b})_{\mathcal{P}_{|a\#b}}$ the projection and $c:=\pi(a)=\pi(b)$. Let $(T_0, s_0)$ be a stated framed tangle of ${\Sigma_{|a\#b}}_{\mathcal{P}_{|a\#b}} \times (0,1)$ transversed to $c\times (0,1)$ and such that the heights of the points of $T_0 \cap c\times (0,1)$ are pairwise distinct and the framing of the points of $T_0 \cap c\times (0,1)$ is vertical. Let $T\subset \Sigma_{\mathcal{P}}\times (0,1)$ be the framed tangle obtained by cutting $T_0$ along $c$. 
Any two states $s_a : \partial_a T \rightarrow \{-,+\}$ and $s_b : \partial_b T \rightarrow \{-,+\}$ give rise to a state $(s_a, s, s_b)$ on $T$. 
Both the sets $\partial_a T$ and $\partial_b T$ are in canonical bijection with the set $T_0\cap c$ by the map $\pi$. Hence the two sets of states $s_a$ and $s_b$ are both in canonical bijection with the set $\mathrm{St}(c):=\{ s: c\cap T_0 \rightarrow \{-,+\} \}$. 

\begin{definition}\label{def_gluing_map}
Let $i_{|a\#b}: \mathcal{S}_{\omega}(\mathbf{\Sigma}_{|a\#b}) \rightarrow \mathcal{S}_{\omega}(\mathbf{\Sigma})$ be the linear map given, for any $(T_0, s_0)$ as above, by: 
$$ i_{|a\#b} \left( [T_0,s_0] \right) := \sum_{s \in \mathrm{St}(c)} [T, (s, s_0 , s) ].$$
\end{definition}

\begin{figure}[!h] 
\centerline{\includegraphics[width=8cm]{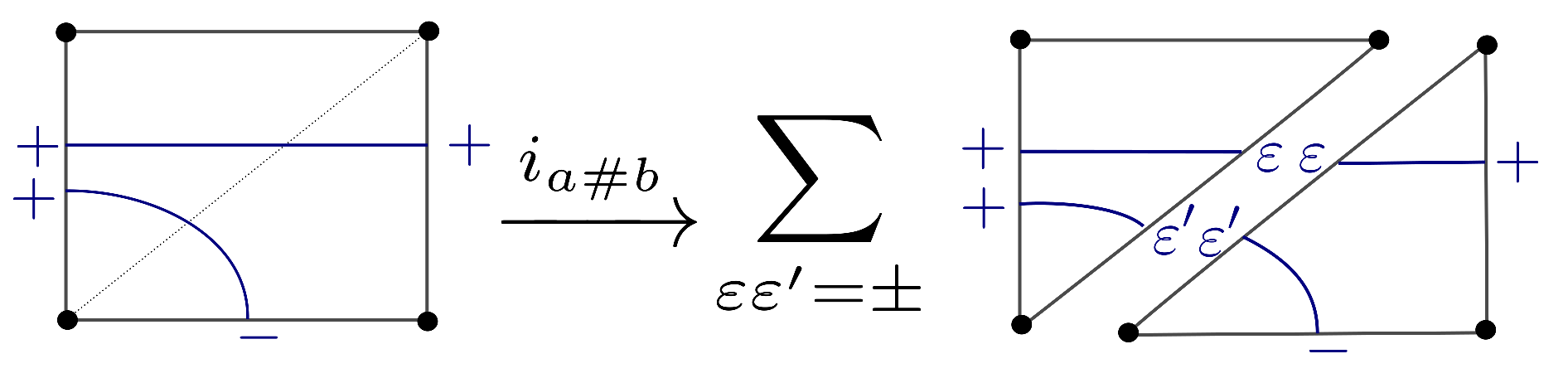} }
\caption{An illustration of the gluing map $i_{a\#b}$.} 
\label{fig_gluingmap} 
\end{figure} 

\begin{theorem}\label{theorem_gluing}\cite[Theorem $3.1$]{LeStatedSkein}, \cite[Theorem $7.6$]{CostantinoLe19} 
\begin{enumerate}
\item The linear map $i_{|a\#b}: \mathcal{S}_{\omega}(\mathbf{\Sigma}_{|a\#b}) \rightarrow \mathcal{S}_{\omega}(\mathbf{\Sigma})$ is an injective morphism of algebras. Moreover the gluing operation is coassociative in the sense that if $a,b,c,d$ are four distinct boundary arcs, then we have $i_{|a\#b} \circ i_{|c\#d} = i_{|c\#d} \circ i_{|a\#b}$.
\item The morphism $i_{|a\#b}$ induces, by passing to the quotient, an injective algebra morphism (still denoted by the same letter) $i_{|a\#b}: \overline{\mathcal{S}}_{\omega}(\mathbf{\Sigma}_{|a\#b}) \rightarrow \overline{\mathcal{S}}_{\omega}(\mathbf{\Sigma})$.
\end{enumerate}
\end{theorem}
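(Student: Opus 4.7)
The plan is to verify the statement in four stages: (i) $i_{|a\#b}$ respects the product, (ii) coassociativity, (iii) injectivity in the stated skein case, and (iv) descent and injectivity for the reduced version.

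For (i), I would argue directly from Definition \ref{def_gluing_map}. Given two stated tangles $(T_0^1,s_0^1)$ and $(T_0^2,s_0^2)$ in $\mathbf{\Sigma}_{|a\#b}$, place them in $\Sigma_{\mathcal{P}_{|a\#b}}\times(1/2,1)$ and $\Sigma_{\mathcal{P}_{|a\#b}}\times(0,1/2)$ respectively. After cutting along $c\times(0,1)$, the intersection points in $c\cap T_0=(c\cap T_0^1)\sqcup(c\cap T_0^2)$ with all points of $T_0^1$ above those of $T_0^2$. Consequently $\mathrm{St}(c\cap T_0)=\mathrm{St}(c\cap T_0^1)\times \mathrm{St}(c\cap T_0^2)$, and the summation in $i_{|a\#b}$ factorizes. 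On the $\mathbf{\Sigma}$ side, the states inserted on $a$ from $T^1$ sit above those from $T^2$ (and symmetrically on $b$), so the height ordering in the product is compatible with the ordering coming from the sum, yielding $i_{|a\#b}(x\cdot y)=i_{|a\#b}(x)\cdot i_{|a\#b}(y)$. Preservation of the unit (the empty tangle) is obvious.

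For (ii), the arcs $a,b,c,d$ being pairwise distinct means that the gluing arcs in $\Sigma_{|a\#b}$ and $\Sigma_{|c\#d}$ are disjoint. Hence the index sets $\mathrm{St}(c_{ab})$ and $\mathrm{St}(c_{cd})$ over which one sums are independent, and Fubini for finite sums gives $i_{|a\#b}\circ i_{|c\#d}=i_{|c\#d}\circ i_{|a\#b}$.

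For (iii), I would use the basis $\mathcal{B}$ of Definition \ref{def_basis}. Let $x=\sum_i \lambda_i [D_i,s_i]\in\mathcal{S}_{\omega}(\mathbf{\Sigma}_{|a\#b})$ be in the kernel, with $(D_i,s_i)\in\mathcal{B}$. Choose the $D_i$ to be transverse to $c$ with pairwise distinct heights on $c$, and let $k_i:=|D_i\cap c|$. After cutting, each $[D_i,s_i]$ produces a diagram $\widetilde{D}_i$ in $\Sigma_{\mathcal{P}}$ with $k_i$ new endpoints on $a$ and $k_i$ new endpoints on $b$; the summand in $i_{|a\#b}([D_i,s_i])$ indexed by $s\in\mathrm{St}(c\cap D_i)$ assigns state $s$ to the endpoints on $a$ and $s$ to the endpoints on $b$ (in the canonical bijections induced by $\pi$). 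After straightening $\widetilde{D}_i$ to a simple diagram and reordering heights, the cut diagrams remain linearly independent in the basis $\mathcal{B}$ of $\mathcal{S}_{\omega}(\mathbf{\Sigma})$, up to lower-order corrections controlled by the height-exchange relation. The key point is that the highest-order term (in the natural lexicographic filtration on boundary states) picks out a unique contribution for each $(D_i,s_i)$, forcing $\lambda_i=0$.

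For (iv), I would first show that $i_{|a\#b}$ maps the ideal of bad arcs of $\mathbf{\Sigma}_{|a\#b}$ into the ideal of bad arcs of $\mathbf{\Sigma}$. A bad arc $\alpha$ at a puncture $p$ of $\mathbf{\Sigma}_{|a\#b}$ either avoids $c$ (in which case its preimage is again a bad arc in $\Sigma_{\mathcal{P}}$) or crosses $c$. In the latter case, a direct skein manipulation expresses $i_{|a\#b}([\alpha])$ as a linear combination of products involving a bad arc in $\mathbf{\Sigma}$, using the skein relations \eqref{eq: skein 2} to resolve the sum over states on $c$. This shows $i_{|a\#b}$ descends. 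Injectivity of the descended map is then obtained by running the argument of step (iii) in the basis $\overline{\mathcal{B}}$ of Definition \ref{def_basis}, using that $\overline{\mathcal{B}}$ is indeed a basis (cited from \cite{CostantinoLe19}).

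The main obstacle will be step (iv): controlling the image of a crossing bad arc under $i_{|a\#b}$ requires a careful local computation, since the sum $\sum_s$ mixes the two states on the two preimage arcs and the pattern of signs $(-,+)$ defining badness interacts nontrivially with the skein relation in \eqref{eq: skein 2}. Step (iii) is in comparison routine once the right filtration on boundary states has been set up.
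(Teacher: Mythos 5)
This theorem is not proved in the paper at all: it is imported verbatim from \cite[Theorem 3.1]{LeStatedSkein} (part (1)) and \cite[Theorem 7.6]{CostantinoLe19} (part (2)), so your proposal has to be measured against those external proofs. Your outline reproduces their architecture faithfully: multiplicativity from stacking the tangles and factorizing $\mathrm{St}(c)$, coassociativity from disjointness of the gluing arcs, injectivity from a triangularity argument with respect to the bases $\mathcal{B}$ and $\overline{\mathcal{B}}$, and descent by showing that bad arcs are sent into the bad-arc ideal. Steps (i) and (ii) are complete as written.

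The other two steps are still sketches, and step (iii) hides a subtlety you have not engaged with. Under the canonical bijections of $\partial_a T$ and $\partial_b T$ with $c\cap T_0$, the orientations $\mathfrak{o}^+$ of $a$ and of $b$ are identified with \emph{opposite} orientations of $c$, so a state $s\in\mathrm{St}(c)$ that is $\mathfrak{o}^+$-increasing on $a$ is $\mathfrak{o}^+$-decreasing on $b$. Consequently no single summand of $i_{|a\#b}([D_i,s_i])$ lies in $\mathcal{B}$ in general (unless $s$ is constant on $c\cap D_i$), and "the highest-order term picks out a unique contribution" is precisely the assertion that needs proof: you must specify the filtration (in L\^e's argument, essentially by the number of endpoints on $a\cup b$, using that the correction term in the cutting/height-exchange relations caps off two endpoints and so strictly drops the filtration degree), show that for each $(D_i,s_i)$ exactly one summand survives at the top level with an invertible coefficient, and show that distinct $(D_i,s_i)$ produce distinct leading basis elements. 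In step (iv) you correctly isolate the crux but do not carry it out: a bad arc at a puncture $p$ that is an endpoint of $c$ necessarily crosses $c$, its image is a sum of two terms indexed by the state $\varepsilon$ at the cut point, each term being a union of two corner arcs at the two copies of $p$, and the computation to be done is that for $\varepsilon=+$ one of the halves is a bad arc while for $\varepsilon=-$ the other one is, so that both summands lie in the bad-arc ideal. Finally, "running (iii) in $\overline{\mathcal{B}}$" is not automatic either: you need the extra check that the leading term of $i_{|a\#b}([D,s])$ for a bad-arc-free $(D,s)$ is itself bad-arc-free, which is part of what \cite[Theorem 7.6]{CostantinoLe19} actually establishes. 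As it stands, the proposal is a correct plan whose two load-bearing steps remain open.
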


\textbf{Triangulations}

\begin{definition}\label{def_triangulation}
\begin{itemize}
\item The \textit{triangle} $\mathbb{T}$ is a disc with three punctures on its boundary.
\item A \emph{small} punctured surface is one of the following four connected punctured surfaces: the sphere with one or two punctures; the disc with one or two punctures on its boundary.   
\item A punctured surface is said to \emph{admit a triangulation} if each of its connected components has at least one puncture and is not small. 
\item Suppose $\mathbf{\Sigma}=(\Sigma, \mathcal{P})$ admits a triangulation.  
		A \textit{topological triangulation} $\Delta$ of $\mathbf{\Sigma}$ is a collection $\mathcal{E}(\Delta)$ of arcs in $\Sigma$, named \textit{edges}, which satisfies the following conditions: 
		the endpoints of the edges belong to $\mathcal{P}$;  
		the interior of the edges are pairwise disjoint and do not intersect $\mathcal{P}$; 
		the edges are not contractible and are pairwise non isotopic relatively to their endpoints; 
		the boundary arcs of $\mathbf{\Sigma}$ belong to $\mathcal{E}(\Delta)$. 
	Moreover, the collection $\mathcal{E}(\Delta)$ is required to be maximal for these properties. A pair $(\mathbf{\Sigma}, \Delta)$ is called a \textit{triangulated punctured surface}.
	\end{itemize}
\end{definition}

\par The only connected punctured surfaces with boundary which do not admit a triangulation are the disc with one boundary puncture, for which both the stated skein algebra and the reduced stated skein algebra is $\mathbb{C}$, and the disc with two boundary punctures, for which the stated skein algebra is isomorphic to $\mathbb{C}_q[\mathrm{SL}_2]$ (see \cite{KojuQuesneyClassicalShadows}) and the reduced stated skein algebra is isomorphic to $\mathbb{C}[X^{\pm 1}]$. In both cases Theorem \ref{theorem1} is trivial, hence in the rest of the paper we will only consider punctured surfaces which admit a triangulation.
\vspace{2mm}
\par The closure of each connected component of  $\Sigma \setminus \mathcal{E}(\Delta)$ is called a \textit{face} and the set of faces is denoted by $F(\Delta)$. Note that the indexing $\mathbb{T}\in F(\Delta)$ denotes a face while the indexed $\mathbb{T}$ denotes the triangle. The author hopes that this abuse of notation is harmless.
Given a topological triangulation $\Delta$, the punctured surface is obtained from the disjoint union 
$\bigsqcup_{\mathbb{T}\in F(\Delta)} \mathbb{T}$ of triangles by gluing the triangles along the boundary arcs corresponding to the edges of the triangulation.  By composing the associated gluing maps, one obtains an injective morphism of algebras: 
$$i^{\Delta} : \overline{\mathcal{S}}_{\omega}(\mathbf{\Sigma}) \hookrightarrow \otimes_{\mathbb{T} \in F(\Delta)} \overline{\mathcal{S}}_{\omega}(\mathbb{T}). $$

\textbf{Basic skein relations}
\\  Define the two matrices: 

$$ C= \begin{pmatrix} C_+^+ & C_-^+ \\ C_+^- & C_-^- \end{pmatrix} := \begin{pmatrix} 0 & \omega \\ -\omega^{5} & 0 \end{pmatrix}; \mbox{ note that  }C^{-1}= -A^3 C =  \begin{pmatrix} 0 & -\omega^{-5} \\ \omega^{-1} & 0 \end{pmatrix}, $$
and 
$$ \mathscr{R} =  
\begin{pmatrix}
 \mathscr{R}_{++}^{++} & \mathscr{R}_{+-}^{++} &\mathscr{R}_{-+}^{++} &\mathscr{R}_{--}^{++} \\
\mathscr{R}_{++}^{+-} &\mathscr{R}_{+-}^{+-} &\mathscr{R}_{-+}^{+-} &\mathscr{R}_{--}^{+-}  \\
\mathscr{R}_{++}^{-+} &\mathscr{R}_{+-}^{-+} &\mathscr{R}_{-+}^{-+} &\mathscr{R}_{--}^{-+} \\
\mathscr{R}_{++}^{--} &\mathscr{R}_{+-}^{--} &\mathscr{R}_{-+}^{--} &\mathscr{R}_{--}^{--} 
  \end{pmatrix}
:= \begin{pmatrix} A & 0 & 0 & 0 \\ 0 & 0 &A^{-1} & 0 \\ 0 & A^{-1} & A-A^{-3} & 0 \\ 0 & 0 & 0 & A \end{pmatrix}. $$

We refer to \cite{Faitg_LGFT_SSkein} for a quantum group interpretation of those matrices.

We now list three families of skein relations, which are straightforward consequences of the definition, and will be used in the paper. Let $i,j \in \{ -, + \}$.

\par $\bullet$  \textit{The trivial arc relations:} 

\begin{equation}\label{trivial_arc_rel}
\begin{tikzpicture}[baseline=-0.4ex,scale=0.5,>=stealth]
\draw [fill=gray!45,gray!45] (-.7,-.75)  rectangle (.4,.75)   ;
\draw[->] (0.4,-0.75) to (.4,.75);
\draw[line width=1.2] (0.4,-0.3) to (0,-.3);
\draw[line width=1.2] (0.4,0.3) to (0,.3);
\draw[line width=1.1] (0,0) ++(90:.3) arc (90:270:.3);
\draw (0.65,0.3) node {\scriptsize{$i$}}; 
\draw (0.65,-0.3) node {\scriptsize{$j$}}; 
\end{tikzpicture}
= C^i_j 
\hspace{.2cm}
\begin{tikzpicture}[baseline=-0.4ex,scale=0.5,>=stealth]
\draw [fill=gray!45,gray!45] (-.7,-.75)  rectangle (.4,.75)   ;
\draw[-] (0.4,-0.75) to (.4,.75);
\end{tikzpicture}
, \hspace{.4cm}
\begin{tikzpicture}[baseline=-0.4ex,scale=0.5,>=stealth]
\draw [fill=gray!45,gray!45] (-.7,-.75)  rectangle (.4,.75)   ;
\draw[->] (-0.7,-0.75) to (-.7,.75);
\draw[line width=1.2] (-0.7,-0.3) to (-0.3,-.3);
\draw[line width=1.2] (-0.7,0.3) to (-0.3,.3);
\draw[line width=1.15] (-.4,0) ++(-90:.3) arc (-90:90:.3);
\draw (-0.9,0.3) node {\scriptsize{$i$}}; 
\draw (-0.9,-0.3) node {\scriptsize{$j$}}; 
\end{tikzpicture}
=(C^{-1})^i_j 
\hspace{.2cm}
\begin{tikzpicture}[baseline=-0.4ex,scale=0.5,>=stealth]
\draw [fill=gray!45,gray!45] (-.7,-.75)  rectangle (.4,.75)   ;
\draw[-] (-0.7,-0.75) to (-0.7,.75);
\end{tikzpicture}.
\end{equation}

\par $\bullet$  \textit{The cutting arc relations:}

\begin{equation}\label{cutting_arc_rel}
\heightcurveright
= \sum_{i,j = \pm} C^i_j
 \hspace{.2cm} 
\heightexchright{->}{i}{j}
, \hspace{.4cm} 
\heightcurve =
\sum_{i,j = \pm} (C^{-1})_j^i
\hspace{.2cm}
\heightexch{->}{i}{j}
\end{equation}.

\par $\bullet$ \textit{The height exchange relations:}

\begin{equation}\label{height_exchange_rel}
\heightexch{->}{i}{j}= \sum_{k,l = \pm} \mathscr{R}_{i j}^{k l} 
 \hspace{.2cm} 
 \heightexch{<-}{l}{k}
 , \hspace{.4cm}
 \heightexch{<-}{j}{i} = \sum_{k,l = \pm} (\mathscr{R}^{-1})_{i j}^{k l} 
 \hspace{.2cm}
 \heightexch{->}{k}{l}.
\end{equation}

We refer to \cite{LeStatedSkein} for proofs. 

\vspace{2mm}
\par \textbf{Chebyshev morphisms}
\\  The center of the skein algebra of a closed punctured surface was characterised in \cite{FrohmanKaniaLe_UnicityRep}, using the results of \cite{BonahonWong1} stated bellow. The center of the stated skein algebras is not known when the punctured surface is open. However a large family of central elements were found in \cite{KojuQuesneyClassicalShadows}; let us describe them. For $\alpha_{\varepsilon \varepsilon'}$ a stated arc, we denote by $\alpha_{\varepsilon \varepsilon'}^{(N)}$ the class of the stated tangle made of $N$ parallel copies of $\alpha_{\varepsilon \varepsilon'}$. More precisely, the underlying tangle $\alpha^{(N)}$ of $\alpha_{\varepsilon \varepsilon'}^{(N)}$ is made of $N$ parallel copies $\alpha^1\cup \ldots \cup \alpha^N$ of $\alpha$ obtained by pushing $\alpha=\alpha^1$ along the direction of the framing. In the case where both endpoints of $\alpha$, say $s(\alpha)$ and $t(\alpha)$, are in the same boundary arc, we need to be more specific about the height order: if $h(s(\alpha)) < h(t(\alpha))$, we impose that $h(s(\alpha^1))<\ldots <h(s(\alpha^N)) < h(t(\alpha^1)) <\ldots <h(t(\alpha^N))$. The state of $\alpha^{(N)}$ sends every point $s(\alpha^i)$ to the state of $s(\alpha)$ and every point $t(\alpha^i)$ to the state of $t(\alpha)$. Note that when the two endpoints of $\alpha$ lye in distinct boundary arcs, one has $\alpha_{\varepsilon \varepsilon'}^{(N)}= (\alpha_{\varepsilon \varepsilon'})^N$ (at the power $N$).

\begin{definition}  The $N$-th Chebyshev polynomial of first kind is the polynomial  $T_N(X) \in \mathbb{Z}[X]$ defined by the recursive formulas $T_0(X)=2$, $T_1(X)=X$ and $T_{n+2}(X)=XT_{n+1}(X) -T_n(X)$ for $n\geq 0$.
\end{definition}

\begin{theorem}\label{theorem_center_skein}[\cite{BonahonWong1} when $\mathbf{\Sigma}$ is closed, \cite[Theorem $1.2$]{KojuQuesneyClassicalShadows} when $\mathbf{\Sigma}$ is open]
	Suppose that $\omega$ is a root of unity of odd order $N>1$. There exists an embedding (named \textit{Chebyshev morphism})
	\begin{equation*}
	j_{\mathbf{\Sigma}} : \mathcal{S}_{+1}(\mathbf{\Sigma}) \hookrightarrow \mathcal{Z}\left( \mathcal{S}_{\omega}(\mathbf{\Sigma}) \right)
	\end{equation*}
	of the (commutative) stated skein algebra with parameter $+1$ into the center of the stated skein algebra with parameter $\omega$. Moreover, the morphism $j_{\mathbf{\Sigma}}$  is characterized by the property that it sends a closed curve $\gamma$ to $T_N(\gamma)$ and a stated arc $\alpha_{\varepsilon \varepsilon'}$ to $\alpha_{\varepsilon \varepsilon'}^{(N)}$.
\end{theorem}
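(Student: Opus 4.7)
The plan is to proceed in three stages: first pin down uniqueness, then construct the morphism by reducing to the case of a single triangle and propagating via gluing, and finally verify centrality and injectivity. Uniqueness is immediate: the basis theorem (every class in $\mathcal{S}_{+1}(\mathbf{\Sigma})$ is spanned by classes of simple increasingly stated diagrams $\mathcal{B}$) together with the skein relations at $\omega=+1$ shows that closed curves and stated arcs generate the algebra, so any algebra morphism is determined by its values on these generators.

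For existence, I would first treat the triangle $\mathbb{T}$. Here $\mathcal{S}_{\omega}(\mathbb{T})$ admits an explicit finite presentation (it is closely related to a localization of $\mathcal{O}_q(\mathrm{SL}_2)$), so one can define the candidate map $j_{\mathbb{T}}$ on the three boundary stated arcs by $\alpha_{\varepsilon\varepsilon'}\mapsto\alpha_{\varepsilon\varepsilon'}^{(N)}$ and check by direct computation that the defining skein relations at parameter $+1$, namely the trivial arc, cutting arc, and height exchange relations \eqref{trivial_arc_rel}--\eqref{height_exchange_rel}, are preserved. The key algebraic input is that at an odd $N$-th root of unity $\omega$, the matrix coefficients of $C$ and $\mathscr{R}$ applied $N$ times produce phases of the form $\omega^{2Nk}=1$ (since $\omega^{2N}=1$), so that the $N$-fold parallel copy of a stated arc commutes with and relates to other $N$-fold copies exactly as a single arc does at $\omega=+1$. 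Given $j_{\mathbb{T}}$, I would then deduce the general case by assembling the triangles of a topological triangulation $\Delta$ through the embedding $i^{\Delta}:\mathcal{S}_{\omega}(\mathbf{\Sigma})\hookrightarrow\bigotimes_{\mathbb{T}\in F(\Delta)}\mathcal{S}_{\omega}(\mathbb{T})$ of Theorem \ref{theorem_gluing}. The crucial point is that the $N$-th parallel copy construction intertwines with the gluing map $i_{|a\#b}$: the sum over $2^N$ intermediate states at the cut $c$ collapses (again via $\omega^{2N}=1$) to the $N$-th copy of the single sum over states that defines the gluing at parameter $+1$. This compatibility lets one descend the morphism $\bigotimes_{\mathbb{T}}j_{\mathbb{T}}$ to a well-defined $j_{\mathbf{\Sigma}}$ on $\mathcal{S}_{+1}(\mathbf{\Sigma})$ whose image lies in the image of $i^{\Delta}$, hence in $\mathcal{S}_{\omega}(\mathbf{\Sigma})$ itself.

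Centrality is checked generator by generator: to show $\alpha_{\varepsilon\varepsilon'}^{(N)}$ commutes with every stated arc $\beta$ and every closed curve, one applies the height exchange and cutting arc relations at each transverse intersection; each crossing contributes a phase $\omega^{\pm 2N m}$ which becomes $1$. For closed curves the statement is the Bonahon--Wong fact that $T_N(\gamma)$ is central, already established for closed punctured surfaces in \cite{BonahonWong1}; it transfers to the open case via the same localization/gluing argument. Injectivity I would obtain by filtering $\mathcal{S}_{\omega}(\mathbf{\Sigma})$ and $\mathcal{S}_{+1}(\mathbf{\Sigma})$ by a geometric complexity (for instance, the number of intersections of a simple diagram with the edges of $\Delta$), noting that $j_{\mathbf{\Sigma}}$ multiplies this complexity by $N$, and then arguing that the induced map on associated graded modules sends the $\mathcal{B}$-basis of $\mathcal{S}_{+1}$ to a $\mathbb{C}$-linearly independent family in $\mathcal{S}_{\omega}$. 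I expect the main obstacle to be the gluing compatibility step: one must verify a non-trivial identity in the stated skein algebra of a bigon neighborhood of the glued arc, packaging $N$-fold $R$-matrix and $C$-matrix contributions, and this is where the odd-order hypothesis on $N$ is genuinely used. Once this single local identity is in hand, the rest of the argument reduces to combinatorial bookkeeping.
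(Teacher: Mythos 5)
This statement is not proved in the paper at all: it is imported wholesale from \cite{BonahonWong1} (closed case) and \cite[Theorem 1.2]{KojuQuesneyClassicalShadows} (open case), so there is no internal proof to compare against. Judged on its own terms, your outline has the right skeleton (uniqueness from generation by arcs and curves, existence via triangles and gluing, then centrality and injectivity via leading terms), and your identification of the gluing compatibility as a genuine obstacle is fair. But there is a real gap at the heart of the argument.

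The gap is your treatment of centrality (and, implicitly, of the homomorphism property) as a phase computation. You write that when $\alpha_{\varepsilon\varepsilon'}^{(N)}$ is crossed by another strand, ``each crossing contributes a phase $\omega^{\pm 2Nm}$ which becomes $1$.'' That is only true for the degenerate situation where the other strand meets $\alpha^{(N)}$ at the boundary and one merely reorders heights of endpoints --- this is exactly the content of the paper's Lemma \ref{l4}, whose proof is already delicate (it needs the trivial-arc relation to kill the off-diagonal $\mathscr{R}$-matrix terms $A-A^{-3}$, and it only works because the $N$ strands form an $\alpha^{(N)}$). For a genuine transverse crossing in the interior of the surface, resolving the $N$ crossings via relation \eqref{eq: skein 1} produces $2^N$ diagrams, most of which have turnbacks, and the assertion that their sum collapses to the diagram with the strand pushed to the other side is precisely the Bonahon--Wong ``miraculous cancellation.'' It is a statement about the Chebyshev/Jones--Wenzl structure of the cabling, not about roots of unity killing phases; indeed for closed curves it is $T_N(\gamma)$, not $\gamma^{(N)}$, that is transparent, which already shows that no phase argument can suffice. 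You defer to \cite{BonahonWong1} for closed curves but then claim the arc case is elementary; it is not --- the transparency of $\alpha^{(N)}$ at interior crossings, and the compatibility of $(\cdot)^{(N)}$ with the cutting-arc relation (i.e.\ that $(\alpha\beta)^{(N)}_{\varepsilon\varepsilon'}$ expands as the single ``diagonal'' sum $\sum_{\mu}\alpha^{(N)}_{\varepsilon\mu}\beta^{(N)}_{\mu\varepsilon'}$ rather than a sum over all $2^N$ intermediate full states), require the same kind of non-trivial cancellation and are the actual content of \cite[Theorem 1.2]{KojuQuesneyClassicalShadows}. Until that cancellation is supplied, neither the existence of $j_{\mathbf{\Sigma}}$ as an algebra map nor the centrality of its image is established.
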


\begin{remark}The fact that Chebyshev morphisms for open punctured surfaces are only known when $\omega$ is root of unity of odd order is the main reason why we do not treat the case of roots of unity of even order in this paper. For closed surfaces, Bonahon and Wong still defined a similar morphism for roots of unity of even order and the proof in \cite{FrohmanKaniaLe_UnicityRep} of the unicity representations conjecture holds also for those cases.  T.L\^e recently informed the author that, together with W.Bloomquist, they extended the Chebyshev morphism for open punctured surfaces and roots of unity of even order (\cite{BloomquistLe}).
\end{remark}

The Chebyshev morphism obviously induces, by passing to the quotient, an injective morphism of algebras (still denoted by the same letter)
\begin{equation*}
	j_{\mathbf{\Sigma}} : \overline{\mathcal{S}}_{+1}(\mathbf{\Sigma}) \hookrightarrow \mathcal{Z}\left( \overline{\mathcal{S}}_{\omega}(\mathbf{\Sigma}) \right).
\end{equation*}

\subsection{Balanced Chekhov-Fock algebras}\label{subsec_CF}

\par The balanced Chekhov-Fock algebras belong to a family of algebras named quantum tori; let us briefly review their definition and main properties.
\vspace{2mm}
\par 
A \textit{quadratic pair} is a pair $\mathbb{E}=(E, \left(\cdot, \cdot\right))$ where $E$ is a free finitely generated $\mathbb{Z}$-module and $\left(\cdot, \cdot\right) : E \times E \rightarrow \mathbb{Z}$ is a skew-symmetric bilinear map. The associated \textit{quantum torus}  $\mathcal{T}_{\omega}(\mathbb{E})$ is the quotient of the algebra freely generated by elements $Z^e, e\in E$ by the ideal generated by the relations $Z^{e_1+e_2}= \omega^{(e_1,e_2)} Z^{e_1}Z^{e_2}$ for $e_1,e_2 \in E$. Equivalently, given $\{e_1, \ldots, e_n\}$ a basis of $E$,   $\mathcal{T}_{\omega}(\mathbb{E})$ is the algebra generated by elements $(Z^{e_i})^{\pm 1}$ modulo relations $Z^{e_i}Z^{e_j}=\omega^{-2(e_i,e_j)}Z^{e_j}Z^{e_i}$. Suppose that $\omega$ is a root of unity of order $N>1$ and consider the composition
$$ \left(\cdot, \cdot \right)_N : E\times E \xrightarrow{(\cdot, \cdot )} \mathbb{Z} \rightarrow \mathbb{Z}/N\mathbb{Z}.$$
The center $\mathcal{Z}$ of $\mathcal{T}_{\omega}(\mathbb{E})$ is spanned by the elements $Z^{e_0}$ with $e_0$ in the kernel $E^0$ of $ \left(\cdot, \cdot \right)_N$ thus the rank of $\mathcal{T}_{\omega}(\mathbb{E})$ as a $\mathcal{Z}$-module is the index of $E^0$ in $E$. By \cite[Proposition $7.2$]{DeConciniProcesiBook} the quantum torus $\mathcal{T}_{\omega}(\mathbb{E})$ is Azumaya of constant rank, hence the character map $\chi : \mathrm{Irrep}(\mathcal{T}_{\omega}(\mathbb{E})) \rightarrow \mathrm{Specm}(\mathcal{Z})$ is a bijection between isomorphism classes of irreducible representations and characters over the center $\mathcal{Z}$. Moreover, every irreducible representation has dimension $\sqrt{R}$, where $R$ denotes the rank of $\mathcal{T}_{\omega}(\mathbb{E})$ over its center.

\vspace{2mm}
\par Let $(\mathbf{\Sigma}, \Delta)$ be a triangulated punctured surface. A map $\mathbf{k} : \mathcal{E}(\Delta) \rightarrow \mathbb{Z}$ is called \textit{balanced} if for any three edges $e_1,e_2,e_3$ bounding a face of $\Delta$, the integer $\mathbf{k}(e_1)+\mathbf{k}(e_2)+\mathbf{k}(e_3)$ is even. If $\mathbf{k}_1, \mathbf{k}_2$ are balanced, their sum $\mathbf{k}_1+\mathbf{k}_2$ is balanced. We denote by $K_{\Delta}$ the abelian group of balanced maps. For  $e$ and $e'$ two edges, denote by $a_{e,e'}$ the number of faces $\mathbb{T}\in F(\Delta)$ such that $e$ and $e'$ are edges of $\mathbb{T}$ and such that we pass from $e$ to $e'$ in the counter-clockwise direction in $\mathbb{T}$. The \textit{Weil-Petersson}  form $\left(\cdot, \cdot \right)^{WP}: K_{\Delta} \times K_{\Delta} \rightarrow \mathbb{Z}$ is the skew-symmetric form defined  by $\left( \mathbf{k}_1, \mathbf{k}_2\right)^{WP}:= \sum_{e,e'} \mathbf{k}_1(e)\mathbf{k}_2(e')(a_{e,e'}-a_{e',e})$.

\begin{definition}
The \textit{balanced Chekhov-Fock algebra} $\mathcal{Z}_{\omega}(\mathbf{\Sigma}, \Delta)$ is the quantum torus associated to the quadratic pair $(K_{\Delta}, (\cdot, \cdot)^{WP})$.
\end{definition}

The centers of the balanced Chekhov-Fock algebras at odd roots of unity are described as follows.

\begin{definition}\label{def_central_elements}
\begin{itemize}
\item  Let $p\in \mathcal{P}\cap \mathring{\Sigma}$ be an inner puncture. For each edge $e\in \mathcal{E}(\Delta)$, denote by $\mathbf{k}_p(e)\in \{0,1,2\}$ the number of endpoints of $e$ equal to $p$. The \textit{central inner puncture element} is  $H_p:=Z^{\mathbf{k}_p}\in \mathcal{Z}_{\omega}(\mathbf{\Sigma}, \Delta)$.
\item  Let  $\partial$ a connected component of $\partial \Sigma$. For each edge $e$, denote by $\mathbf{k}_{\partial}(e)\in \{0, 1, 2\}$ the number of endpoints of $e$ lying in $\partial$. The \textit{central boundary element} is $H_{\partial}:=Z^{\mathbf{k}_{\partial}}\in \mathcal{Z}_{\omega}(\mathbf{\Sigma}, \Delta)$. 
\item Suppose that $\omega$ is a root of unity of order $N>1$, the \textit{Frobenius morphism} $j_{(\mathbf{\Sigma}, \Delta)} : \mathcal{Z}_{+1}(\mathbf{\Sigma}, \Delta) \rightarrow \mathcal{Z}_{\omega}(\mathbf{\Sigma}, \Delta)$ sending a balanced monomial $Z^{\mathbf{k}}$ to $Z^{N\mathbf{k}}$, is an injective morphism of algebras whose image lies in the center.
\end{itemize}
\end{definition}

\begin{proposition}[\cite{BonahonWong2} when $\mathbf{\Sigma}$ is closed, \cite{KojuQuesneyQNonAb} when $\mathbf{\Sigma}$ is open]\label{prop_center_CF} If $\omega$ is a root of unity of odd order $N>1$, the center of $\mathcal{Z}_{\omega}(\mathbf{\Sigma}, \Delta)$  is generated by the image of the Frobenius morphism together with the central inner puncture and boundary elements.
\end{proposition}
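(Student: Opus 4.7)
The plan is to reduce Proposition \ref{prop_center_CF} to a combinatorial statement about the quadratic pair $(K_\Delta,(\cdot,\cdot)^{WP})$. By the general description of centers of quantum tori recalled at the beginning of Subsection \ref{subsec_CF}, the center of $\mathcal{Z}_\omega(\mathbf{\Sigma},\Delta)$ is $\mathbb{C}$-spanned by the monomials $Z^{\mathbf{k}}$ with $\mathbf{k}$ in
$$ K_\Delta^0 \;:=\; \{\mathbf{k}\in K_\Delta \,:\, (\mathbf{k},\mathbf{k}')^{WP}\in N\mathbb{Z} \text{ for every }\mathbf{k}'\in K_\Delta\}. $$
Thus the proposition is equivalent to the equality
$$ K_\Delta^0 \;=\; N K_\Delta \;+\; \sum_{p\in \mathcal{P}\cap \mathring{\Sigma}} \mathbb{Z}\mathbf{k}_p \;+\; \sum_{\partial\subset \partial\Sigma} \mathbb{Z}\mathbf{k}_\partial. \qquad (\ast) $$

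First I would settle the easy inclusion $\supseteq$ in $(\ast)$. The containment $NK_\Delta \subseteq K_\Delta^0$ is immediate from $\mathbb{Z}$-bilinearity. For an inner puncture $p$, I would verify by a direct local computation that $(\mathbf{k}_p,\mathbf{k}')^{WP}=0$ (in $\mathbb{Z}$, not merely modulo $N$) for every balanced $\mathbf{k}'$: enumerating the edges $e_1,\dots,e_r$ incident to $p$ in counterclockwise cyclic order, the faces at $p$ are exactly the triangles having two consecutive $e_i$'s as edges, and the resulting signed contributions $a_{e_i,e_{i+1}}-a_{e_{i+1},e_i}$ telescope around $p$. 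The same argument, applied to the cyclic ordering of triangles along a boundary circle, handles $\mathbf{k}_\partial$.

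The hard direction $\subseteq$ in $(\ast)$ is the crux. My approach is an index comparison: since the right-hand side of $(\ast)$ is already a subgroup of $K_\Delta^0$, it suffices to prove that the two indices $[K_\Delta:K_\Delta^0]$ and $[K_\Delta:\mathrm{RHS}]$ coincide. The first is computed by invoking the general quantum-torus fact (reviewed in Subsection \ref{subsec_CF}) that $\mathcal{T}_\omega(K_\Delta,(\cdot,\cdot)^{WP})$ is Azumaya of constant rank $[K_\Delta:K_\Delta^0]$, and that its simple modules have dimension $N^{3g-3+s+n_\partial}$ (the count recalled in the introduction, following Bonahon--Wong and Koju--Quesney), giving $[K_\Delta:K_\Delta^0]=N^{2(3g-3+s+n_\partial)}$. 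The second index reduces, upon passing to $\mathbb{F}_N = \mathbb{Z}/N\mathbb{Z}$, to computing the $\mathbb{F}_N$-dimension of the subspace $\langle \overline{\mathbf{k}}_p, \overline{\mathbf{k}}_\partial\rangle \subset K_\Delta/NK_\Delta$, which is a direct topological exercise (an Euler-characteristic count involving $s$, $n_\partial$ and the genus); crucially $N$ odd makes the coefficients $\in\{0,1,2\}$ of $\mathbf{k}_p,\mathbf{k}_\partial$ behave transparently and ensures that these vectors are $\mathbb{F}_N$-independent.

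The step I expect to be the main obstacle is making the two index counts coincide without circular use of the Azumaya rank formula. The cleanest implementation is to fix, as in \cite{BonahonWong2,KojuQuesneyQNonAb}, an explicit basis of $K_\Delta$ adapted to the triangulation in which the antisymmetric integer matrix of $(\cdot,\cdot)^{WP}$ admits a tractable Smith normal form over $\mathbb{Z}/N\mathbb{Z}$. The kernel $K_\Delta^0$ can then be read off directly from that normal form, and the generators $N\mathbf{k}$, $\mathbf{k}_p$ and $\mathbf{k}_\partial$ can be exhibited as an explicit system generating this kernel, yielding $(\ast)$ by inspection.
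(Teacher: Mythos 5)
This proposition is one the paper does not prove: it is imported verbatim from \cite{BonahonWong2} (closed case) and \cite{KojuQuesneyQNonAb} (open case), so there is no in-paper argument to compare against. Judged on its own terms, your proposal correctly reduces the statement to the lattice identity $(\ast)$ and correctly identifies the easy inclusion (the telescoping computation showing $(\mathbf{k}_p,\mathbf{k}')^{WP}=0$ and likewise for $\mathbf{k}_\partial$ is standard and sound, modulo care with self-folded triangles). But the hard inclusion is not actually established. Your first index comparison is circular, as you yourself note: the dimension $N^{3g-3+s+n_\partial}$ of the simple modules of $\mathcal{Z}_\omega(\mathbf{\Sigma},\Delta)$ is \emph{derived from} the description of the center (the dimension is the square root of the index $[K_\Delta:K_\Delta^0]$, whose computation is exactly the content of the proposition), so it cannot be used as input. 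The proposed repair --- choose a basis of $K_\Delta$ adapted to $\Delta$, compute the Smith normal form of the Weil--Petersson matrix over $\mathbb{Z}/N$, and read off that the kernel is generated by $N\mathbf{k}$, $\mathbf{k}_p$, $\mathbf{k}_\partial$ ``by inspection'' --- is precisely the nontrivial computation carried out in the cited references, and you never perform it. As written, the crux of the argument is deferred to an unexhibited normal form.

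Concretely, what is missing is either (i) the explicit basis of $K_\Delta$ and the resulting matrix of $(\cdot,\cdot)^{WP}$ together with its reduction, or (ii) an independent, non-circular computation of $[K_\Delta:NK_\Delta+\sum\mathbb{Z}\mathbf{k}_p+\sum\mathbb{Z}\mathbf{k}_\partial]$ \emph{and} of $[K_\Delta:K_\Delta^0]$ showing they agree. Your Euler-characteristic count addresses only the first of these two indices; without the second computed from scratch, the comparison cannot close. So the proposal is a reasonable roadmap reproducing the strategy of \cite{BonahonWong2,KojuQuesneyQNonAb}, but it is an outline rather than a proof: the decisive combinatorial step is asserted, not demonstrated.
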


The balanced Chekhov-Fock algebras admit triangular decompositions, similar to the stated skein algebras, defined as follows. Let 
$$i^{\Delta} : \mathcal{Z}_{\omega}(\mathbf{\Sigma},\Delta) \hookrightarrow \otimes_{\mathbb{T} \in F(\Delta)} \mathcal{Z}_{\omega}(\mathbb{T})$$
be the linear map sending $Z^{\mathbf{k}}$ to $\otimes_{\mathbb{T}\in F(\Delta)} Z^{\mathbf{k}_{\mathbb{T}}}$, where given an edge $e_{\mathbb{T}}\in \mathcal{E}(\mathbb{T})$ corresponding to an edge $e\in \mathcal{E}(\Delta)$, one set $\mathbf{k}_{\mathbb{T}}(e_{\mathbb{T}}) := \mathbf{k}(e)$. The linear map $i^{\Delta}$ is an injective algebra morphism.

\subsection{The quantum trace}

We now define an injective algebra morphism $\Tr_{\omega}^{\Delta} : \overline{\mathcal{S}}_{\omega}(\mathbf{\Sigma}) \hookrightarrow \mathcal{Z}_{\omega}(\mathbf{\Sigma}, \Delta)$. First consider the case where $\mathbf{\Sigma}=\mathbb{T}$ is the triangle. Consider $\alpha_1, \alpha_2, \alpha_3$ the three arcs in $\mathbb{T}$ drawn in Figure \ref{fig_triangle} and denote by $e_1,e_2,e_3$ the three edges of $\mathbb{T}$. The reduced stated skein algebra $\overline{\mathcal{S}}_{\omega}(\mathbb{T})$ is generated by the classes of the stated arcs $(\alpha_i)_{\varepsilon \varepsilon}$, for $i=1,2,3$ and $\varepsilon \in \{-, +\}$, moreover one has $(\alpha_i)_{--} = ((\alpha_i)_{++})^{-1}$. Define the balanced maps $\mathbf{k}_1, \mathbf{k}_2, \mathbf{k}_3 \in K_{\mathbb{T}}$ by $\mathbf{k}_i (e_i)=0$ and  $\mathbf{k}_i (e_j)=1$
for $j\neq i$. By \cite[Theorem $7.11$]{CostantinoLe19}, the linear map $\Tr_{\omega}^{\mathbb{T}} : \overline{\mathcal{S}}_{\omega}(\mathbb{T}) \rightarrow \mathcal{Z}_{\omega}(\mathbb{T})$, sending $(\alpha_i)_{++}$ to $Z^{\mathbf{k}_i}$ and sending $(\alpha_i)_{--}$ to $Z^{-\mathbf{k}_i}$, extends to an isomorphism of algebras.

\begin{figure}[!h] 
\centerline{\includegraphics[width=2cm]{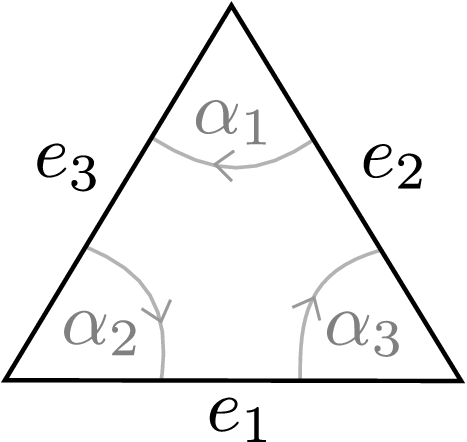} }
\caption{The triangle and some arcs.} 
\label{fig_triangle} 
\end{figure} 

\begin{definition}[\cite{BonahonWongqTrace, LeStatedSkein}]\label{def_qtrace} Let $(\mathbf{\Sigma}, \Delta)$ be a triangulated punctured surface. The \textit{quantum trace} is the unique algebra morphism  $\Tr_{\omega}^{\Delta} : \overline{\mathcal{S}}_{\omega}(\mathbf{\Sigma}) \hookrightarrow \mathcal{Z}_{\omega}(\mathbf{\Sigma}, \Delta)$ making the following diagram commuting: 
$$
\begin{tikzcd}
\overline{\mathcal{S}}_{\omega}(\mathbf{\Sigma})
\arrow[r, hook, "i^{\Delta}"] \arrow[d, dotted, hook, "\Tr_{\omega}^{\Delta}"] &
\otimes_{\mathbb{T}\in F(\Delta)} \overline{\mathcal{S}}_{\omega}(\mathbb{T}) 
\arrow[d, "\cong"' , "\otimes_{\mathbb{T}} \Tr_{\omega}^{\mathbb{T}}"] \\
\mathcal{Z}_{\omega}(\mathbf{\Sigma}, \Delta)
\arrow[r, hook, "i^{\Delta}"] &
\otimes_{\mathbb{T}\in \Delta} \mathcal{Z}_{\omega}(\mathbb{T})
\end{tikzcd}
$$

\end{definition}
\begin{remark}\label{remark_prime} Since $\mathcal{Z}_{\omega}(\mathbb{T})$ is a quantum torus, it has no zero divisor and since both $i^{\Delta} : \mathcal{S}_{\omega}(\mathbf{\Sigma}) \hookrightarrow \otimes_{\mathbb{T}} \overline{\mathcal{S}}_{\omega}(\mathbb{T})$ and $i^{\Delta} : \overline{\mathcal{S}}_{\omega}(\mathbf{\Sigma}) \hookrightarrow \otimes_{\mathbb{T}} \overline{\mathcal{S}}_{\omega}(\mathbb{T})$ are embeddings, both the stated skein algebra and  the reduced stated skein algebra have no zero divisor. In particular, they are prime algebras.
\end{remark}

 \par 
We now describe the image of a basis element through the quantum trace.
 We first prove a preliminary statement.

\begin{lemma}\label{lemma_product}
 Let $(D,s)$ be stated diagram such that $D$ is simple and $s$ is $\mathfrak{o}^+$ increasing. Suppose that $D=D_1 \bigsqcup D_2$ is a disjoint union of two sub-diagrams and write $s_1, s_2$ the restriction of $s$ to $D_1$ and $D_2$ respectively. We write $(D,s)= (D_1,s_1)\cup (D_2,s_2)$.
There exists an integer $n\in \mathbb{Z}$ such that in $\mathcal{S}_{\omega}(\mathbf{\Sigma})$ one has the equality $[D,s] = \omega^n [D_1,s_1] [D_2, s_2]$.
\end{lemma}

\begin{proof}
Denote by $T, T_1, T_2$ some framed tangles representing $D, D_1$ and $D_2$ respectively,  with the endpoints height ordered using $\mathfrak{o}^+$ and denote by $T'$ the tangle obtained by putting $T_1$ on top of $T_2$ in the $(0,1)$ direction. The sets $\partial T$, $\partial T'$ and $\partial D$ are in natural bijection and we denote by $s$ and $s'$ the corresponding states on $T$ and $T'$. By definition, one has $[T,s]=[D,s]$ and $[T',s']=[D_1,s_1][D_2,s_2]$. The stated tangle $(T',s')$ is obtained from $(T,s)$ by changing the heights of some points of $\partial T$. Because $s$ is $\mathfrak{o}^+$-increasing, one passes from $(T,s)$ to $(T',s')$ by a finite sequence of elementary moves which consists in exchanging the heights of two boundary points $v$ and $v'$ such that $s(v)\geq s(v')$ and $v\geq_{\mathfrak{o}^+} v'$. Note that the height exchange relations \eqref{height_exchange_rel} write:
\begin{equation*}
\heightexch{<-}{+}{+}=\omega^{2} \heightexch{->}{+}{+}, ~~~
\heightexch{<-}{+}{-}=\omega^{-2} \heightexch{->}{+}{-}, ~~~
\heightexch{<-}{-}{-}=\omega^{2} \heightexch{->}{-}{-}
\end{equation*}
Since each of these elementary move changes the class of the stated tangle by multiplying it by a power of $\omega$, one has $[T',s']= \omega^n [T,s]$ for some $n\in \mathbb{Z}$. Therefore $[D,s]= \omega^n [D_1,s_1][D_2,s_2]$ and the proof is completed.
\end{proof}

Denote by $\mathfrak{o}^-$ the orientation of the boundary arcs of $\mathbb{T}$ which is opposite to $\mathfrak{o}^+$.

\begin{lemma}\label{lemma_triangle_trace} Let $(D,s)$ be a stated diagram in the triangle $\mathbb{T}$ with $D$ simple. Let $\mathfrak{o}$ be an arbitrary orientation of the boundary arcs of $\mathbb{T}$ and let $[D,s]_{\mathfrak{o}} \in \overline{\mathcal{S}}_{\omega}(\mathbb{T})$ be the associated skein element. Let $\mathds{k}=\mathds{k}(D,s)\in K_{\mathbb{T}}$ be the balanced map defined by $\mathds{k}(e_i) := \sum_{v\in D\cap e_i} s(v)$. 
\begin{enumerate}
\item
There exists $P(X)\in \mathbb{Z}[X^{\pm 1}]$ such that $Tr_{\omega}^{\mathbb{T}}([D,s]_{\mathfrak{o}})= P(\omega) Z^{\mathds{k}}$. 
\item Moreover, if $\mathfrak{o}\neq \mathfrak{o}^-$ and $(D,s)$ does not contain any bad arc, then $P(\omega)=\omega^n$ for some $n\in \mathbb{Z}$.
\end{enumerate}
\end{lemma}

\begin{proof}
If $D$ is connected, i.e. if $(D,s)$ is a stated arc, then the Lemma is an immediate consequence of the definition of $Tr_{\omega}^{\mathbb{T}}$. Suppose that  $(D,s)= \alpha_1 \cup \ldots \cup \alpha_n$ with $\alpha_i$ some stated arcs. By the preceding case, $[\alpha_i]=0$ if $\alpha_i$ is a bad arc and $[\alpha_i]=\omega^{n_i}Z^{\mathds{k}_i}$ else.  If $s$ is $\mathfrak{o}^+$ increasing, then by Lemma \ref{lemma_product}, there exists $n\in \mathbb{Z}$ such that $[D,s]_{\mathfrak{o}}= \omega^n [\alpha_1] \ldots [\alpha_n]$ so it vanishes if one of the $\alpha_i$ is a bad arc; else it is equal to $[D,s]_{\mathfrak{o}}= \omega^{n+ \sum_i n_i}Z^{\mathds{k}_1} \ldots Z^{\mathds{k}_n} = \omega^m Z^{\mathds{k}}$ for some $m\in \mathbb{Z}$ since $\mathds{k}=\sum_i \mathds{k}_i$. Eventually, suppose that $s$ is not $\mathfrak{o}^+$ increasing. So there exists two consecutive endpoints $v,w \in \partial D$ in the same boundary arc $e_i$ such that $v<_{\mathfrak{o}_+} w$ and $(s(v), s(w))=(+,-)$. Let us call \textit{bad pair} such a pair of endpoints. Applying the skein relation 
$$ 
\heightexch{->}{-}{+}
= \omega^{-4}
\heightexch{->}{+}{-}
+
\omega \heightcurve
$$
We can write $[D,s]_{\mathfrak{o}}= \omega^{-4} [D_1,s_1] + \omega [D_2,s_2]$ where $\mathds{k}(D,s)=\mathds{k}(D_1,s_1)=\mathds{k}(D_2,s_2)$ and such that $(D_1,s_1)$ and $(D_2,s_2)$ have one bad pair less than $(D,s)$. By induction on the number of bad pairs, we find that $[D,s]_{\mathfrak{o}}$ is a linear combination with coefficients in $\mathbb{Z}[\omega^{\pm 1}]$ of elements $(D_i,s_i)$ such that $\mathds{k}(D,s)=\mathds{k}(D_i,s_i)$ and $s_i$ is $\mathfrak{o}_+$ increasing. Therefore $[D,s]_{\mathfrak{o}}=P(\omega)Z^{\mathds{k}}$ for some $P(X)\in \mathbb{Z}[X^{\pm 1}]$. So we have proved  Item $(1)$ and also Item $(2)$ in the case $\mathfrak{o}=\mathfrak{o}^+$. Let us suppose that $\mathfrak{o}\neq \mathfrak{o}^+$ and $\mathfrak{o}\neq \mathfrak{o}^-$ and that $(D,s)$ does not contain any bad arc and let us prove that $P(\omega)=\omega^n$ for some $n\in \mathbb{Z}$. We will prove the result by induction on the number of connected components of $D$. If $D$ is an arc, the result is immediate. Let us suppose that $D$ contains at least two arcs. By hypothesis on $\mathfrak{o}$, there exists one puncture $p$ of $\mathbb{T}$ such that the orientation of both edges adjacent to $p$ are oriented towards $p$. Let us suppose that $p$ is the puncture between $e_2$ and $e_3$, the other cases are obtained by rotation. There are two cases to consider depending whether the orientation of $e_1$ in $\mathfrak{o}$ coincides with $\mathfrak{o}^+$ (case $1$) or with $\mathfrak{o}^-$ (case $2$), so in pictures we have: 
$$ \mbox{Case 1: } \adjustbox{valign=c}{\includegraphics[width=1.5cm]{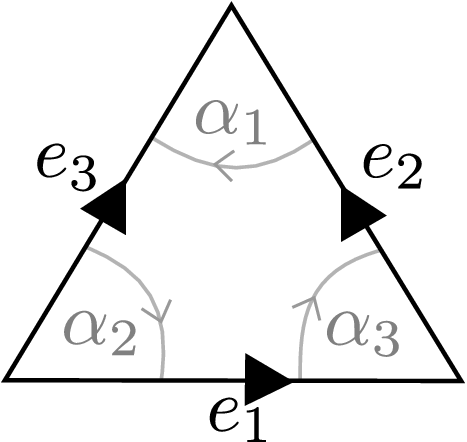}}  \hspace{2cm}  \mbox{Case 2: } \adjustbox{valign=c}{\includegraphics[width=1.5cm]{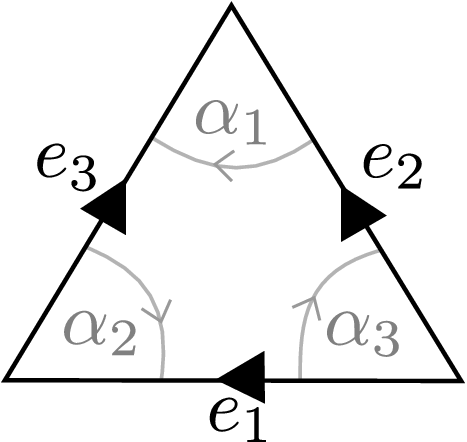}}. $$
Suppose that $D$ is made of $n_1$ parallel copies of $\alpha_1$, $n_2$ copies of $\alpha_2$ and $n_3$ copies of $\alpha_3$. For $i=1,2,3$ and $1\leq k \leq n_i$, we denote by $\alpha_i^{(k)}\subset (D,s)$ the $k$-th stated subarc of $(D,s)$ parallel to $\alpha_i$. First suppose that $n_1>0$. Then $[D,s]_{\mathfrak{o}}=\alpha_1^{(1)} [D_0,s_0]_{\mathfrak{o}}$ where $(D_0,s_0)$ is obtained from $(D,s)$ by removing $\alpha_1^{(1)}$. Applying the induction hypothesis to $[D_0,s_0]_{\mathfrak{o}}$ we have $[D_0,s_0]_{\mathfrak{o}}=\omega^{n_0}Z^{\mathds{k}(D_0,s_0)}$ for some $n_0\in \mathbb{Z}$ so $[D,s]_{\mathfrak{o}}=\omega^n Z^{\mathds{k}}$ for some $n\in \mathbb{Z}$. Next suppose that $n_1=0$.
If we are in Case $1$ and $n_2>0$, then $[D,s]_{\mathfrak{o}}=[D_1,s_1]\alpha_{2}^{(1)}$ where $(D_1,s_1)$ is obtained by removing $\alpha_2^{(1)}$ from $(D,s)$. Similarly if we are in Case $2$ and $n_3>0$, then  $[D,s]_{\mathfrak{o}}=[D_1,s_1]\alpha_{3}^{(1)}$ where $(D_2,s_2)$ is obtained by removing $\alpha_3^{(1)}$ from $(D,s)$. In both cases, we conclude using the induction hypothesis. Suppose we are  in Case $1$ and $n_1=n_2=0$. Let $v,w$ be the two endpoints of $\alpha_3^{(n_3)}$ with $v\in e_2$ and $w\in e_1$. Since $\alpha_3^{(n_3)}$ is not a bad arc, then $(s(v), s(w))\neq (-, +)$. Let $(D_1,s_1)$ be the stated diagram obtained from $(D,s)$ by removing $\alpha_3^{(n_3)}$. If $s(v)=+$, applying the height exchange relations repeatedly, we find that $[D,s]_{\mathfrak{o}}=A^n \alpha_3^{(n_3)} [D_1,s_1]_{\mathfrak{o}}$ for some $n\in \mathbb{Z}$. If $s(w)=-$, again using the height exchange relations, we find that  $[D,s]_{\mathfrak{o}}=A^m [D_1,s_1]_{\mathfrak{o}} \alpha_3^{(n_3)}$ for some $m\in \mathbb{Z}$. In both cases we conclude using the induction hypothesis. Eventually, the case where we are in Case $2$ and $n_1=n_3=0$ is handled similarly by replacing $\alpha_3^{(n_3)}$ by $\alpha_2^{(1)}$. This concludes the proof.

\end{proof}

\par  Let $\mathcal{D}_{\mathbf{\Sigma}}$ be the set of stated diagrams $(D,s)$ such that $D$ is simple, $s$ is $\mathfrak{o}^+$-increasing and $(D,s)$ does not contain bad arcs; hence $\mathcal{D}_{\mathbf{\Sigma}}$ is in natural bijection with the basis $\overline{\mathcal{B}}$. Fix a triangulation $\Delta$ and consider a diagram $D$ isotoped such that it intersects the edges of $\Delta$ transversally with minimal number of intersection points. A \textit{full state} on $D$ is a map $\hat{s} : \mathcal{E}(\Delta)\cap D \rightarrow \{-,+\}$. A pair $(D, \hat{s})$ induces on each face $\mathbb{T}\in F(\Delta)$ a stated diagram in $\mathbb{T}$. A full state $\hat{s}$ is \textit{admissible} if the restriction of $(D,\hat{s})$ on each face does not contain bad arcs. A full state $\hat{s}$ of $D$ induces, by restriction to the boundary arcs, a state $s$. We denote by $\widehat{\mathcal{D}}_{\Delta}$ the set of full stated diagrams $(D, \hat{s})$ such that  its restriction $s$ is $\mathfrak{o}^+$-increasing. There is a natural restriction map $\mathbf{res} : \widehat{\mathcal{D}}_{\Delta} \twoheadrightarrow \mathcal{D}_{\mathbf{\Sigma}}$ and we denote by $\mathrm{St} (D,s) := \mathbf{res}^{-1}(D,s)$ its fibers and by $\mathrm{St}^a(D,s)\subset \mathrm{St}(D,s)$ the subset of admissible states. For $(D, \hat{s})\in \widehat{\mathcal{D}}_{\Delta}$, we denote by $\mathds{k}(D, \hat{s})\in K_{\Delta}$ the balanced map defined by 
$$ \mathds{k}(D, \hat{s}) (e) := \sum_{v\in D\cap e} \hat{s}(v) \quad , e\in \mathcal{E}(\Delta).$$

\begin{lemma}\label{lemma_qtr}
For $(D,s) \in \mathcal{D}_{\mathbf{\Sigma}}$, one has the equality
\begin{equation}\label{eq_qtrace}
 \Tr_{\omega}^{\Delta} ([D,s]) = \sum_{\hat{s} \in \mathrm{St}(D,s)} P_{\hat{s}}(\omega) Z^{\mathds{k}(D,\hat{s})}, 
 \end{equation}
for some Laurent polynomials $P_{\hat{s}}(X^{\pm 1}) \in \mathbb{Z}[X^{\pm 1}]$. Moreover, if $\hat{s}$ is admissible, then $P_{\hat{s}}(\omega)=\omega^{n(\hat{s})}$ for some $n(\hat{s})\in \mathbb{Z}$.
\end{lemma}

\begin{proof}
Let $\mathfrak{o}_{\Delta}$ be an  orientation of the edges of $\Delta$ whose restriction to the boundary arcs coincides with $\mathfrak{o}_{+}$. For each $\mathbb{T}$, this induces an orientation $\mathfrak{o}_{\mathbb{T}}$ of the edges of $\mathbb{T}$ and we chose $\mathfrak{o}_{\Delta}$ such that $\mathfrak{o}_{\mathbb{T}}\neq \mathfrak{o}^-$ for all $\mathbb{T}\in F(\Delta)$. Let $i^{\Delta}: \overline{\mathcal{S}}_{\omega}(\mathbf{\Sigma}) \hookrightarrow \otimes_{\mathbb{T}\in F(\Delta)} \overline{\mathcal{S}}_{\omega}(\mathbb{T})$ be the associated splitting morphism. A full state $\widehat{s}$ induces for each face $\mathbb{T}\in F(\Delta)$ a state $\restriction{\widehat{s}}{\mathbb{T}}$ of $\restriction{D}{\mathbb{T}}:= D\cap \mathbb{T}$ and, by definition of the splitting morphism, one has 
$$ i^{\Delta}\left(\Tr_{\omega}^{\Delta}([D,s])\right) = \sum_{\widehat{s}\in \mathrm{St}(D,s)} \otimes_{\mathbb{T}\in F(\Delta)} [\restriction{D}{\mathbb{T}}, \restriction{\widehat{s}}{\mathbb{T}}]_{\mathfrak{o}_{\mathbb{T}}}.$$
By Lemma \ref{lemma_triangle_trace}, one has $[\restriction{D}{\mathbb{T}}, \restriction{\widehat{s}}{\mathbb{T}}]_{\mathfrak{o}_{\mathbb{T}}}=P_{\widehat{s}, \mathbb{T}}(\omega) Z^{\mathds{k}(\restriction{D}{\mathbb{T}}, \restriction{\widehat{s}}{\mathbb{T}})}$. So setting $P_{\hat{s}}(\omega)=\prod_{\mathbb{T}\in F(\Delta)} P_{\widehat{s}, \mathbb{T}}(\omega)$ and noting that $i^{\Delta} (Z^{\mathds{k}(D,s)})= \otimes_{\mathbb{T}\in F(\Delta)} Z^{\mathds{k}(\restriction{D}{\mathbb{T}}, \restriction{\widehat{s}}{\mathbb{T}})}$, we find that 
$$  i^{\Delta}\left(\Tr_{\omega}^{\Delta}([D,s])\right) = i^{\Delta}\left( \sum_{\hat{s} \in \mathrm{St}(D,s)} P_{\hat{s}}(\omega) Z^{\mathds{k}(D,\hat{s})}\right).$$
This proves the first assertion. Now if $\hat{s}$ is admissible, by Lemma \ref{lemma_triangle_trace}, we have $P_{\hat{s}, \mathbb{T}}= \omega^{n(\hat{s}, \mathbb{T})}$ for some $n(\hat{s}, \mathbb{T})\in \mathbb{Z}$, so $P_{\widehat{s}}(\omega) = \omega^{\sum_{\mathbb{T}\in F(\Delta)}n(\hat{s}, \mathbb{T})}$. This concludes the proof.

\end{proof}

\section{Finite set of generators for the (reduced) stated skein algebras}

\par For $\mathbf{\Sigma}= (\Sigma, \mathcal{P})$ a closed punctured surface, the fact that the skein algebra $\mathcal{S}_{\omega}(\mathbf{\Sigma})$ is finitely generated was first proved by Bullock in \cite{BullockGeneratorsSkein}. This results was refined in \cite{AbdielFrohman_SkeinFrobenius, FrohmanKania_SkeinRootUnity, SantharoubaneSkeinGenerators} by finding smaller sets of generators. The fact that the stated skein algebra associated to a punctured surface with one boundary component and exactly one puncture on the boundary is finitely generated was proved by Faitg in \cite{Faitg_LGFT_SSkein}.
The goal of this section is to prove that, for an arbitrary open punctured surface, both the associated stated skein algebras and their reduced versions are finitely generated. During all this section we fix a punctured surface $\mathbf{\Sigma}=(\Sigma, \mathcal{P})$ such that $\Sigma$ is connected and has non empty boundary. Let us sketch the strategy. We will exhibit finite sets $\mathbb{G}$ of arcs such that $\mathcal{S}_{\omega}(\mathbf{\Sigma})$ is generated by the stated arcs of the form $\alpha_{\varepsilon \varepsilon'}$ for $\alpha \in \mathbb{G}$. Figure \ref{fig_generators} shows such a set $\mathbb{G}$ and how the cutting arc relations \eqref{cutting_arc_rel} can be used to express a simple stated diagram $[D,s]\in \mathcal{B}$ as a polynomial in the generators $\alpha_{\varepsilon \varepsilon'}, \alpha \in \mathbb{G}$. The key property that $\mathbb{G}$ must satisfies is that its elements generate a certain groupoid $\Pi_1(\Sigma_{\mathcal{P}}, \mathbb{V})$. 

\begin{figure}[!h] 
\centerline{\includegraphics[width=10cm]{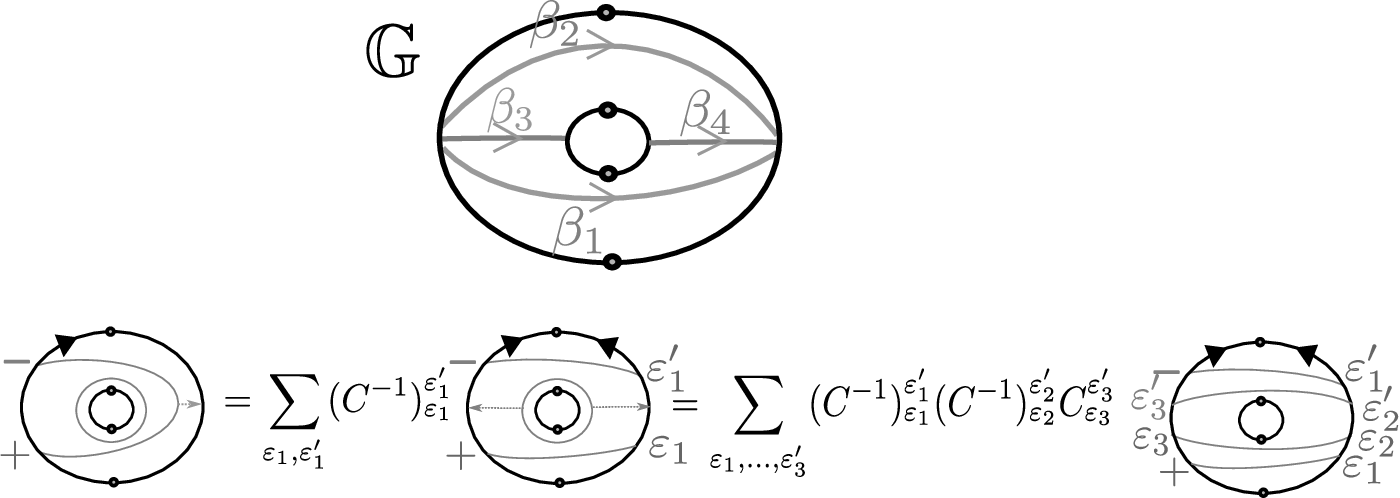} }
\caption{On the top: a set $\mathbb{G}=\{\beta_1, \ldots, \beta_4\}$ of generators of $\Pi_1(\Sigma_{\mathcal{P}}, \mathbb{V})$. On the bottom: an application of the cutting arc relations to express a simple stated diagram in terms of the elements of $\mathcal{A}^{\mathbb{G}}$. We draw dotted arrows to exhibit where we perform the cutting arc relations.} 
\label{fig_generators} 
\end{figure}

\begin{lemma}\label{lemma_gencurves}
The stated skein algebra $\mathcal{S}_{\omega}(\mathbf{\Sigma})$ is generated by the classes of stated arcs.
\end{lemma}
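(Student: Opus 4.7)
The plan is to combine the basis theorem for $\mathcal{S}_{\omega}(\mathbf{\Sigma})$ with a local cutting trick that rewrites any closed curve in terms of stated arcs on the boundary. First, I would invoke \cite[Theorem $2.11$]{LeStatedSkein}: every element of $\mathcal{S}_{\omega}(\mathbf{\Sigma})$ is a $\mathbb{C}$-linear combination of classes $[D,s] \in \mathcal{B}$, where $D$ is simple and $s$ is $\mathfrak{o}^+$-increasing. I would then apply Lemma \ref{lemma_product} iteratively to the decomposition $D = D_1 \sqcup \cdots \sqcup D_k$ into connected components, obtaining a factorization $[D,s] = \omega^n [D_1, s_1] \cdots [D_k, s_k]$ for some $n \in \mathbb{Z}$. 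This reduces the problem to showing that the class of a single stated connected simple diagram lies in the subalgebra $\mathcal{A} \subseteq \mathcal{S}_{\omega}(\mathbf{\Sigma})$ generated by stated arcs.

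Stated arcs lie in $\mathcal{A}$ by definition, so the only remaining case is that of a simple closed curve $\gamma$. If $\gamma$ bounds a disc in $\Sigma_{\mathcal{P}}$, the second relation in \eqref{eq: skein 1} yields $[\gamma] = -(A^2 + A^{-2}) \cdot 1_{\mathcal{S}_{\omega}(\mathbf{\Sigma})}$ directly, which lies in $\mathcal{A}$.

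For a non-trivial simple closed curve $\gamma$, the heart of the argument exploits the hypothesis that $\Sigma$ has non-empty boundary. I would pick a boundary arc $b$ and, using connectedness of $\Sigma$, a simple path $p$ in $\Sigma_{\mathcal{P}}$ from a point of $\gamma$ to an interior point of $b$. Isotoping $\gamma$ along an embedded tubular neighborhood of $p$ produces an isotopic diagram $\gamma'$ possessing a U-shaped piece inside a small open disc $N$ abutting $b$, chosen small enough that $N \cap \gamma'$ consists only of this U-shape. Inside $N$, the diagram matches the left-hand side of the cutting arc relation \eqref{cutting_arc_rel}; applying that relation locally then gives
\[
[\gamma] = \sum_{i, j \in \{-, +\}} (C^{-1})^i_j \, [\alpha_{ij}],
\]
where each $\alpha_{ij}$ is a stated arc with both endpoints on $b$ carrying states $i$ and $j$. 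In particular $[\gamma] \in \mathcal{A}$, which concludes the reduction.

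The main obstacle, though mild, is arranging the isotopy so that inside $N$ the diagram really looks like the left-hand side of \eqref{cutting_arc_rel}, with no stray strands of $\gamma'$ intruding. This is a routine consequence of the existence of an embedded annular neighborhood of $p$ combined with shrinking $N$. Beyond this geometric setup, the proof is a direct combination of the basis theorem, Lemma \ref{lemma_product}, and the local skein relations \eqref{eq: skein 1} and \eqref{cutting_arc_rel}.
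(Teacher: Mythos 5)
Your proposal is correct and follows essentially the same route as the paper: reduce to basis elements, induct on connected components via Lemma \ref{lemma_product}, then isotope a closed curve to a boundary arc and apply the cutting arc relation \eqref{cutting_arc_rel}. The extra case of a trivial closed curve is harmless but unnecessary, since simple diagrams have no trivial components by definition.
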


\begin{proof} Recall that  $\mathcal{S}_{\omega}(\mathbf{\Sigma})$ has basis the set of classes $[D,s]$ where $D$ is a simple diagram and $s$ is $\mathfrak{o}^+$ increasing. By Lemma \ref{lemma_product}, an induction on the number of connected components of $D$ shows that $[D,s]$ lies in the algebra generated by the classes of connected stated diagrams. It remains to show that the class of a closed curve $\gamma$ lies in the algebra generated by stated arcs. This is shown by isotoping such a $\gamma$ to bring one of its point close to some boundary arc and applying the cutting arc relation \eqref{cutting_arc_rel}.This concludes the proof.

\end{proof}

\par For each boundary arc $a$ of $\mathbf{\Sigma}$, fix a point $v_a \in a$ and let $\mathbb{V}$ be the set $\{v_a\}_a$. We denote by $\Pi_1(\Sigma_{\mathcal{P}}, \mathbb{V})$ the full subcategory of the fundamental groupoid $\Pi_1(\Sigma_{\mathcal{P}})$ generated by $\mathbb{V}$. Said differently, $\Pi_1(\Sigma_{\mathcal{P}}, \mathbb{V})$ is the small groupoid whose set of objects is $\mathbb{V}$ and such that a morphism (called path) $\alpha : v_1 \rightarrow v_2$ is a homotopy class of continuous map $\varphi_{\alpha} : [0,1] \rightarrow \Sigma_{\mathcal{P}}$ (a \textit{geometric representative} of $\alpha$) with $\varphi_{\alpha}(0)=v_1$ and $\varphi_{\alpha}(1)=v_2$. The composition is the concatenation of paths. For a path $\alpha : v_1 \rightarrow v_2$, we write $s(\alpha)=v_1$ (the source point) and $t(\alpha)=v_2$ (the target point) and $\alpha^{-1} : v_2 \rightarrow v_1$ the path with opposite orientation.  By abuse of notations, we denote by $\Pi_1(\Sigma_{\mathcal{P}}, \mathbb{V})$ the set of paths.

\begin{definition}\label{def_generators}
\begin{enumerate}
\item  A \textit{set of generators} for $\Pi_1(\Sigma_{\mathcal{P}}, \mathbb{V})$ is a set $\mathbb{G}$ of paths in $\Pi_1(\Sigma_{\mathcal{P}}, \mathbb{V})$ such that any path $\alpha \in \Pi_1(\Sigma_{\mathcal{P}}, \mathbb{V})$ decomposes as $\alpha= \alpha_1^{\varepsilon_1} \ldots \alpha_n^{\varepsilon_n}$ with $\varepsilon_i = \pm 1$ and $\alpha_i \in \mathbb{G}$. We also require that each path $\alpha \in \mathbb{G}$ is the homotopy class of some embedding  $\varphi_{\alpha} : [0,1] \rightarrow \Sigma_{\mathcal{P}}$ such that the images of the $\varphi_{\alpha}$  do not intersect outside $\mathbb{V}$. We will always assume implicitly that the geometric representatives $\varphi_{\alpha}$ is part of the data defining a set of generators.
\item For a path $\alpha : v_1 \rightarrow v_2$ and $\varepsilon, \varepsilon' \in \{-,+\}$, we denote by $\alpha_{\varepsilon \varepsilon'} \in \mathcal{S}_{\omega}(\mathbf{\Sigma})$ the class of the stated arc $(\alpha, s)$ where $s(v_1)=\varepsilon$ and $s(v_2)=\varepsilon'$. Set 
$$ \mathcal{A}^{\mathbb{G}} := \{ \alpha_{\varepsilon \varepsilon'} | \alpha \in \mathbb{G}, \varepsilon, \varepsilon' \in \{-,+\}  \} \subset \mathcal{S}_{\omega}(\mathbf{\Sigma}). $$
\end{enumerate}
 \end{definition}

\begin{example}\label{exemple_pres}
For any connected open punctured surface $\mathbf{\Sigma}$, the groupoid $\Pi_1(\Sigma_{\mathcal{P}}, \mathbb{V})$ admits a finite set of generators depicted in Figure \ref{fig_generators_final} and defined as follows. Denote by $a_0, \ldots, a_n$ the boundary arcs, by $\partial_0, \ldots, \partial_r$ the boundary components of $\Sigma$ with $a_0\subset \partial_0$ and write  $v_i:= a_i \cap \mathbb{V}$. 
Let $\overline{\Sigma}$ be the surface obtained from $\Sigma$ by gluing a disc along each boundary component $\partial_i$ for $1\leq i \leq r$, and choose $\alpha_1, \beta_1, \ldots, \alpha_g, \beta_g$ some paths in $\pi_1(\Sigma_{\mathcal{P}}, v_0)(=\mathrm{End}_{\Pi_1(\Sigma_{\mathcal{P}}, \mathbb{V})}(v_0)$), such that their images in $\overline{\Sigma}$ generate the free group $\pi_1(\overline{\Sigma}, v_0)$ (said differently, the $\alpha_i$ and $\beta_i$ are longitudes and meridians of $\Sigma$). For each inner puncture $p$ choose a peripheral curve $\gamma_p \in \pi_1(\Sigma_{\mathcal{P}}, v_0)$ encircling $p$ once and for each boundary puncture $p_{\partial}$ between two boundary arcs $a_i$ and $a_j$, consider the path $\alpha_{p_{\partial}} : v_i \rightarrow v_j $ represented by the corner arc in $p_{\partial}$. Eventually, for each boundary component $\partial_j$, with $1\leq j \leq r$, containing a boundary arc $a_{k_j} \subset \partial_j$,  choose a path $\delta_{\partial_j} : v_0 \rightarrow v_{k_j}$. The set 
$$\mathbb{G}':= \{ \alpha_i, \beta_i, \alpha_p, \delta_{\partial_j} | 1\leq i \leq g, p\in \mathcal{P}, 1\leq j \leq r\}$$
is a generating set for $\Pi_1(\Sigma_{\mathcal{P}}, \mathbb{V})$ and Figure \ref{fig_generators_final} represents a set of geometric representatives for $\mathbb{G}'$. Moreover each of its generators which is not one of the $\delta_{\partial_j}$ can be expressed as a composition of the other ones, therefore a set $\mathbb{G}$ obtained from $\mathbb{G}'$ by removing one of the element of the form $\alpha_i, \beta_i$ or $\gamma_p$, is still a generating set for  $\Pi_1(\Sigma_{\mathcal{P}}, \mathbb{V})$. 
Note that $\mathbb{G}$ has cardinality $2g-2+s+n_{\partial}$, where $g$ is the genus of $\Sigma$, $s:=|\mathcal{P}|$ is the number of punctures and $n_{\partial}:= |\pi_0(\partial \Sigma)|$ is the number of boundary components.

\begin{figure}[!h] 
\centerline{\includegraphics[width=10cm]{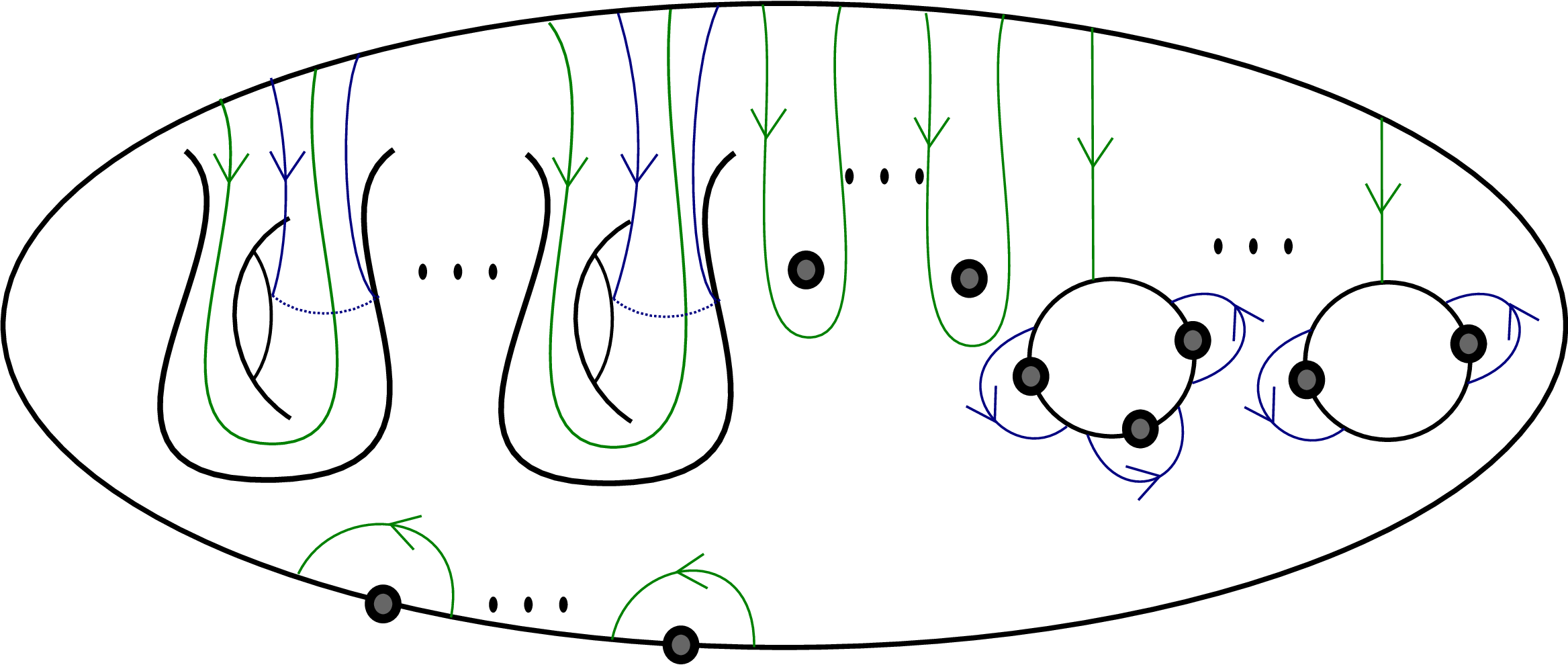} }
\caption{The geometric representatives of a set of generators for $\Pi_1(\Sigma_{\mathcal{P}}, \mathbb{V})$.} 
\label{fig_generators_final} 
\end{figure} 

\end{example}

\begin{proposition}\label{prop_generators}
If $\mathbb{G}$ is a set of generators of $\Pi_1(\Sigma_{\mathcal{P}}, \mathbb{V})$, then the set $ \mathcal{A}^{\mathbb{G}} $
generates $\mathcal{S}_{\omega}(\mathbf{\Sigma}) $ as an algebra. So the image of $\mathcal{A}^{\mathbb{G}}$ in $\overline{\mathcal{S}}_{\omega}(\mathbf{\Sigma})$ generates $\overline{\mathcal{S}}_{\omega}(\mathbf{\Sigma})$ as well.
\end{proposition}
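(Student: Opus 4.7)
The plan is to combine Lemma \ref{lemma_gencurves} with an inductive use of the cutting arc relation \eqref{cutting_arc_rel} applied at boundary points of concatenation.

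First, by Lemma \ref{lemma_gencurves}, the stated skein algebra $\mathcal{S}_{\omega}(\mathbf{\Sigma})$ is generated by classes of stated arcs. By a small isotopy along the boundary arcs, any such stated arc can be represented with endpoints in $\mathbb{V}$, hence as a morphism in the groupoid $\Pi_1(\Sigma_{\mathcal{P}}, \mathbb{V})$. It therefore suffices to prove that for every path $\beta \in \Pi_1(\Sigma_{\mathcal{P}}, \mathbb{V})$ and every pair of states $\varepsilon, \varepsilon' \in \{-,+\}$, the element $\beta_{\varepsilon \varepsilon'}$ lies in the subalgebra $\langle \mathcal{A}^{\mathbb{G}} \rangle$.

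The key technical step is a \emph{concatenation identity}: if $\beta = \gamma \cdot \delta$ with $\gamma : v_1 \to v$ and $\delta : v \to v_2$, then
\begin{equation*}
\beta_{\varepsilon \varepsilon'} = \sum_{i,j \in \{-,+\}} \mu_{ij}\, \gamma_{\varepsilon i}\, \delta_{j \varepsilon'}
\end{equation*}
in $\mathcal{S}_{\omega}(\mathbf{\Sigma})$, for some coefficients $\mu_{ij}$ coming from the matrix $C$. To derive this, I would choose a representative of $\beta$ that, in a small collar neighborhood of the boundary arc containing $v$, follows $\gamma$ until it is very close to $v$, then bounces away from the boundary and re-enters along $\delta$. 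The portion of the resulting diagram near $v$ matches the left-hand side of a cutting arc relation \eqref{cutting_arc_rel} (a U-turn near the boundary). Applying that relation replaces the U-turn by a sum over $(i,j)$ of diagrams with two stacked endpoints close to $v$, weighted by entries of $C$; after an isotopy and, if needed, the height exchange relations \eqref{height_exchange_rel}, these sum-terms coincide with the stacked product $\gamma_{\varepsilon i} \delta_{j \varepsilon'}$.

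The proof is then completed by induction on the length $n$ of a decomposition $\beta = \alpha_1^{\epsilon_1} \cdots \alpha_n^{\epsilon_n}$ with $\alpha_i \in \mathbb{G}$ and $\epsilon_i = \pm 1$, which exists by hypothesis. For $n=1$, either $\beta \in \mathbb{G}$, in which case $\beta_{\varepsilon \varepsilon'} \in \mathcal{A}^{\mathbb{G}}$ directly, or $\beta = \alpha_1^{-1}$ with $\alpha_1 \in \mathbb{G}$, in which case the identity $(\alpha_1^{-1})_{\varepsilon \varepsilon'} = (\alpha_1)_{\varepsilon' \varepsilon}$ (the same stated tangle after relabelling endpoints) gives the conclusion. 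For the inductive step, apply the concatenation identity with $\gamma = \alpha_1^{\epsilon_1} \cdots \alpha_{n-1}^{\epsilon_{n-1}}$ of length $n-1$, so that $\gamma_{\varepsilon i} \in \langle \mathcal{A}^{\mathbb{G}} \rangle$ by induction, and $\delta = \alpha_n^{\epsilon_n}$, so that $\delta_{j \varepsilon'} \in \mathcal{A}^{\mathbb{G}}$ by the base case. The second assertion of the proposition is then immediate, since $\overline{\mathcal{S}}_{\omega}(\mathbf{\Sigma})$ is a quotient of $\mathcal{S}_{\omega}(\mathbf{\Sigma})$ and the image of a generating set is still a generating set.

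The main obstacle is the careful justification of the concatenation identity. The geometric picture is natural, but making it precise requires tracking the interplay between the height orderings at the various boundary arcs (governed by \eqref{height_exchange_rel}), the orientation $\mathfrak{o}^+$ used implicitly in the cutting arc relation \eqref{cutting_arc_rel}, and the choice of diagram representative (in particular when $\gamma$ is not embedded). Fortunately, only the existence of the $\mu_{ij}$ matters here, not their exact values.
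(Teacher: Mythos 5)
Your proposal is correct and follows essentially the same route as the paper: reduce to stated arcs via Lemma \ref{lemma_gencurves}, handle inverses by the relabelling identity $(\alpha^{-1})_{\varepsilon'\varepsilon}=\alpha_{\varepsilon\varepsilon'}$, and split a concatenation $\gamma\cdot\delta$ at the intermediate basepoint using the cutting arc relation \eqref{cutting_arc_rel}; your explicit induction on word length is just the paper's closure argument for the set of paths whose stated classes lie in $\langle\mathcal{A}^{\mathbb{G}}\rangle$, phrased constructively. The "concatenation identity" you isolate is exactly the step the paper invokes (and illustrates in Figure \ref{fig_generators}), so no gap.
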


\begin{proof}[Proof of Proposition \ref{prop_generators}]  Let $\widehat{\mathcal{A}}$ be the set of paths $\alpha$ such that for every $\varepsilon, \varepsilon' \in \{-,+\}$, the class $\alpha_{\varepsilon \varepsilon'}$ lies in the subalgebra of $\mathcal{S}_{\omega}(\mathbf{\Sigma}) $ generated by $\mathcal{A}^{\mathbb{G}}$. By definition one has $\mathbb{G} \subset \widehat{\mathcal{A}}$ and by Lemma \ref{lemma_gencurves} it suffices to prove that $\widehat{\mathcal{A}}= \Pi_1(\Sigma_{\mathcal{P}}, \mathbb{V})$. Note that $\alpha \in \widehat{\mathcal{A}}$ implies that $\alpha^{-1} \in \widehat{\mathcal{A}}$ because $(\alpha^{-1})_{\varepsilon' \varepsilon}$ and $\alpha_{\varepsilon \varepsilon'}$ represent the same element in $\mathcal{S}_{\omega}(\mathbf{\Sigma})$. Since $\mathbb{G}$ generates the groupoid, it suffices to prove that if $\alpha, \beta \in \widehat{\mathcal{A}}$ with $t(\alpha)=s(\beta)$, then $\alpha\beta \in \widehat{\mathcal{A}}$. The cutting arc relations \eqref{cutting_arc_rel} imply that $(\alpha\beta)_{\varepsilon \varepsilon'}$ lies in the subalgebra generated by elements $\alpha_{\varepsilon \mu}$ and $\beta_{\mu' \varepsilon'}$ for $\mu, \mu' \in \{-,+\}$, therefore  $\alpha\beta \in \widehat{\mathcal{A}}$.

\end{proof}

\begin{corollary} Let $\mathbf{\Sigma}=(\Sigma, \mathcal{P})$ be a punctured surface with $\Sigma$ connected of genus $g$ with $n_{\partial}\geq 1$ boundary components and with $s:=|\mathcal{P}|$ punctures such that $s_{\partial}$ of them are on the boundary of $\Sigma$ and $\mathring{s}$ are in the interior of $\Sigma$.
\begin{enumerate}
\item The stated skein algebra $\mathcal{S}_{\omega}(\mathbf{\Sigma})$ is generated, as an algebra, by a set of $8g-8+4s+4n_{\partial}$ elements. 
\item The reduced stated skein algebra $\overline{\mathcal{S}}_{\omega}(\mathbf{\Sigma})$ is generated, as an algebra, by a set of $8g-8+4\mathring{s}+3s_{\partial}+4n_{\partial}$ elements if either $g\geq 1$ or $\mathring{s}\geq 1$, or by a set of  $8g-7+4\mathring{s}+3s_{\partial}+4n_{\partial}$ elements elsewhere.
\end{enumerate}
\end{corollary}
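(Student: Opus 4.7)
The plan is to apply Proposition \ref{prop_generators} to the explicit generating set $\mathbb{G}$ of the groupoid $\Pi_1(\Sigma_{\mathcal{P}}, \mathbb{V})$ constructed in Example \ref{exemple_pres}, then to count the stated arcs obtained, while carefully tracking the number of corner arcs present in $\mathbb{G}$. The key point is that a corner arc at a boundary puncture contributes only three non-zero stated arcs to $\overline{\mathcal{S}}_{\omega}(\mathbf{\Sigma})$ instead of four, because the state $(-,+)$ produces a bad arc and hence vanishes in the reduced algebra.

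For item (1), Example \ref{exemple_pres} produces $\mathbb{G}$ of cardinality $2g-2+s+n_{\partial}$, and each $\alpha \in \mathbb{G}$ gives four stated arcs $\alpha_{\varepsilon \varepsilon'}$, so $|\mathcal{A}^{\mathbb{G}}| = 4(2g-2+s+n_{\partial}) = 8g-8+4s+4n_{\partial}$, and Proposition \ref{prop_generators} concludes.

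For item (2), the strategy is to choose $\mathbb{G}$ so as to retain as many corner arcs as possible. In Case 1 (when $g \geq 1$ or $\mathring{s} \geq 1$), the observation in Example \ref{exemple_pres} allows one to obtain $\mathbb{G}$ by removing one of the generators $\alpha_i, \beta_i$, or $\gamma_p$ from $\mathbb{G}'$; the resulting $\mathbb{G}$ retains all $s_{\partial}$ corner arcs, and the generator count becomes $4|\mathbb{G}| - s_{\partial}$, which rearranges into $8g - 8 + 4 \mathring{s} + 3 s_{\partial} + 4 n_{\partial}$ using $s = \mathring{s} + s_{\partial}$. In Case 2 ($g = \mathring{s} = 0$) no such non-corner generator is available in $\mathbb{G}'$, so one must instead remove a corner arc from $\mathbb{G}'$; the resulting $\mathbb{G}$ contains $s_{\partial} - 1$ corner arcs and $n_{\partial} - 1$ arcs of the form $\delta_{\partial_j}$, yielding $3(s_{\partial}-1) + 4(n_{\partial}-1) = 3 s_{\partial} + 4 n_{\partial} - 7 = 8g - 7 + 4\mathring{s} + 3 s_{\partial} + 4 n_{\partial}$ generators.

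The only mildly delicate step is the claim, needed in Case 2, that a corner arc in $\mathbb{G}'$ is expressible as a composition of the remaining generators; this is precisely the parenthetical remark in Example \ref{exemple_pres} that every non-$\delta$ element of $\mathbb{G}'$ is a composition of the others, so no extra work is required.
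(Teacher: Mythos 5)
Your proof is correct and follows essentially the same route as the paper: apply Proposition \ref{prop_generators} to the generating set of Example \ref{exemple_pres}, then in the reduced case discard the one bad arc contributed by each corner arc of $\mathbb{G}$, splitting into the two cases according to whether a non-corner generator can be removed from $\mathbb{G}'$. Your Case 2 count ($3(s_{\partial}-1)+4(n_{\partial}-1)$) is just a rephrasing of the paper's $|\mathcal{A}^{\mathbb{G}}|-(s_{\partial}-1)$, and your appeal to the remark in Example \ref{exemple_pres} for removability of a corner arc is exactly what the paper does.
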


\begin{proof} For $\mathbb{G}$ the set of generators in Example \ref{exemple_pres}, Proposition \ref{prop_generators} implies that $\mathcal{A}^{\mathbb{G}}$ generates $\mathcal{S}_{\omega}(\mathbf{\Sigma})$, so the first assertion follows from the computation of its cardinality: 
$$ |\mathcal{A}^{\mathbb{G}}| = 4|\mathbb{G}| = 8g-8+4s+4n_{\partial}$$.
Recall from Example \ref{exemple_pres} that $\mathbb{G}$ is obtained from a bigger set $\mathbb{G}'$ by removing one arbitrary generator, say $g\in \mathbb{G}'$. If either $g\geq 1$ or $\mathring{s}\geq 1$, one can suppose that $g$ is not the path of a corner arc $\alpha_{p_{\partial}}$. In this case, the set $\mathcal{A}^{\mathbb{G}}$ contains $s_{\partial}$ bad arcs. The image in the reduced stated skein algebra of the set obtained from $\mathcal{A}^{\mathbb{G}}$ by removing these bad arcs generates $\overline{\mathcal{S}}_{\omega}(\mathbf{\Sigma})$ and has cardinality $|\mathcal{A}^{\mathbb{G}}|-s_{\partial} = 8g-8+4\mathring{s}+3s_{\partial}+4n_{\partial}$. In the case where $g=\mathring{s}=0$, $\mathbb{G}$ is necessarily obtained from $\mathbb{G}'$ by removing the path of a corner arc, so $\mathcal{A}^{\mathbb{G}}$ contains $s_{\partial}-1$ bad arcs and we conclude likewise.

\end{proof}

\section{The (reduced) stated skein algebras are finitely generated over their centers}

For a punctured surface $\mathbf{\Sigma}=(\Sigma, \mathcal{P})$ such that $\Sigma$ is closed, and $\omega$ a root of unity of odd order $N>1$, the fact that the Kauffman-bracket skein algebra $\mathcal{S}_{\omega}(\mathbf{\Sigma})$ is finitely generated as a module over its center was proved in \cite{AbdielFrohman_SkeinFrobenius}. The goal of this subsection is to extend this result for both the stated skein algebras and their reduced versions associated to open punctured surfaces, namely to prove the 

\begin{proposition}\label{prop_finitely_gen_center}
For $\omega$ a root of unity of odd order $N>1$ and $\mathbf{\Sigma}=(\Sigma, \mathcal{P})$ a punctured surface with $\Sigma$ connected with non-empty boundary, both $\mathcal{S}_{\omega}(\mathbf{\Sigma})$ and $\overline{\mathcal{S}}_{\omega}(\mathbf{\Sigma})$ are finitely generated as modules over their centers.
\end{proposition}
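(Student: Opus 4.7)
The plan is to reduce the problem to $\mathcal{S}_\omega(\mathbf{\Sigma})$, then to exhibit enough central elements via the Chebyshev morphism to force each algebra generator to be integral over a finitely generated central subalgebra.

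First I would observe that $\overline{\mathcal{S}}_\omega(\mathbf{\Sigma})$ is a quotient of $\mathcal{S}_\omega(\mathbf{\Sigma})$ by the ideal of bad arcs and that the image of any central subalgebra of $\mathcal{S}_\omega(\mathbf{\Sigma})$ is central in the quotient; hence it suffices to prove that $\mathcal{S}_\omega(\mathbf{\Sigma})$ is finitely generated as a module over its center, and then $\overline{\mathcal{S}}_\omega(\mathbf{\Sigma})$ will automatically be so as well. By Proposition~\ref{prop_generators}, $\mathcal{S}_\omega(\mathbf{\Sigma})$ is generated as an algebra by the finite set $\mathcal{A}^{\mathbb{G}}$ of stated arcs, and applying the same proposition at $\omega=1$ shows that $\mathcal{S}_{+1}(\mathbf{\Sigma})$ is finitely generated as well. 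By Theorem~\ref{theorem_center_skein}, its image $\mathcal{C}:= j_{\mathbf{\Sigma}}(\mathcal{S}_{+1}(\mathbf{\Sigma}))$ is a finitely generated central subalgebra of $\mathcal{S}_\omega(\mathbf{\Sigma})$ which contains the element $\alpha_{\varepsilon\varepsilon'}^{(N)}$ for every generator $\alpha \in \mathbb{G}$ and every choice of states. I would aim to show that $\mathcal{S}_\omega(\mathbf{\Sigma})$ is finitely generated as a $\mathcal{C}$-module, which a fortiori gives finite generation over $\mathcal{Z}(\mathcal{S}_\omega(\mathbf{\Sigma}))\supseteq \mathcal{C}$.

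The heart of the argument is to establish that each arc generator $\alpha_{\varepsilon\varepsilon'}\in \mathcal{A}^{\mathbb{G}}$ is integral over $\mathcal{C}$. When the two endpoints of $\alpha$ lie in distinct boundary arcs, the excerpt already gives $(\alpha_{\varepsilon\varepsilon'})^N = \alpha_{\varepsilon\varepsilon'}^{(N)}\in \mathcal{C}$ on the nose. When both endpoints lie on the same boundary arc, the two elements $(\alpha_{\varepsilon\varepsilon'})^N$ and $\alpha_{\varepsilon\varepsilon'}^{(N)}$ differ only in the height ordering of the source and target points, interleaved in the former but sources-below-targets in the latter. Applying the height exchange relations~\eqref{height_exchange_rel} to permute heights, each exchange of equal-state pairs contributes only a power of $\omega$, while each mixed-state exchange contributes a scalar plus a correction term containing a boundary half-circle $\heightcurve$; the latter is then resolved via the cutting arc relations~\eqref{cutting_arc_rel}, producing elements of strictly smaller complexity in an appropriate filtration on $\mathcal{S}_\omega(\mathbf{\Sigma})$. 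A descending induction along that filtration then yields an identity of the form $(\alpha_{\varepsilon\varepsilon'})^N = \omega^{m}\,\alpha_{\varepsilon\varepsilon'}^{(N)} + R$, where $R$ is a polynomial in the generators $\mathcal{A}^{\mathbb{G}}$ with coefficients in $\mathcal{C}$ and strictly lower complexity, which is the desired integral relation.

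To conclude, I would combine these integral relations with the primality of $\mathcal{S}_\omega(\mathbf{\Sigma})$ (Remark~\ref{remark_prime}) and the fact that this algebra satisfies a polynomial identity of bounded degree (it embeds, via the quantum trace applied face by face, into a tensor product of quantum tori), and invoke the standard noncommutative Artin--Tate argument for affine PI algebras to deduce that $\mathcal{S}_\omega(\mathbf{\Sigma})$ is finitely generated as a $\mathcal{C}$-module. I expect the main obstacle to be the third step: the correction terms produced by the height exchange and cutting arc relations have a subtle combinatorial structure, and making the induction well-founded will most likely require transporting the problem through the quantum trace $\Tr_\omega^\Delta$ to the balanced Chekhov--Fock algebra $\mathcal{Z}_\omega(\mathbf{\Sigma},\Delta)$, where Lemma~\ref{lemma_qtr} reduces the integrality statement to the elementary identity $(Z^{\mathbf{k}})^N = Z^{N\mathbf{k}}$ together with the fact that $N\mathbf{k}$ always lies in the kernel of the mod-$N$ Weil--Petersson form.
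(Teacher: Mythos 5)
Your overall strategy --- reduce to $\mathcal{S}_{\omega}(\mathbf{\Sigma})$, use the Chebyshev image $\mathcal{C}$ as a finitely generated central subalgebra, dispose of arcs with endpoints on distinct boundary arcs via $(\alpha_{\varepsilon\varepsilon'})^N=\alpha_{\varepsilon\varepsilon'}^{(N)}$, and work harder for arcs returning to the same boundary arc --- matches the paper's in spirit, but two steps have genuine gaps. First, the claimed identity $(\alpha_{\varepsilon\varepsilon'})^N=\omega^{m}\,\alpha_{\varepsilon\varepsilon'}^{(N)}+R$ is not an integral relation for the single element $\alpha_{\varepsilon\varepsilon'}$. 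For $\varepsilon=\varepsilon'$ the reordering produces only powers of $\omega$ and one gets $R=0$; but for $\alpha_{+-}$ the mixed height exchanges carry the off-diagonal entry $A-A^{-3}$ of $\mathscr{R}$, which \emph{swaps the states} of the two strands involved. The remainder $R$ is therefore a polynomial in all four stated arcs $\alpha_{\mu\mu'}$, not in $\alpha_{+-}$ alone, so no monic polynomial in $\alpha_{+-}$ with coefficients in $\mathcal{C}$ is produced. What is actually true --- and what the paper proves in Lemmas \ref{l3}--\ref{l5} --- is that the whole subalgebra $\mathcal{A}(\alpha)$ generated by the four stated versions of $\alpha$ is a finite module over the subalgebra generated by the four central elements $\alpha_{\mu\mu'}^{(N)}$; the key input is the factorization Lemma \ref{l4}, where $N$ parallel copies are pulled off a tangle precisely because $A^N=1$ makes the sum of correction terms telescope to zero. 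This is not a routine descending induction and is the technical heart of that section.

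Second, even granting integrality of each generator over $\mathcal{C}$, your concluding step does not follow. The Artin--Tate lemma goes in the opposite direction (module-finiteness implies the central subalgebra is affine), and for a noncommutative affine PI algebra, integrality of the generators over a central subalgebra does not by itself yield module-finiteness: the relevant tool, Shirshov's height theorem, requires integrality of all words up to the PI degree, which you do not address. The paper avoids this entirely by an ordered-monomial spanning result (Lemma \ref{l1}): since the geometric representatives of the generators in $\mathbb{G}$ are pairwise disjoint, the height exchange relations \eqref{height_exchange_rel} rewrite any monomial as a combination of monomials in the fixed order $\mathcal{A}(\beta_1)\cdots\mathcal{A}(\beta_n)$, after which module-finiteness of each $\mathcal{A}(\beta_i)$ over its central subalgebra immediately gives a finite spanning set for the whole algebra. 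Your fallback of transporting integrality through the quantum trace does not close the gap either: $\Tr_{\omega}^{\Delta}$ is only defined on the reduced algebra, it is not surjective, and integrality of $\Tr_{\omega}^{\Delta}(x)$ over the center of the quantum torus does not yield a relation whose coefficients lie in the image of $\mathcal{C}$.
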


We will then deduce the 

\begin{proposition}\label{prop_almost_Azumaya} 
The algebras $\mathcal{S}_{\omega}(\mathbf{\Sigma})$ and  $\overline{\mathcal{S}}_{\omega}(\mathbf{\Sigma})$ are affine almost-Azumaya.
\end{proposition}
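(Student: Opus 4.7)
The plan is to verify directly the three conditions that characterize affine almost-Azumaya algebras, as recalled in the introduction via Theorem 3.6 of \cite{FrohmanKaniaLe_UnicityRep}: finite generation as a $\mathbb{C}$-algebra, primality, and finite generation as a module over the center. All three have been established in the preceding material, so this proposition is essentially a synthesis rather than genuinely new work.

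First I would reduce to the case where $\Sigma$ is connected with non-empty boundary. When $\Sigma$ is closed the statement is already part of Theorem \ref{theorem_FKL}(2), so only the open case requires attention, and under the hypothesis of admitting a triangulation a connected component argument suffices. Under this reduction, condition (i) (finite generation as an algebra) is the corollary to Proposition \ref{prop_generators}, which exhibits an explicit finite family $\mathcal{A}^{\mathbb{G}}$ of stated arcs indexed by a finite set of generators $\mathbb{G}$ of the fundamental groupoid $\Pi_1(\Sigma_{\mathcal{P}}, \mathbb{V})$; the image of $\mathcal{A}^{\mathbb{G}}$ in the reduced algebra then gives a finite generating family for $\overline{\mathcal{S}}_{\omega}(\mathbf{\Sigma})$. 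Condition (ii) (primality) is the content of Remark \ref{remark_prime}: each triangle algebra $\overline{\mathcal{S}}_{\omega}(\mathbb{T}) \cong \mathcal{Z}_{\omega}(\mathbb{T})$ is a quantum torus and therefore a domain, the tensor product $\bigotimes_{\mathbb{T}\in F(\Delta)} \overline{\mathcal{S}}_{\omega}(\mathbb{T})$ is a domain, and the injective triangular embeddings $i^{\Delta}$ transport this to both $\mathcal{S}_{\omega}(\mathbf{\Sigma})$ and $\overline{\mathcal{S}}_{\omega}(\mathbf{\Sigma})$, which are therefore domains, hence prime. Condition (iii) is precisely Proposition \ref{prop_finitely_gen_center}.

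Combining these three ingredients with the characterization of affine almost-Azumaya algebras from the introduction concludes the argument. There is no genuine obstacle to this proposition itself: the serious work is packaged into Proposition \ref{prop_finitely_gen_center}, whose proof requires constructing enough central elements by pulling back the Frobenius morphism of $\mathcal{Z}_{\omega}(\mathbf{\Sigma}, \Delta)$ through the quantum trace and exploiting the Chebyshev morphism of Theorem \ref{theorem_center_skein} to realize central inner puncture and boundary elements inside the (reduced) stated skein algebra. The present statement simply assembles finite generation, primality, and module-finiteness over the center into the uniform conclusion of being affine almost-Azumaya.
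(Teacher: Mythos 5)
Your proof is correct and is essentially identical to the paper's: the proposition is proved by assembling primality (Remark \ref{remark_prime}), finite generation as an algebra (Proposition \ref{prop_generators}), and finite generation as a module over the center (Proposition \ref{prop_finitely_gen_center}), then invoking the characterization of affine almost-Azumaya algebras from \cite{FrohmanKaniaLe_UnicityRep}. The only minor inaccuracy is in your aside about Proposition \ref{prop_finitely_gen_center}: the central elements $\alpha_{\varepsilon\varepsilon'}^{(N)}$ used there come directly from the Chebyshev morphism, not from pulling back the Frobenius morphism through the quantum trace, but this does not affect the argument.
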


\begin{proof}
The  algebras $\mathcal{S}_{\omega}(\mathbf{\Sigma})$ and  $\overline{\mathcal{S}}_{\omega}(\mathbf{\Sigma})$ are: $(1)$  prime algebras by Remark \ref{remark_prime}, $(2)$ finitely generated as  algebras by Proposition \ref{prop_generators}, $(3)$ finitely generated as  module over their centers by Proposition \ref{prop_finitely_gen_center}, therefore they are affine almost-Azumaya.
\end{proof}

We cut the proof of Proposition \ref{prop_finitely_gen_center} in five lemmas. For an arc $\alpha$, we denote by $\mathcal{A}(\alpha) \subset \mathcal{S}_{\omega}(\mathbf{\Sigma})$ the subalgebra generated by the classes of stated arcs $\alpha_{\varepsilon \varepsilon'}$. For $\beta \in \mathbb{G}$, we also denote by  $\mathcal{A}(\beta)$
the algebra associated to its underlying arc given by the geometric representative.

\begin{lemma}\label{l1}
Let $\mathbb{G}= \{ \beta_1, \ldots, \beta_n \}$ be a set of generators of $\Pi_1(\Sigma_{\mathcal{P}}, \mathbb{V})$ ordered arbitrarily. Then $\mathcal{S}_{\omega}(\mathbf{\Sigma})$ is linearly spanned by elements of the form $x_1x_2 \ldots x_n$, where $x_i \in \mathcal{A}(\beta_i)$.
\end{lemma}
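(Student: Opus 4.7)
The plan is to combine Proposition \ref{prop_generators} with an inductive reordering argument driven by the height exchange relations \eqref{height_exchange_rel}. By Proposition \ref{prop_generators} every element of $\mathcal{S}_\omega(\mathbf{\Sigma})$ is a $\mathbb{C}$-linear combination of words $y_1 y_2 \cdots y_m$ in which each factor $y_k = \beta_{i_k, \varepsilon_k \varepsilon'_k}$ is a single stated arc carried by some generator $\beta_{i_k}$. I will attach to such a word the inversion count $\mathrm{inv}(y_1 \cdots y_m) := |\{(k,k') : k<k',\ i_k > i_{k'}\}|$ and induct on this quantity. When $\mathrm{inv} = 0$ the sequence $i_1 \leq \cdots \leq i_m$ is non-decreasing; grouping consecutive factors with equal index then produces a product $x_1 \cdots x_n$ with $x_j$ the (possibly trivial) product of factors carrying index $j$, and in particular $x_j \in \mathcal{A}(\beta_j)$.

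The inductive step reduces to the following local commutation claim: whenever $i_k > i_{k+1}$, the product $y_k y_{k+1}$ equals a linear combination of terms $\widetilde{y}_{k+1}^{(\alpha)} \widetilde{y}_k^{(\alpha)}$ in which $\widetilde{y}_{k+1}^{(\alpha)}$ is a stated arc on $\beta_{i_{k+1}}$ and $\widetilde{y}_k^{(\alpha)}$ is a stated arc on $\beta_{i_k}$. Substituting this identity back into the word at positions $k,k+1$ gives a linear combination of words of the same length in which the indices at those two positions are transposed while all other indices are unchanged; one checks that this transposition strictly decreases the inversion count by one, so the induction closes.

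To prove the commutation claim I will exploit the defining property of a set of generators (Definition \ref{def_generators}): the fixed geometric representatives of the $\beta_i$'s are embedded and pairwise disjoint outside $\mathbb{V}$. Consequently, writing $i := i_{k+1} < j := i_k$, the underlying tangles of $y_k$ and $y_{k+1}$ have no interior crossings, and the only difference between the diagrams realizing $y_k y_{k+1}$ and $y_{k+1}' y_k'$ is the relative height order at each shared vertex $v \in \beta_i \cap \beta_j \cap \mathbb{V}$. At every such $v$ I will apply the height exchange relation \eqref{height_exchange_rel}; this relation preserves the underlying arcs and modifies only the two heights and the two states at the chosen boundary arc, with coefficients drawn from the $\mathscr{R}$-matrix. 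Since distinct shared vertices belong to disjoint neighborhoods of the boundary, the exchanges at different vertices commute and can be performed independently. Summing over all resulting state configurations yields a representative in which the $\beta_i$-endpoints sit above the $\beta_j$-endpoints at every shared vertex, that is, a sum of products of the required form $\widetilde{y}_{k+1}^{(\alpha)} \widetilde{y}_k^{(\alpha)}$.

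The main obstacle is this commutation step: one must verify carefully that the height exchanges at different shared endpoints can be applied simultaneously without interfering, and that the embeddedness hypothesis in Definition \ref{def_generators} genuinely rules out interior crossings between $\beta_i$ and $\beta_j$ (as well as between any parallel copies that might appear inside $y_k$ and $y_{k+1}$). Once that purely local identity is established, the inversion-count induction is formal.
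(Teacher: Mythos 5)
Your proposal is correct and takes essentially the same route as the paper: Proposition \ref{prop_generators} gives spanning by words of stated arcs on the generators, and since the fixed geometric representatives of distinct generators meet only in $\mathbb{V}$, adjacent out-of-order factors are commuted via the height exchange relations \eqref{height_exchange_rel}, after which a bubble-sort induction closes the argument. Your inversion-count bookkeeping and the care about performing exchanges at the (disjoint neighborhoods of the) shared endpoints merely make explicit what the paper compresses into ``the result follows by induction.''
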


\begin{proof}
By Proposition \ref{prop_generators}, $\mathcal{S}_{\omega}(\mathbf{\Sigma})$ is linearly spanned by monomials of the form $x=(\alpha_{i_1})_{\varepsilon_1 \varepsilon'_1} \ldots (\alpha_{i_k})_{\varepsilon_k \varepsilon'_k}$ so we need to show that each such $x$ is a linear combination of monomials $x'$ of the same form where in $x'$ one has  $a<b$ implies $i_a < i_b$. Recall that we suppose that for $\beta_i \neq \beta_j \in \mathbb{G}$, the geometric representatives of $\beta_i$ and $\beta_j$ do not intersect, so by the height exchange relations \eqref{height_exchange_rel}, any element $(\beta_i)_{\varepsilon_i \varepsilon'_i} (\beta_j)_{\varepsilon_j \varepsilon'_j}$ is a linear combination of elements of the form $(\beta_j)_{\mu_j \mu'_j} (\beta_i)_{\mu_i \mu_i'}$ so the results follows by induction.

\end{proof}

To prove Proposition \ref{prop_finitely_gen_center}, we will use the fact, derived from Theorem \ref{theorem_center_skein}, that , when $\omega$ is a root of unity of odd order $N>1$ and for any stated arc $\alpha_{\varepsilon \varepsilon'}$, the elements $\alpha_{\varepsilon \varepsilon'}^{(N)}$ are central in $\mathcal{S}_{\omega}(\mathbf{\Sigma})$. Let $Z_{\alpha} \subset \mathcal{A}(\alpha)$ the (commutative) subalgebra generated by such elements $\alpha_{\varepsilon \varepsilon'}^{(N)}$. We want to show that $\mathcal{A}(\alpha)$ is finitely generated as a $Z_{\alpha}$-module. Proposition \ref{prop_finitely_gen_center} will then follow from this fact and from Lemma \ref{l1}.

\begin{lemma}\label{l2}
Suppose that $\omega$ is a root of unity of odd order $N>1$ and $\alpha$ an arc whose endpoints lye in two different boundary arcs. Then $\mathcal{A}(\alpha)$ is generated as a $Z_{\alpha}$-module by the finite set
$$S_{\alpha} := \{ \alpha_{++}^a \alpha_{+-}^b \alpha_{--}^c, 0\leq a,b,c \leq N-1 \} \cup \{ \alpha_{++}^a \alpha_{-+}^b \alpha_{--}^c, 0 \leq a,b,c \leq N-1  \}$$.
\end{lemma}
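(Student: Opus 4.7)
The plan is to establish two families of straightening relations in the algebra $\mathcal{A}(\alpha)$---height-exchange relations giving a Poincar\'e--Birkhoff--Witt-type spanning set and a quantum determinant identity---and then to exploit the centrality of $N$-th powers of stated arcs to collapse the resulting spanning set to $S_\alpha$. Let $a_1, a_2$ denote the two distinct boundary arcs containing the endpoints of $\alpha$. The key structural observation is that any product $\alpha_{\varepsilon_1 \varepsilon_1'} \cdots \alpha_{\varepsilon_n \varepsilon_n'}$ consists of $n$ parallel copies of $\alpha$ stacking $n$ boundary points on each of $a_1$ and $a_2$, so the height exchange relations \eqref{height_exchange_rel} may be applied \emph{independently} at the two boundary arcs, with the $\mathscr{R}$-matrix controlling the reordering on each side.

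First I would derive, by applying \eqref{height_exchange_rel} at each of $a_1$ and $a_2$ to every pair $\alpha_{ij}\alpha_{kl}$, the quantum matrix type relations among the four generators: the elements $\alpha_{++}$ and $\alpha_{--}$ each $q$-commute (up to a power of $\omega$) with $\alpha_{+-}$ and with $\alpha_{-+}$; the pair $\alpha_{+-}, \alpha_{-+}$ commute with each other; and $[\alpha_{++}, \alpha_{--}]$ is a scalar multiple of $\alpha_{+-}\alpha_{-+}$. A standard diamond lemma argument then shows that $\mathcal{A}(\alpha)$ is linearly spanned by the ordered monomials $\alpha_{++}^a \alpha_{+-}^b \alpha_{-+}^c \alpha_{--}^d$ with $a, b, c, d \geq 0$.

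Next I would establish a quantum determinant identity of the form
\[
\alpha_{++}\alpha_{--} - \mu\, \alpha_{+-}\alpha_{-+} = \lambda, \qquad \mu, \lambda \in \mathbb{C}^\times,
\]
obtained by applying the cutting arc relation \eqref{cutting_arc_rel} to a small trivial arc placed next to one endpoint of $\alpha$ and then collapsing the resulting trivial cap using \eqref{trivial_arc_rel}; the constants $\mu, \lambda$ are determined by the entries of the matrix $C$ together with the height-exchange coefficients. This identity allows one to eliminate $\alpha_{+-}\alpha_{-+}$ in favour of $\alpha_{++}\alpha_{--}$ plus a scalar, so that every ordered monomial in the above PBW basis reduces, modulo monomials of strictly lower total degree in the middle generators, to one of the two shapes $\alpha_{++}^a \alpha_{+-}^b \alpha_{--}^c$ or $\alpha_{++}^a \alpha_{-+}^b \alpha_{--}^c$ with $a, b, c \geq 0$.

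Finally, since the endpoints of $\alpha$ lie in distinct boundary arcs, the excerpt records the identity $\alpha_{\varepsilon \varepsilon'}^{(N)} = (\alpha_{\varepsilon \varepsilon'})^N$, and by Theorem \ref{theorem_center_skein} each such $N$-th power is central in $\mathcal{S}_\omega(\mathbf{\Sigma})$, hence lies in $Z_\alpha$. Consequently every exponent $a, b, c$ in the spanning monomials above can be reduced modulo $N$ at the cost of multiplying by an element of $Z_\alpha$, which finishes the proof that $\mathcal{A}(\alpha)$ is generated as a $Z_\alpha$-module by $S_\alpha$. The main obstacle I expect is pinning down the precise quantum determinant identity and verifying that the constant $\lambda$ is nonzero: this is the only step which is not a mechanical manipulation of \eqref{height_exchange_rel}--\eqref{cutting_arc_rel} and requires careful bookkeeping of the entries of the matrices $C$ and $\mathscr{R}$. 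Once that identity is in hand, both the PBW ordering and the modular reduction of exponents are purely combinatorial.
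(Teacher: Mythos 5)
Your proposal is correct and follows essentially the same route as the paper: the paper simply cites the known identification of $\mathcal{A}(\alpha)$ with $\mathbb{C}_q[\mathrm{SL}_2]$ (whose defining relations include exactly your quantum determinant identity $\alpha_{++}\alpha_{--}=1+q^{-1}\alpha_{+-}\alpha_{-+}$, so $\lambda=1\neq 0$), reads off the PBW spanning set, and then uses $\alpha_{\varepsilon\varepsilon'}^{(N)}=(\alpha_{\varepsilon\varepsilon'})^N$ and centrality to reduce exponents modulo $N$, just as you do. The only difference is that you propose to re-derive the $\mathbb{C}_q[\mathrm{SL}_2]$ relations from the skein relations rather than quoting them, which is a matter of exposition, not of substance.
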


\begin{proof}
When the two endpoints of $\alpha$ lye in distinct boundary arcs, it is well known (see \textit{e.g.} \cite{KojuQuesneyClassicalShadows}) that $\mathcal{A}(\alpha)$ is isomorphic to $\mathbb{C}_q[\mathrm{SL}_2]$ where $q:= A^2$, namely that its elements satisfy the relations: 
\begin{align*}\label{relbigone}
\alpha_{++}\alpha_{+-} &= q^{-1}\alpha_{+-}\alpha_{++} & \alpha_{++}\alpha_{-+}&=q^{-1}\alpha_{-+}\alpha_{++}
\\ \alpha_{--}\alpha_{+-} &= q\alpha_{+-}\alpha_{--} & \alpha_{--}\alpha_{-+}&=q\alpha_{-+}\alpha_{--}
\\ \alpha_{++}\alpha_{--}&=1+q^{-1}\alpha_{+-}\alpha_{-+} &  \alpha_{--}\alpha_{++}&=1 + q\alpha_{+-}\alpha_{-+} 
\\ \alpha_{-+}\alpha_{+-}&=\alpha_{+-}\alpha_{-+} & &
\end{align*}

We deduce from those relations that the set
$$ B := \{ \alpha_{++}^a \alpha_{+-}^b \alpha_{--}^c, a,b,c \geq 0 \} \cup \{ \alpha_{++}^a \alpha_{-+}^b \alpha_{--}^c, a,b,c \geq 0 \}$$
linearly spans $\mathcal{A}(\alpha)$ (actually it is a PBW basis of $\mathbb{C}_q[\mathrm{SL}_2]$). Since the two endpoints of $\alpha$ lye in distinct boundary arcs, one has $\alpha_{\varepsilon \varepsilon'}^{(N)} = (\alpha_{\varepsilon \varepsilon'})^N$, therefore the set $S_{\alpha}$
spans $\mathcal{A}(\alpha)$ as a $Z_{\alpha}$-module.

\end{proof}

\par For an arc $\alpha$ with both endpoints, say $v$ and $w$,  in the same boundary, we need to work a little more.
Let us introduce some terminology. Recall from the end of Section $2.1$ that we defined a tangle  $\alpha^{(n)}$ made of $n$ parallel copies $\alpha^1 \cup \ldots \cup \alpha^n$ of $\alpha$, pushed along the framing direction, with the convention that if the heights of the endpoints are such that $h(v)<h(w)$ then 
 the height order of $\alpha^{(n)}$ is $h(v_1)< \ldots < h(v_n)<h(w_1) < \ldots < h(w_n)$. Let us denote by $\alpha^n = \alpha'^1 \cup \ldots \cup \alpha'^n$ the tangle also made of $n$ parallel copies of $\alpha$ but this time, we require the height order $h(v_1) <h(w_1) < \ldots < h(v_n) <h(w_n)$. Figure \ref{parallel_arcs} illustrates the difference between $\alpha^{(n)}$ and $\alpha^n$. Note that: $(1)$ a class $[\alpha^n, s]$ is a product of stated arcs $\alpha_{\varepsilon \varepsilon'}$ and $(2)$ $\alpha^n$ and $\alpha^{(n)}$ differ by an isotopy than does not preserve the height order.

\begin{figure}[!h] 
\centerline{\includegraphics[width=10cm]{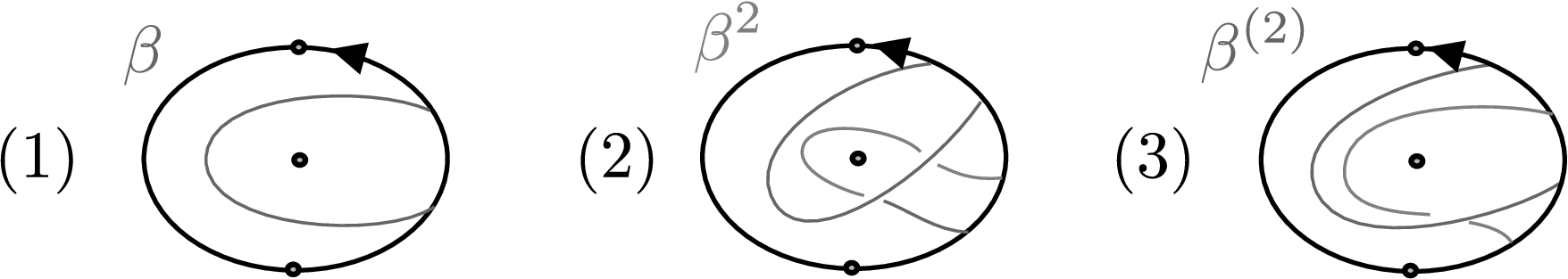} }
\caption{$(1)$ an arc $\beta$, $(2)$ a diagram of $\beta^2$, $(3)$ a diagram of $\beta^{(2)}$.} 
\label{parallel_arcs} 
\end{figure} 

For $\bm{\varepsilon}= (\varepsilon_1, \ldots, \varepsilon_n), \bm{\varepsilon}'= (\varepsilon'_1, \ldots, \varepsilon'_n)\in \{ -, +\}^n$, we denote by $s^{\bm{\varepsilon}, \bm{\varepsilon'}}$ the state on $\alpha^{(n)}$ defined by $s^{\bm{\varepsilon}, \bm{\varepsilon'}}(v_i)= \varepsilon_i$ and $s^{\bm{\varepsilon}, \bm{\varepsilon'}}(w_i)= \varepsilon'_i$. 
\begin{definition}
A state on $\alpha^{(n)}$ will be called \textit{ordered} if it is of the form $s^{\bm{\varepsilon}, \bm{\varepsilon'}}$ where $i<j$ implies both $\varepsilon_i \leq \varepsilon_j$ and $\varepsilon'_i\leq \varepsilon'_j$ (recall we use the convention $-<+$).
\end{definition}

\begin{lemma}\label{l3}
The space $\mathcal{A}(\alpha)$ is linearly spanned by elements $[\alpha^{(n)}, s]$ where $s$ is ordered.
\end{lemma}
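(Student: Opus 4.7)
My plan is to prove the lemma in two steps: first reduce to showing that each class $[\alpha^{(n)}, s]$ for an arbitrary state $s$ lies in the linear span of the ordered classes, and then establish this by induction on the disorder of the state.

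For the first reduction, I would observe that $\mathcal{A}(\alpha)$ is linearly spanned by monomials $\alpha_{\varepsilon_1 \varepsilon'_1} \cdots \alpha_{\varepsilon_n \varepsilon'_n}$, and that each such monomial represents a stated tangle whose underlying diagram is the same as that of $\alpha^{(n)}$ but whose boundary heights follow the ``product'' convention (the $k$-th factor having its endpoints at higher heights than those of the $(k+1)$-st factor). Iterated applications of the height exchange relations \eqref{height_exchange_rel} then rearrange these heights to match the convention defining $\alpha^{(n)}$, expressing each monomial as a linear combination of classes $[\alpha^{(n)}, s']$ for various states $s'$.

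For the inductive step, I would proceed by induction on the lexicographically ordered pair $(n, I(s))$, where $I(s)$ counts the total number of inversions in the two sequences $(s(v_1),\ldots,s(v_n))$ and $(s(w_1),\ldots,s(w_n))$; note that $s$ is ordered precisely when $I(s)=0$. If $I(s)>0$ there exists an adjacent inversion which, without loss of generality, lies on the $v$-side with $s(v_i)=+$ and $s(v_{i+1})=-$. Applying the defining skein relation
\[ \omega^{-1}\heightexch{->}{-}{+}-\omega^{-5}\heightexch{->}{+}{-}=\heightcurve \]
locally near $v_i, v_{i+1}$ and solving for the inverted configuration produces the identity
\[ [\alpha^{(n)}, s] = \omega^{-4}\, [\alpha^{(n)}, s_{\mathrm{swap}}] + \omega\, M, \]
where $s_{\mathrm{swap}}$ agrees with $s$ except that it exchanges the values at $v_i$ and $v_{i+1}$, so that $I(s_{\mathrm{swap}}) = I(s)-1$, and $M$ is the tangle obtained by fusing $\alpha^i$ and $\alpha^{i+1}$ at the $v$-side into a single arc joining $w_i$ to $w_{i+1}$, the remaining $n-2$ parallel copies being untouched.

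The main obstacle I anticipate is the topological claim that the fused arc in $M$ is a \emph{trivial arc} in the sense of Definition \ref{def_basis}. Granting this, the trivial arc relation \eqref{trivial_arc_rel} shows that the stated fused arc evaluates to a scalar, whence $M$ equals that scalar times $[\alpha^{(n-2)}, s'']$ for the restriction $s''$ of $s$ to the remaining endpoints, which by the induction hypothesis lies in the span of ordered classes. To establish triviality I would argue that $\alpha^i$ and $\alpha^{i+1}$ lie in a simply connected tubular neighborhood $U$ of $\alpha$, so the fused path from $w_i$ along $(\alpha^i)^{-1}$, through the local U-turn near $v$, and back along $\alpha^{i+1}$ to $w_{i+1}$ is homotopic rel endpoints within $U$ to a path lying inside the boundary arc $b$. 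A symmetric argument handles inversions on the $w$-side, closing the induction.
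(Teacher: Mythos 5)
Your proof is correct, and its skeleton is the same as the paper's: first use the height exchange relations \eqref{height_exchange_rel} to rewrite every monomial in the $\alpha_{\varepsilon\varepsilon'}$ as a combination of classes $[\alpha^{(n)},s']$, then remove adjacent inversions of the state with the defining relation \eqref{eq: skein 2}. The one substantive difference is the treatment of the returning-arc term that this relation produces. The paper asserts that this term is null by the trivial arc relation and thereby obtains the stronger identity $[\alpha^{(n)},s']=A^{2k}[\alpha^{(n)},s]$ with $s$ ordered; however, the trivial arc relation \eqref{trivial_arc_rel} only annihilates a trivial arc whose two states agree (since $C^+_+=C^-_-=0$ while $C^+_-$ and $C^-_+$ are invertible), and the fused arc in your term $M$ carries the states $s(w_i),s(w_{i+1})$, which need not agree — the vanishing is automatic in Lemma \ref{l4}, where all strands are copies of a single $\alpha_{\varepsilon\varepsilon'}$, but not here. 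Your route — evaluating the fused arc to a scalar (possibly zero, possibly not) times $[\alpha^{(n-2)},s'']$ and absorbing that term via the extra induction on $n$ in the lexicographic pair $(n,I(s))$ — is therefore the safer one, and it still yields the spanning statement, which is all the lemma claims (and all that Lemma \ref{l5} needs, since the generating set there contains all $n\le 3N-3$). Your justification that the fused arc is trivial is sound: it is embedded, homotopic rel endpoints into the boundary arc within a neighbourhood of $\alpha$, and the strip between the adjacent parallel copies $\alpha^i,\alpha^{i+1}$ is free of other strands, so the trivial arc relation applies and leaves exactly $[\alpha^{(n-2)},s'']$ with its inherited (correct) height order.
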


\begin{proof} By definition, the space $\mathcal{A}(\alpha)$ is spanned by elements of the form $[\alpha^n, s]$. Since $\alpha^n$ and $\alpha^{(n)}$ only differ by an isotopy which modify the height order, the height exchange relation \eqref{height_exchange_rel} implies that any elements $[\alpha^n ,s ]$ is a linear combination of elements $[\alpha^{(n)}, s']$ for some states $s'$ on $\alpha^{(n)}$. It remains to show that each element $[\alpha^{(n)}, s']$ is a linear combination of elements $[\alpha^{(n)}, s]$ with $s$ ordered. If $s'$ is not ordered, there exists $i$ such that either $(s'(v_i), s'(v_{i+1}))= (+,-)$ or $(s'(w_i), s'(w_{i+1})) = (+,-)$ (or both). Applying the cutting arc (defining) relation \eqref{cutting_arc_rel}
$$ 
\heightexch{->}{-}{+} =
A^2
\heightexch{->}{+}{-}
+ \omega
\heightcurve.$$
and noting that the term $\heightcurve$ is null by the trivial arc relation \eqref{trivial_arc_rel}, we show by induction that there exists an ordered state $s$ and an integer $k$ such that $[\alpha^{(n)}, s']= A^{2k} [\alpha^{(n)}, s]$. This concludes the proof.

\end{proof}

For $[\alpha^{(n)}, s]$, with $s$ ordered and $n>N$ big enough, we would like to have a factorization of the form $[\alpha^{(n)}, s]= [\alpha^{(n-N)}, s'] \alpha_{\varepsilon \varepsilon'}^{(N)}$. The fact that $\mathcal{A}(\alpha)$ is finitely generated as a $Z_{\alpha}$-module will easily follow. The key step to provide such a factorization is the following:

\begin{lemma}\label{l4} Suppose that $A^N=1$ for some $N>1$ (not necessarily odd). Then one has the following skein relations:
\begin{equation}\label{eq_central}
 \adjustbox{valign=c}{\includegraphics[width=1.6cm]{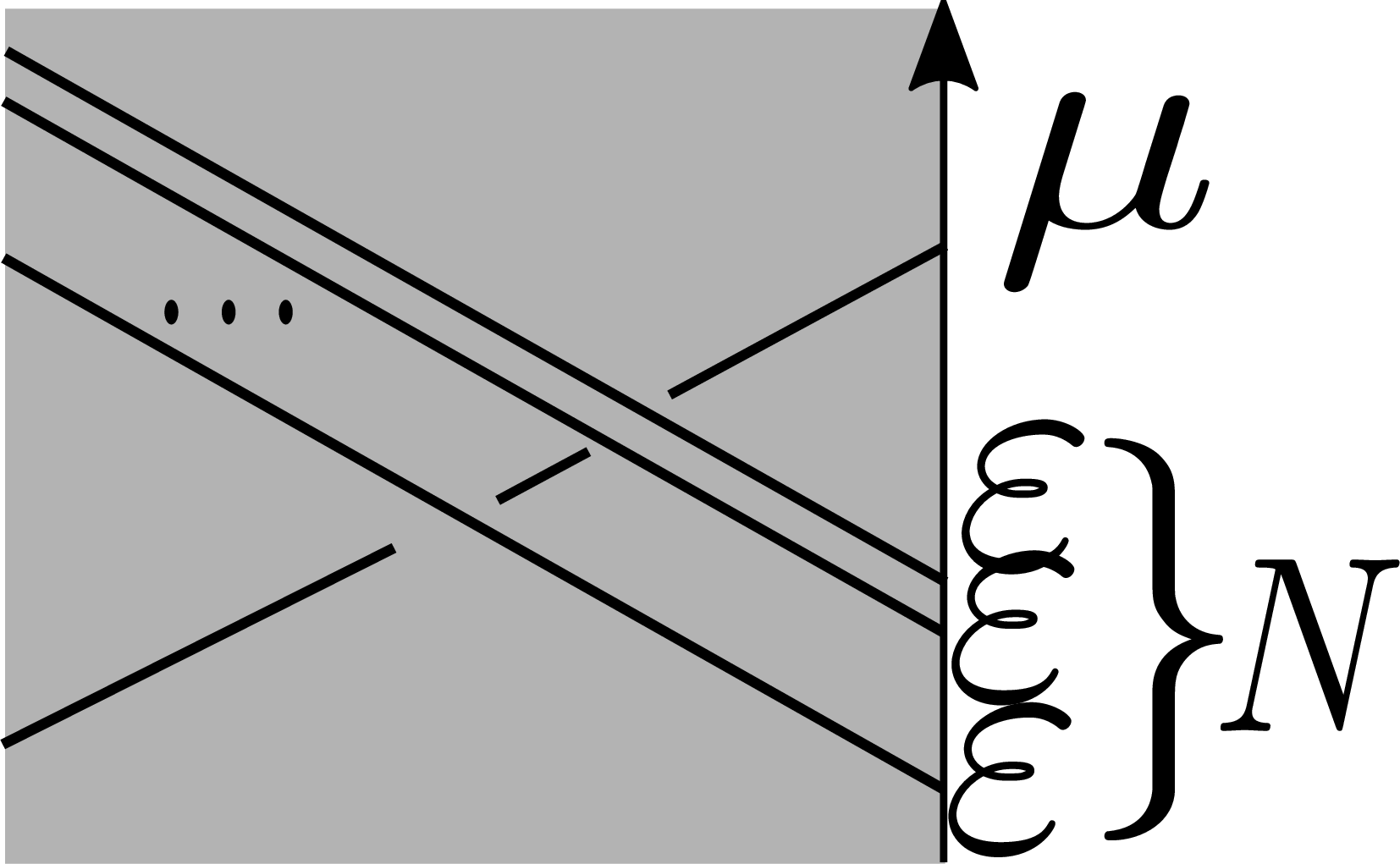}} =  \adjustbox{valign=c}{\includegraphics[width=1.6cm]{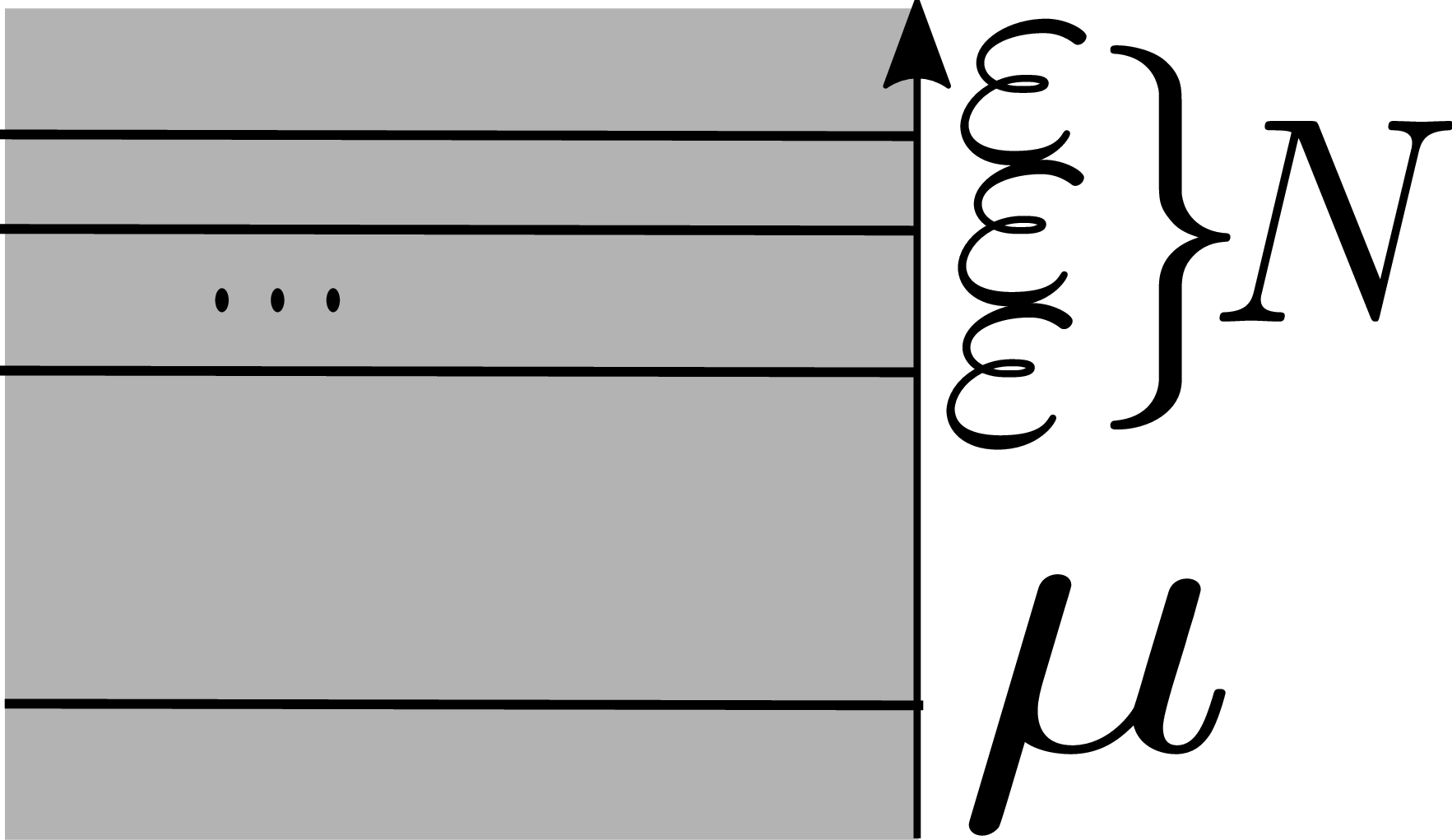}} = \adjustbox{valign=c}{\includegraphics[width=1.6cm]{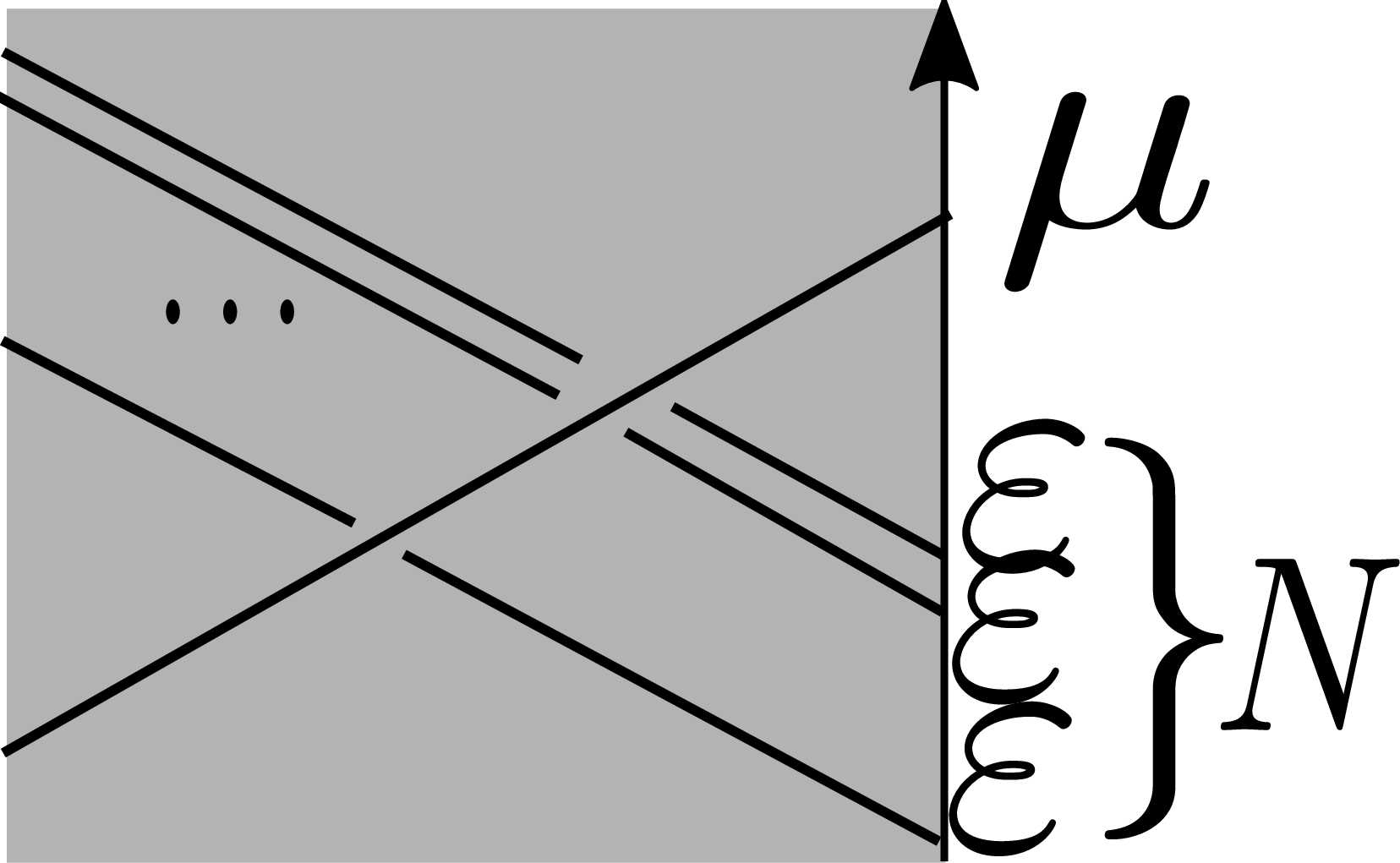}}, 
\end{equation}
where the $N$ strands in Equation \eqref{eq_central} represent the components of a stated tangle of the form $\alpha_{\varepsilon \varepsilon'}^{(N)}$ (so Equation \eqref{eq_central} is not a local skein relation since it depends on how is the tangle outside the drawn box). Therefore, if $\mathcal{T}=(T,s)$ and $\mathcal{T}'= (T',s')$ are two stated tangles such that $\mathcal{T}=\mathcal{T}'\cup \alpha_{\varepsilon \varepsilon'}^{(N)}$, one has the factorization $[\mathcal{T}] = [\mathcal{T}'] \alpha_{\varepsilon \varepsilon'}^{(N)}$ in $\mathcal{S}_{\omega}(\mathbf{\Sigma})$.

\end{lemma}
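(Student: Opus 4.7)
The statement splits into two parts: the \emph{transparency identity} \eqref{eq_central}, asserting that the $N$-parallel pushoff $\alpha_{\varepsilon\varepsilon'}^{(N)}$ is transparent to over- and under-crossings with any other strand, and the resulting factorization $[\mathcal{T}] = [\mathcal{T}']\cdot \alpha_{\varepsilon\varepsilon'}^{(N)}$. My plan is to prove the transparency identity first by a direct skein computation, and then deduce the factorization from it by an isotopy argument.

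For the transparency identity, I would start from the left diagram $D_1$ of \eqref{eq_central}, in which the $N$ parallel strands of $\alpha^{(N)}$ pass over a single transverse strand $\beta$, and apply the Kauffman bracket skein relation \eqref{eq: skein 1} at each of the $N$ over-crossings. This expands $D_1$ into $2^N$ smoothed diagrams. The unique term where every crossing is smoothed so that $\beta$ entirely bypasses the bundle reproduces the middle diagram $D_2$ with coefficient $A^N = 1$. For the remaining $2^N-1$ terms, $\beta$ reconnects with one or more of the components $\alpha^i$, producing new arcs and trivial loops (each contributing $-(A^2+A^{-2})$ by \eqref{eq: skein 1}). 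To show that these remaining contributions sum to zero, I would organize them by induction on $N$, using the Chebyshev-like recursion $T_{n+2}(X) = X T_{n+1}(X) - T_n(X)$: at each induction step the non-bypass contributions at level $N+1$ split into two groups matching the combination $X\cdot T_{n+1} - T_n$, and the hypothesis $A^N=1$ makes this telescoping close cleanly. The equality $D_1 = D_3$ is then obtained by the symmetric argument with over- and under-crossings swapped, using $A^{-N}=1$. The feature that makes the argument work in the stated (not just closed-curve) setting is that the endpoint states on $\alpha^{(N)}$ remain uniform throughout the recursion, so the cutting arc and trivial arc relations \eqref{cutting_arc_rel}, \eqref{trivial_arc_rel} never produce new obstructions.

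For the factorization, given a stated tangle $\mathcal{T} = \mathcal{T}' \cup \alpha_{\varepsilon\varepsilon'}^{(N)}$ in $\Sigma_{\mathcal{P}}\times(0,1)$, I would isotope $\alpha^{(N)}$ into the lower layer $\Sigma_{\mathcal{P}}\times(0,1/2)$ while pushing $\mathcal{T}'$ into the upper layer $\Sigma_{\mathcal{P}}\times(1/2,1)$. The isotopy slides strands of $\alpha^{(N)}$ past strands of $\mathcal{T}'$ in the surface projection, creating crossings that are each handled by a local application of the transparency identity \eqref{eq_central}. Hence the skein class is preserved, and in the final configuration $[\mathcal{T}]$ equals $[\mathcal{T}']\cdot \alpha_{\varepsilon\varepsilon'}^{(N)}$ by the very definition of the product in $\mathcal{S}_{\omega}(\mathbf{\Sigma})$.

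The main difficulty is the telescoping cancellation in the expansion of $D_1$: one must carefully track how horizontal smoothings interact (creating trivial arcs, trivial loops, and redistributing the endpoint states) and verify that the partial sums recombine into the Chebyshev recursion. The hypothesis $A^N=1$ is precisely what closes this telescoping at the right order; without it one would obtain the $T_N(\gamma)$ correction familiar from the closed-surface Chebyshev morphism of Bonahon--Wong.
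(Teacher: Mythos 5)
Your reading of \eqref{eq_central}, and hence your whole strategy, diverges from what the lemma actually asserts and uses. The three diagrams are not an interior transverse strand passing over, around, or under the bundle: they are configurations near a \emph{boundary arc}, where one endpoint of another strand sits at different positions in the \emph{height order} relative to the $N$ consecutive endpoints of $\alpha_{\varepsilon\varepsilon'}^{(N)}$ (all carrying the same state $\varepsilon$). Accordingly the paper's proof never touches the Kauffman crossing relation \eqref{eq: skein 1}; it applies the height exchange relations \eqref{height_exchange_rel} $N$ times. When the two states agree, or equal $(-,+)$, each exchange contributes a single monomial and the total factor is $A^{\pm N}=1$. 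In the remaining case $(\varepsilon,\mu)=(+,-)$ each exchange produces one extra term with coefficient $A-A^{-3}$; the resulting $N$ correction diagrams $x_1,\dots,x_N$ satisfy $x_i=A^2x_{i+1}$ via the cutting arc relation \eqref{cutting_arc_rel}, because the turnback created between two adjacent parallel copies of $\alpha$ closes into a trivial arc with equal states and is killed by \eqref{trivial_arc_rel}. This is the one and only place where the hypothesis that the $N$ strands form $\alpha_{\varepsilon\varepsilon'}^{(N)}$ enters, and it is why the lemma warns that \eqref{eq_central} is not a local relation. The correction then collapses to the geometric series $(A-A^{-3})\sum_{i=1}^N A^{3(N-i)}$, which vanishes since $A^{3N}=1$. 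No Chebyshev polynomial appears anywhere: for arcs the relevant element is the plain $N$-th parallel, not $T_N$, and your proposed telescoping "matching $XT_{n+1}-T_n$" is the mechanism of the closed-curve case, which does not transfer.

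Beyond the misidentified mechanism, the key step of your argument is simply asserted: you expand $D_1$ into $2^N$ smoothings and claim the $2^N-1$ non-bypass terms cancel, but you give no bookkeeping of how the reconnections between the parallel copies and the transverse strand recombine, and a purely local crossing expansion cannot establish an identity that the lemma explicitly flags as non-local. Your derivation of the factorization also overstates what is needed: since $\mathcal{T}=\mathcal{T}'\cup\alpha_{\varepsilon\varepsilon'}^{(N)}$ is a disjoint union of diagrams, pushing $\alpha^{(N)}$ into the lower layer creates no new crossings in the projection; the only obstruction to writing $[\mathcal{T}]$ as the product $[\mathcal{T}']\,\alpha_{\varepsilon\varepsilon'}^{(N)}$ is the interleaving of endpoints in the boundary height order, and \eqref{eq_central} is precisely the statement that this reordering is free of charge. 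As written, your proposal would instead require the full transparency of $\alpha_{\varepsilon\varepsilon'}^{(N)}$ under interior crossings, which is essentially the content of Theorem \ref{theorem_center_skein} and a much deeper input than the lemma needs.
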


\begin{proof}
We prove the equality $\adjustbox{valign=c}{\includegraphics[width=1.3cm]{D1.eps}} =  \adjustbox{valign=c}{\includegraphics[width=1.3cm]{D2.eps}}$, the second one is proved similarly and left to the reader. First note that the height exchange relations \eqref{height_exchange_rel} imply
$$ \adjustbox{valign=c}{\includegraphics[width=1cm]{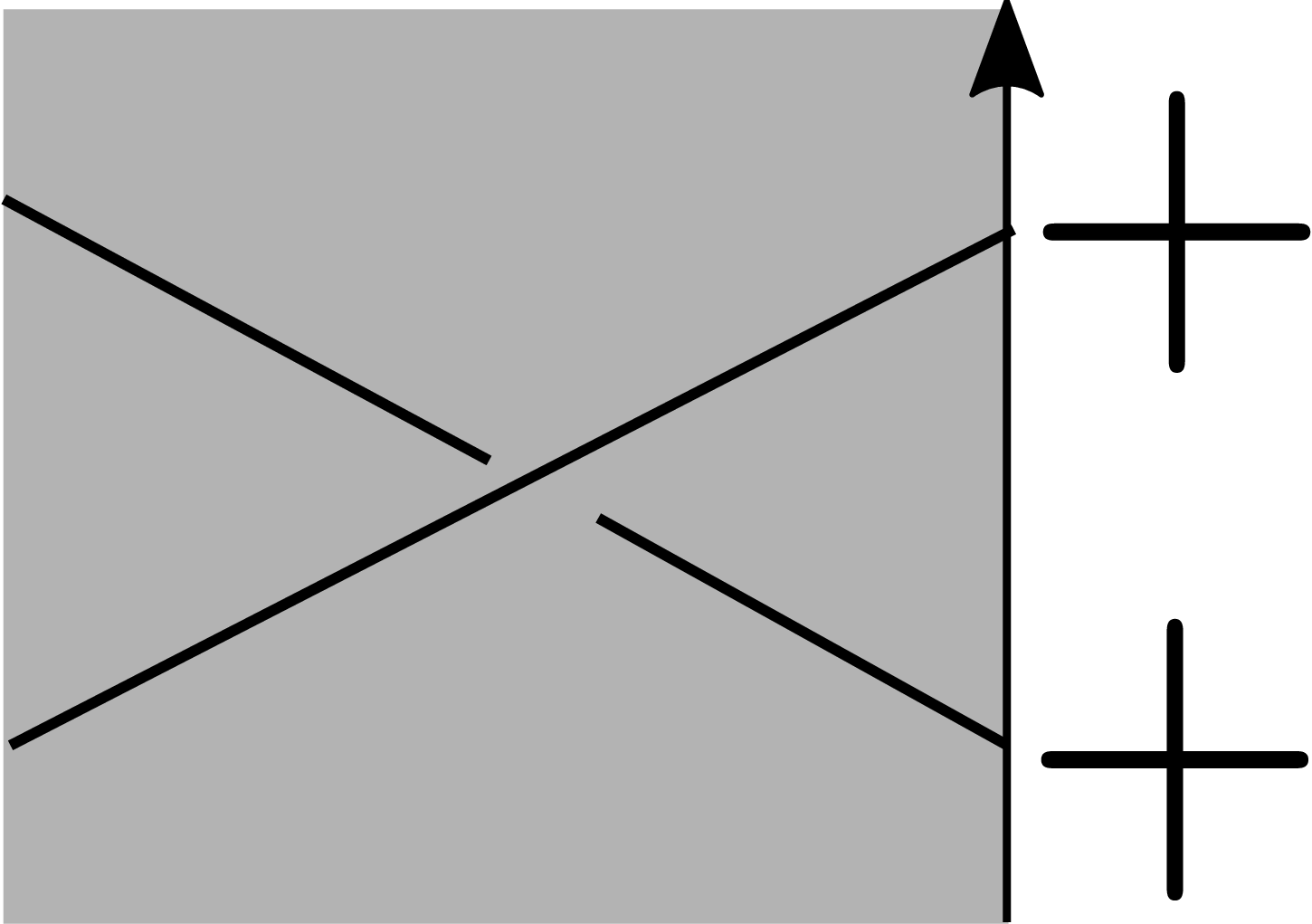}} = A \heightexch{->}{+}{+}, \quad   \adjustbox{valign=c}{\includegraphics[width=1cm]{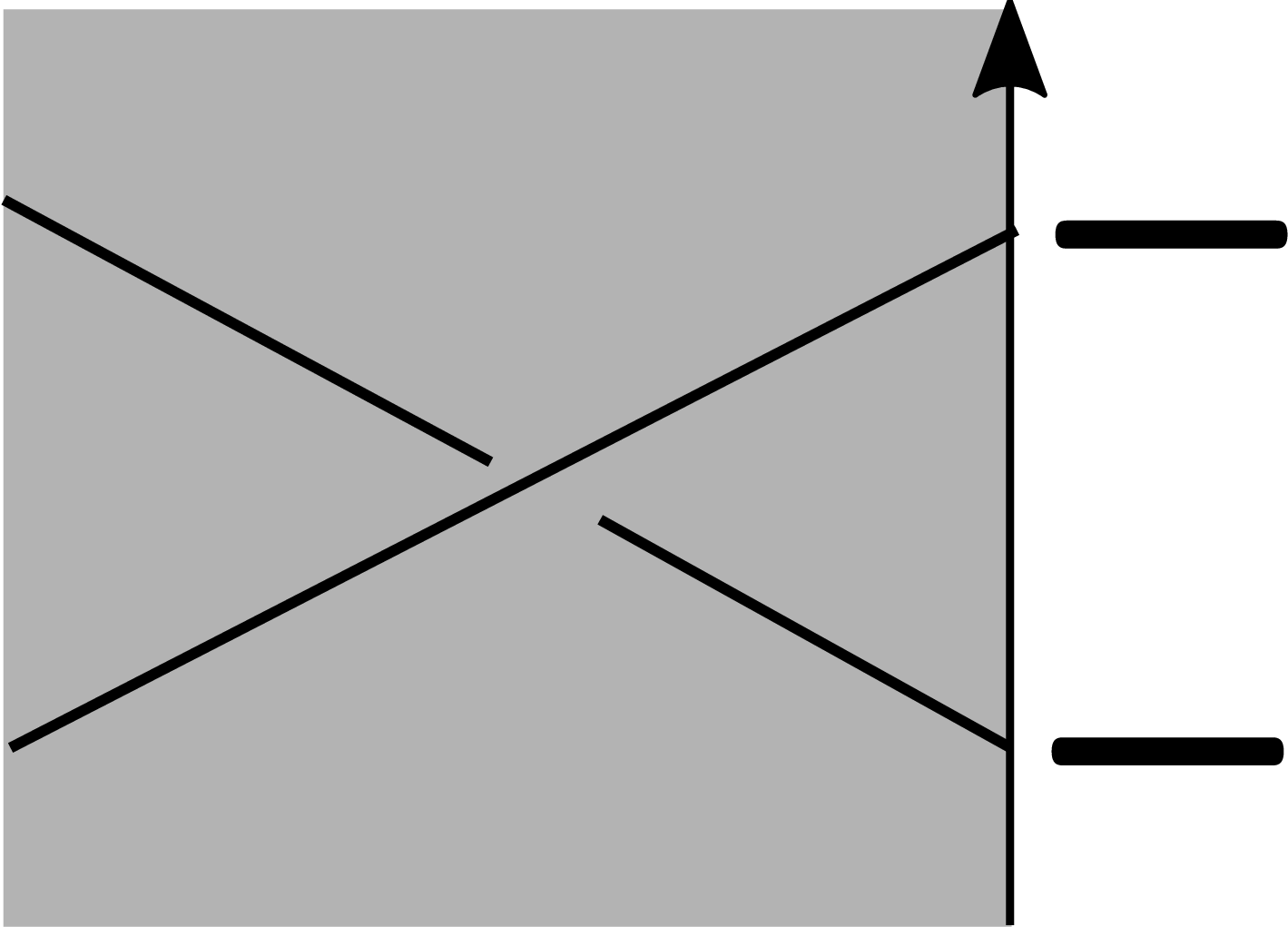}} = A \heightexch{->}{-}{-},  \quad \adjustbox{valign=c}{\includegraphics[width=1cm]{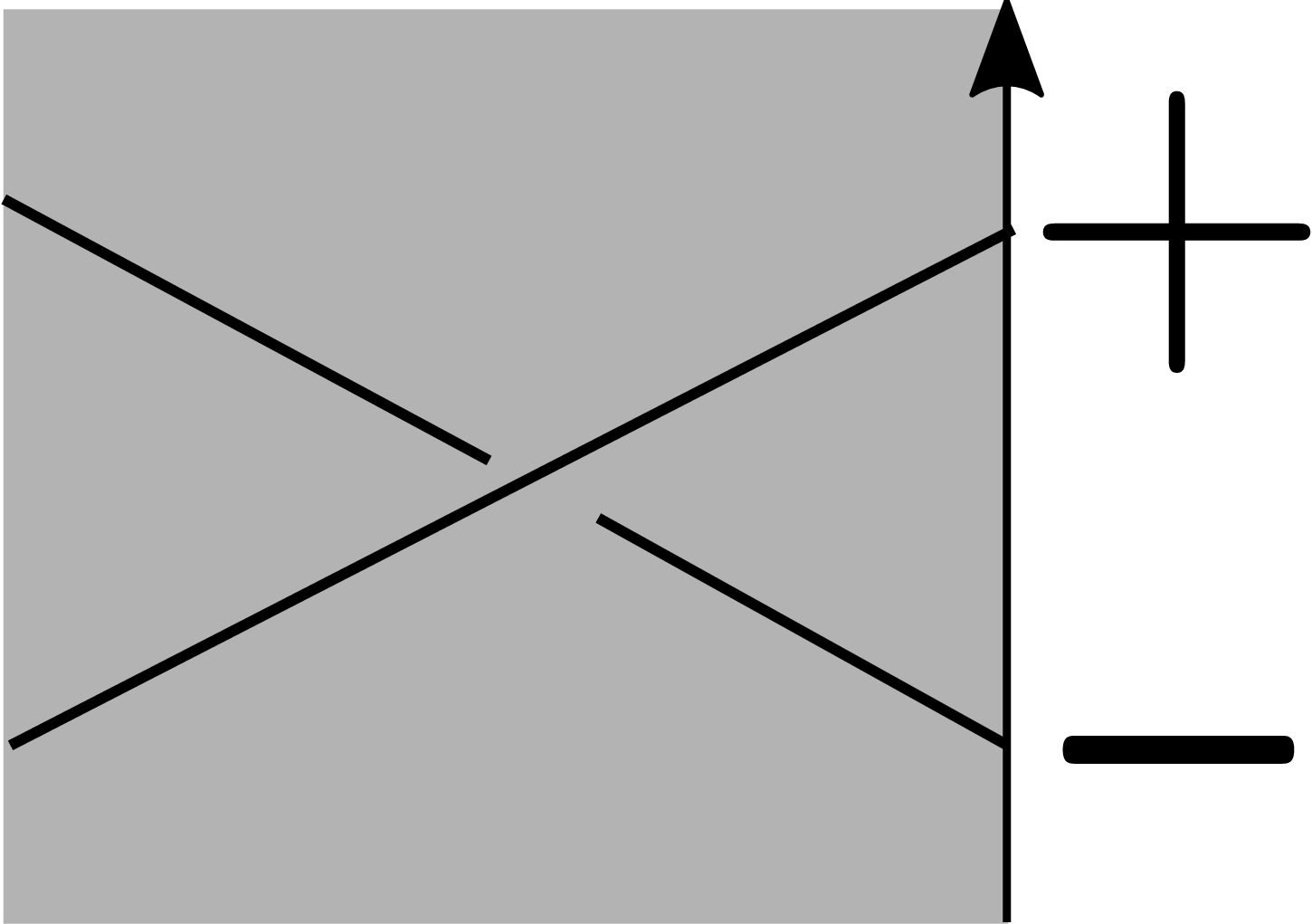}} = A^{-1} \heightexch{->}{-}{+}, 
$$
so one has $\adjustbox{valign=c}{\includegraphics[width=1.3cm]{D1.eps}} = A^N \adjustbox{valign=c}{\includegraphics[width=1.3cm]{D2.eps}}$ if $\varepsilon = \mu$ and $\adjustbox{valign=c}{\includegraphics[width=1.3cm]{D1.eps}} = A^{-N} \adjustbox{valign=c}{\includegraphics[width=1.3cm]{D2.eps}}$ if $(\varepsilon, \mu) = (-,+)$. In each of these cases, we conclude using $A^N=1$. It remains to deal with the case where $(\varepsilon, \mu)=(+,-)$. Using the height exchange relation 
$$  \adjustbox{valign=c}{\includegraphics[width=1cm]{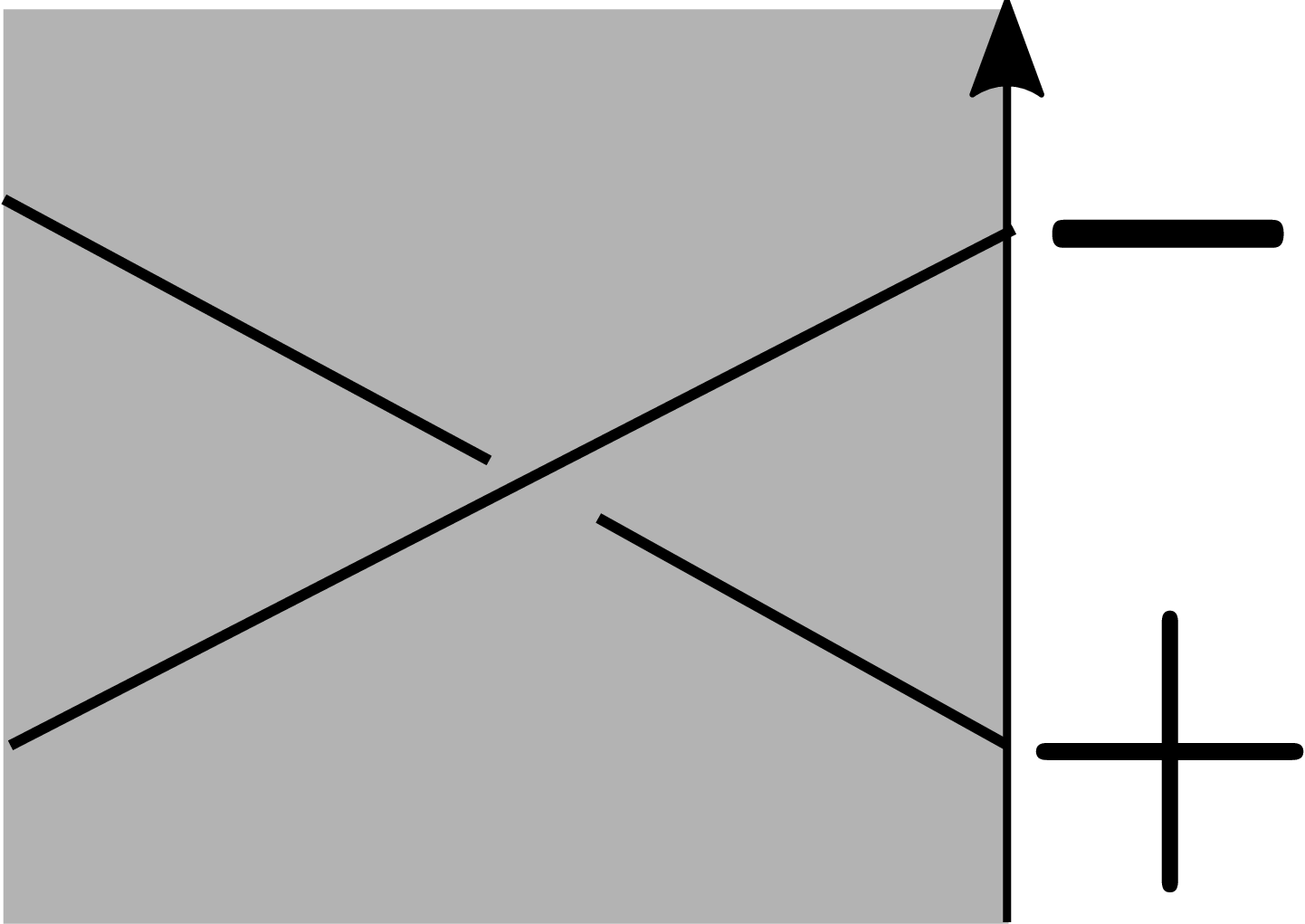}} = A^{-1} \heightexch{->}{+}{-} +(A-A^{-3})  \heightexch{->}{-}{+}$$ 
repeatedly, one has the equalities
$$
\adjustbox{valign=c}{\includegraphics[width=1.6cm]{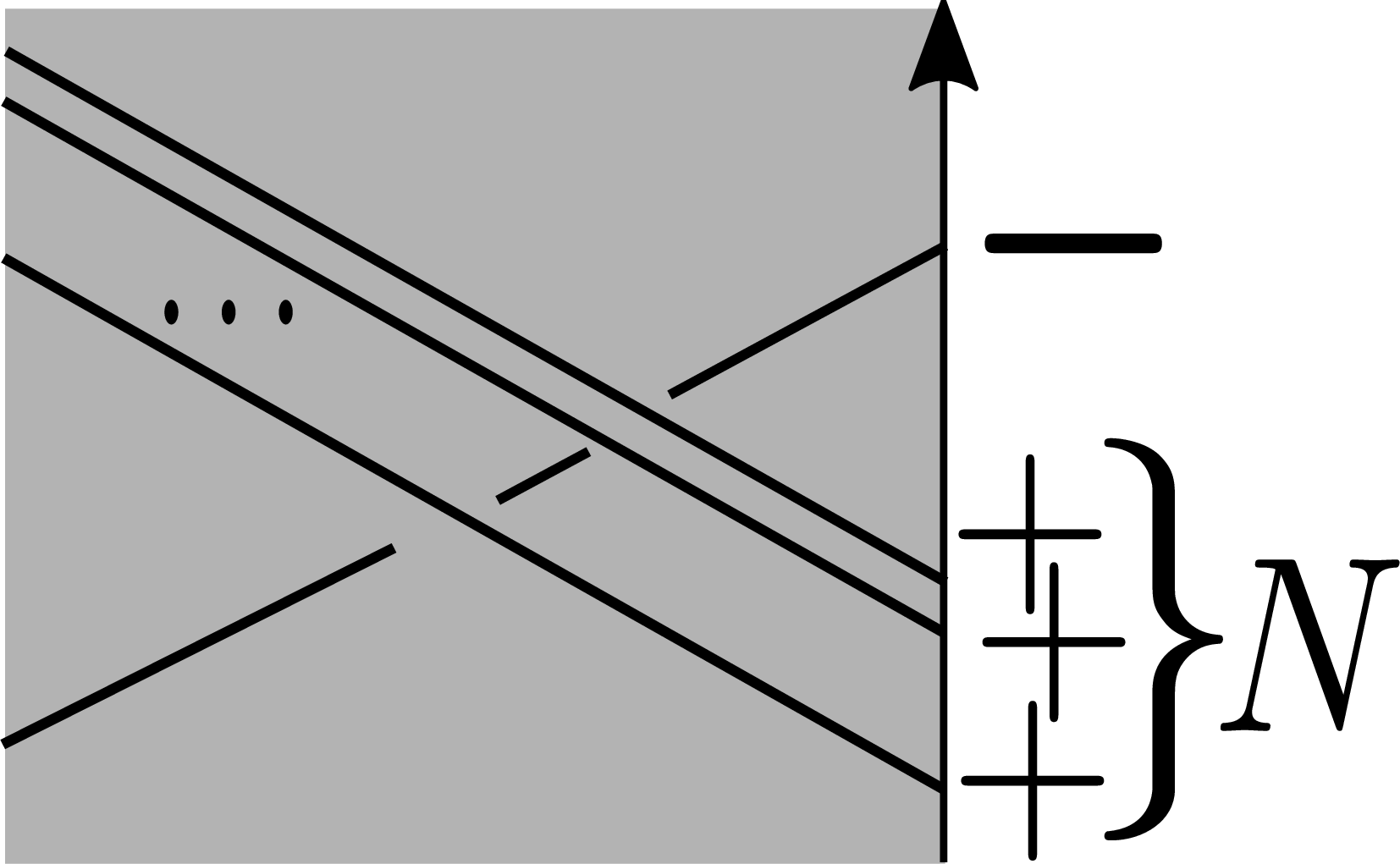}} = A^{-N} \adjustbox{valign=c}{\includegraphics[width=1.6cm]{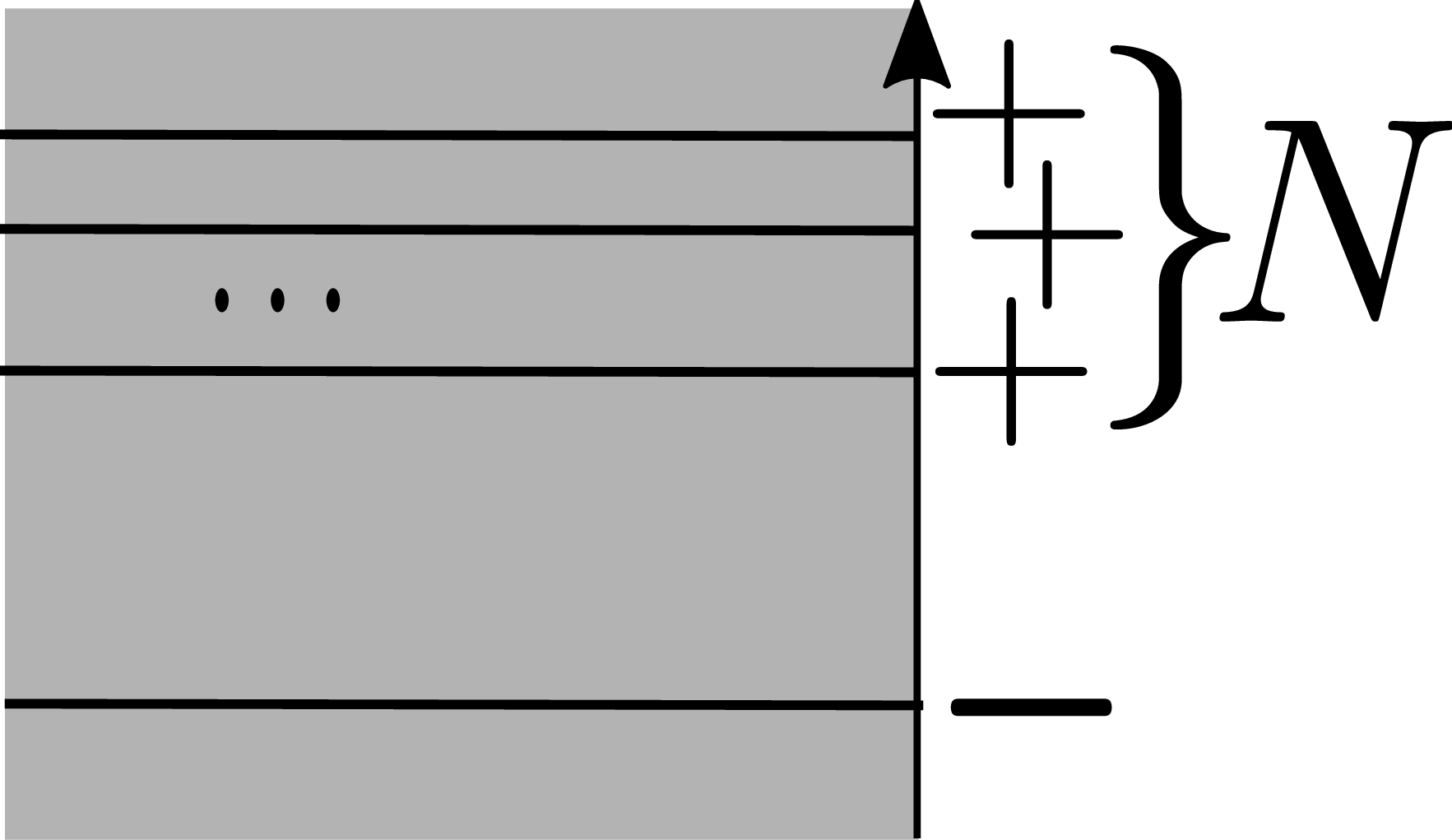}} +(A-A^{-3})\left( \sum_{i=1}^N  \adjustbox{valign=c}{\includegraphics[width=1.6cm]{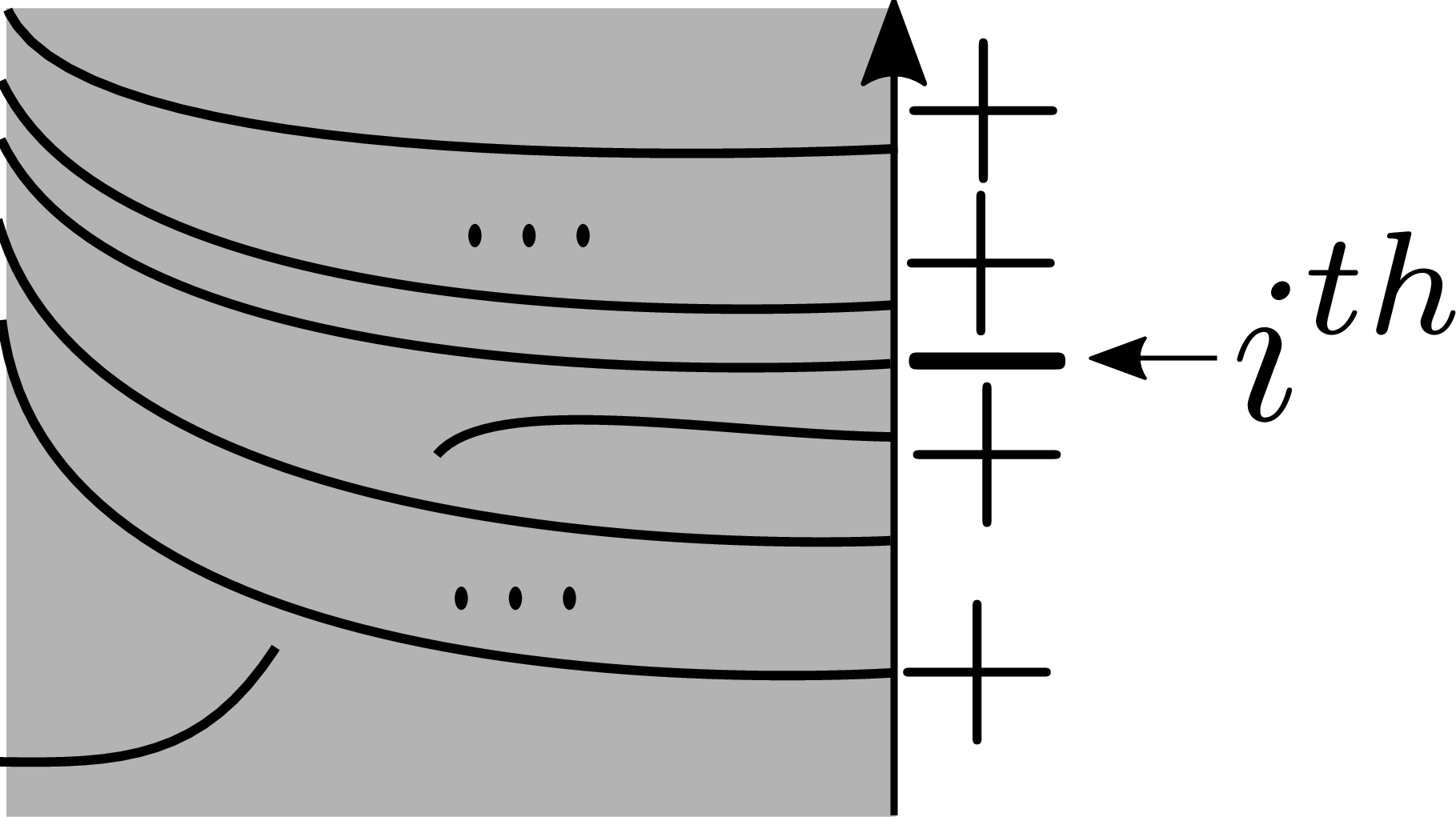}} \right).
$$
Here, in $ \adjustbox{valign=c}{\includegraphics[width=1.6cm]{F3.eps}}$, the $i^{th}$ point (starting at the top) has state $-$ and the other ones $+$ and the $(i+1)^{th}$ point is the endpoint of the strand which is not part of $N$ parallel copies of $\alpha$. Using the relation $ \adjustbox{valign=c}{\includegraphics[width=1cm]{C1.eps}} = A \heightexch{->}{+}{+}$, one see that 
$$ \adjustbox{valign=c}{\includegraphics[width=1.6cm]{F3.eps}} = A^{N-i} \adjustbox{valign=c}{\includegraphics[width=1.6cm]{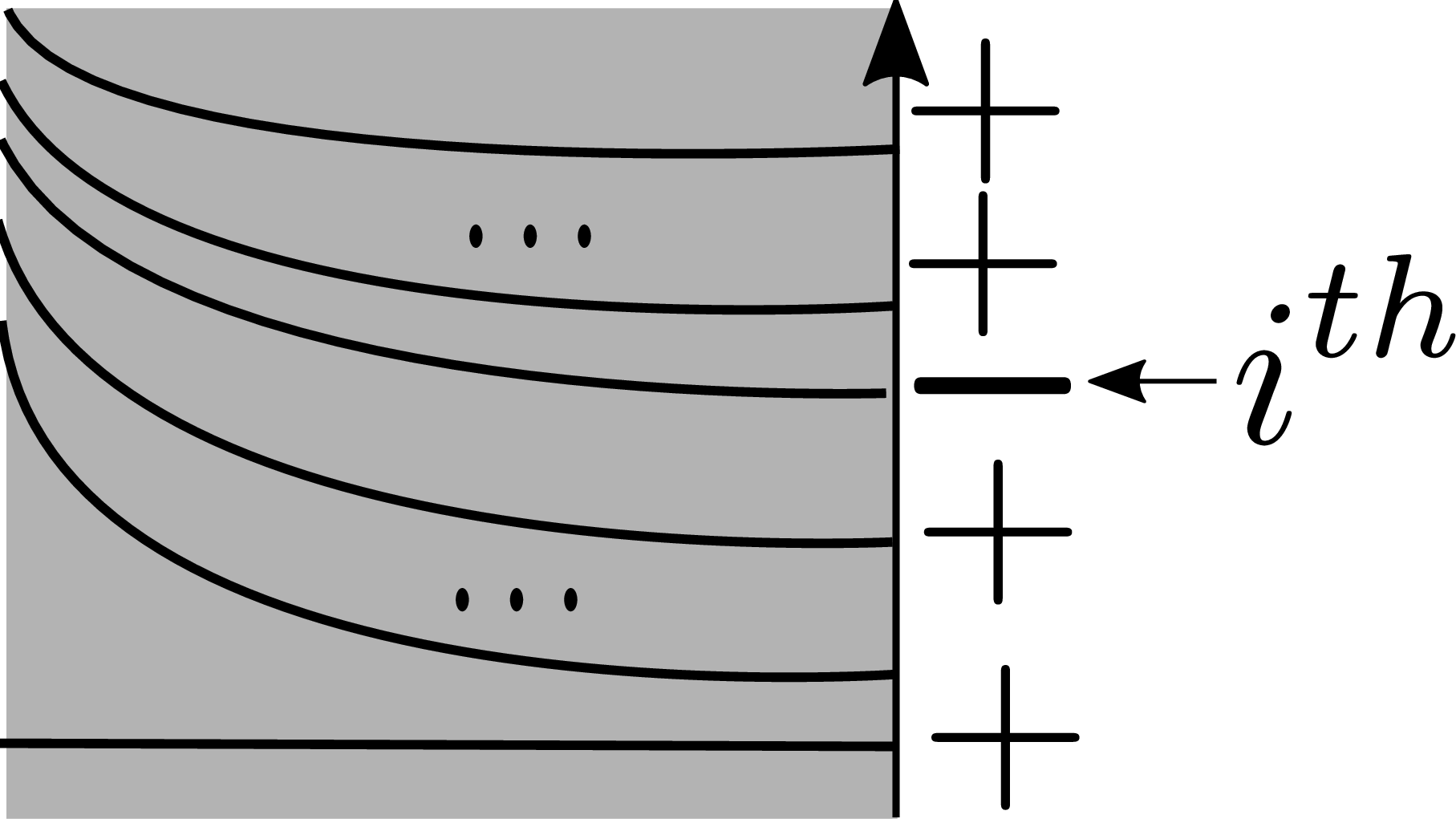}} =: A^{N-i} x_i.$$
Using the cutting arc (defining) relation 
$$\heightexch{->}{-}{+} =
A^2
\heightexch{->}{+}{-}
+ \omega
\heightcurve $$
in the $i^{th}$ and $(i+1)^{th}$ strand of $x_i$, and noting that the term $\heightcurve$ vanishes thanks to the trivial arc relation \ref{trivial_arc_rel} (this is the only moment of the proof where we use the fact that the $N$ strands in Equation \eqref{eq_central} belong to an element $\alpha_{\varepsilon \varepsilon'}^{(N)}$), one finds that $x_i = A^2 x_{i+1}$, so $x_i= A^{2(N-i)}x_N$.  Therefore, using that $A^N=1$,  one finds 
$$ \adjustbox{valign=c}{\includegraphics[width=1.6cm]{F1.eps}} = A^{-N} \adjustbox{valign=c}{\includegraphics[width=1.6cm]{F2.eps}} +(A-A^{-3})\left( \sum_{i=1}^N A^{3(N-i)} \right) x_N  =  \adjustbox{valign=c}{\includegraphics[width=1.6cm]{F2.eps}}.$$
This concludes the proof.

\end{proof}

\begin{lemma}\label{l5}
If $\omega$ is a root of unity of odd order $N>1$, then $\mathcal{A}(\alpha)$ is linearly spanned by the set
$$F= \{ [\alpha^{(n)}, s] (\alpha^{(N)}_{++})^{n_1} (\alpha^{(N)}_{+-})^{n_2}(\alpha^{(N)}_{-+})^{n_3} (\alpha^{(N)}_{--})^{n_4}, s \mbox{ ordered }, 0\leq n \leq 3N-3,  n_1, n_2, n_3, n_4 \geq 0 \}. $$
Therefore, $\mathcal{A}(\alpha)$ is generated as a $K_{\alpha}$-module, by the finite set 
$$S_{\alpha} := \{ [\alpha^{(n)}, s], s \mbox{ ordered }, 0\leq n \leq 3N-3\}.$$
\end{lemma}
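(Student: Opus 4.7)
The plan is to combine Lemma \ref{l3} with an induction on $n$ that peels off central factors using Lemma \ref{l4}. By Lemma \ref{l3} it suffices to show that every $[\alpha^{(n)},s]$ with $s$ ordered belongs to the linear span of $F$, which I will establish by induction on $n$.

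For an ordered state $s=s^{\bm{\varepsilon},\bm{\varepsilon'}}$, denote by $n_{\varepsilon\varepsilon'}$ the number of strands of $\alpha^{(n)}$ whose state pair is $(\varepsilon,\varepsilon')$. The ordered condition forces the $v$-states to be $-$ on an initial block of indices and $+$ afterwards, and similarly at $w$; hence at most three of $n_{--},n_{-+},n_{+-},n_{++}$ are nonzero (in fact $n_{-+}\cdot n_{+-}=0$), and they sum to $n$. In the base case $n\leq 3N-3$ the element $[\alpha^{(n)},s]$ already lies in $S_\alpha\subset F$. For $n\geq 3N-2$ the pigeonhole principle yields a pair $(\varepsilon,\varepsilon')$ with $n_{\varepsilon\varepsilon'}\geq \lceil(3N-2)/3\rceil=N$. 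The $N$ parallel strands carrying this state pair form (a representative of) the central element $\alpha^{(N)}_{\varepsilon\varepsilon'}$ of Theorem \ref{theorem_center_skein}, and Lemma \ref{l4} yields the factorization
$$
[\alpha^{(n)},s] \;=\; [\alpha^{(n-N)},s''] \cdot \alpha^{(N)}_{\varepsilon\varepsilon'},
$$
where $s''$ is the restriction of $s$ to the remaining $n-N$ strands. Removing a consecutive block of state pairs from an ordered state yields an ordered state, so the induction hypothesis places $[\alpha^{(n-N)},s'']$ in the span of $F$, and multiplying on the right by $\alpha^{(N)}_{\varepsilon\varepsilon'}$ concludes the inductive step. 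The second assertion is then immediate because $Z_\alpha$ is by definition generated as a subalgebra by the four $\alpha^{(N)}_{\varepsilon\varepsilon'}$, so the linear spanning of $\mathcal{A}(\alpha)$ by $F$ is equivalent to the $Z_\alpha$-module spanning of $\mathcal{A}(\alpha)$ by the finite set $S_\alpha$.

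The main obstacle is the extraction step when the maximal multiplicity is $n_{-+}$ or $n_{+-}$: the $N$ parallel strands to be peeled off then sit in the middle of the height stack rather than at the top or bottom, so the corresponding sub-tangle is not literally $\alpha^{(N)}_{\varepsilon\varepsilon'}$ as a tangle with its prescribed height ordering. This is precisely the situation that Lemma \ref{l4} is engineered to cover: the slide identities proved there use the centrality of $\alpha^{(N)}_{\varepsilon\varepsilon'}$ to legitimize the factorization irrespective of where in the height ordering the $N$ parallel strands sit.
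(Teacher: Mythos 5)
Your proof is correct and follows essentially the same route as the paper's: reduce to ordered states via Lemma \ref{l3}, observe that an ordered state partitions the $n$ strands into at most three consecutive blocks by state pair, apply the pigeonhole principle for $n\geq 3N-2$ to find a block of size at least $N$, and peel it off using the factorization of Lemma \ref{l4} (which, as you note, is exactly what handles the case where the extracted block sits in the middle of the height stack). The only cosmetic difference is that the paper names the three block sizes $a,b,c$ explicitly rather than working with the multiplicities $n_{\varepsilon\varepsilon'}$.
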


Note that the set $S_{\alpha}$ found in Lemma \ref{l2} in the case where the endpoints of $\alpha$ lye in distinct boundary arcs has exactly the same expression than the set $S_{\alpha}$ in Lemma \ref{l5}.

\begin{proof}
The second assertion is an immediate consequence of the first one. By Lemma \ref{l3}, $\mathcal{A}(\alpha)$ is linearly spanned by elements $[\alpha^{(n)}, s]$ with $s$ ordered and $n\geq 0$. We show that each such element is equal to an element of $F$. If $n\leq 3N-3$, this is trivial. Else, we show the result by decreasing induction on $n$. Suppose that $n\geq 3N-2$. Write $s= s^{\bm{\varepsilon}, \bm{\varepsilon'}}$ and define the integers $0 \leq i_0, j_0 \leq n$  by the conditions $\varepsilon_i = -$ if $i\leq i_0$, $\varepsilon_i=+$ if $i>i_0$ and $\varepsilon'_j=-$ if $j\leq j_0$ and $\varepsilon'_j=+$ if $j>j_0$. Suppose that $i_0\leq j_0$, the case $j_0 <i_0$ is similar and left to the reader. Set $a:=i_0, b:= j_0-i_0$ and $c:= n-j_0$ such that $n=a+b+c$. Since $n\geq 3N-2$, at least one of the integer $a,b$ or $c$ is bigger or equal than $N$. Using Lemma \ref{l4}, one has a factorization of the form $[\alpha^{(n)}, s]= [\alpha^{(n-N)}, s'] \alpha_{\mu \mu'}^{(N)}$, where we can choose $(\mu, \mu') = (+,+)$ if $a\geq N$, $(\mu, \mu')= (-,+)$ if $b\geq N$ or $(\mu, \mu')=(-,-)$ if $c\geq N$. We conclude by decreasing induction on $n$. 

\end{proof}

\begin{proof}[Proof of Proposition \ref{prop_finitely_gen_center}]
By Lemmas \ref{l2} and \ref{l5}, for each $\beta \in \mathbb{G}$, one has a finite set $S_{\beta}$ which generates $\mathcal{A}(\alpha)$ as a $Z_{\alpha}$-module. Lemma \ref{l1} implies that the set 
$S:= \{ x_1 \ldots x_n, x_i \in S_{\beta_i} \}$ generates $\mathcal{S}_{\omega}(\mathbf{\Sigma})$ as a module over its center. Therefore the image of $S$ in $\overline{\mathcal{S}}_{\omega}(\mathbf{\Sigma})$ generates the reduced stated skein algebra over its center as well.
\end{proof}

\begin{remark}
Using the set $\mathbb{G}$ of Example \ref{exemple_pres}, the proof of Proposition \ref{prop_finitely_gen_center} provides an explicit set $S$ of generators for $\mathcal{S}_{\omega}(\mathbf{\Sigma})$ as a module over its center from which we deduce an upper bound 
$$|S|=  |S_{\beta}|^{|\mathbb{G}|} = (2N^3-N^2)^{2g-2+s+n_{\partial}}$$ for the rank of $\mathcal{S}_{\omega}(\mathbf{\Sigma})$ as a module over this center. Therefore every irreducible representation of $\mathcal{S}_{\omega}(\mathbf{\Sigma})$ has dimension smaller or equal to the square root $\sqrt{|S|}$. However, this upper bound is far from been optimal. Indeed, we only considered the central elements of the form $\alpha_{\varepsilon \varepsilon'}^{(N)}$ but the center of $\mathcal{S}_{\omega}(\mathbf{\Sigma})$ is much bigger. For instance, the class of a peripheral curve around an inner puncture is a central element that we did not consider. 
\end{remark}

\section{Valuations on reduced stated skein algebras}

\subsection{A strict valuation}

\par Consider a triangulated punctured surface $(\mathbf{\Sigma}, \Delta)$. An \textit{indexing} is a bijection $I : \mathcal{E}(\Delta) \cong \{1, \ldots, n\}$, where $n$ is the cardinal of $\mathcal{E}(\Delta)$. We write $e_i:= I^{-1}(i)$ such that $\mathcal{E}(\Delta)=\{e_1, \ldots, e_n\}$ is totally ordered with $I$ increasing. We define a total ordering $\prec_{\mathcal{I}}$ on $K_{\Delta}$ by setting $\mathbf{k} \prec_{\mathcal{I}} \mathbf{k}'$ if there exists $i\in \{1, \ldots, n\}$ such that $\mathbf{k}(e_j)=\mathbf{k}'(e_j)$ for $j<i$ and $\mathbf{k}(e_i) < \mathbf{k}'(e_i)$ (lexicographic order). We extend $\prec_{\mathcal{I}}$ to $K_{\Delta} \cup \{-\infty\}$ by imposing that $-\infty \prec_{\mathcal{I}} \mathbf{k}$ for all $\mathbf{k}$. 
The following definition is inspired from \cite{MarcheSimonAutomCharVar}, and does not agree with the more classical definition of valuation.

\begin{definition} A map $\mathbf{v} : \overline{\mathcal{S}}_{\omega}(\mathbf{\Sigma}) \rightarrow K_{\Delta}\cup \{-\infty\}$ is called a \textit{valuation} if one has $\mathbf{v}(x)=-\infty$ if and only if $x=0$,  $\mathbf{v}(xy)=\mathbf{v}(x)+\mathbf{v}(y)$ and $\mathbf{v}(x+y) \preceq_{\mathcal{I}}\mathrm{max}(\mathbf{v}(x), \mathbf{v}(y))$ for all $x,y \in  \overline{\mathcal{S}}_{\omega}(\mathbf{\Sigma})$. A valuation $\mathbf{v}$ is \textit{strict} if for $(D_1,s_1), (D_2, s_2) \in \mathcal{D}_{\mathbf{\Sigma}}$, then $(D_1,s_1)\neq (D_2,s_2)$ implies $\mathbf{v}([D_1,s_1]) \neq \mathbf{v}([D_2,s_2])$.
\end{definition}

The quantum trace naturally induces a valuation as follows. 
\begin{definition}[Valuation]
For $0\neq x \in \overline{\mathcal{S}}_{\omega}(\mathbf{\Sigma})$, expands its image through the quantum trace as $\Tr_{\omega}^{\Delta}(x) = \sum_{\mathbf{k} \in K_{\Delta}} x_{\mathbf{k}} Z^{\mathbf{k}}$ and set 
 $$\mathbf{v}(x):= \mathrm{max}\{ \mathbf{k} \mbox{, such that } x_{\mathbf{k}} \neq 0 \}, $$
 where the maximum is chosen for the total ordering $\prec_{\mathcal{I}}$. Extend $\mathbf{v}$ to a valuation $\mathbf{v} : \overline{\mathcal{S}}_{\omega}(\mathbf{\Sigma}) \rightarrow K_{\Delta}\cup \{-\infty\}$ by stating $\mathbf{v}(0)=-\infty$.
 \end{definition}
 The \textit{canonical ordering} $\leq$ of $K_{\Delta}$ is the partial ordering defined by $\mathbf{k}\leq \mathbf{k}'$ if $\mathbf{k}(e)\leq \mathbf{k}'(e)$ for all $e\in \mathcal{E}(\Delta)$. Obviously, $\mathbf{k}\leq \mathbf{k}'$ implies $\mathbf{k}\preceq_{\mathcal{I}} \mathbf{k}'$ for all indexing $\mathcal{I}$.

 The goal of this subsection is to prove the following theorem.
 
 \begin{theorem}\label{theorem_valuation}
 \begin{enumerate}
 \item For $(D,s)\in \mathcal{D}_{\mathbf{\Sigma}}$,  one has 
 $$ \Tr_{\omega}^{\Delta}([D,s])= \omega^n Z^{\mathbf{v}([D,s])}+ l.t.$$ 
 where $n\in \mathbb{Z}$ and $l.t.$ is a linear combination of elements $Z^{\mathbf{k}}$ with $\mathbf{k}< \mathbf{v}([D,s])$ for the canonical partial ordering $\leq$. In particular, $\mathbf{v}$ does not depend on $\mathcal{I}$.
 \item The valuation $\mathbf{v}$ is strict.
 \end{enumerate} 
 \end{theorem}
 
 \par Note that for $(D,s)\in \mathcal{D}_{\mathbf{\Sigma}}$, by Lemma \ref{lemma_qtr}, one has
 \begin{equation}\label{eq_valuation}
  \mathbf{v}([D,s]) = \mathrm{max}\{ \mathds{k}(D,\hat{s}), (D, \hat{s}) \in \mathrm{St}(D,s) \}.
  \end{equation}
 When $s(v)=+$ for all $v\in \partial D$ (for instance when $D$ is closed), this maximum is uniquely reached for the full state $\hat{s}$ such that $\hat{s}(v)=+$ for all $v\in D \cap \mathcal{E}(\Delta)$. In this case, the valuation $\mathbf{v}$ has a nice geometric interpretation: for $e\in \mathcal{E}(\Delta)$, the integer $\mathbf{v}([D,s]) (e)$ is the geometric intersection of $D$ with $e$. Hence when $D$ is a closed diagram, $\mathbf{v}([D])$ characterizes $D$ and for a punctured surface $\mathbf{\Sigma}=(\Sigma, \mathcal{P})$ with $\Sigma$ closed, the valuation $\mathbf{v}$ is strict. In the closed case, this valuation is the key tool that enabled Bonahon and Wong to prove in \cite{BonahonWongqTrace} that the quantum trace is injective and permitted the authors of \cite{FrohmanKaniaLe_UnicityRep} to characterize the center of the skein algebra at roots of unity (though they did not mention the quantum trace explicitly). Our goal is to generalize this key tool to the case of open punctured surfaces. 
 \vspace{2mm}
 \par First consider the case where $(\alpha, s)$ is a stated arc whose endpoints $v$ and $v'$ and isotope $\alpha$ such that it intersects $\mathcal{E}(\Delta)$ transversally and minimally. Orient $\alpha$ from $v$ to $v'$ and denote by $v=v_0, v_2, \ldots, v_n=v'$ the points of $\alpha \cap \mathcal{E}(\Delta)$ ordered by the orientation of $\alpha$. Let $\alpha_i$ be the subarc of $\alpha$ between the point $v_{i-1}$ and $v_i$ and $\mathbb{T}_i$ be the face of $\Delta$ in which lies $\alpha_i$. For $\mu, \mu' \in \{-,+\}$, we denote by $(\alpha_i)_{\mu \mu'} \in \overline{\mathcal{S}}_{\omega}(\mathbb{T}_i)$ the class of the arc $\alpha_i$ with state $\mu$ at $v_{i-1}$ and $\mu'$ at $v_i$.
\\  Suppose that $s(v)= -$. If $(\alpha_1)_{-+}$ is a bad arc in $\mathbb{T}_1$, then one has $\hat{s}(v_1)=-$ for all admissible full states $\hat{s}\in \mathrm{St}^a(\alpha, s)$. In this case, if $(\alpha_2)_{-+}$ is a bad arc in $\mathbb{T}_2$, then one has $\hat{s}(v_2)=-$ for all admissible full states $\hat{s}\in \mathrm{St}^a(\alpha, s)$ and so-on. This suggests the following:
 
 \begin{notations} We denote by $n_v(\alpha)\in \{0, \ldots, n\}$ the biggest integer such that for all $1\leq i \leq n_v(\alpha)$, the stated arc $(\alpha_i)_{-+}$ is a bad arc in $\mathbb{T}_i$. The \textit{critical part} of $\alpha$ at $v$ is the subarc $\alpha_1 \cup \ldots \cup \alpha_{n_v(\alpha)}\subset \alpha$. By convention, if $(\alpha_1)_{-+}$ is not a bad arc, we state $n_v(\alpha)=0$ and the critical part is $\{v\}$.
 \end{notations}
 
 \begin{figure}[!h] 
\centerline{\includegraphics[width=8cm]{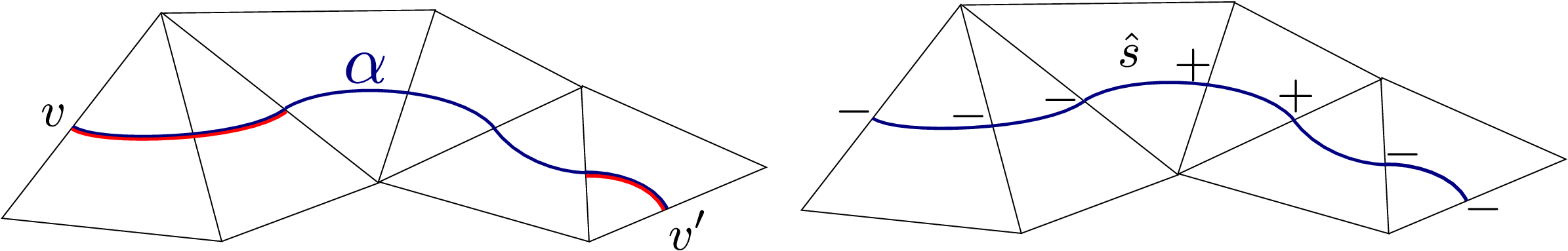} }
\caption{An arc $\alpha$ in blue, its two critical parts in red and the maximal admissible full state when both endpoints have state $-$.} 
\label{fig_critical} 
\end{figure} 
 
 Figure \ref{fig_critical} illustrates the critical parts of an arc. Note that if $s(v)=-$, then $\hat{s}(v_i)=-$ for all admissible states $\hat{s}$ and for all $v_i$ which lies in the critical part of $v$ (\textit{i.e.} for $0\leq i \leq n_v(\alpha)$). We can now describe $\mathbf{v}(\alpha, s)$ explicitly. 
 
 \begin{lemma}\label{lemma_valuation_critical} Let $(\alpha, s)$ a stated arc with endpoints $v,v'$.
 If $(s(v), s(v')) = (-, +)$, then the maximum in Equation \eqref{eq_valuation} is uniquely reached for the admissible full state $\hat{s}^{max}$ with $\hat{s}^{max}(v_i)=\left\{ \begin{array}{ll} - & \mbox{, if }v_i\mbox{ lies in the critical part of }v; \\ + & \mbox{, else.} \end{array} \right.$. If $(s(v), s(v')) = (- , -)$, the maximum is uniquely reached for the admissible full state $\hat{s}^{max}$ with $\hat{s}^{max}(v_i)=\left\{ \begin{array}{ll} - & \mbox{, if }v_i\mbox{ lies in the critical part of }v\mbox{ or of }v'; \\ + & \mbox{, else.} \end{array} \right.$. The case  $(s(v), s(v')) = (+, -)$ is obtained by exchanging the role of $v$ and $v'$. If $(s(v), s(v')) = (+, +)$, then the maximum in Equation \eqref{eq_valuation} is obtained for the admissible full state $\hat{s}^{max}$ sending every $v_i$ to $+$. 
 \end{lemma}

Note that the lemma implies that $\mathbf{v}(\alpha, s)=\mathds{k}(\alpha, \hat{s}^{max})$ does not depend on the choice of $\mathcal{I}$.

\begin{proof}
Consider the expansion $ \Tr_{\omega}^{\Delta} ([\alpha, s]) = \sum_{\hat{s} \in \mathrm{St}(\alpha, s)} P_{\hat{s}}(\omega) Z^{\mathds{k}(\alpha,\hat{s})}$ of Lemma \ref{lemma_qtr}. Suppose that $s(v)=-$ and  let $\widehat{s}\in  \mathrm{St}(\alpha, s)$ be such that $\widehat{s}(w)=+$ for some $w$ in the critical part of $\alpha$  adjacent to $v$ and let us prove that $P_{\hat{s}}(\omega)=0$.
Using the above notations,  the critical part of $\alpha$ adjacent to $v$ has the form $\alpha_1\cup \ldots \cup \alpha_{n_v(\alpha)} \subset \alpha$ with $v\in \partial \alpha_1$ and $w= \partial \alpha_k \cap \partial \alpha_{k+1}$ for some $1 \leq k < n_v(\alpha)$. Let $\mathbb{T}$ be the face of $\Delta$ containing $\alpha_{k}$ and $p$ be the puncture of $\mathbb{T}$ encircled by $\alpha_k$. Without loss of generality, we can suppose that the orientation $\mathfrak{o}_{\Delta}$ has been chosen such that the edges adjacent to $p$ are both oriented either towards $p$ or from $p$.
 Let $(\restriction{\alpha}{\mathbb{T}}, \restriction{\widehat{s}}{\mathbb{T}})$ be the stated diagram of $\mathbb{T}$ obtained by restriction. Then the stated subarc of $(\restriction{\alpha}{\mathbb{T}}, \restriction{\widehat{s}}{\mathbb{T}})$ whose underlying arc is $\alpha_k$, is a bad arc. Moreover there is no arc parallel to $\alpha_k$ between $\alpha_k$ and $p$. So $[\restriction{\alpha}{\mathbb{T}}, \restriction{\widehat{s}}{\mathbb{T}}]= 0 $ in $ \overline{\mathcal{S}}_{\omega}(\mathbb{T})$. This implies that $P_{\hat{s}}(\omega)=0$. Therefore, if $\widehat{s}\in  \mathrm{St}(\alpha, s)$ is such that $P_{\hat{s}}(\omega)\neq 0$, then $\widehat{s}(w)=-$ for all $w$ in the critical parts of $\alpha$ adjacent to an endpoint with state $-$ so $\widehat{s} \leq \widehat{s}^{max}$. This concludes the proof.

\end{proof}

\begin{lemma}\label{lemma_max} For $(D,s)\in \mathcal{D}_{\mathbf{\Sigma}}$,  one has 
 $$ \Tr_{\omega}^{\Delta}([D,s])= \omega^n Z^{\mathbf{v}([D,s])}+ l.t.$$ 
 where $n\in \mathbb{Z}$ and $l.t.$ is a linear combination of elements $Z^{\mathbf{k}}$ with $\mathbf{k}< \mathbf{v}([D,s])$ for the canonical partial ordering $\leq$. In particular, $\mathbf{v}$ does not depend on $\mathcal{I}$.
\end{lemma}

\begin{proof} When $D$ is connected, the results follows from Lemmas \ref{lemma_valuation_critical} and \ref{lemma_qtr}. The general case follows by induction on the number of connected components of $D$ using Lemma \ref{lemma_product}.
\end{proof}

We now prove that the valuation $\mathbf{v}$ is strict.

\begin{proposition}\label{prop_strict} The map $\mathbf{v} : \mathcal{D}_{\mathbf{\Sigma}} \rightarrow K_{\Delta}$ sending $(D,s)$ to $\mathbf{v}(D,s):= \mathbf{v}([D,s])$, is injective.
\end{proposition}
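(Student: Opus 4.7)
The plan is to show that $(D,s)\in \mathcal{D}_{\mathbf{\Sigma}}$ is uniquely determined by $\mathbf{k}:=\mathbf{v}([D,s])$, via a face-by-face analysis. By Lemma \ref{lemma_max}, $\mathbf{k}=\mathds{k}(D,\hat{s}_{max})$ where $\hat{s}_{max}$ is the unique admissible full state maximal for the canonical partial order $\leq$ on $K_{\Delta}$ (which refines $\prec_{\mathcal{I}}$). Since $\mathds{k}$ is defined edge by edge, for each face $\mathbb{T}\in F(\Delta)$ the restriction of $\mathbf{k}$ to the edges of $\mathbb{T}$ equals $\mathds{k}(D\cap \mathbb{T}, \hat{s}_{max}|_{\mathbb{T}})$.

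The heart of the proof is a triangle lemma: the map $(D',s')\mapsto \mathds{k}(D',s')$ from the basis $\overline{\mathcal{B}}$ of $\overline{\mathcal{S}}_{\omega}(\mathbb{T})$ to $K_{\mathbb{T}}$ is injective. In a triangle there are no interior edges, so the admissible-state sum of Lemma \ref{lemma_qtr} collapses to a single term, and $\Tr_{\omega}^{\mathbb{T}}$ sends each basis element $(D',s')$ to the monomial $\omega^{n(s')}Z^{\mathds{k}(D',s')}$. Since the Costantino--L\^e theorem asserts that $\Tr_{\omega}^{\mathbb{T}}\colon \overline{\mathcal{S}}_{\omega}(\mathbb{T})\xrightarrow{\cong}\mathcal{Z}_{\omega}(\mathbb{T})$ is an algebra isomorphism, and distinct monomials in a quantum torus are linearly independent, distinct basis elements must have distinct exponents $\mathds{k}$.

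To apply the triangle lemma, I would rewrite $[D\cap \mathbb{T}, \hat{s}_{max}|_{\mathbb{T}}]$ as a nonzero scalar multiple of a basis element of $\overline{\mathcal{S}}_{\omega}(\mathbb{T})$, by using the height exchange relations \eqref{height_exchange_rel} on each boundary arc of $\mathbb{T}$ to sort the states into $\mathfrak{o}^+$-increasing order. The only swaps required are of adjacent inversions $+$-above-$-$ in the $\mathfrak{o}^+$-direction; by inspection of $\mathscr{R}$ these are clean scalar swaps with coefficient $\mathscr{R}_{+-}^{-+}=A^{-1}$, without any mixing, and they obviously leave $\mathds{k}$ invariant. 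Bad arcs are not created by such rearrangements, since badness is intrinsic to a stated corner arc, independent of ambient heights. Hence $[D\cap \mathbb{T}, \hat{s}_{max}|_{\mathbb{T}}]=\omega^{N_{\mathbb{T}}}[D\cap \mathbb{T}, s'_{\mathbb{T}}]$ with $(D\cap \mathbb{T}, s'_{\mathbb{T}})\in \overline{\mathcal{B}}$, and this basis element is uniquely determined by $\mathbf{k}|_{\mathbb{T}}$ thanks to the previous paragraph.

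Finally, gluing the face-wise reconstructions recovers the diagram $D$, and on each boundary arc $b$ of $\mathbf{\Sigma}$ the state $s|_b$, being $\mathfrak{o}^+$-increasing by definition of $\mathcal{D}_{\mathbf{\Sigma}}$, is determined by the multiset of its values, hence by $s'_{\mathbb{T}}|_b$ for the unique triangle $\mathbb{T}$ containing $b$. This reconstructs $(D,s)$ from $\mathbf{k}$ and establishes strictness. The main obstacle is the rearrangement step: one must carefully verify that only clean scalar swaps appear, so that the single-monomial image of $\Tr_{\omega}^{\mathbb{T}}$ on basis elements propagates to $[D\cap \mathbb{T}, \hat{s}_{max}|_{\mathbb{T}}]$ without spurious contributions of a different $\mathds{k}$.
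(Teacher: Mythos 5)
Your overall strategy --- reduce to the triangle via the bijection $\mathbf{v}_{\mathbb{T}}:\mathcal{D}_{\mathbb{T}}\to K_{\mathbb{T}}$ and then reassemble face by face --- is the same as the paper's, and your triangle lemma is exactly Lemma \ref{lemma_strict_triangle}. But the reconstruction step contains a genuine gap. It is true that $[D\cap\mathbb{T},\hat{s}_{max}|_{\mathbb{T}}]$ is a nonzero scalar multiple of a single element of $\overline{\mathcal{B}}_{\mathbb{T}}$ (the cleanest reason is not your sorting argument but the fact that $\overline{\mathcal{S}}_{\omega}(\mathbb{T})$ is a quantum torus and, by Lemma \ref{lemma_product}, the class is a product of monomials). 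What is false is that this basis element has the \emph{same underlying diagram} $D\cap\mathbb{T}$. Since $(\alpha_i)_{--}=((\alpha_i)_{++})^{-1}$, a $++$-stated copy and a $--$-stated copy of the same corner arc multiply to a scalar times the \emph{empty} diagram, so they disappear from the basis element; and such configurations really occur for $\hat{s}_{max}$ (a strand lying in a critical part of a globally $-$-stated arc sits next to a parallel strand that is not, giving a $--$ and a $++$ copy of the same corner arc in the same face). Consequently $\mathbf{k}|_{\mathbb{T}}$ only determines the diagram \emph{after} these cancellations, not $D\cap\mathbb{T}$ itself, and your final step "gluing the face-wise reconstructions recovers $D$" is unjustified: one must show that the cancelled, normalized data can be "un-cancelled" in only one globally consistent way. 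This is precisely the content of the paper's Lemma \ref{lemma_technical}: the map $\theta_{a\#b}$ performs these cancellations via positive moves (which \emph{connect} arcs and change the diagram), and its injectivity is proved by a counting argument on the block structure $(n_1^a,n_2^a,n_3^a)$ versus $(n_1^b,n_2^b,n_3^b)$ of signs along the two sides of the cut. Your proposal omits this ingredient entirely.

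A secondary error: your claim that only "clean scalar swaps" with coefficient $\mathscr{R}^{-+}_{+-}=A^{-1}$ are needed is backwards. The configuration you must resolve is a point stated $+$ followed (in the $\mathfrak{o}^+$ direction) by a point stated $-$; reversing the heights of such a pair is exactly the mixing case of \eqref{height_exchange_rel} (the entry $A-A^{-3}$), and the alternative rewriting via the cutting arc relation \eqref{cutting_arc_rel} produces the turn-back term that joins the two strands --- again changing the diagram. So even the local sorting cannot be done while keeping $D\cap\mathbb{T}$ fixed, which is another symptom of the same underlying issue.
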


\begin{lemma}\label{lemma_strict_triangle}
When $\mathbf{\Sigma}=\mathbb{T}$, the map $\mathbf{v}_{\mathbb{T}} : \mathcal{D}_{\mathbb{T}} \rightarrow K_{\mathbb{T}}$ is a bijection.
\end{lemma}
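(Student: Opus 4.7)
The plan is to leverage the fact that the triangle carries a trivial triangulation: $\mathcal{E}(\Delta)$ consists solely of the three boundary arcs $e_1, e_2, e_3$, and $\mathbb{T}$ itself is the unique face of $\Delta$. Consequently, for any simple diagram $D$ in $\mathbb{T}$, every intersection point of $D$ with an edge of $\Delta$ lies on $\partial \mathbb{T}$, so $D \cap \mathcal{E}(\Delta) = \partial D$, and a full state on $D$ is the same data as a boundary state on $\partial D$.

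For $(D, s) \in \mathcal{D}_{\mathbb{T}}$, this observation forces $\mathrm{St}^a(D, s) = \{s\}$: the only candidate full state extending $s$ is $s$ itself, and it is admissible because by the definition of $\mathcal{D}_{\mathbb{T}}$ the pair $(D, s)$ contains no bad arc. Lemma \ref{lemma_qtr} therefore collapses to a single term
$$
\Tr_{\omega}^{\mathbb{T}}([D, s]) = \omega^{n(s)} Z^{\mathds{k}(D, s)}
$$
for some $n(s) \in \mathbb{Z}$; in particular $\mathbf{v}_{\mathbb{T}}(D, s) = \mathds{k}(D, s)$.

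Now recall that $\Tr_{\omega}^{\mathbb{T}} : \overline{\mathcal{S}}_{\omega}(\mathbb{T}) \xrightarrow{\sim} \mathcal{Z}_{\omega}(\mathbb{T})$ is an algebra isomorphism (the statement just before Definition \ref{def_qtrace}). The computation above shows that it sends each basis element $[D, s]$ of $\overline{\mathcal{B}}$ to a non-zero scalar multiple of a single monomial $Z^{\mathds{k}(D, s)}$ from the monomial basis $\{Z^{\mathbf{k}}\}_{\mathbf{k} \in K_{\mathbb{T}}}$ of $\mathcal{Z}_{\omega}(\mathbb{T})$. The induced map $\mathcal{D}_{\mathbb{T}} \to K_{\mathbb{T}}$, $(D, s) \mapsto \mathds{k}(D, s)$, is then immediately seen to be a bijection: injectivity holds because distinct basis elements of $\overline{\mathcal{B}}$ cannot map under an isomorphism to scalar multiples of the same monomial, and surjectivity holds because every $Z^{\mathbf{k}}$ lies in the image of $\Tr_{\omega}^{\mathbb{T}}$ while the $Z^{\mathbf{k}}$ are linearly independent. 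The main conceptual step is the observation that the absence of interior edges reduces the quantum trace on the triangle to a single monomial; after that, no significant obstacle is anticipated.
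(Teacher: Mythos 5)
Your proposal is correct and follows essentially the same route as the paper: both reduce to the observation that $\Tr_{\omega}^{\mathbb{T}}([D,s])$ is a single monomial $\omega^{n}Z^{\mathbf{v}_{\mathbb{T}}(D,s)}$ and then invoke the fact that the isomorphism $\Tr_{\omega}^{\mathbb{T}}$ carries the basis $\overline{\mathcal{B}}$ to a basis of the quantum torus. Your justification of the single-monomial claim (that on the triangle every edge is a boundary arc, so $\mathrm{St}^a(D,s)=\{s\}$ and Lemma \ref{lemma_qtr} collapses to one term) is a welcome elaboration of a step the paper leaves implicit.
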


\begin{proof}
Note that for $(D,s)\in \mathcal{D}_{\mathbb{T}}$, there exists $n\in \mathbb{Z}$ such that $\Tr_{\omega}^{\mathbb{T}} ([D,s]) = \omega^n Z^{\mathbf{v}_{\mathbb{T}}(D,s)}$. Since the quantum trace is an isomorphism, its sends the basis $\overline{B}= \{ [D,s], (D,s) \in \mathcal{D}_{\mathbb{T}} \}$ to a basis of $\mathcal{Z}_{\omega}(\mathbb{T})$, hence $\mathbf{v}_{\mathbb{T}}$ is a bijection.
\end{proof}

Let $a$ and $b$ be two distinct boundary arcs of $\mathbf{\Sigma}$ and denote by $\mathbf{\Sigma}_{|a\#b}$ the punctured surface obtained by gluing $a$ and $b$ together. Let $\pi : \Sigma \rightarrow \Sigma_{| a\#b}$ be the gluing map and $c=\pi(a)=\pi(b)$. Fix $\Delta$ a triangulation of $\mathbf{\Sigma}$ and write $\Delta_{a\#b}$ for the triangulation of 
$\mathbf{\Sigma}_{|a\#b}$ obtained from $\Delta$ by identifying $a$ and $b$ along $\pi$. Let $(D,s)\in \mathcal{D}_{{\mathbf{\Sigma}}_{|a\#b}}$, with $D$ isotoped such that it intersects $c$ transversally and minimally and denote by $D'$ the diagram of $\mathbf{\Sigma}$ obtained by cutting $D$ along $c$ (hence $\pi(D')=D$). For $v\in c\cap D$, denote by $v_a \in a\cap D'$ and $v_b \in b\cap D'$ the points such that $\pi(v_a)=\pi(v_b)=v$. Let $s'$ be the (not necessarily $\mathfrak{o}^+$ increasing) state of $D'$ such that: $(1)$ $s'(v)=s(\pi(v))$ for $v\notin a\cup b$, $(2)$ $s'(v_a)=s'(v_b)$ for $v\in c\cap D$, $(3)$ $(D',s')$ has no bad arc, $(4)$ $s'$ is maximal (for the partial ordering $\leq$) among the states satisfying $(1),(2)$ and $(3)$. The existence of such a maximum follows from the same argument previously used in Lemma \ref{lemma_max}. See Figure \ref{fig_diagram} for an example.

\begin{lemma}\label{lemma_technical}
\begin{enumerate}
\item There exists a unique $(D'',s'')\in \mathcal{D}_{\Delta}$ such that $[D'',s'']=\omega^n [D',s']$ for some $n\in \mathbb{Z}$.
\item The map $\theta_{a\#b} : \mathcal{D}_{{\mathbf{\Sigma}}_{|a\#b}} \rightarrow \mathcal{D}_{\mathbf{\Sigma}}$ sending $(D,s)$ to $(D'', s'')$ is injective.
\item If $a,b,c,d$ are four distinct boundary arcs, one has $\theta_{a\#b}\circ \theta_{c\# d} = \theta_{c\#d}\circ \theta_{a\#b}$.
\end{enumerate}
\end{lemma}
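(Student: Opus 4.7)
Part (1) carries the content; parts (2) and (3) will follow more directly.

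For part (1), I first establish the existence and uniqueness of the maximum state $s'$ by a critical-part analysis analogous to the one preceding Lemma \ref{lemma_max}: reducing to the connected arc components of $D'$ meeting $a\cup b$ and propagating the no-bad-arc constraint together with the coupling constraint $s'(v_a)=s'(v_b)$, one determines which boundary points are forced to carry the state $-$ and assigns $+$ everywhere else. Next I set $D'' := D'$ and construct $s''$ by re-expressing $[D',s']$ in the basis $\overline{\mathcal{B}}$ via the height exchange relations \eqref{height_exchange_rel}, reordering the heights of $\partial_a D'\cup \partial_b D'$ until the resulting state becomes $\mathfrak{o}^+$-increasing. Each elementary height swap either produces a scalar multiple (one of the three relations listed in the proof of Lemma \ref{lemma_product}) or, when the local configuration is of type $\mathscr{R}_{-+}^{\bullet\bullet}$, yields a two-term R-matrix sum. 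The crucial claim is that in $\overline{\mathcal{S}}_\omega(\mathbf{\Sigma})$ the ``state-swapped'' term of this sum vanishes because of a bad arc: maximality of $s'$ at a point $v$ that cannot be flipped to $+$ forces an adjacent corner arc whose state, after the local swap, is precisely a bad arc. Uniqueness of $(D'',s'')$ then follows since $D''=D'$ is fixed and the scalar exchanges determine $s''$ unambiguously.

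For part (2), $\theta_{a\#b}$ admits an explicit inverse on its image: given $(D'',s'')$, recover $D=\pi(D'')$ by gluing $a$ to $b$, and recover $s$ via $s(v):=s''(v_a)=s''(v_b)$ for $v\in c\cap D$ (using condition (2) on $s'$ together with the fact that height reordering preserves state values pointwise) and $s(v):=s''(v)$ elsewhere. For part (3), the commutativity $\theta_{a\#b}\circ\theta_{c\#d}=\theta_{c\#d}\circ\theta_{a\#b}$ follows because the constructions of the maximum state and of the height reordering decompose independently over the disjoint pairs $\{a,b\}$ and $\{c,d\}$, so the two orders of application produce the same $(D'',s'')$ pointwise.

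The main obstacle is showing the vanishing of the ``state-swapped'' R-matrix term in part (1). This requires a careful case analysis of how the coupling constraint and the bad-arc obstructions on the $a$-side and $b$-side interact near each configuration that invokes the non-scalar exchange; this is where the critical-part structure of $s'$ really pays off, and I expect most of the technical work to concentrate there.
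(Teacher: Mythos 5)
Your preliminary step (existence and uniqueness of the maximal state $s'$) and part (3) are fine, but the core construction in part (1) is wrong, and this breaks part (2) as well. You set $D'':=D'$ and try to reach an $\mathfrak{o}^+$-increasing state by height exchanges. The height exchange relations \eqref{height_exchange_rel} relate the two height orderings of the \emph{same} stated diagram: they never move a boundary point along $a$ and never reduce $|\partial_a D'|$, so if $s'$ is genuinely non-increasing as a function of position on $a$ (there are adjacent points $v_1^a<_{\mathfrak{o}^+}v_2^a$ with $s'(v_1^a)=+$ and $s'(v_2^a)=-$), no sequence of height exchanges can express $[D',s']$ as a scalar times a single element of $\overline{\mathcal{B}}$ supported on $D'$. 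The relation actually needed is the last defining relation of \eqref{eq: skein 2} (equivalently \eqref{cutting_arc_rel}), which trades the configuration $(+,-)$ at $(v_1^a,v_2^a)$ for the state-swapped configuration $(-,+)$ on the same diagram \emph{plus} a term in which the two arcs through $v_1^a$ and $v_2^a$ are concatenated and the junction is pushed into the interior of $\Sigma$. The bad-arc vanishing you anticipate does occur, but it kills the \emph{state-swapped term on $D'$} (maximality of $s'$ forces the arc through $v_2^a$ to be a corner arc whose other state is $-$, so after the swap it is a bad arc); the term that survives is the \emph{glued} diagram. Hence $(D'',s'')$ has a strictly different underlying diagram, with fewer endpoints on $a$ and $b$ (cf.\ Figure \ref{fig_diagram}), contrary to your claim that $D''=D'$.

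Because of this, your part (2) collapses: there is no pointwise inverse of the form $s(v):=s''(v_a)=s''(v_b)$, since the offending boundary points are no longer present in $D''$, and distinct elements of $\mathcal{D}_{\mathbf{\Sigma}_{|a\#b}}$ could a priori reach the same $(D'',s'')$ after different numbers of gluing ("positive") moves on the $a$- and $b$-sides. Injectivity has to be recovered combinatorially: each inverse ("negative") move along $a$ increments a specific sign-pattern count $(n_1^a,n_2^a,n_3^a)$ read along $a$, the coupling constraint $s'(v_a)=s'(v_b)$ forces $n_1^a=n_3^b$, $n_2^a=n_2^b$, $n_3^a=n_1^b$, and exactly one preimage of $(D'',s'')$ under sequences of negative moves satisfies these equalities. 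This counting argument is also reused later (e.g.\ in Lemma \ref{lemma_center2}), so it cannot be bypassed.
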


 \begin{figure}[!h] 
\centerline{\includegraphics[width=10cm]{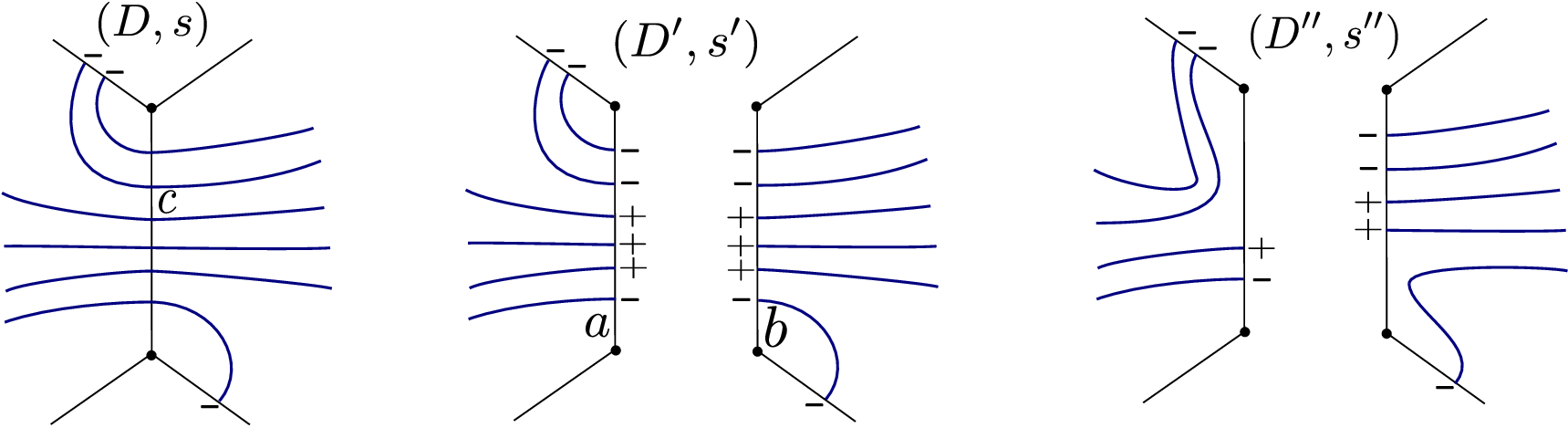} }
\caption{Three stated diagrams $(D,s)$, $(D',s')$ and $(D'',s'')= \theta_{a\#b}(D,s)$.} 
\label{fig_diagram} 
\end{figure}

\begin{proof}
$(1)$ The unicity part of the first statement follows from the fact that $\overline{B}=\{[D,s], (D,s) \in \mathcal{D}_{\mathbf{\Sigma}}\}$ is a basis. To prove the existence part, let us consider $(D,s)\in \mathcal{D}_{\mathbf{\Sigma}_{|a\#b}}$. If $(D',s')$ is $\mathfrak{o}^+$-increasing along $a$ and $b$, then one can choose $(D'',s''):= (D',s')$. Suppose that $(D',s')$ is not $\mathfrak{o}^+$-increasing along $a$. Then there exist $v_1^a, v_2^a \in a \cap \partial D'$ such that $v_1^a <_{\mathfrak{o}^+} v_2^a$ and $s'(v_1^a)=+, s'(v_2^a)=-$. We choose such a pair $(v_1^a,v_2^a)$ such that there is no element of $a\cap \partial D'$ between $v_1^a$ and $v_2^a$. Let $\alpha \subset D'$ be the connected component of $D'$ containing $v_2^a$, let $v'$ be the second endpoint of the arc $\alpha$ and let $\widetilde{\alpha} \subset D$ be the connected component of $D$ whose image through $\pi$ contains $\alpha$. 
\vspace{2mm}
\par Let us show that $\alpha$ is a corner arc and that $s'(v')=-$. Since $s'(v_2)=-$, and by maximality of $s'$, then $\pi(v_2)$ belongs to one of two critical parts of $\widetilde{\alpha}$ and the state $s$ at the corresponding endpoint takes value $-$. By contradiction, suppose that $\pi(v_2)$ belongs to the critical part of the endpoint of $\widetilde{\alpha}$ which is not $v'$, say $v''$. Then the arc $\beta \subset D'$ that connects $v_2^b$ to $v''$ is a corner arc and $s(v'')=-$. Since $D$ is simple, the arc $\beta' \subset D'$ that contains $v_1^b$ must also be corner arc and since $s$ is $\mathfrak{o}^+$-increasing, the value of $s'$ at the other endpoint of $\beta'$ is $-$. Thus the stated arc $\beta'$ is a bad arc and this contradicts the definition of $s'$. Therefore $\alpha$ is a corner arc and $s'(v)=-$.
\vspace{2mm}
\par Let $\alpha'\subset D'$ be the arc containing $v_1^a$. Consider the stated diagram $(D^{(2)}, s^{(2)})$ obtained from $(D', s')$ by gluing $\alpha$ and $\alpha'$ by identifying $v_1^a$ and $v_2^a$ and then pushing the obtained point in the interior of $\Sigma$. Using the cutting arc relation 
\begin{equation*}
\heightcurve =
\omega^{-1}
\heightexch{->}{-}{+}
- \omega^{-5}
\heightexch{->}{+}{-}
\end{equation*}
together with the fact that the $\heightexch{->}{+}{-}$ part vanishes in the reduced stated skein algebra because it contains a bad arc by the preceding discussion, one sees that $[D',s'] = \omega [D^{(2)}, s^{(2)}]$. Let us call the assignation $(D',s') \to (D^{(2)}, s^{(2)})$ a \textit{positive move} along $a$ and its inverse a \textit{negative move} along $a$. The positive and negative moves along $b$ are defined similarly. Since a positive move along $a$  decreases the number of pairs $(v_1^a, v_2^a)$ such that $v_1^a <_{\mathfrak{o}^+} v_2^a$ and $s'(v_1^a)=+, s'(v_2^a)=-$, by a finite sequence of positive moves $(D',s') \to (D^{(2)}, s^{(2)}) \to \ldots \to (D^{(n)}, s^{(n)})=: (D'',s'')$ one obtains $(D'',s'')\in \mathcal{D}_{\mathbf{\Sigma}}$ such that $[D',s']= \omega^n [D'',s'']$ and the first assertion is proved. Figure \ref{fig_diagram} shows an exemple where $(D'',s'')$ is obtained from $(D',s')$ by two positive moves along $a$ and one along $b$.

\vspace{2mm}
\par $(2)$ To prove the injectivity, consider $(D_1,s_1), (D_2,s_2) \in \mathcal{D}_{\mathbf{\Sigma}_{|a\#b}}$ such that $(D''_1, s''_1)=(D''_2,s''_2)=:(D'',s'')$. Then both $(D'_1, s'_1)$ and $(D'_2,s'_2)$ are obtained from $(D'',s'')$ by a finite sequence of negative moves. For $(D',s')$ a stated diagram obtained from $(D'',s'')$ by a finite sequence of negative moves, Let $n_1^a, n_2^a, n_3^a \in \mathbb{N}$ the non negative integers such that when running along $a$ in the $\mathfrak{o}^+$ direction, one crosses $n_1^a$ points of $\partial D'$ with states $-$ followed by $n_2^a$ points with states $+$ followed by $n_3^a$ points with states $-$. By convention, if all the points of $\partial D'$ have states $-$, one set $n_3^a=0$. Define $(n_1^b, n_2^b, n_3^b)$ in a similar manner. A negative move along $a$ increases $n_2^a$ and $n_3^a$ by $+1$ and leaves the other integers unchanged, while a negative move along $b$ increases $n_2^b$ and $n_3^b$ by $+1$ and leaves the other integers unchanged. A simple combinatorial arguments shows that among the set of stated diagrams obtained from $(D'',s'')$ by a finite set of negative moves, there is a single one such that $n_1^a=n_3^b, n_2^a=n_2^b$ and $n_3^a=n_1^b$. Since both $(D'_1,s'_1)$ and $(D'_2,s'_2)$ satisfy these equalities, one has $(D'_1,s'_1)=(D'_2,s'_2)$, thus $(D_1,s_1)=(D_2,s_2)$ and the injectivity is proved. The assertion $(3)$ is a straightforward consequence of the definitions.

\end{proof}

Consider a triangulated punctured surface $(\mathbf{\Sigma}, \Delta)$. Since $\mathbf{\Sigma}$ is obtained from $\bigsqcup_{\mathbb{T}\in F(\Delta)} \mathbb{T}$ by gluing the triangles along boundary arcs $e'$ and $e''$, by composing the applications $\theta_{e'\#e''}$ together, one obtains an injective map $\theta_{\Delta} : \mathcal{D}_{\mathbf{\Sigma}} \hookrightarrow \prod_{\mathbb{T}\in F(\Delta)} \mathcal{D}_{\mathbb{T}}$. Let $\varphi : K_{\Delta} \hookrightarrow \prod_{\mathbb{T}} K_{\mathbb{T}}$ be the injective group morphism sending $\mathbf{k}$ to $\prod_{\mathbb{T}} \mathbf{k}_{\mathbb{T}}$ where given an edge $e_{\mathbb{T}}\in \mathcal{E}(\mathbb{T})$ corresponding to an edge $e\in \mathcal{E}(\Delta)$, one set $\mathbf{k}_{\mathbb{T}}(e_{\mathbb{T}}) := \mathbf{k}(e)$. Note that $\varphi$ was implicitly used in the definition of the gluing map $i^{\Delta}$ in Section \ref{subsec_CF}.

\begin{proof}[Proof of Proposition \ref{prop_strict}]
It follows from the definitions that the following diagram commutes
$$
\begin{tikzcd}
\mathcal{D}_{\mathbf{\Sigma}} 
\arrow[r, hook, "\theta_{\Delta}"] \arrow[d, "\mathbf{v}"] &
\prod_{\mathbb{T}\in F(\Delta)} \mathcal{D}_{\mathbb{T}} 
\arrow[d, "\cong"', "\prod_{\mathbb{T}}\mathbf{v}_{\mathbb{T}}"] \\
K_{\Delta} 
\arrow[r, hook, "\varphi"] &
\prod_{\mathbb{T}\in F(\Delta)} K_{\mathbb{T}}
\end{tikzcd}
$$

Hence the injectivity of $\mathbf{v}$ follows from the injectivity of $\theta_{\Delta}, \mathbf{v}_{\mathbb{T}}$ and $\varphi$.

\end{proof}

\begin{proof}[Proof of Theorem \ref{theorem_valuation}]
The Theorem follows from Lemma \ref{lemma_max} and Proposition \ref{prop_strict}.

\end{proof}

\subsection{Further properties of the valuation}\label{sec_valuation_ppty}
In this subsection we state some technical results concerning the valuation $\mathbf{v}$.
\vspace{2mm}
\par The map  $\mathbf{v} : \mathcal{D}_{\mathbf{\Sigma}} \rightarrow K_{\Delta}$ is not surjective in general. Though, one has the following:

\begin{lemma}\label{lemma_quasi_surjective}
For all $\mathbf{k} \in K_{\Delta}$, there exists $(D_1,s_1), (D_2, s_2) \in \mathcal{D}_{\mathbf{\Sigma}}$ such that $\mathbf{k}=\mathbf{v}(D_1,s_1) - \mathbf{v}(D_2,s_2)$.
\end{lemma}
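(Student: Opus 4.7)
The strategy is to reduce the statement to showing that every non-negative balanced map is the valuation of a simple stated diagram with all-$+$ states, and then to construct such diagrams using a normal arc system in the triangulation.

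First I perform the reduction. Define $\mathbf{k}_0 \in K_{\Delta}$ by $\mathbf{k}_0(e):=2N$ on every edge $e$; this is balanced since the face sum is $6N$, which is even. For $N$ large enough, both $\mathbf{k}_0$ and $\mathbf{k}+\mathbf{k}_0$ become non-negative on every edge and moreover satisfy the triangle inequalities $\mathbf{n}(e_i)+\mathbf{n}(e_j)\geq \mathbf{n}(e_k)$ in every face $\mathbb{T}\in F(\Delta)$ with edges $e_1,e_2,e_3$, for $\mathbf{n}\in\{\mathbf{k}_0,\mathbf{k}+\mathbf{k}_0\}$. Hence it suffices to prove the following claim: for any non-negative balanced $\mathbf{n}\in K_{\Delta}$ satisfying the triangle inequalities in each face, there exists $(D,+)\in\mathcal{D}_{\mathbf{\Sigma}}$ (the all-$+$ boundary state on a simple diagram $D$) with $\mathbf{v}(D,+)=\mathbf{n}$. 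Indeed, if $(D_1,s_1)$ realises $\mathbf{k}+\mathbf{k}_0$ and $(D_2,s_2)$ realises $\mathbf{k}_0$, then $\mathbf{k}=\mathbf{v}(D_1,s_1)-\mathbf{v}(D_2,s_2)$.

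For the construction, in each face $\mathbb{T}$ with edges $e_1,e_2,e_3$ I place $c_{ij}^{\mathbb{T}}:=(\mathbf{n}(e_i)+\mathbf{n}(e_j)-\mathbf{n}(e_k))/2\in\mathbb{Z}_{\geq 0}$ pairwise disjoint normal arcs connecting $e_i$ and $e_j$ (where $\{i,j,k\}=\{1,2,3\}$), and then glue across each interior edge $e$ of $\Delta$ by pairing the $\mathbf{n}(e)$ endpoints on each side according to their induced cyclic order. The result is an embedded $1$-submanifold $D\subset\Sigma_{\mathcal{P}}$ satisfying $|D\cap e|=\mathbf{n}(e)$ for every $e$; after discarding any trivial closed component that might appear, $D$ is simple. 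The stated diagram $(D,+)$ lies in $\mathcal{D}_{\mathbf{\Sigma}}$: the all-$+$ state is trivially $\mathfrak{o}^+$-increasing, and a bad arc requires states $(-,+)$, so $(D,+)$ contains none. The full state $\hat{s}\equiv +$ on $D\cap\mathcal{E}(\Delta)$ is then admissible (every corner arc in every face is stated $(+,+)$, never bad) and obviously maximises $\mathds{k}(D,\hat{s})$ over $\mathrm{St}^a(D,+)$; by \eqref{eq_valuation},
\[
\mathbf{v}(D,+)(e)=\mathds{k}(D,+)(e)=\sum_{v\in D\cap e}(+1)=|D\cap e|=\mathbf{n}(e),
\]
which establishes the claim and concludes the proof.

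The main obstacle is ensuring that the normal arc construction really produces a simple diagram, i.e. one without trivial closed components. This is a classical issue in the theory of normal curves on triangulated surfaces; in our setting, the hypothesis that $\mathbf{\Sigma}$ admits a triangulation rules out bigons and monogons, so a trivial closed component in the constructed $D$ must bound a disc in $\Sigma_{\mathcal{P}}$ disjoint from $\mathcal{E}(\Delta)$ (and hence contributes $0$ to every intersection count and may be deleted without affecting $\mathbf{n}$).
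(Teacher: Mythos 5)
Your proof is correct and follows essentially the same route as the paper's: the paper also reduces to realising non-negative data by adding a large constant balanced map (its $\phi_{[n_0]}$ is your $\mathbf{k}_0$ viewed through the corner coordinates), and your corner counts $c_{ij}^{\mathbb{T}}$ are exactly the paper's train-track weights $\Phi(\mathbf{n})(\mathfrak{c})$, with the triangle inequalities playing the role of $\Phi(\mathbf{n})\in\mathcal{W}(\tau_\Delta,\mathbb{N})$ and the gluing compatibility being the switch condition. The only difference is presentational — you avoid naming the train track — and your extra care about trivial components is harmless (in fact no such component can arise, since every piece of the construction is a normal arc meeting the edges).
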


Let us state an alternative description of the abelian group $K_{\Delta}$ borrowed from \cite{BonahonWong2}. Denote by $\tau_{\Delta}\subset \Sigma$ the train track whose restriction to each face $\mathbb{T}\in F(\Delta)$ looks like Figure \ref{figtraintracks}. The edges of $\tau_{\Delta}$ in $\mathbb{T}$ can thus be identified with the corner arcs of $\mathbb{T}$ and will be called \textit{corner edges}.
Let $\mathcal{W}(\tau_{\Delta}, \mathbb{Z})$ be the abelian group of maps from the set of the edges of $\tau_{\Delta}$ to the set of integers that satisfy the \emph{switch-condition} illustrated in Figure \ref{figtraintracks}, where the group structure is given by the sum of maps. Define a group isomorphism $\Phi : K_{\Delta} \xrightarrow{\cong} \mathcal{W}(\tau_{\Delta}, \mathbb{Z})$ as follows. For $\mathbf{k}\in K_{\Delta}$ and  $\mathfrak{c}$ a corner edge of $\tau_{\Delta}$ that connects two edges $e_a$ and $e_b$ of a face of $\Delta$ with third edge $e_c$, 
   one sets 
   \begin{equation*}
   \Phi(\mathbf{k})(\mathfrak{c})=\frac{\mathbf{k}(e_a)+\mathbf{k}(e_b)-\mathbf{k}(e_c)}{2}. 
   \end{equation*}
   The balanced condition insures that this is an integer. 
   \\
  The inverse map is defined by sending $\phi\in  \mathcal{W}(\tau_{\Delta}, \mathbb{Z})$ to the balanced map $\mathbf{k}$ such that $\mathbf{k}(e)$ is obtained by choosing an arbitrary face $\mathbb{T}$ containing $e$ and  setting $\mathbf{k}(e)=\phi(\mathfrak{c})+\phi(\mathfrak{c}')$ where $\mathfrak{c},\mathfrak{c}'$ are the two edges of the train track lying in $\mathbb{T}$ and intersecting $e$. The switch condition insures that this integer does not depend on the choice of the face $\mathbb{T}$.

 \begin{figure}[!h] 
\centerline{\includegraphics[width=8cm]{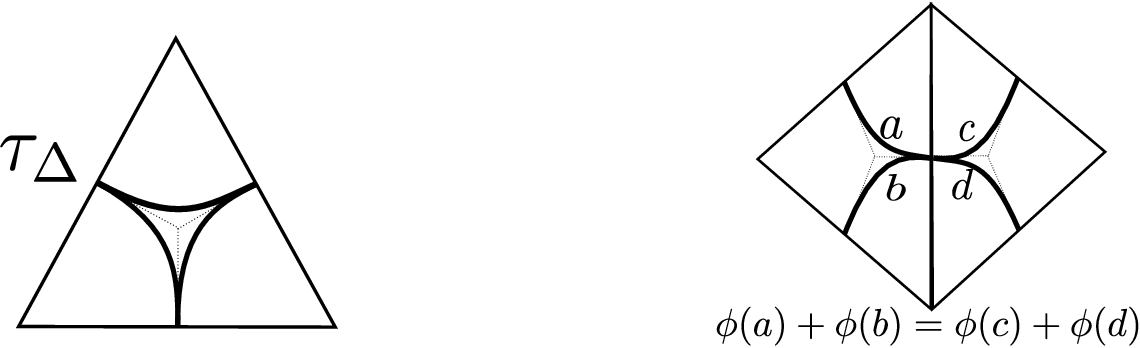} }
\caption{On the left: the train track associated to the triangle. On the right: the switch condition.} 
\label{figtraintracks} 
\end{figure} 

\par Denote by $\mathcal{W}(\tau_{\Delta}, \mathbb{N}) \subset \mathcal{W}(\tau_{\Delta}, \mathbb{Z})$  the subset of maps that take only non-negative values. Let $(D,s^+)\in \mathcal{D}_{\mathbf{\Sigma}}$ be a stated diagram such that $s^+(v)=+$ for all $v\in \partial D$. As discussed previously, $\mathbf{v}(D, s^+)$ sends an edge $e$ to the geometric intersection of $D$ with $e$ (the maximum in Equation \eqref{eq_valuation} is reached for the full state sending every points of $D\cap \mathcal{E}(\Delta)$ to $+$), hence $\Phi(\mathbf{v}(D,s^{+})) \in \mathcal{W}(\tau_{\Delta}, \mathbb{N})$. Conversely, given $\phi \in  \mathcal{W}(\tau_{\Delta}, \mathbb{N})$,  for each corner edge $\mathfrak{c}$ in a face $\mathbb{T}$, draw $\phi(\mathfrak{c})$ parallel copies of the corner $\mathfrak{c}$. The switch condition insures that we can glue the so-obtained collection of arcs to get a diagram $D$.  Let $s^+$ be the state of $D$ sending every point of $\partial D$ to $+$. Then one has $\Phi(\mathbf{v}(D, s^+))=\phi$, therefore the image of $\Phi \circ \mathbf{v}$ contains $\mathcal{W}(\tau_{\Delta}, \mathbb{N})$.

\begin{proof}[Proof of Lemma \ref{lemma_quasi_surjective}]

Let $\mathbf{k}\in K_{\Delta}$ and write $\phi:= \Phi(\mathbf{k})$. For $n\geq 0$, denote by $\phi_{[n]} \in \mathcal{W}(\tau_{\Delta}, \mathbb{N})$ the map sending any corner edge to $n$. Let $n_0\geq 0$ be such that $-n_0 \leq \phi(\mathfrak{c})$ for all corner edge $\mathfrak{c}$. Then both $\phi_{[n_0]}$ and $\phi':= \phi + \phi_{[n_0]}$ are in $\mathcal{W}(\tau_{\Delta}, \mathbb{N})$, hence by the preceding discussion, there exist $(D_1,s_1), (D_2,s_2) \in \mathcal{D}_{\mathbf{\Sigma}}$ such that $\Phi (\mathbf{v}(D_2,s_2))= \phi_{[n_0]}$ and $\Phi(\mathbf{v}(D_1,s_1))=\phi'$. Since $\phi = \phi' - \phi_{[n_0]}$, one has $\mathbf{k}=\mathbf{v}(D_1,s_1) - \mathbf{v}(D_2,s_2)$.
\end{proof}

\begin{lemma}\label{lemma_center_surjective}
One has $\mathbf{v}(\mathcal{D}_{\mathbf{\Sigma}}) + NK_{\Delta} = K_{\Delta}$. 
\\ Said differently, for every $\mathbf{k}\in K_{\Delta}$, there exists $(D,s)\in \mathcal{D}_{\mathbf{\Sigma}}$ and $\mathbf{k}_0 \in K_{\Delta}$ such that $\mathbf{k} = \mathbf{v}(D,s) + N\mathbf{k}_0$.
\end{lemma}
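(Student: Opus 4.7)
The plan is to reduce the statement to Lemma~\ref{lemma_quasi_surjective} by exploiting the elementary observation that a minus sign can be replaced by a factor of $N-1$ modulo $N$. Instead of needing $\mathbf{v}$ to hit arbitrary elements of $K_{\Delta}$, we only need to hit them modulo $N K_{\Delta}$, and multiplying a basis element by itself $N-1$ times will provide the correction term.

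Concretely, I would begin by applying Lemma~\ref{lemma_quasi_surjective} to write
$$\mathbf{k} = \mathbf{v}(D_1,s_1) - \mathbf{v}(D_2,s_2)$$
for some $(D_1,s_1),(D_2,s_2) \in \mathcal{D}_{\mathbf{\Sigma}}$, and then consider the element
$$y := [D_1,s_1]\cdot [D_2,s_2]^{N-1} \in \overline{\mathcal{S}}_{\omega}(\mathbf{\Sigma}).$$
Because the quantum trace takes values in the quantum torus $\mathcal{Z}_{\omega}(\mathbf{\Sigma},\Delta)$, which has no zero divisors and in which the leading monomial of a product is (up to a nonzero scalar) the product of the leading monomials, the map $\mathbf{v}$ is multiplicative. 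This yields
$$\mathbf{v}(y) = \mathbf{v}(D_1,s_1) + (N-1)\mathbf{v}(D_2,s_2) = \mathbf{k} + N\mathbf{v}(D_2,s_2).$$

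To extract a basis element of this prescribed valuation, I would then expand $y$ in the basis $\overline{\mathcal{B}}$ as $y = \sum_{(D,s)\in \mathcal{D}_{\mathbf{\Sigma}}}\lambda_{(D,s)}\,[D,s]$ and compare leading terms after applying $\Tr_{\omega}^{\Delta}$. Each summand contributes a leading monomial a nonzero multiple of $Z^{\mathbf{v}(D,s)}$, and by the strictness of $\mathbf{v}$ established in Proposition~\ref{prop_strict}, the exponents $\mathbf{v}(D,s)$ are pairwise distinct over the basis. Consequently, the leading term $Z^{\mathbf{v}(y)}$ of $\Tr_{\omega}^{\Delta}(y)$ must come from a single basis element $(D_0,s_0)$ with $\lambda_{(D_0,s_0)}\neq 0$ and $\mathbf{v}(D_0,s_0) = \mathbf{v}(y)$. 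Setting $\mathbf{k}_0 := -\mathbf{v}(D_2,s_2)\in K_{\Delta}$ and $(D,s):=(D_0,s_0)$, we obtain $\mathbf{k} = \mathbf{v}(D,s) + N\mathbf{k}_0$, as desired.

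The only genuine subtlety I foresee is the basis expansion step: one must be sure that no cancellation occurs among the leading monomials of the $\Tr_{\omega}^{\Delta}([D,s])$, so that a basis element with the prescribed valuation really appears. This is exactly what strictness of $\mathbf{v}$ on $\mathcal{D}_{\mathbf{\Sigma}}$ (Proposition~\ref{prop_strict}) guarantees; without it, several basis elements sharing the same leading valuation could cancel and destroy the argument. Multiplicativity of $\mathbf{v}$ itself is, by contrast, essentially automatic from the quantum torus structure and requires no additional input.
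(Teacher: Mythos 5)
Your argument is correct, but it takes a genuinely different route from the paper. The paper's proof stays entirely on the combinatorial side: it transports the problem through the train-track isomorphism $\Phi$, uses the fact (established just before Lemma \ref{lemma_quasi_surjective}) that every non-negative weight system $\phi\in\mathcal{W}(\tau_{\Delta},\mathbb{N})$ lies in $\Phi(\mathbf{v}(\mathcal{D}_{\mathbf{\Sigma}}))$, and then writes an arbitrary $\phi$ as $\bigl(\phi+N\phi_{[n_0]}\bigr)-N\phi_{[n_0]}$ with the first term non-negative --- i.e.\ it redoes the shift argument of Lemma \ref{lemma_quasi_surjective} with the constant function scaled by $N$. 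You instead take Lemma \ref{lemma_quasi_surjective} as a black box, form the product $[D_1,s_1]\cdot[D_2,s_2]^{N-1}$ in $\overline{\mathcal{S}}_{\omega}(\mathbf{\Sigma})$, and use multiplicativity of $\mathbf{v}$ together with Proposition \ref{prop_strict} to locate a basis element of valuation $\mathbf{k}+N\mathbf{v}(D_2,s_2)$. Your identification of where strictness is needed is exactly right: without it, leading monomials of distinct basis elements in the expansion of $y$ could cancel, and the existence of a basis element realizing $\mathbf{v}(y)$ would fail. What your approach buys is robustness --- it works for any strict, multiplicative valuation on a domain whose basis is separated by the valuation, without ever reopening the train-track picture; what it costs is that it invokes the heavier machinery of Proposition \ref{prop_strict} and the quantum-torus structure, whereas the paper's argument is a two-line arithmetic shift. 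Both proofs are valid.
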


\begin{proof}
Since $\mathcal{W}(\tau_{\Delta}, \mathbb{N}) \subset \Phi(\mathbf{v}(\mathcal{D}_{\mathbf{\Sigma}}))$, it is sufficient to prove that $\mathcal{W}(\tau_{\Delta}, \mathbb{N}) + N\mathcal{W}(\tau_{\Delta}, \mathbb{Z}) = \mathcal{W}(\tau_{\Delta}, \mathbb{Z})$. Let $\phi \in  \mathcal{W}(\tau_{\Delta}, \mathbb{Z})$. Choose $n_0 \geq 0$ such that $-n_0 N \leq \phi(\mathfrak{c})$ for all corner edge $\mathfrak{c}$. Then $\phi':= \phi +N \phi_{[n_0]}\in \mathcal{W}(\tau_{\Delta}, \mathbb{N})$ and $\phi = \phi' - N \phi_{[n_0]} \in \mathcal{W}(\tau_{\Delta}, \mathbb{N}) + N\mathcal{W}(\tau_{\Delta}, \mathbb{Z}).$

\end{proof}

\begin{lemma}\label{lemma_corner_inner}
Let $p\in \mathcal{P}\cap \mathring{\Sigma}$ be an inner puncture and $(D,s)\subset \mathcal{D}_{\mathbf{\Sigma}}$ be such that $D$ does not contain any copy of the peripheral curve $\gamma_p$. Write $\phi := \Phi(\mathbf{v}(D,s)) \in \mathcal{W}(\tau_{\Delta}, \mathbb{Z})$. Then there exists a corner edge $\mathfrak{c}$ adjacent to $p$ such that $\phi(\mathfrak{c})=0$.
\end{lemma}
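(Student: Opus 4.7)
The plan is to argue by contrapositive: assuming $D$ contains no $\gamma_p$-component, I will exhibit a face $\mathbb{T}_{i_0}$ at $p$ for which $\phi(\mathfrak{c}_{i_0})=0$. Label the faces at $p$ cyclically as $\mathbb{T}_1,\ldots,\mathbb{T}_m$, with $\mathbb{T}_i$ bounded by the edges $e_i,e_{i+1}$ incident to $p$ and the opposite edge $f_i$, and denote the other two vertices of $\mathbb{T}_i$ by $v_a,v_b$. Let $\hat{s}$ be the maximal admissible full state realizing $\mathbf{v}(D,s)$ in Lemma \ref{lemma_qtr} and set $n^-(e):=\#\{v\in D\cap e:\hat{s}(v)=-\}$. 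The identities $\mathbf{k}(e)=|D\cap e|-2n^-(e)$ and $|D\cap e_i|+|D\cap e_{i+1}|-|D\cap f_i|=2n_i^p$, where $n_i^p$ is the number of subarcs of $D\cap\mathbb{T}_i$ cutting the corner at $p$, feed into the definition of $\Phi$ to yield
\[
\phi(\mathfrak{c}_i)=n_i^p-\delta_i,\qquad \delta_i:=n^-(e_i)+n^-(e_{i+1})-n^-(f_i).
\]

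Next I observe that if $n_i^p\geq 1$ for every $i$, then choosing the innermost $p$-corner-cutting arc in each face $\mathbb{T}_i$ produces arcs whose $e_i$-endpoints coincide with the unique point of $D\cap e_i$ closest to $p$; concatenating them around $p$ yields a closed curve isotopic to $\gamma_p$, which by simplicity of $D$ must be a connected component of $D$, contradicting the hypothesis. Hence some $i_0$ satisfies $n_{i_0}^p=0$.

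The crux is to show $\delta_{i_0}=0$. With $n_{i_0}^p=0$, every subarc of $D\cap\mathbb{T}_{i_0}$ is a corner-cutting arc at $v_a$ or $v_b$. The orientation of $\Sigma$ fixes the counter-clockwise conventions inside $\mathbb{T}_{i_0}$: the CCW-first endpoint at $v_a$ sits on $f_{i_0}$, and the CCW-first endpoint at $v_b$ sits on $e_{i_0}$. Admissibility of $\hat{s}$ in $\mathbb{T}_{i_0}$ already rules out one asymmetric configuration at each vertex (the bad-arc states). For the remaining asymmetric possibility at $v_a$ --- a corner arc with states $(c,d)=(-,+)$ on $(e_{i_0+1},f_{i_0})$ --- maximality of $\hat{s}$ forces $c=+$: the only bad-arc rule that could force the $e_{i_0+1}$-endpoint $v$ to state $-$ is the one at $v_a$ in $\mathbb{T}_{i_0}$, which requires $d=-$, contradicting $d=+$; in the adjacent face $\mathbb{T}_{i_0+1}$, the same point $v$ sits at a CCW-first position of any corner arc it meets (either at $v_a$ or at $p$), so the bad-arc rule there cannot force $\hat{s}(v)=-$ either. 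A symmetric argument at $v_b$ excludes $(c',d')=(+,-)$. Therefore every corner arc of $D\cap\mathbb{T}_{i_0}$ has matched states, either $(+,+)$ or $(-,-)$; in the $(-,-)$ case the contribution $+1$ to $n^-$ on each of its two endpoints cancels in the alternating sum defining $\delta_{i_0}$, giving $\delta_{i_0}=0$ and hence $\phi(\mathfrak{c}_{i_0})=0$.

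The main obstacle is the last step: the maximality argument excluding the asymmetric states requires an orientation-sensitive propagation analysis, together with the well-definedness of $\hat{s}$. The latter holds because propagation of forced $-$-states originates only at boundary endpoints of $D$ whose $s$-state equals $-$ and travels along arcs of finite length, so no infinite cycles arise.
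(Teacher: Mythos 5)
Your reduction is sound and matches the paper's: the identity $\phi(\mathfrak{c}_i)=n_i^p-\delta_i$ is exactly the paper's Equation \eqref{eq_phi} in disguise, and the innermost-arc argument producing a face $\mathbb{T}_{i_0}$ with $n_{i_0}^p=0$ is the (implicit) first step of the paper's proof. The gap is in the crux step: it is \emph{not} true that every corner arc of $D\cap\mathbb{T}_{i_0}$ has matched states, so $\delta_{i_0}$ need not vanish. Two things go wrong. First, your orientation bookkeeping for the adjacent face is off: with your conventions, a point $v\in e_{i_0+1}\cap D$ lying on a corner arc \emph{at $p$} in $\mathbb{T}_{i_0+1}$ is the CCW-\emph{second} endpoint of that arc (the CCW traversal of $e_{i_0+1}$ reverses when you cross into the adjacent face), so the bad-arc rule there \emph{can} force $\hat{s}(v)=-$. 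Excluding that forcing is possible, but only via the global argument the paper uses for its Fact 1: a forcing chain entering an edge adjacent to $p$ through corners at $p$ must spiral around $p$ all the way back to a boundary arc containing $p$, which is impossible since $p$ is an inner puncture. A purely local two-face inspection does not suffice. Second, and fatally, the ``symmetric argument at $v_b$'' is not symmetric. The $-$-endpoint of a $(+,-)$ corner arc at $v_b$ lies on $f_{i_0}$, an edge \emph{not} adjacent to $p$; its state can be legitimately forced to $-$ by a critical part that spirals around $v_a$ (or around $v_b$ in the faces beyond $f_{i_0}$) down to a boundary arc far from $p$. Such arcs (the paper's $\mathfrak{c}''_{+-}$) do occur, each contributes $+1$ to $\delta_{i_0}$, and then $\phi(\mathfrak{c}_{i_0})\neq 0$.

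This is precisely why the paper's proof is a two-face argument rather than a one-face argument. After locating the face with no $p$-corner arcs, it splits into cases: either no $\mathfrak{c}''_{+-}$ occurs and $\phi(\mathfrak{c}_{i_0})=0$ as you claim, or some $\mathfrak{c}''_{+-}$ occurs, in which case its Fact 2 (a simplicity/non-crossing argument) shows that \emph{every} point of $D\cap e_{i_0+1}$ then carries the state $-$, whence in the face on the other side of $e_{i_0+1}$ neither $(+,+)$ nor mixed-state arcs can contribute and $\phi$ of the $p$-corner of \emph{that} face vanishes. To repair your proof you would need to add this second case; as written, the conclusion $\delta_{i_0}=0$ is false in general.
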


\begin{proof}
Let us fix some notations for the proof. Let $(D, \hat{s})\in \widehat{\mathcal{D}}_{\Delta}$ be the full stated diagram such that $\mathds{k}(D, \hat{s}) = \mathbf{v}(D,s)$. For a face $\mathbb{T}\in F(\Delta)$, we denote by $(D_{\mathbb{T}}, \hat{s}_{\mathbb{T}})\in \mathcal{D}_{\mathbb{T}}$ the stated diagram obtained by restricting $(D, \hat{s})$ to $\mathbb{T}$. For $\mathfrak{c}$ a corner edge adjacent to $p$ lying in some face $\mathbb{T}$, we denote by $p'$ and $p''$ the two other vertices of $\mathbb{T}$ such that one sees $p, p', p''$ once travelling around $\partial \mathbb{T}$ in the trigonometric direction. Write $\mathfrak{c}'$ and $\mathfrak{c}''$ the corner edges in $\mathbb{T}$ adjacent to $p'$ and $p''$ respectively (see Figure \ref{fig_corner}). Note that the points $p,p', p''$ are not necessary pairwise distinct, whereas the corner edges $\mathfrak{c}, \mathfrak{c}', \mathfrak{c}''$ are.  For $\varepsilon, \varepsilon' \in \{-, +\}$, write $\mathfrak{c}_{\varepsilon \varepsilon'}$ the associated stated arc with the convention that $\mathfrak{c}_{-+}$ is a bad arc (hence $\mathfrak{c}_{+-}$ is not a bad arc). We use the same convention for $\mathfrak{c}', \mathfrak{c}''$. The stated diagram $(D_{\mathbb{T}}, \hat{s}_{\mathbb{T}})$ is a disjoint union of stated arcs of the form $\mathfrak{c}_{\varepsilon \varepsilon'}, \mathfrak{c}'_{\varepsilon \varepsilon'}, \mathfrak{c}''_{\varepsilon \varepsilon'}$ which are not bad arcs. By definition of $\Phi$, one has the equality:
\begin{equation}\label{eq_phi}
\phi(\mathfrak{c}) = \# \mathfrak{c}_{++} - \# \mathfrak{c}_{--} + \# \mathfrak{c}''_{+-} - \# \mathfrak{c}'_{+-},
\end{equation} 
where, for instance, $ \# \mathfrak{c}_{++}$ denotes the number of parallel copies of $\mathfrak{c}_{++}$ in $(D_{\mathbb{T}}, \hat{s}_{\mathbb{T}})$. We denote by $e$ the boundary arc of $\mathbb{T}$ adjacent to both $\mathfrak{c}$ and $\mathfrak{c}'$ (between $p$ and $p'$) and $e'$ the boundary arc adjacent to $\mathfrak{c}'$ and $\mathfrak{c}''$.

 \begin{figure}[!h] 
\centerline{\includegraphics[width=10cm]{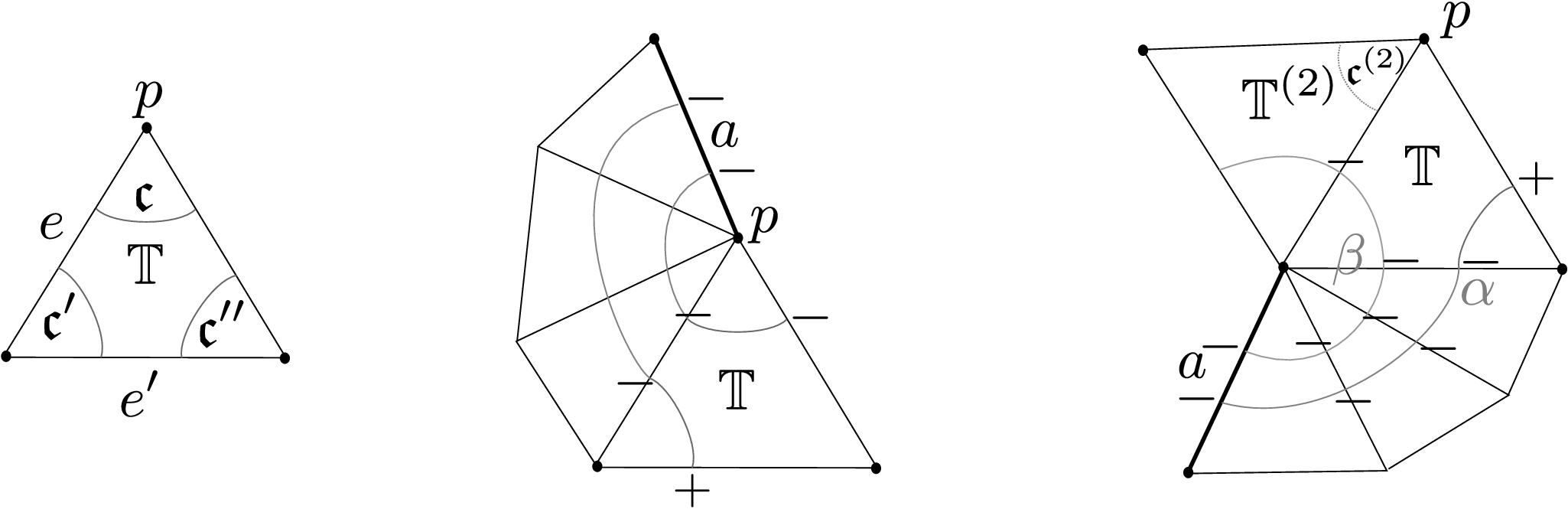} }
\caption{On the left: a face adjacent to $p$ and its corner edges. On the middle: illustration of Fact $1$. On the right: illustration of Fact $2$.} 
\label{fig_corner} 
\end{figure} 

\vspace{2mm}
\par We first prove two elementary facts.
\vspace{2mm}
\par \textit{Fact 1: the stated diagram }  $(D_{\mathbb{T}}, \hat{s}_{\mathbb{T}})$\textit{ does not contain any copy of }$\mathfrak{c}_{--}$\textit{ nor }$\mathfrak{c}'_{+-}$.
\\ By contradiction, suppose that $(D_{\mathbb{T}}, \hat{s}_{\mathbb{T}})$ contains some copy of either $\mathfrak{c}_{--}$ or $\mathfrak{c}'_{+-}$ and let $\alpha \subset D$ be the arc whose restriction to $\mathbb{T}$ contains this copy. Then the point $v$ in $\alpha \cap e$ with $\hat{s}(v)=-$ belongs to a critical part of $\alpha$  hence, once leaving $\mathbb{T}$ at $v$, the arc $\alpha$ spirals around $p$ until it reaches a boundary arc $a$ at which $s$ takes the value $-$ (see Figure \ref{fig_corner}). Hence $p\in a$ is not an inner puncture and we found our contradiction.

\vspace{2mm}
\par \textit{Fact 2: Suppose that }$(D_{\mathbb{T}}, \hat{s}_{\mathbb{T}})$\textit{ contains at least one copy of }$\mathfrak{c}''_{+-}$\textit{ and does not contain any arc of the form }$\mathfrak{c}_{\varepsilon \varepsilon'}$. \textit{ Then for all } $v\in e\cap D$\textit{, one has }$\hat{s}(v)=-$.
\\ Let $\alpha \subset D$ be an arc whose restriction to $\mathbb{T}$ contains a copy of $\mathfrak{c}''_{+-}$ and let $v$ be the endpoint of $\alpha \cap e'$ on which $\hat{s}$ takes value $-$. Then the point $v$ belongs to a critical part of $\alpha$ hence, once leaving $\mathbb{T}$ at $v$, the arc $\alpha$ spirals around $p'$ until it reaches a boundary arc $a$ at which $s$ takes value $-$ (see Figure \ref{fig_corner}).  Suppose that $(D_{\mathbb{T}}, \hat{s}_{\mathbb{T}})$ contains an arc of the form $\mathfrak{c}'_{\varepsilon \varepsilon'}$ and let us prove that $\varepsilon=\varepsilon'=-$. This will imply Fact $2$. Let $\beta\subset D$ be an arc whose restriction to $\mathbb{T}$ contains a copy of $\mathfrak{c}'$ and let $w$ the intersection of this copy with $e'$. Since $D$ is a simple diagram, $\beta$ can not intersect $\alpha$, hence while leaving $\mathbb{T}$ at $w$, it spirals around $p'$ until it reaches $a$ at some point $w'$, thus the copy of $\mathfrak{c}'$ in $\beta$ belongs to its critical part in $w'$. Because $s$ is $\mathfrak{o}^+$ increasing, one has $s(w')=-$, thus $\varepsilon=\varepsilon'=-$.

\vspace{2mm}
\par We now proceed to the proof. Because $D$ does not contain any copy of the peripheral curve $\gamma_p$, there exists a corner edge $\mathfrak{c}$ adjacent to $p$ in some face $\mathbb{T}$, such that with the above notations the stated diagram $(D_{\mathbb{T}}, \hat{s}_{\mathbb{T}})$ does not contain any of the form $\mathfrak{c}_{\varepsilon \varepsilon'}$. If $\phi(\mathfrak{c})=0$ the proof is finished. Else, by Equation \eqref{eq_phi} and Fact $1$, the stated diagram $(D_{\mathbb{T}}, \hat{s}_{\mathbb{T}})$ must contain a copy of $\mathfrak{c}''_{+-}$. By Fact $2$,  for any $v\in e\cap D$, one has $\hat{s}(v)=-$. Let $\mathbb{T}^{(2)}$ be the other face which shares the boundary edge $e$ with $\mathbb{T}$ and let $\mathfrak{c}^{(2)}$ be the corner edge adjacent to $p$ next to $\mathfrak{c}$ in the counterclockwise direction (see Figure \ref{fig_corner}). Because $D\cap e$ contains no point at which $\hat{s}$ has value $+$, the stated diagram $(D_{\mathbb{T}^{(2)}}, \hat{s}_{\mathbb{T}^{(2)}})$ does not contain any copy of $\mathfrak{c}^{(2)}_{++}$ nor $\mathfrak{c}''^{(2)}_{+-}$. Using Fact $1$, Equation \eqref{eq_phi} implies that $\phi(\mathfrak{c}^{(2)})=0$. This concludes the proof.

\end{proof}

\section{The center of the reduced stated skein algebra}

\subsection{Characterisation of the center}

\begin{definition}\label{def_center_skein} Let $\mathbf{\Sigma}$ be a punctured surface.
\begin{enumerate}
\item For $p\in \mathcal{P}\cap \mathring{\Sigma}$ an inner puncture, we call \textit{peripheral curve} a simple closed curve bounding a properly embedded disc in $\Sigma$ whose only intersection with $\mathcal{P}$ is $\{p\}$. We write $\gamma_p \in \overline{\mathcal{S}}_{\omega}(\mathbf{\Sigma})$ the class of a peripheral curve in $p$.
\item For $\partial$ a connected component of $\partial \Sigma$, denote by $D_{\partial}$ the simple diagram made of the disjoint union of every corner arcs $\alpha_p$ for $p\in \mathcal{P} \cap \partial$ and write $s^+$ (resp. $s^-$) the state on $D_{\partial}$ sending every point of $\partial D_{\partial}$ to $+$ (resp. to $-$). The \textit{boundary elements} in $\partial$ are the classes $\alpha_{\partial} := [D_{\partial}, s^+]$ and $\alpha_{\partial}^{-1} := [D_{\partial}, s^-]$ where we use the $\mathfrak{o}^+$ orientation.
\end{enumerate}
\end{definition}

\begin{lemma}\label{lemma_central} The elements $\gamma_p, \alpha_{\partial}$ and $\alpha_{\partial}^{-1}$ are central in $\overline{\mathcal{S}}_{\omega}(\mathbf{\Sigma})$ and $\alpha_{\partial}\alpha_{\partial}^{-1}=1$.
\end{lemma}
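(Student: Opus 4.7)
The plan is to treat each assertion separately.

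\textbf{Centrality of $\gamma_p$.} The curve $\gamma_p$ bounds a disc meeting $\mathcal{P}$ only at the inner puncture $p$, so it can be isotoped inside an arbitrarily small neighborhood of $p$. By Proposition \ref{prop_generators} it suffices to show $\gamma_p$ commutes with each stated arc generator $\beta_{\varepsilon\varepsilon'}$. Any such generator has its endpoints on $\partial\Sigma$, hence well away from a small neighborhood of $p$. I would push $\gamma_p$ close enough to $p$ that it becomes geometrically disjoint from $\beta$ in $\Sigma_{\mathcal{P}}$. Being a closed curve with no boundary points and no shared endpoint heights to worry about, the height of $\gamma_p$ in the thickened surface can be translated by an isotopy while staying disjoint from $\beta$, showing that $\gamma_p \cdot \beta_{\varepsilon\varepsilon'}$ and $\beta_{\varepsilon\varepsilon'}\cdot \gamma_p$ represent the same class.

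\textbf{Centrality of $\alpha_\partial$ and $\alpha_\partial^{-1}$.} I would pass through the quantum trace: fix a triangulation $\Delta$ and, using the injectivity of $\Tr_\omega^{\Delta}$, reduce to showing $\Tr_\omega^{\Delta}(\alpha_\partial)$ is central in $\mathcal{Z}_\omega(\mathbf{\Sigma}, \Delta)$. Isotope $D_\partial$ close to $\partial$ and apply Lemma \ref{lemma_qtr}. The key step is analysing the admissible full states on $(D_\partial, s^+)$: for each corner arc $\alpha_p$ (parametrised counterclockwise around $p$), the no-bad-arc condition in each face forbids any $(-,+)$ transition between consecutive intersection points, so once $\hat{s}$ becomes $-$ it must remain $-$; with both boundary endpoint states equal to $+$, the only admissible full state is the constant $+$ state. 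For this unique $\hat{s}$, the formula $\mathds{k}(D_\partial,\hat{s})(e) = \sum_{v\in D_\partial\cap e}\hat{s}(v)$ counts the endpoints of $e$ lying on $\partial$, which equals $\mathbf{k}_\partial(e)$. Hence $\Tr_\omega^{\Delta}(\alpha_\partial) = \omega^c H_\partial$ for some $c\in \mathbb{Z}$. The analogous computation with $s^-$ yields $\Tr_\omega^{\Delta}(\alpha_\partial^{-1}) = \omega^{c'} H_\partial^{-1}$. By Proposition \ref{prop_center_CF}, $H_\partial$ and $H_\partial^{-1}$ are central, so by the injectivity of $\Tr_\omega^{\Delta}$ so are $\alpha_\partial$ and $\alpha_\partial^{-1}$.

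\textbf{The identity $\alpha_\partial \alpha_\partial^{-1} = 1$.} The starting point is the $\mathbb{C}_q[\mathrm{SL}_2]$ relation recalled in the proof of Lemma \ref{l2}, which applies to every corner arc $\alpha_p$ since its endpoints lie on two distinct boundary arcs:
\[
(\alpha_p)_{++}(\alpha_p)_{--} = 1 + q^{-1}(\alpha_p)_{+-}(\alpha_p)_{-+}.
\]
Exactly one of $(\alpha_p)_{+-}, (\alpha_p)_{-+}$ is a bad arc, so the right-hand side collapses to $1$ in $\overline{\mathcal{S}}_\omega(\mathbf{\Sigma})$. Using Lemma \ref{lemma_product} I would then write $\alpha_\partial = \omega^{n_+}\prod_p (\alpha_p)_{++}$ and $\alpha_\partial^{-1} = \omega^{n_-}\prod_p (\alpha_p)_{--}$ in a common ordering, form the product, and use the height exchange relations \eqref{height_exchange_rel} to bring each $(\alpha_p)_{--}$ next to its $(\alpha_p)_{++}$ partner; this shows $\alpha_\partial \alpha_\partial^{-1} = \omega^k\cdot 1$ for some integer $k$. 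To pin down $k=0$, combine the centrality proof above with the fact that $\Tr_\omega^{\Delta}(\alpha_\partial \alpha_\partial^{-1}) = \omega^{c+c'}H_\partial H_\partial^{-1} = \omega^{c+c'}$, and exploit the symmetry between the $+$ and $-$ admissible-state computations (they produce opposite $\mathbf{k}$ exponents and, by the symmetric structure of the height exchange coefficients, opposite $\omega$-phases), yielding $c+c'=0$ and hence $\omega^k = 1$.

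The main obstacle is the bookkeeping of $\omega$-scalars in the third part: that the product is a scalar is clear from both the $\mathbb{C}_q[\mathrm{SL}_2]$ argument and the quantum trace argument, but verifying it is \emph{exactly} $1$ requires carefully matching the height-exchange phases from the reordering against the scalars $c$, $c'$ extracted from the quantum trace.
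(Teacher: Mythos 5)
Your centrality arguments are sound. For $\gamma_p$ you give a direct geometric proof (isotope the peripheral curve into a small neighbourhood of $p$ disjoint from the fixed representatives of a generating set $\mathbb{G}$, then slide its height freely since a closed component carries no boundary points); this is more elementary than the paper, which instead uses $\Tr_{\omega}^{\Delta}(\gamma_p)=H_p+H_p^{-1}$ and Proposition \ref{prop_center_CF}. For $\alpha_{\partial}$ and $\alpha_{\partial}^{-1}$ you follow exactly the paper's route through the quantum trace, and in fact you supply a detail the paper leaves implicit: the admissible-state analysis showing that the state sum of Lemma \ref{lemma_qtr} collapses to a single monomial $\omega^{c}Z^{\pm\mathbf{k}_{\partial}}$ (the no-bad-arc condition forbids a $(-,+)$ transition counterclockwise around each boundary puncture, so the full state is forced by the endpoint states). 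That analysis is correct, and centrality then follows from Proposition \ref{prop_center_CF} and injectivity of $\Tr_{\omega}^{\Delta}$ exactly as you say.

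The gap is in the identity $\alpha_{\partial}\alpha_{\partial}^{-1}=1$. Both of your routes correctly reduce the claim to showing that a certain integer power of $\omega$ is zero, but neither route determines that integer. The appeal to ``the symmetric structure of the height exchange coefficients'' giving opposite phases $c$ and $c'$ is not a proof: the coefficients $n(\hat{s})$ in Lemma \ref{lemma_qtr} are only asserted to exist (they come from \cite{KojuQGroupsBraidings}), and nothing quoted in this paper gives you $c+c'=0$; likewise the reordering in your first route mixes $\omega^{\pm2}$ factors from Lemma \ref{lemma_product} with the phases of the $\mathbb{C}_q[\mathrm{SL}_2]$ relation, and the exchange of a $+$ endpoint past a $-$ endpoint on a shared boundary arc even produces an extra $(A-A^{-3})$ term that must be checked to die on bad arcs. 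The paper avoids all of this by invoking the exact computations $\Tr_{\omega}^{\Delta}(\alpha_{\partial})=H_{\partial}$ and $\Tr_{\omega}^{\Delta}(\alpha_{\partial}^{-1})=H_{\partial}^{-1}$ (no phase), from which $\alpha_{\partial}\alpha_{\partial}^{-1}=1$ is immediate since $Z^{\mathbf{k}_{\partial}}Z^{-\mathbf{k}_{\partial}}=Z^{0}=1$ by skew-symmetry of the Weil--Petersson form. To close your argument you would either need to carry out the phase computation of \cite{KojuQGroupsBraidings} explicitly for $D_{\partial}$, or do the local skein computation on each boundary arc honestly; as written, you have only established $\alpha_{\partial}\alpha_{\partial}^{-1}=\omega^{k}$ for some undetermined $k\in\mathbb{Z}$.
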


\begin{proof}
This is an immediate consequence of the facts that, given a topological triangulation $\Delta$, the quantum trace is injective and  one has  $\Tr_{\omega}^{\Delta}(\gamma_p)=H_p+H_p^{-1}$, $\Tr_{\omega}^{\Delta}(\alpha_{\partial}) = H_{\partial}$ and $\Tr_{\omega}^{\Delta}(\alpha_{\partial}^{-1}) = H_{\partial}^{-1}$, where $H_p, H_{\partial}$ are the central elements of Definition \ref{def_central_elements}.
\end{proof}

The purpose of this subsection is to prove the 
\begin{theorem}\label{theorem_center}
For  $\mathbf{\Sigma} = (\Sigma, \mathcal{P})$ an open punctured surface, the center of the reduced stated skein algebra $\overline{\mathcal{S}}_{\omega}(\mathbf{\Sigma})$ is generated by the image of the Chebyshev morphism together with the peripheral curves and boundary elements.
\end{theorem}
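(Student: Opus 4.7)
Let $\mathcal{A} \subseteq \overline{\mathcal{S}}_{\omega}(\mathbf{\Sigma})$ denote the subalgebra generated by the image of the Chebyshev morphism, the peripheral curves $\{\gamma_p\}_{p \text{ inner}}$, and the boundary elements $\{\alpha_\partial^{\pm 1}\}_\partial$. The inclusion $\mathcal{A} \subseteq \mathcal{Z}^0$ is immediate from Theorem \ref{theorem_center_skein} and Lemma \ref{lemma_central}. For the reverse inclusion, fix a triangulation $\Delta$ with indexing $\mathcal{I}$ of its edges and work with the strict valuation $\mathbf{v}: \overline{\mathcal{S}}_{\omega}(\mathbf{\Sigma}) \to K_\Delta\cup\{-\infty\}$ of Section 5. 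Let $\mathcal{Z}(K_\Delta) \subseteq K_\Delta$ be the sublattice indexing central monomials of $\mathcal{Z}_\omega(\mathbf{\Sigma}, \Delta)$; by Proposition \ref{prop_center_CF}, $\mathcal{Z}(K_\Delta) = NK_\Delta + \sum_{p \text{ inner}}\mathbb{Z}\mathbf{k}_p + \sum_\partial \mathbb{Z}\mathbf{k}_\partial$.

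The first step is to show $\mathbf{v}(z)\in \mathcal{Z}(K_\Delta)$ for every $z \in \mathcal{Z}^0\setminus\{0\}$. For any simple diagram $D$ with all-plus state $s^+$, centrality yields $[\Tr_\omega^\Delta(z), \Tr_\omega^\Delta([D,s^+])] = 0$; extracting the leading term (well defined by strictness of $\mathbf{v}$) forces $(\mathbf{v}(z), \mathbf{v}([D,s^+]))^{WP} \equiv 0 \pmod N$. Since $\Phi(\mathbf{v}([D,s^+]))$ sweeps out $\mathcal{W}(\tau_\Delta,\mathbb{N})$ as $(D,s^+)$ varies (Section 5.2) and differences of such elements generate $K_\Delta$ (Lemma \ref{lemma_quasi_surjective}), bilinearity yields $(\mathbf{v}(z),\cdot)^{WP}\equiv 0 \pmod N$ on all of $K_\Delta$, i.e., $\mathbf{v}(z)\in \mathcal{Z}(K_\Delta)$.

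The core step is to build, for each $z \in \mathcal{Z}^0\setminus\{0\}$, an element $w\in \mathcal{A}$ with $\mathbf{v}(w)=\mathbf{v}(z)$ and matching leading coefficient. Decompose $\mathbf{k}:=\mathbf{v}(z)=N\mathbf{k}_0+\sum_p m_p\mathbf{k}_p+\sum_\partial m_\partial\mathbf{k}_\partial$ with $m_p\in\{0,\ldots,N-1\}$. Let $[D,s]$ be the leading basis element of $z$ and factor $D=\prod_p\gamma_p^{n_p}\cdot D'$ with $D'$ free of peripheral components, so that $\mathbf{k}=\sum_p n_p\mathbf{k}_p+\mathbf{v}([D',s])$. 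Lemma \ref{lemma_corner_inner} supplies, for each inner puncture $p$, a corner $\mathfrak{c}_p$ at $p$ with $\Phi(\mathbf{v}([D',s]))(\mathfrak{c}_p)=0$; evaluating $\Phi(\mathbf{k})(\mathfrak{c}_p)$ through the two decompositions of $\mathbf{k}$ gives $n_p=N\Phi(\mathbf{k}_0)(\mathfrak{c}_p)+m_p$, hence $\Phi(\mathbf{k}_0)(\mathfrak{c}_p)\geq 0$ and $(n_p-m_p)/N\in \mathbb{Z}_{\geq 0}$. Define
$$ w := j_{\mathbf{\Sigma}}(u)\cdot \prod_p \gamma_p^{m_p}\cdot \prod_\partial \alpha_\partial^{m_\partial}, $$
with $u \in \overline{\mathcal{S}}_{+1}(\mathbf{\Sigma})$ chosen so that $\mathbf{v}_{+1}(u)=\mathbf{k}_0$; the existence of such $u$ combines Lemma \ref{lemma_quasi_surjective} with the invertibility of $\alpha_\partial^{\pm 1}$ in $\overline{\mathcal{S}}_{+1}(\mathbf{\Sigma})$ and the non-negative factor $(n_p-m_p)/N$ of spare $\gamma_p$-powers just made available, which together shift $\mathbf{k}_0$ into a regime realizable as the valuation of a simple stated diagram. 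A direct leading-term computation gives $\mathbf{v}(w)=\mathbf{k}$ with leading coefficient compatible with that of $z$, so for suitable $c \in \mathbb{C}^*$ the difference $z-cw \in \mathcal{Z}^0$ satisfies $\mathbf{v}(z-cw)\prec_{\mathcal{I}}\mathbf{v}(z)$.

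Iterating the reduction eventually yields $z \in \mathcal{A}$; termination is handled by a filtration argument showing that the associated graded pieces $\mathrm{gr}_\mathbf{k}(\mathcal{A})$ and $\mathrm{gr}_\mathbf{k}(\mathcal{Z}^0)$ coincide for every $\mathbf{k}\in K_\Delta$ and combining this with the exhaustive-and-Hausdorff nature of the filtration and the one-dimensionality of each graded piece ensured by strictness of $\mathbf{v}$. The main technical obstacle is the second step, specifically the construction of $u$: since peripheral curves admit no multiplicative inverse in $\overline{\mathcal{S}}_{+1}(\mathbf{\Sigma})$, arbitrary negative contributions of the $\mathbf{k}_p$-directions to $\mathbf{v}(z)$ cannot be realized directly. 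The compatibility $n_p\geq m_p$ extracted from Lemma \ref{lemma_corner_inner} via the centrality of $z$ is precisely the mechanism that provides enough spare non-negative powers of $\gamma_p$ to complete the construction, and is arguably the most original ingredient of the argument.
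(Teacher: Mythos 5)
Your overall strategy coincides with the paper's: use the strict valuation $\mathbf{v}$ coming from the quantum trace, show that the leading valuation of a central element lies in $K^0_{\Delta}$, match the leading term by an explicit element of the candidate center, and descend. Your first step is essentially Lemma \ref{lemma_center1}, and your use of Lemma \ref{lemma_corner_inner} to force the peripheral exponents $n_p$ to be congruent to $m_p$ modulo $N$ is exactly the paper's mechanism in Lemma \ref{lemma_center3}. However, two steps that you treat as routine are genuine gaps. First, the construction of $u$ with $\mathbf{v}(j_{\mathbf{\Sigma}}(u))=N\mathbf{k}_0$: Lemma \ref{lemma_quasi_surjective} only expresses $\mathbf{k}_0$ as a \emph{difference} $\mathbf{v}(D_1,s_1)-\mathbf{v}(D_2,s_2)$, where $(D_2,s_2)$ realizes a constant weight $\phi_{[n_0]}$ and hence is a product of peripheral curves and boundary elements; the boundary part can be inverted but the peripheral part cannot, and the ``spare'' powers $(n_p-m_p)/N=\Phi(\mathbf{k}_0)(\mathfrak{c}_p)$ that Lemma \ref{lemma_corner_inner} hands you control only \emph{one} corner per inner puncture, not the minimum of $\Phi(\mathbf{k}_0)$ over all corners at $p$, which is what you would need to shift $\Phi(\mathbf{k}_0)$ into $\mathcal{W}(\tau_{\Delta},\mathbb{N})$ at the cost of only non-negative peripheral powers. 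The paper avoids this entirely: it does not realize an abstract $\mathbf{k}_0$ but instead takes the actual leading diagram of $x$ (stripped of peripheral curves and boundary elements), observes that its valuation is $N\mathbf{k}'$, and proves a separate lemma (Lemma \ref{lemma_center2}) that such a valuation can be ``divided by $N$'' at the level of basis diagrams, via the triangle decomposition and the cutting maps $\theta_{a\#b}$. That lemma is a substantive input with its own non-trivial proof, and your proposal has no substitute for it.

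Second, your termination argument does not work as stated. The lexicographic order $\prec_{\mathcal{I}}$ on $K_{\Delta}\cong\mathbb{Z}^n$ is not a well-order, and $\overline{\mathcal{S}}_{\omega}(\mathbf{\Sigma})$ is not complete for the associated filtration, so ``exhaustive and Hausdorff'' plus equality of graded pieces does not let you conclude: the reduction could a priori produce an infinite strictly $\prec_{\mathcal{I}}$-decreasing chain of leading valuations with no limit in the algebra. The paper closes this hole by introducing, for each edge $e$, the discrete valuation $m_e$ (the minimal $e$-height in the quantum trace expansion) and proving in Lemma \ref{lemma_basic} that the correcting basic element $z$ satisfies $m_e(z)=m_e(\mathrm{ld}(z))\ge m_e(x)$; consequently every term of the descending chain is bounded below coordinate-wise by the $m_e(x)$, and a $\prec_{\mathcal{I}}$-decreasing sequence that is bounded below in every coordinate must be finite. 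Without this lower bound (or some equivalent device) your induction does not terminate, so you should either import the $m_e$-argument or replace it by an explicit well-founded ordering.
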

When $\mathbf{\Sigma}$ is closed, the analogue of Theorem \ref{theorem_center} was proved in \cite[Theorem $4.1$]{FrohmanKaniaLe_UnicityRep}. During all the section, we fix a connected punctured surface $\mathbf{\Sigma}=(\Sigma, \mathcal{P})$ with $\partial \Sigma \neq \emptyset$, together with a topological triangulation $\Delta$ and an indexing map $\mathcal{I}$.	
	
\begin{notations}
We denote by $K^0_{\Delta} \subset K_{\Delta}$ the kernel of the pairing 
$$ \left(\cdot, \cdot\right)^{WP}_{N} : K_{\Delta} \times K_{\Delta} \xrightarrow{\left(\cdot, \cdot\right)^{WP}} \mathbb{Z} \rightarrow \mathbb{Z}/N\mathbb{Z}, $$
where $\mathbb{Z} \rightarrow \mathbb{Z}/N\mathbb{Z}$ is the quotient mapping and $\left(\cdot, \cdot \right)^{WP}$ is the Weil-Peterssen pairing defined in Section $2.2$.
\end{notations}	
	
Recall from the discussion at the beginning of Section \ref{subsec_CF} that the center of $\mathcal{Z}_{\omega}(\mathbf{\Sigma}, \Delta)$ is spanned by the elements $Z^{\mathbf{k}}$ with $\mathbf{k} \in K^0_{\Delta}$.

\begin{lemma}\label{lemma_center1}
If $x\neq 0$ is in the center of $\overline{\mathcal{S}}_{\omega}(\mathbf{\Sigma})$, then $\mathbf{v}(x)\in K^0_{\Delta}$.
\end{lemma}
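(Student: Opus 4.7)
The plan is to transport the centrality condition through the injective quantum trace and then read off what it says about leading monomials in the quantum torus $\mathcal{Z}_{\omega}(\mathbf{\Sigma},\Delta)$.

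First, I would fix $x\neq 0$ in the center of $\overline{\mathcal{S}}_{\omega}(\mathbf{\Sigma})$ and an arbitrary $y\in\overline{\mathcal{S}}_{\omega}(\mathbf{\Sigma})$. Set $\mathbf{k}_0:=\mathbf{v}(x)$ and $\mathbf{k}_1:=\mathbf{v}(y)$, and write
\[
\Tr_{\omega}^{\Delta}(x)=\sum_{\mathbf{k}} a_{\mathbf{k}}Z^{\mathbf{k}},\qquad \Tr_{\omega}^{\Delta}(y)=\sum_{\mathbf{k}} b_{\mathbf{k}}Z^{\mathbf{k}},
\]
so that by definition $a_{\mathbf{k}_0},b_{\mathbf{k}_1}\neq 0$ and every other term has strictly smaller index with respect to $\prec_{\mathcal{I}}$. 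Applying $\Tr_{\omega}^{\Delta}$ to the relation $xy=yx$ and using the quantum torus commutation rule $Z^{\mathbf{k}}Z^{\mathbf{k}'}=\omega^{(\mathbf{k},\mathbf{k}')^{WP}}Z^{\mathbf{k}+\mathbf{k}'}$, the identity $[\Tr_{\omega}^{\Delta}(x),\Tr_{\omega}^{\Delta}(y)]=0$ becomes
\[
\sum_{\mathbf{k},\mathbf{k}'}a_{\mathbf{k}}b_{\mathbf{k}'}\bigl(\omega^{(\mathbf{k},\mathbf{k}')^{WP}}-\omega^{-(\mathbf{k},\mathbf{k}')^{WP}}\bigr)Z^{\mathbf{k}+\mathbf{k}'}=0.
\]

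Next I would extract the $\prec_{\mathcal{I}}$-leading term. Because $\prec_{\mathcal{I}}$ is a lexicographic order on the free abelian group $K_{\Delta}$, it is compatible with addition: if $\mathbf{k}\preceq_{\mathcal{I}}\mathbf{k}_0$ and $\mathbf{k}'\preceq_{\mathcal{I}}\mathbf{k}_1$ with $\mathbf{k}+\mathbf{k}'=\mathbf{k}_0+\mathbf{k}_1$, then necessarily $\mathbf{k}=\mathbf{k}_0$ and $\mathbf{k}'=\mathbf{k}_1$. Thus the coefficient of $Z^{\mathbf{k}_0+\mathbf{k}_1}$ in the displayed expression is precisely $a_{\mathbf{k}_0}b_{\mathbf{k}_1}(\omega^{(\mathbf{k}_0,\mathbf{k}_1)^{WP}}-\omega^{-(\mathbf{k}_0,\mathbf{k}_1)^{WP}})$, and it must vanish. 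Since $a_{\mathbf{k}_0}b_{\mathbf{k}_1}\neq 0$, we obtain $\omega^{2(\mathbf{k}_0,\mathbf{k}_1)^{WP}}=1$. The order $N$ of $\omega$ being odd forces $(\mathbf{k}_0,\mathbf{k}_1)^{WP}\equiv 0\pmod N$, that is $(\mathbf{v}(x),\mathbf{v}(y))^{WP}_N=0$ for every $y\in\overline{\mathcal{S}}_{\omega}(\mathbf{\Sigma})$.

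Finally I would bootstrap from valuations of single elements to arbitrary elements of $K_{\Delta}$. By Lemma \ref{lemma_quasi_surjective}, any $\mathbf{k}\in K_{\Delta}$ can be written as $\mathbf{v}(D_1,s_1)-\mathbf{v}(D_2,s_2)$ with $(D_i,s_i)\in\mathcal{D}_{\mathbf{\Sigma}}$. Taking $y=[D_1,s_1]$ and $y=[D_2,s_2]$ in turn and using bilinearity of $(\cdot,\cdot)^{WP}_N$ in its second variable yields $(\mathbf{v}(x),\mathbf{k})^{WP}_N=0$ for all $\mathbf{k}\in K_{\Delta}$, i.e.\ $\mathbf{v}(x)\in K^0_{\Delta}$.

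The one step requiring a little care is the leading-term extraction: it relies on $\prec_{\mathcal{I}}$ being an order on a free abelian group that is compatible with the group law, so that the maxima $\mathbf{v}(x),\mathbf{v}(y)$ combine to a unique contribution to the $\mathbf{k}_0+\mathbf{k}_1$ coefficient rather than being drowned out by cross terms. Everything else—the oddness of $N$ to pass from $\omega^{2m}=1$ to $m\equiv 0\pmod N$, and Lemma \ref{lemma_quasi_surjective} to promote the conclusion to all of $K_{\Delta}$—is then immediate.
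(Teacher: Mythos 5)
Your proof is correct and follows essentially the same route as the paper's: push the commutation relation through the injective quantum trace, compare the $\prec_{\mathcal{I}}$-leading monomials (which works because the lexicographic order is translation-invariant, exactly as you note), deduce $(\mathbf{v}(x),\mathbf{v}(y))^{WP}_N=0$, and conclude via Lemma \ref{lemma_quasi_surjective}. The only cosmetic difference is that the paper tests centrality against basis elements $[D,s]$ from the start rather than against an arbitrary $y$.
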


\begin{proof}
Let $x$ be a non-zero central element of $\overline{\mathcal{S}}_{\omega}(\mathbf{\Sigma})$. The image of $x$ through the quantum trace writes
$$ \Tr_{\omega}^{\Delta}(x) = c_x Z^{\mathbf{v}(x)} + \mbox{lower terms}, $$
where $c_x\in \mathbb{C}^*$ and "lower terms" is a linear combination of balanced monomial $Z^{\mathbf{k}}$ with $\mathbf{k}\prec_{\mathcal{I}} \mathbf{v}(x)$. For $(D,s)\in \mathcal{D}_{\mathbf{\Sigma}}$, one has similarly
$$ \Tr_{\omega}^{\Delta}([D,s]) = c_{[D,s]} Z^{\mathbf{v}(D,s)} + \mbox{lower terms}, $$
therefore one has:
$$  \Tr_{\omega}^{\Delta}(x[D,s]) = c_x c_{[D,s]} Z^{\mathbf{v}(x)}Z^{\mathbf{v}(D,s)} + \mbox{lower terms}, \quad \Tr_{\omega}^{\Delta}([D,s]x) = c_x c_{[D,s]} Z^{\mathbf{v}(D,s)}Z^{\mathbf{v}(x)}  + \mbox{lower terms}.$$
Since $x$ is central, one finds the equalities
 $$Z^{\mathbf{v}(x)}Z^{\mathbf{v}(D,s)}= Z^{\mathbf{v}(D,s)}Z^{\mathbf{v}(x)} \Leftrightarrow \omega^{2(\mathbf{v}(x), \mathbf{v}(D,s))^{WP}}=1 \Leftrightarrow (\mathbf{v}(x), \mathbf{v}(D,s))^{WP}_N=0.$$
Since this equality holds for any $(D,s)\in \mathcal{D}_{\mathbf{\Sigma}}$ and since the elements $\mathbf{v}(D,s)$ generate the group $K_{\Delta}$ by Lemma \ref{lemma_quasi_surjective}, one has $\mathbf{v}(x)\in K^0_{\Delta}$.

\end{proof}
	
\begin{definition}\label{def_multiple}
\begin{enumerate}
\item Let $x= \sum_{(D,s)\in \mathcal{D}_{\mathbf{\Sigma}}} x_{[D,s]} [D,s]$ be a non-zero element of $\overline{\mathcal{S}}_{\omega}(\mathbf{\Sigma})$. Let $(D_0,s_0)$ be unique element of $\mathcal{D}_{\mathbf{\Sigma}}$ such that $\mathbf{v}(x)=\mathbf{v}(D_0,s_0)$ (the unicity is guaranteed by Proposition \ref{prop_strict}). The \textit{leading term} of $x$ is the element 
$\mathrm{ld}(x):= x_{(D_0,s_0)} [D_0,s_0]$. 
\item Write $\Tr_{\omega}^{\Delta}(x)=\sum_{\mathbf{k}} x_{\mathbf{k}}Z^{\mathbf{k}}$. For $e\in \mathcal{E}(\Delta)$, the \textit{minimal height} of $x$ in $e$ is the integer
$$ m_e(x)= \mathrm{min} \{ \mathbf{k}(e)| \quad x_{\mathbf{k}}\neq 0\}.$$
\item We denote by $\mathcal{Z}^0\subset \overline{\mathcal{S}}_{\omega}(\mathbf{\Sigma})$ the subalgebra generated by the image of the Chebyshev morphism together with the peripheral curves $\gamma_p$  and the  boundary central elements $\alpha_{\partial}$.
\item For $(D,s)\in \mathcal{D}_{\mathbf{\Sigma}}$, we denote by $[D^{(N)}, s^{(N)}] \in \overline{\mathcal{S}}_{\omega}(\mathbf{\Sigma})$ the linear combination of classes of stated diagrams obtained by replacing each stated arc $\alpha_{\varepsilon \varepsilon'}$ of $(D,s)$ by $N$ parallel copies of $\alpha_{\varepsilon \varepsilon'}$ and replacing each closed curve $\gamma$ by $T_N(\gamma)$.
\item A \textit{basic element} of $\mathcal{Z}^0$ is an element of the form $z=c[D^{(N)}, s^{(N)}] \prod_{p\in \mathcal{P}\cap \mathring{\Sigma}} \gamma_p^{n_p} \prod_{\partial \in \pi_0(\partial \Sigma)} \alpha_{\partial}^{n_{\partial}}$, where $c\in\mathbb{C}^*, n_p \in \mathbb{N}, n_{\partial} \in \mathbb{Z}$ and $(D,s)\in \mathcal{D}_{\mathbf{\Sigma}}$.
\end{enumerate}
\end{definition}

By Lemma \ref{lemma_central} and Theorem \ref{theorem_center_skein},  $\mathcal{Z}^0$ is included in the center of $\overline{\mathcal{S}}_{\omega}(\mathbf{\Sigma})$ thus to prove Theorem \ref{theorem_center}, we need to show the reverse inclusion. Note that if follows from Lemma \ref{lemma_product} that basic elements lies in $ \mathcal{Z}^0$ and  that $\mathbf{v}(D^{(N)}, s^{(N)}) = N \mathbf{v}(D,s)$. Note also that $m_e(xy)=m_e(x)+m_e(y)$ and $m_e(x+y)\geq \mathrm{min}(m_e(x)+m_e(y))$ (once setting $m_e(0)=\infty$), hence $m_e$ is a discrete valuation.
	
\begin{lemma}\label{lemma_basic}
For any $e\in\mathcal{E}(\Delta)$ and  $z\in \mathcal{Z}^0$ a basic element, one has $m_e(z)=m_e(\mathrm{ld}(z))$. 
\end{lemma}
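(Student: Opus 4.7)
The plan is to compute $m_e(z)$ and $m_e(\mathrm{ld}(z))$ separately via the quantum trace and show they coincide. I would first use multiplicativity of $\Tr_{\omega}^{\Delta}$ together with Lemma \ref{lemma_central} to write
$$ \Tr_{\omega}^{\Delta}(z) = c \cdot \Tr_{\omega}^{\Delta}([D^{(N)}, s^{(N)}]) \cdot \prod_p (H_p + H_p^{-1})^{n_p} \cdot \prod_{\partial} H_{\partial}^{n_\partial}. $$
Since $H_p$ and $H_{\partial}$ are central in $\mathcal{Z}_{\omega}(\mathbf{\Sigma}, \Delta)$ by Proposition \ref{prop_center_CF}, they commute freely and $(H_p + H_p^{-1})^{n_p} = \sum_j \binom{n_p}{j} Z^{(n_p - 2j) \mathbf{k}_p}$ has positive integer coefficients. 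The compatibility of the Chebyshev and Frobenius morphisms, which follows from Theorem \ref{theorem_center_skein} combined with Lemma \ref{lemma_qtr} specialised at $\omega = 1$ (where the phases $\omega^{n(\hat{s})}$ become trivial), yields
$$ \Tr_{\omega}^{\Delta}([D^{(N)}, s^{(N)}]) = \sum_{\hat{s} \in \mathrm{St}^a(D, s)} Z^{N \mathds{k}(D, \hat{s})}, $$
with non-negative integer coefficients. After combining all factors, every monomial in $\Tr_{\omega}^{\Delta}(z)$ carries a non-negative coefficient, which rules out cancellations.

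Next, I would minimize the exponent $\mathbf{m}(e) = N \mathds{k}(D, \hat{s})(e) + \sum_p (n_p - 2j_p) \mathbf{k}_p(e) + \sum_{\partial} n_{\partial} \mathbf{k}_{\partial}(e)$ by choosing $j_p = n_p$ (using $\mathbf{k}_p(e) \geq 0$) and $\hat{s}$ achieving $\mathds{k}(D, \hat{s})(e) = m_e([D, s])$; this gives
$$ m_e(z) = N \, m_e([D, s]) - \sum_p n_p \mathbf{k}_p(e) + \sum_{\partial} n_{\partial} \mathbf{k}_{\partial}(e). $$
To identify $\mathrm{ld}(z)$, I would invoke Lemma \ref{lemma_product} applied to the expansion $T_N(\gamma) = \gamma^N + (\text{lower order in } \mathbf{v})$ to obtain $\mathrm{ld}(z) = c \omega^m [D_0, s_0]$, where $(D_0, s_0) \in \mathcal{D}_{\mathbf{\Sigma}}$ is the disjoint union of: $N$ parallel copies of each arc of $D$ together with $\gamma^N$ for each closed curve of $D$; $n_p$ parallel copies of $\gamma_p$ for each inner puncture; and $|n_{\partial}|$ copies of $D_{\partial}$ with states $s^+$ if $n_{\partial} \geq 0$ and $s^-$ otherwise. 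Since $D_0$ is a disjoint union, admissible full states factor over components and $\mathds{k}$ is additive, so $m_e([D_0, s_0])$ decomposes into a sum of three contributions, each computed directly as $N m_e([D, s])$, $-n_p \mathbf{k}_p(e)$, and $n_{\partial} \mathbf{k}_{\partial}(e)$ respectively, summing to $m_e(z)$.

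The hard part will be establishing the Chebyshev-Frobenius compatibility rigorously (which, although standard in the works of Bonahon-Wong and Koju-Quesney, is not stated in the excerpt) and verifying the no-cancellation property that it implies. A secondary technical obstacle is correctly identifying the leading basis element $(D_0, s_0)$ when $D$ contains closed curves encircling inner punctures, where $T_N(\gamma)$ interacts with the admissibility structure near the puncture in a way that must be carefully analysed.
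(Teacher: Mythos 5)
Your strategy is genuinely different from the paper's, but as written it has a gap that you yourself flag: the entire computation rests on the identity $\Tr_{\omega}^{\Delta}([D^{(N)},s^{(N)}])=\sum_{\hat{s}\in\mathrm{St}^a(D,s)}Z^{N\mathds{k}(D,\hat{s})}$, i.e.\ on the compatibility of the Chebyshev morphism $j_{\mathbf{\Sigma}}$ with the Frobenius morphism $j_{(\mathbf{\Sigma},\Delta)}$ through the quantum traces. This is a substantial theorem (due to Bonahon--Wong in the closed case and to later work in the open case) that is neither stated nor used anywhere in this paper, and your proposed derivation does not establish it: specialising Lemma \ref{lemma_qtr} at $\omega=1$ only computes $\Tr_{+1}^{\Delta}([D,s])$ in $\mathcal{Z}_{+1}(\mathbf{\Sigma},\Delta)$, while what you need is $\Tr_{\omega}^{\Delta}$ applied to the element $[D^{(N)},s^{(N)}]\in\mathcal{S}_{\omega}(\mathbf{\Sigma})$; there is no argument connecting the two, and Theorem \ref{theorem_center_skein} says nothing about quantum traces. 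Without this identity your no-cancellation claim also collapses, since $[D^{(N)},s^{(N)}]$ is a linear combination of basis elements whose traces carry a priori uncontrolled phases $\omega^{n(\hat{s})}$. A secondary, more repairable, issue is your identification of $\mathrm{ld}(z)$ as an explicit disjoint union: one must check that the corner arcs of the $|n_{\partial}|$ copies of $D_{\partial}$ and the peripheral curves can be arranged so that the resulting stated diagram is simple, $\mathfrak{o}^+$-increasing and bad-arc free, which you do not address.

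The paper's proof avoids the Chebyshev--Frobenius input entirely and stays at the level of the basis $\overline{\mathcal{B}}$. It first disposes of the boundary elements by the elementary observation that $\Tr_{\omega}^{\Delta}(\alpha_{\partial})=Z^{\mathbf{k}_{\partial}}$ is a single monomial, so multiplying by $\alpha_{\partial}^{n}$ shifts $m_e$ and $m_e(\mathrm{ld}(\cdot))$ identically; this reduces to $z=[D^{(N)},s^{(N)}]\prod_p\gamma_p^{n_p}$. It then expands $T_N(X)=\sum_i c_iX^i$, writes $z=\sum_{\mathbf{n}}c_{n_1}\cdots c_{n_t}D(\mathbf{n})$ as a combination of genuine basis elements $D(\mathbf{n})$ (only the closed components $\delta_i$ need Chebyshev polynomials; the arc parts are already basis elements by Lemma \ref{lemma_product}), identifies $\mathrm{ld}(z)=D(N,\dots,N)$ from the additivity $\mathbf{v}(D(N,\dots,N))=\mathbf{v}(D(\mathbf{n}))+\sum_i(N-n_i)\mathbf{k}_i$ with $\mathbf{k}_i\geq 0$, and concludes using only the multiplicativity of $m_e$ and the geometric computation $m_e(\delta_i)=-i(\delta_i,e)\leq 0$, which gives $m_e(D(N,\dots,N))\leq m_e(D(\mathbf{n}))$ for every $\mathbf{n}$. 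If you want to keep your route you would have to import and prove the Chebyshev--Frobenius compatibility as an external theorem; the paper's argument shows this is unnecessary for the statement at hand.
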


\begin{proof}
Let us start by a preliminary remark. Since $\Tr_{\omega}^{\Delta}(\alpha_{\partial})= Z^{\mathbf{k}_{\partial}}$, for any $e\in \mathcal{E}(\Delta)$ and $x\in \overline{\mathcal{S}}_{\omega}(\mathbf{\Sigma})$ one has $m_e(x\alpha_{\partial}^n) = \left\{ \begin{array}{ll} m_e(x) + n &\mbox{, if }e\subset \partial; \\ m_e(x) &\mbox{, else.} \end{array} \right.$. Hence if $m_e(x)=m_e(\mathrm{ld}(x))$ then $m_e(x\alpha_{\partial})= m_e(\mathrm{ld}(x\alpha_{\partial}))$. Therefore it is sufficient to prove the lemma for a basic element of the form  $z=[(D^{(N)}, s^{(N)})] \prod_{p\in \mathcal{P}\cap \mathring{\Sigma}} \gamma_p^{n_p}$. Fix such an element and denote by $\delta_1, \ldots, \delta_t$ the closed connected components of $D$. For $\mathbf{n}= (n_1,\ldots, n_t) \in \{0, \ldots, N\}^t$, denote by $D(\mathbf{n})\in \mathcal{D}_{\mathbf{\Sigma}}$ the stated diagram obtained from $(D^{(N)}, s^{(N)})$ by replacing the composent $T_N(\delta_i)$ by $n_i$ parallel copies of $\delta_i$ and adding $p$ parallel copies of the peripheral curve $\gamma_p$ for every inner puncture $p$. Write $T_N(X)= \sum_{i=0}^{N} c_i X^i$, where $c_N=1$. By definition, the basic element $z$ decomposes in the basis $\overline{B}$ as: 
$$ z = \sum_{\mathbf{n}\in \{0, \ldots, N\}^t} c_{n_1}\ldots c_{n_t} D(\mathbf{n}).$$
We claim that $\mathrm{ld}(z)=D(N, \ldots, N)$. Indeed, let $s_i^+$ be the full state of $\delta_i$ sending every point of $\delta_i \cap \mathcal{E}(\Delta)$ to $+$ and write $\mathbf{k}_i:=\mathds{k}(\delta_i, s_i^+) \geq \mathbf{0}$. For $\mathbf{n}\in \{0, \ldots, N\}^t$, since $D(N, \ldots, N) = D(\mathbf{n}) \cup \cup_{i=1}^t \delta_i^{N-n_i}$, one has 
$$ \mathbf{v}(D(N,\ldots, N)) = \mathbf{v}(D(\mathbf{n})) + \sum_{i=1}^t (N-n_i) \mathbf{k}_i.$$
Therefore $\mathbf{v}(D(N,\ldots, N)) \geq \mathbf{v}(D(\mathbf{n}))$ and  $\mathrm{ld}(z)=D(N, \ldots, N)$. Next remark, using Equation \eqref{eq_valuation}, that $m_e(\delta_i)=-i(\delta_i, e)\leq 0$ is the opposite of the geometric intersection of $\delta_i$ with $e$, hence
$$ m_e(D(N, \ldots, N)) = m_e(D(\mathbf{n})) - \sum_{e, i} (N-n_i)i(\delta_i, e) \leq m_e(D(\mathbf{n})), $$
hence $m_e(z)=m_e(D(N, \ldots, N))=m_e(\mathrm{ld}(z))$.
\end{proof}	
	
\begin{lemma}\label{lemma_center2}
Let $(D,s)\in \mathcal{D}_{\mathbf{\Sigma}}$ be such that $\mathbf{v}(D,s)= N \mathbf{k}$ for some $\mathbf{k}\in K_{\Delta}$. Then there exists $(D_0, s_0)\in \mathcal{D}_{\mathbf{\Sigma}}$ such that $\mathbf{v}(D_0,s_0)=\mathbf{k}$ and thus $\mathbf{v}(D,s)=\mathbf{v}(D_0^{(N)}, s_0^{(N)})$.
\end{lemma}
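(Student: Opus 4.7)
The plan is to produce $(D_0, s_0)$ by ``dividing $(D, s)$ by $N$'', working in the train-track picture via the isomorphism $\Phi : K_\Delta \xrightarrow{\cong} \mathcal{W}(\tau_\Delta, \mathbb{Z})$ from Section \ref{sec_valuation_ppty}. By Proposition \ref{prop_strict}, it is enough to construct $(D_0, s_0) \in \mathcal{D}_{\mathbf{\Sigma}}$ with $\mathbf{v}(D_0, s_0) = \mathbf{k}$: once this is done, $\mathbf{v}(D_0^{(N)}, s_0^{(N)}) = N \mathbf{v}(D_0, s_0) = N\mathbf{k} = \mathbf{v}(D, s)$ gives the second assertion, and the injectivity of $\mathbf{v}$ forces the leading basis term of $[D_0^{(N)}, s_0^{(N)}]$ to equal $[D, s]$. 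Set $\phi := \Phi(\mathbf{v}(D, s))$ and $\phi_0 := \Phi(\mathbf{k})$; the hypothesis becomes $\phi = N\phi_0$, so every corner weight of $\phi$ is divisible by $N$.

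The first reduction is to factor out peripheral curves. Let $n_p$ be the number of peripheral curves around $p$ appearing in $D$ and write $D = D^\flat \sqcup \bigsqcup_{p \in \mathcal{P} \cap \mathring{\Sigma}} \gamma_p^{n_p}$, with $D^\flat$ containing no peripheral component. Since $\mathbf{v}(\gamma_p) = \mathbf{k}_p$ and peripheral curves carry no state, Lemma \ref{lemma_product} yields $\mathbf{v}(D^\flat, s) = N\mathbf{k} - \sum_p n_p \mathbf{k}_p$. Applying Lemma \ref{lemma_corner_inner} to $(D^\flat, s)$ produces, for each inner puncture $p$, a corner $\mathfrak{c}_p$ adjacent to $p$ with $\Phi(\mathbf{v}(D^\flat, s))(\mathfrak{c}_p) = 0$. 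Using that $\Phi(\mathbf{k}_p)$ equals $1$ at corners at $p$ and $0$ elsewhere, and evaluating the identity at $\mathfrak{c}_p$, we get $0 = N \phi_0(\mathfrak{c}_p) - n_p$, hence $n_p = N \phi_0(\mathfrak{c}_p)$; in particular $N \mid n_p$, and we set $m_p := n_p/N \in \mathbb{N}$.

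The more substantial step is to show that $D^\flat$ is itself an $N$-fold parallel copy of a stated diagram. Group its components by isotopy class as $D^\flat = \bigsqcup_i \delta_i^{r_i}$ with pairwise non-isotopic $\delta_i$, so that $\mathbf{v}(D^\flat, s) = \sum_i r_i \mathbf{v}(\delta_i, s|_{\delta_i})$ by the additivity coming from Lemma \ref{lemma_product}. The claim is that $N \mid r_i$ for every $i$; the argument proceeds by induction on the number of parallel classes, locating at each step an edge $e_{i_0}$ for an ``outermost'' $\delta_{i_0}$ such that $\mathbf{v}(\delta_{i_0})(e_{i_0})$ is coprime to $N$ and the contributions of the remaining components to $\mathbf{v}(D^\flat, s)(e_{i_0})$ have already been shown $N$-divisible, forcing $N \mid r_{i_0}$. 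Given this divisibility, put $D_0^\flat := \bigsqcup_i \delta_i^{r_i/N}$ and $D_0 := D_0^\flat \sqcup \bigsqcup_p \gamma_p^{m_p}$, and take $s_0$ to be the restriction of $s$ to the first $r_i/N$ parallel copies of each $\delta_i$ in $\mathfrak{o}^+$-order (the $\mathfrak{o}^+$-increasing property of $s$ ensures each consecutive block of $N$ parallel copies of $\delta_i$ carries constant endpoint states, so $s_0$ is well defined). A direct computation then gives $\mathbf{v}(D_0, s_0) = (\mathbf{k} - \sum_p m_p \mathbf{k}_p) + \sum_p m_p \mathbf{k}_p = \mathbf{k}$, as required.

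The main obstacle is the divisibility $N \mid r_i$. On an arbitrary triangulated surface, exhibiting for each non-peripheral isotopy class $\delta_{i_0}$ an edge on which its contribution is ``primitive'' modulo the other components is delicate, particularly for arcs whose two endpoints lie on the same boundary arc: the critical-part phenomenon underlying $\mathbf{v}$ can inflate the contribution of a single stated arc to an edge and obscure the naive coprimality argument. Making this step rigorous amounts to showing that the sublattice of $K_\Delta$ generated by the $\mathbf{v}(\delta_i, s|_{\delta_i})$ is saturated modulo the subgroup generated by the $\mathbf{k}_p$, and this is where the oddness of $N$ and the already-established identification of peripheral multiplicities from Step~1 feed back into the argument.
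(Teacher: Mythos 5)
Your overall goal (showing that $(D,s)$ is essentially an $N$-th parallel power, up to peripheral curves) is the right one, and your Step 1 handling of peripheral multiplicities via Lemma \ref{lemma_corner_inner} is sound. But the central step of your argument --- the divisibility $N \mid r_i$ for the multiplicities of the non-peripheral components --- is not proved: you describe an induction that would need, at each stage, an edge on which the contribution of one stated isotopy class is ``primitive'' modulo the others, you acknowledge this is delicate, and you reformulate what would be needed (saturation of the sublattice generated by the $\mathbf{v}(\delta_i,s|_{\delta_i})$ modulo the $\mathbf{k}_p$'s) without establishing it. That reformulated statement is exactly the content of the lemma, so as written the argument is circular at its key point. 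There is a second, related defect in the setup: grouping $D^\flat=\bigsqcup_i\delta_i^{r_i}$ by isotopy class of the \emph{underlying} curve is too coarse, since parallel copies of the same arc $\delta_i$ may carry different endpoint states (the $\mathfrak{o}^+$-increasing condition only forces the states to be weakly monotone along a boundary arc, not constant), so $\mathbf{v}(D^\flat,s)=\sum_i r_i\,\mathbf{v}(\delta_i,s|_{\delta_i})$ is not well defined, and ``$N\mid r_i$'' would not by itself guarantee that consecutive blocks of $N$ copies carry constant states. One must work with stated isotopy classes and prove $N$-divisibility of each of those multiplicities.

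The paper's proof avoids this global lattice question entirely by localizing to the triangulation: on a single triangle $\mathbb{T}$ the map $\mathbf{v}_{\mathbb{T}}:\mathcal{D}_{\mathbb{T}}\to K_{\mathbb{T}}$ is a \emph{bijection} (Lemma \ref{lemma_strict_triangle}), so divisibility of the valuation by $N$ immediately forces the restricted stated diagram to be an $N$-th parallel power, triangle by triangle; one then checks, via a counting argument on the negative moves of Lemma \ref{lemma_technical}, that the numbers of negative moves needed to reglue are themselves multiples of $N$, so the $N$-th power structure descends through each gluing $\theta_{a\# b}$ and hence through $\theta_{\Delta}$. If you want to salvage your global approach, you would need to supply a genuine proof of the saturation statement you identified; otherwise the cut-and-reglue route is the efficient one, since it converts the divisibility problem into the trivial one for the triangle plus a bookkeeping argument for the gluings.
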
	

\begin{proof}

First note that since $\mathbf{v}_{\mathbb{T}} : \mathcal{D}_{\mathbb{T}} \rightarrow K_{\mathbb{T}}$ is a bijection (Lemma \ref{lemma_strict_triangle}), the results holds when $\mathbf{\Sigma}=\mathbb{T}$.
\vspace{2mm}
\par We next show a preliminary result: suppose that $\mathbf{\Sigma}_{|a\#b}$ is obtained from $\mathbf{\Sigma}$ by gluing two boundary arcs $a$ and $b$ and that $\theta_{a\#b}(D,s)= (D_1^{(N)}, s_1^{(N)})$ for some $(D_1,s_1)\in \mathcal{D}_{\mathbf{\Sigma}}$; let us show that there exists $(D_0,s_0)\in \mathcal{D}_{\mathbf{\Sigma}_{|a\#b}}$ such that $(D,s)= (D_0^{(N)}, s_0^{(N)})$. Using the notations of Lemma \ref{lemma_technical}, the stated diagram $(D',s')$ is obtained from $(D_1^{(N)}, s_1^{(N)})$ by a finite sequence of, say $m_a$ and $m_b$,  negative moves along $a$ and $b$. By a counting argument, similar to the proof of the injectivity of $\theta_{a\#b}$, one sees that $m_a=N m_a^0$ and $m_b=N m_b^0$ for some $m_a^0, m_b^0 \geq 0$. By performing on $(D_1, s_1)$ a sequence of $m_a^0$ negative moves  along $a$ and $m_b^0$ negative moves along $b$, one obtains a stated diagram $(D'_0,s'_0)$ such that $({D'}_0^{(N)}, {s'}_0^{(N)})= (D',s')$. Gluing $a$ and $b$ together one obtains a stated diagram $(D_0,s_0)$ such that  $(D,s)= (D_0^{(N)}, s_0^{(N)})$ as claimed.
\vspace{2mm}
\par We can now finish the proof. Recall the commutative diagram in the category of abelian groups:

$$
\begin{tikzcd}
\mathcal{D}_{\mathbf{\Sigma}} 
\arrow[r, hook, "\theta_{\Delta}"] \arrow[d, "\mathbf{v}"] &
\prod_{\mathbb{T}\in F(\Delta)} \mathcal{D}_{\mathbb{T}} 
\arrow[d, "\cong"', "\prod_{\mathbb{T}}\mathbf{v}_{\mathbb{T}}"] \\
K_{\Delta} 
\arrow[r, hook, "\varphi"] &
\prod_{\mathbb{T}\in F(\Delta)} K_{\mathbb{T}}
\end{tikzcd}
$$
Consider $(D,s)\in \mathcal{D}_{\mathbf{\Sigma}}$ such that $\mathbf{v}(D,s)= N \mathbf{k}$ for some $\mathbf{k}\in K_{\Delta}$. Then, using the fact that the lemma holds for the triangle,  one has $\theta_{\Delta}(D,s) = \prod_{\mathbb{T}\in F(\Delta)} (D_{\mathbb{T}}^{(N)}, s_{\mathbb{T}}^{(N)})$ for some  $ (D_{\mathbb{T}}^{(N)}, s_{\mathbb{T}}^{(N)}) \in \mathcal{D}_{\mathbb{T}}$. By definition, the morphism $\theta_{\Delta}$ is a composition of morphisms of the form $\theta_{a\#b}$, hence the results follows from the above preliminary result.

\end{proof}

\begin{lemma}\label{lemma_center3}
If $x\neq 0$ is in the center of $\overline{\mathcal{S}}_{\omega}(\mathbf{\Sigma})$, then there exists a basic element $z \in \mathcal{Z}^0$ such that $\mathrm{ld}(x)=\mathrm{ld}(z)$ and $m_e(z)\geq m_e(x)$ for all $e\in \mathcal{E}(\Delta)$.
\end{lemma}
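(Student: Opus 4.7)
The strategy is to read the Proposition~\ref{prop_center_CF} decomposition of $\mathbf{v}(x)$ directly off the geometric structure of the leading basis element $(D_0,s_0)$ of $x$.

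By Lemma~\ref{lemma_center1}, $\mathbf{v}(x)\in K_\Delta^0$, so Proposition~\ref{prop_center_CF} yields a decomposition
\begin{equation*}
\mathbf{v}(x)=N\mathbf{k}'+\sum_p n_p\mathbf{k}_p+\sum_\partial n_\partial\mathbf{k}_\partial,
\end{equation*}
with $\mathbf{k}'\in K_\Delta$ and $n_p,n_\partial\in\mathbb{Z}$; replacing $(n_p,\mathbf{k}')\mapsto(n_p+N,\mathbf{k}'-\mathbf{k}_p)$ where needed, one may arrange $n_p\geq 0$. Writing $\mathrm{ld}(x)=c_x[D_0,s_0]$, I want this decomposition reflected in the geometry of $D_0$. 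Let $n_p^{\mathrm{geo}}\geq 0$ be the number of peripheral components of $D_0$ around the inner puncture $p$ and $n_\partial^{\mathrm{geo}}\in\mathbb{Z}$ the signed count by state of corner arcs of $D_0$ on $\partial$ ($+1$ for $\alpha_\partial$-type, $-1$ for $\alpha_\partial^{-1}$-type). Removing these components yields a sub-diagram $(D_0',s_0')\in\mathcal{D}_\mathbf{\Sigma}$, and additivity of $\mathbf{v}$ on disjoint unions gives
\begin{equation*}
\mathbf{v}(D_0',s_0')=\mathbf{v}(x)-\sum_p n_p^{\mathrm{geo}}\mathbf{k}_p-\sum_\partial n_\partial^{\mathrm{geo}}\mathbf{k}_\partial\in K_\Delta^0.
\end{equation*}
Since $D_0'$ contains no peripheral curve nor corner arc, Lemma~\ref{lemma_corner_inner} (together with an analogous corner-edge vanishing argument at boundary punctures, using that $D_0'$ has no corner arcs) forces $\mathbf{v}(D_0',s_0')\in NK_\Delta$; writing it as $N\mathbf{k}'$, Lemma~\ref{lemma_center2} and Proposition~\ref{prop_strict} produce the unique $(D,s)\in\mathcal{D}_\mathbf{\Sigma}$ with $(D^{(N)},s^{(N)})=(D_0',s_0')$. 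Define
\begin{equation*}
z:=c\,[D^{(N)},s^{(N)}]\prod_p\gamma_p^{n_p^{\mathrm{geo}}}\prod_\partial\alpha_\partial^{n_\partial^{\mathrm{geo}}},
\end{equation*}
with $c\in\mathbb{C}^*$ tuned via Lemma~\ref{lemma_product} so that $\mathrm{ld}(z)=\mathrm{ld}(x)$. Then $z$ is a basic element in $\mathcal{Z}^0$ (by Lemma~\ref{lemma_central} and Theorem~\ref{theorem_center_skein}) with the desired leading term.

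It remains to verify $m_e(z)\geq m_e(x)$ for every $e\in\mathcal{E}(\Delta)$. Lemma~\ref{lemma_basic} gives $m_e(z)=m_e([D_0,s_0])$, so the inequality reduces to $m_e([D_0,s_0])\geq m_e(x)$. This last reduction is the main obstacle of the proof: the support of $\Tr_\omega^\Delta([D_0,s_0])$ need not lie entirely in $K_\Delta^0$, so \emph{a priori} the monomial realizing the $e$-minimum of $\Tr_\omega^\Delta([D_0,s_0])$ could be cancelled by lower-valuation terms in $x$, which would push $m_e(x)$ strictly above $m_e([D_0,s_0])$. The plan to rule this out is to exhibit, for each $e$, an admissible full state $\hat{s}$ on $D_0$ realizing $\mathds{k}(D_0,\hat{s})(e)=m_e([D_0,s_0])$ whose balanced map $\mathds{k}(D_0,\hat{s})$ lies in $K_\Delta^0$: such an $\hat{s}$ should be assembled by combining the extremal admissible full states on each central factor in the decomposition $[D_0,s_0]=c'\,[D_0',s_0']\prod\gamma_p^{n_p^{\mathrm{geo}}}\prod\alpha_\partial^{n_\partial^{\mathrm{geo}}}$ (whose $\mathds{k}$-values are individually in $K_\Delta^0$) with a matching extremal state on $(D_0',s_0')$, using that $\mathbf{v}(D_0',s_0')\in NK_\Delta$ constrains the extremal balanced maps on $(D_0',s_0')$ to the same coset. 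Once $\mathds{k}(D_0,\hat{s})\in K_\Delta^0$ is secured, it remains to check that the coefficient of the corresponding monomial in $\Tr_\omega^\Delta(x)$ is nonzero; this follows because any cancellation of this coefficient would require a lower-valuation basis term of $x$ to contribute the same $K_\Delta^0$-monomial, which a combinatorial analysis of admissible full states on diagrams with smaller valuation can rule out.
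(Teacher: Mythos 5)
Your skeleton matches the paper's (decompose the valuation of the leading term via Proposition \ref{prop_center_CF}, use Lemma \ref{lemma_corner_inner} to force $N\mid n_p$, invoke Lemma \ref{lemma_center2}, assemble a basic element, and control $m_e$ via Lemma \ref{lemma_basic}), but your treatment of the boundary contributions contains a genuine gap. You propose to \emph{geometrically} strip off $n_\partial^{\mathrm{geo}}$ copies of $\alpha_\partial^{\pm 1}$ from $D_0$ and then claim that the remainder $D_0'$, having no corner arcs, satisfies $\mathbf{v}(D_0',s_0')\in NK_\Delta$ by ``an analogous corner-edge vanishing argument at boundary punctures.'' Two problems: first, $\alpha_\partial=[D_\partial,s^+]$ is the union of one corner arc at \emph{every} puncture of $\partial$, all stated $++$; a general $(D_0,s_0)\in\mathcal{D}_{\mathbf{\Sigma}}$ may contain unequal numbers of corner arcs at the different punctures of $\partial$, and corner arcs stated $+-$ (which are not bad arcs), so your ``signed count'' and the removal of whole copies of $D_\partial$ are not well defined. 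Second, the boundary analogue of Lemma \ref{lemma_corner_inner} is not in the paper and would need its own proof: the proof of Lemma \ref{lemma_corner_inner} hinges on the contradiction that a spiralling critical part must terminate on $\partial\Sigma$, which is only a contradiction when $p$ is an \emph{inner} puncture. The paper avoids both issues by never touching corner arcs geometrically: it reads off $n_\partial$ from the algebraic decomposition of $\mathbf{v}(D,s)$ in $K_\Delta^0$, multiplies by the invertible element $\prod_\partial\alpha_\partial^{-n_\partial}$ so that the resulting leading valuation is $N\mathbf{k}'$, and only then applies Lemma \ref{lemma_center2} to the leading diagram of that product. Only the inner punctures require the geometric Lemma \ref{lemma_corner_inner}, and only to conclude $N\mid n_p$ (after which $n_p\mathbf{k}_p$ is absorbed into $N\mathbf{k}'$); the peripheral curves $\gamma_p^{m_p}$ that are genuinely factored out of $\mathrm{ld}(x)$ are connected components of $D_0$, so that factorization, unlike yours, is unambiguous.

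On the last step: you are right that $m_e(z)=m_e(\mathrm{ld}(z))=m_e(\mathrm{ld}(x))$ follows from Lemma \ref{lemma_basic}, and the remaining inequality $m_e(\mathrm{ld}(x))\geq m_e(x)$ is exactly what the paper uses (it asserts it directly, without addressing the possible cancellation, in $\Tr_\omega^\Delta(x)$, of the monomial of $\Tr_\omega^\Delta(\mathrm{ld}(x))$ realizing the $e$-minimum). So your worry at that point is legitimate, but your proposed resolution --- building an extremal admissible full state with balanced map in $K_\Delta^0$ and then ruling out cancellation by ``a combinatorial analysis'' --- is a plan, not an argument; as written it does not close the step. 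The actionable fix for your write-up is to replace the geometric boundary decomposition by the paper's algebraic one; the $m_e$ inequality should either be proved or at least isolated as a separate claim.
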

	
\begin{proof}  Write $\mathrm{ld}(x)= c [D',s']= [D,s] \prod_{p \in \mathcal{P}\cap \mathring{\Sigma}} \gamma_p^{m_p}$, where $c\in \mathbb{C}^*$, $(D,s)$ contains no peripheral curve and $m_p\geq 0$. By Lemma \ref{lemma_center1}, one has $\mathbf{v}(D',s')\in K^0_{\Delta}$. Since $\mathbf{v}(D',s') = \mathbf{v}(D,s) + \sum_p m_p\mathbf{k}_p $, one has also $\mathbf{v}(D,s)\in K^0_{\Delta}$. 
\vspace{2mm}
\par Recall from Proposition \ref{prop_center_CF} that $K^0_{\Delta}$ is generated, as a $\mathbb{Z}$-module, by the elements of the form $N\mathbf{k}$, with $\mathbf{k}\in K_{\Delta}$, together with the balanced maps $\mathbf{k}_p$ and $\mathbf{k}_{\partial}$ of Definition \ref{def_central_elements} associated to inner punctures $p$ and boundary components $\partial$. Therefore we can write 
$$ \mathbf{v}(D,s) = N \mathbf{k} + \sum_{p\in \mathcal{P}\cap \mathring{\Sigma}} n_p \mathbf{k}_p + \sum_{\partial \in \pi_0(\partial \Sigma)} n_{\partial} \mathbf{k}_{\partial}, $$
with $\mathbf{k}\in K_{\Delta}$ and $n_p, n_{\partial} \in \mathbb{Z}$. Composing the above equality with the group isomorphism $\Phi$ and writing $\phi:= \Phi(\mathbf{v}(D,s))$, $\phi_p:= \Phi(\mathbf{k}_p)$ and $\phi_{\partial}:= \Phi(\mathbf{k}_{\partial})$, one finds
$$ \phi = N\Phi(\mathbf{k}) + \sum_{p\in \mathcal{P}\cap \mathring{\Sigma}} n_p \phi_p + \sum_{\partial \in \pi_0(\partial \Sigma)} n_{\partial} \phi_{\partial}.$$
Note that $\phi_p(\mathfrak{c})=+1$ if $\mathfrak{c}$ is a corner edge adjacent to $p$ and is null otherwise. For $p\in \mathcal{P}\cap \mathring{\Sigma}$, since $D$ does not contain any copy of the peripheral curve $\gamma_p$, by Lemma \ref{lemma_corner_inner} there exists a corner edge $\mathfrak{c}$ adjacent to $p$ such that $\phi(\mathfrak{c})=0$, hence $N$ divides $n_p$. Therefore, there exists $\mathbf{k}'\in K_{\Delta}$ such that $\mathbf{v}(D,s) = N \mathbf{k}' + \sum_{\partial} n_{\partial} \mathbf{k}_{\partial}$. Since $\mathbf{v}(\mathrm{ld}([D,s] \prod_{\partial} \alpha_{\partial}^{-n_{\partial}})) = N \mathbf{k}'$,  by Lemma \ref{lemma_center2}, there exists $(D_0, s_0) \in \mathcal{D}_{\Delta}$ such that $N\mathbf{k}'= \mathbf{v}(D_0^{(N)}, s_0^{(N)})$. Therefore the element
$ z:= c[(D_0^{(N)}, s_0^{(N)})] \prod_p  \gamma_p^{m_p} \prod_{\partial} \alpha_{\partial}^{n_{\partial}}$
is a basic element in $\mathcal{Z}^0$ such that $\mathrm{ld}(x)=\mathrm{ld}(z)$. For $e\in \mathcal{E}(\Delta)$, using Lemma \ref{lemma_basic}, one has $m_e(z)=m_e(\mathrm{ld}(z))=m_e(\mathrm{ld}(x)) \geq m_e(x)$.
\end{proof}

\begin{proof}[Proof of Theorem \ref{theorem_center}]
Let $x$ be a non zero central element in $\overline{\mathcal{S}}_{\omega}(\mathbf{\Sigma})$. By Lemma \ref{lemma_center3}, there exists $z\in \mathcal{Z}^0$ such that the element $x':= x-z$ is either null or satisfies $\mathbf{v}(x')\prec_{\mathcal{I}} \mathbf{v}(x)$ and $m_e(x')\geq m_e(x)$ for all $e\in \mathcal{E}(\Delta)$. Let us say that $x'$ is obtained from $x$ by a \textit{central move}.
Note that there is only a finite number of balanced maps $\mathbf{k}$ such that $\mathbf{k}\prec_{\mathcal{I}} \mathbf{v}(x)$ and $\mathbf{k}(e)\geq m_e(x)$ for all $e\in \mathcal{E}(\Delta)$, therefore after a finite number of central moves one obtains zero, so $x$ is a sum of elements of $\mathcal{Z}^0$.

\end{proof}	
		


\subsection{The rank of a reduced stated skein algebra over its center}	

During all the section, we fix a triangulated punctured surface $(\mathbf{\Sigma}, \Delta)$ and an indexing map $\mathcal{I}$.

\begin{notations}
Let $\mathcal{Z}$  and $\mathcal{Z}^0$ be the centers of $\mathcal{Z}_{\omega}(\mathbf{\Sigma}, \Delta)$ and $\overline{\mathcal{S}}_{\omega}(\mathbf{\Sigma})$ respectively. Let $Q(\mathcal{Z})$ and $Q(\mathcal{Z}^0)$ their fraction fields.
We write  $R:= \dim_{Q(\mathcal{Z})} (\mathcal{Z}_{\omega}(\mathbf{\Sigma}, \Delta)\otimes_{\mathcal{Z}}Q(\mathcal{Z})$  and  $R^0= \dim_{Q(\mathcal{Z}^0)} (\overline{\mathcal{S}}_{\omega}(\mathbf{\Sigma})\otimes_{\mathcal{Z}^0}Q(\mathcal{Z}^0)$.
\end{notations}
The goal of this subsection is to prove that $R=R^0$.

\begin{lemma}\label{lemma_rank1}
One has $R^0 \geq R$.
\end{lemma}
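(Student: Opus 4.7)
The plan is to exhibit $R$ explicit elements of $\overline{\mathcal{S}}_{\omega}(\mathbf{\Sigma})$ that are linearly independent over $\mathcal{Z}^0$, using the strict valuation $\mathbf{v}$ built from the quantum trace to detect independence. Since $R=[K_{\Delta}:K^0_{\Delta}]$ (by the general theory of quantum tori recalled in Section~\ref{subsec_CF}), it suffices to produce one basis element of $\overline{\mathcal{S}}_{\omega}(\mathbf{\Sigma})$ per coset of $K^0_\Delta$ in $K_\Delta$ and check independence.

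First I would observe that $NK_{\Delta}\subseteq K^0_{\Delta}$: the Weil--Petersson pairing is $\mathbb{Z}$-valued, so for any $\mathbf{k}\in K_{\Delta}$ and any $\mathbf{k}'\in K_{\Delta}$ one has $(N\mathbf{k},\mathbf{k}')^{WP}=N(\mathbf{k},\mathbf{k}')^{WP}\in N\mathbb{Z}$, hence vanishes modulo $N$. Combined with Lemma~\ref{lemma_center_surjective}, which states $\mathbf{v}(\mathcal{D}_{\mathbf{\Sigma}})+NK_{\Delta}=K_{\Delta}$, this gives $\mathbf{v}(\mathcal{D}_{\mathbf{\Sigma}})+K^0_{\Delta}=K_{\Delta}$. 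Therefore the composite
\[
\mathcal{D}_{\mathbf{\Sigma}}\xrightarrow{\ \mathbf{v}\ }K_{\Delta}\twoheadrightarrow K_{\Delta}/K^0_{\Delta}
\]
is surjective, and I can pick stated diagrams $(D_1,s_1),\dots,(D_R,s_R)\in \mathcal{D}_{\mathbf{\Sigma}}$ whose valuations represent the $R$ distinct cosets of $K^0_{\Delta}$ in $K_{\Delta}$.

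Next I would prove that the classes $[D_1,s_1],\dots,[D_R,s_R]$ are $\mathcal{Z}^0$-linearly independent. Suppose for contradiction that $\sum_i c_i[D_i,s_i]=0$ with $c_i\in\mathcal{Z}^0$ not all zero. The valuation $\mathbf{v}$ is multiplicative on nonzero elements: indeed it is pulled back through the algebra embedding $\Tr_{\omega}^{\Delta}$ from a quantum torus, which is a domain in which $\prec_{\mathcal{I}}$ is compatible with the group law $(\mathbf{k},\mathbf{k}')\mapsto \mathbf{k}+\mathbf{k}'$, so leading exponents add under multiplication. Hence for each index $i$ with $c_i\neq 0$,
\[
\mathbf{v}\bigl(c_i[D_i,s_i]\bigr)=\mathbf{v}(c_i)+\mathbf{v}(D_i,s_i).
\]
By Lemma~\ref{lemma_center1}, $\mathbf{v}(c_i)\in K^0_{\Delta}$, so the coset of $\mathbf{v}(c_i[D_i,s_i])$ in $K_{\Delta}/K^0_{\Delta}$ equals that of $\mathbf{v}(D_i,s_i)$. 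By our choice of the $(D_i,s_i)$, these cosets are pairwise distinct, and in particular the values $\mathbf{v}(c_i[D_i,s_i])$ are pairwise distinct in $K_{\Delta}$. The standard ultrametric property of the valuation then forces
\[
\mathbf{v}\Bigl(\sum_i c_i[D_i,s_i]\Bigr)=\max_{i:\,c_i\neq 0}\mathbf{v}\bigl(c_i[D_i,s_i]\bigr)\neq -\infty,
\]
contradicting $\sum_i c_i[D_i,s_i]=0$. This establishes $R^0\geq R$.

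The proof is essentially formal once the right basic facts are in place; there is no serious obstacle here beyond recognising that Lemma~\ref{lemma_center_surjective} together with $NK_\Delta\subseteq K^0_\Delta$ is exactly what is needed to realise every coset of $K^0_\Delta$ by a diagram valuation, and that multiplicativity of $\mathbf{v}$ follows immediately from its definition via the quantum trace. The harder half of the rank comparison will be the reverse inequality $R^0\leq R$, since for that one has to control all of $\overline{\mathcal{S}}_{\omega}(\mathbf{\Sigma})$ from above rather than merely exhibiting independent elements from below.
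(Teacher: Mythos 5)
Your proof is correct and follows essentially the same route as the paper: realise each of the $R$ cosets of $K^0_{\Delta}$ in $K_{\Delta}$ by the valuation of a stated diagram (via Lemma \ref{lemma_center_surjective} and $NK_{\Delta}\subseteq K^0_{\Delta}$, a containment the paper leaves implicit but you rightly spell out), then use multiplicativity of $\mathbf{v}$ together with Lemma \ref{lemma_center1} to see that the summands of a putative $\mathcal{Z}^0$-linear relation have pairwise distinct valuations, so no cancellation of leading terms can occur. The paper's own argument differs only cosmetically, extracting the term of maximal valuation rather than invoking the ultrametric equality by name.
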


\begin{proof}
Since $\mathcal{Z}_{\omega}(\mathbf{\Sigma}, \Delta)$ is a quantum torus, $R$ is the index of $K^0_{\Delta}$ in $K_{\Delta}$ and for $\{ [\mathbf{k}_1], \ldots, [\mathbf{k}_R] \}$ a family of representatives of the coset $\quotient{K_{\Delta}}{K^0_{\Delta}}$, the set $\{ Z^{\mathbf{k}_1}, \ldots, Z^{\mathbf{k}_R}\}$ is a generating set for the $\mathcal{Z}$-module $\mathcal{Z}_{\omega}(\mathbf{\Sigma}, \Delta)$. It follows from Lemma \ref{lemma_center_surjective} that the composition
 $$\mathbf{v}_0:  \mathcal{D}_{\Delta} \xrightarrow{\mathbf{v}} K_{\Delta} \twoheadrightarrow \quotient{K_{\Delta}}{K^0_{\Delta}} $$
 is surjective, hence we can choose a family $\{ (D_1,s_1), \ldots, (D_R,s_R) \} \subset \mathcal{D}_{\Delta}$ such that $\mathbf{v}_0(D_i,s_i)= [\mathbf{k}_i]$ for $1\leq i \leq R$. Let us prove that the elements of the  family  $\{ [D_1,s_1], \ldots, [D_R,s_R] \} $ are linearly independent in the $\mathcal{Z}^0$-module $\overline{\mathcal{S}}_{\omega}(\mathbf{\Sigma})$; this will imply the desired inequality $R^0 \geq R$. Suppose that 
 $$ \sum_{i=1}^R z_i [D_i, s_i] = 0 \quad \mbox{, for some }z_i \in \mathcal{Z}^0.$$
 Let us show that for $i\neq j$, if $z_iz_j\neq 0$ then one has $\mathbf{v}(z_i[D_i,s_i])\neq \mathbf{v}(z_j[D_j, s_j])$. Indeed, if $\mathbf{v}(z_i[D_i,s_i])= \mathbf{v}(z_j[D_j, s_j])$ and $z_iz_j\neq 0$ then $\mathbf{v}(D_i,s_i) - \mathbf{v}(D_j,s_j) = \mathbf{v}(z_j)-\mathbf{v}(z_j) \in K^0_{\Delta}$ (by Lemma \ref{lemma_center1}), hence one has $[k_i]=[k_j]$ in the coset $\quotient{K_{\Delta}}{K^0_{\Delta}}$, thus $i=j$. It follows that if the $z_i$ are not all null,  there exists $i_0$ such that $z_{i_0}\neq 0$ and  
 $$ -\infty = \mathbf{v}\left(  \sum_{i=1}^R z_i [D_i, s_i] \right) = \mathbf{v}(z_{i_0}[D_{i_0}, s_{i_0}]) = \mathbf{v}(z_{i_0}) + \mathbf{v}(D_{i_0}, s_{i_0}).$$
 Since $\mathbf{v}(D_{i_0}, s_{i_0}) \neq -\infty$, one has $\mathbf{v}(z_{i_0})=-\infty$, thus $z_{i_0}=0$ and we have a contradiction. Therefore for all $1 \leq i \leq R$, one has $z_i=0$ and the family is free.

\end{proof}	
	
It follows from Remark \ref{remark_prime} and Theorem \ref{theorem_center} that the center $\mathcal{Z}^0$ of the reduced stated skein algebra is a finitely generated commutative algebra without zero divisor, hence $\mathrm{Specm}(\mathcal{Z}^0)$ is an irreducible affine variety (it could also be derived from the Artin-Tate theorem as in \cite{FrohmanKaniaLe_UnicityRep}). Moreover Theorem \ref{theorem_center} shows that the quantum trace embeds $\mathcal{Z}^0$ into $\mathcal{Z}$ hence defines a dominant map $\psi : \mathrm{Specm}(\mathcal{Z}) \rightarrow \mathrm{Specm}(\mathcal{Z}^0)$. Since $\psi$ is dominant and 
$\mathrm{Specm}(\mathcal{Z}^0)$ irreducible, the image $\psi (\mathrm{Specm}(\mathcal{Z}))$ contains a dense Zariski open subset $\mathcal{O} \subset  \mathrm{Specm}(\mathcal{Z}^0)$.
	
\begin{lemma}\label{lemma_rank2}
One has $R\geq R^0$.
\end{lemma}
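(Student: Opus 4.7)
The plan is to use the quantum trace to transport a generic irreducible representation of $\mathcal{Z}_{\omega}(\mathbf{\Sigma},\Delta)$ into a representation of $\overline{\mathcal{S}}_{\omega}(\mathbf{\Sigma})$ of dimension $\sqrt{R}$, and then apply Theorem~\ref{theorem_FKL} to force a divisibility between $\sqrt{R}$ and $\sqrt{R^0}$. Since Lemma~\ref{lemma_rank1} already gives $R^0\geq R$, this will yield equality.

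First I would invoke Theorem~\ref{theorem_FKL}$(1)(c)$ on the affine almost-Azumaya algebra $\overline{\mathcal{S}}_{\omega}(\mathbf{\Sigma})$ (Proposition~\ref{prop_almost_Azumaya}) to produce a Zariski open dense subset $\mathcal{U}\subset \mathrm{Specm}(\mathcal{Z}^0)$ such that every representation of $\overline{\mathcal{S}}_{\omega}(\mathbf{\Sigma})$ sending $\mathcal{Z}^0$ to scalar operators through a character in $\mathcal{U}$ is semi-simple and whose irreducible summands all have dimension exactly $\sqrt{R^0}$. Next, since $\mathcal{O}$ and $\mathcal{U}$ are both Zariski open dense subsets of the irreducible affine variety $\mathrm{Specm}(\mathcal{Z}^0)$, their intersection is non-empty; pick $\chi_0\in \mathcal{O}\cap\mathcal{U}$. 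The inclusion $\chi_0\in\mathcal{O}\subset \psi(\mathrm{Specm}(\mathcal{Z}))$ provides a lift $\chi\in \mathrm{Specm}(\mathcal{Z})$ with $\psi(\chi)=\chi_0$, and since $\mathcal{Z}_{\omega}(\mathbf{\Sigma},\Delta)$ is Azumaya, $\chi$ is realised by a simple module $V$ of dimension $\sqrt{R}$ with representation $r_0:\mathcal{Z}_{\omega}(\mathbf{\Sigma},\Delta)\to \mathrm{End}(V)$.

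Finally, consider the composition $r:= r_0\circ \Tr_{\omega}^{\Delta}:\overline{\mathcal{S}}_{\omega}(\mathbf{\Sigma})\to \mathrm{End}(V)$. It is a representation of dimension $\sqrt{R}$, and because $\psi$ is by definition induced from the embedding $\Tr_{\omega}^{\Delta}\big|_{\mathcal{Z}^0}:\mathcal{Z}^0\hookrightarrow \mathcal{Z}$, every $z\in \mathcal{Z}^0$ acts on $V$ through $r$ by the scalar $\chi(\Tr_{\omega}^{\Delta}(z))=\chi_0(z)$. Hence $r$ sends $\mathcal{Z}^0$ to scalars with induced character $\chi_0\in\mathcal{U}$. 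By the first step, $r$ is semi-simple and each of its irreducible constituents has dimension $\sqrt{R^0}$; in particular $\sqrt{R^0}$ divides $\sqrt{R}$, so $R^0\leq R$, as required. The argument is essentially a bookkeeping exercise: the main subtlety is simply to verify that the induced character on $\mathcal{Z}^0$ through $r$ equals $\chi_0\in\mathcal{U}$, which follows from the very definition of the dominant map $\psi$.
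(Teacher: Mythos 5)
Your proof is correct and follows essentially the same route as the paper: intersect the dense open set from Theorem~\ref{theorem_FKL}$(1)(c)$ with the image set $\mathcal{O}$ of the dominant map $\psi$, lift the character, use the Azumaya property of $\mathcal{Z}_{\omega}(\mathbf{\Sigma},\Delta)$ to get a simple module of dimension $\sqrt{R}$, pull it back along $\Tr_{\omega}^{\Delta}$, and conclude by the divisibility $\sqrt{R^0}\mid\sqrt{R}$. If anything, you spell out the semi-simplicity step and the verification that the induced character on $\mathcal{Z}^0$ lies in $\mathcal{U}$ slightly more explicitly than the paper does.
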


\begin{proof} Let $\chi : \mathrm{Irrep}(\overline{\mathcal{S}_{\omega}}(\mathbf{\Sigma}) \rightarrow \mathrm{Specm}(\mathcal{Z}^0)$ be the character map. Since the reduced stated skein algebra is affine almost-Azumaya (Proposition \ref{prop_almost_Azumaya}), Theorem \ref{theorem_FKL} implies that $\chi$ is surjective and that  there exists a dense Zariski open subset $\mathcal{O}' \subset \mathrm{Specm}(\mathcal{Z}^0)$ such that $\chi : \chi^{-1}(\mathcal{O}') \rightarrow \mathcal{O}'$ is a bijection and any irreducible representation in $\chi^{-1}(\mathcal{O}')$ has dimension $\sqrt{R^0}$. Since both $\mathcal{O}$ and $\mathcal{O}'$ are open dense subsets, their intersection is non empty. Let $\rho_0 \in \mathcal{O}\cap \mathcal{O}'$ and $\rho \in  \mathrm{Specm}(\mathcal{Z})$ such that $\psi (\rho)=\rho_0$. Since the balanced Chekhov-Fock algebra is Azumaya, there exists an irreducible representation $r : \mathcal{Z}_{\omega}(\mathbf{\Sigma}, \Delta) \rightarrow \mathrm{End}(V)$ (unique up to isomorphism) whose induced character on $\mathcal{Z}$ is $\rho$ and whose dimension is $\sqrt{R}$. Hence the quantum Teichm\"uller representation $r_0 : \overline{\mathcal{S}}_{\omega}(\mathbf{\Sigma}) \rightarrow \mathrm{End}(V)$, defined as the composition $r_0:= r\circ \Tr_{\omega}^{\Delta}$, induces the character $\rho_0$ over $\mathcal{Z}^0$. Therefore, $r_0$ decomposes as a direct sum of sub-representations of dimension $\sqrt{R_0}$. This proves that $\sqrt{R_0}$ divides $\sqrt{R}$, hence $R\geq R^0$. 
\end{proof}

\begin{proof}[Proof of Theorem \ref{theorem1} and Corollary \ref{corollary1}]
Theorem \ref{theorem1} is a consequence of Proposition \ref{prop_almost_Azumaya} and Lemmas \ref{lemma_rank1}, \ref{lemma_rank2}. Corollary \ref{corollary1} follows using $\mathcal{U}:= \mathcal{O}\cap \mathcal{O}'$ (with the notations of the proof of Lemma \ref{lemma_rank2}) and $\mathcal{V}:= \psi^{-1}(\mathcal{U})$.

\end{proof}

\bibliographystyle{amsalpha}
\bibliography{biblio}

\end{document}